\newcommand{\iso}{\xrightarrow{\;\smash{\raisebox{-0.5ex}{\ensuremath{\scriptstyle\sim}}}\;}}
\DeclareMathOperator{\Hom}{Hom}
\DeclareMathOperator{\iHom}{\underline{\operatorname{Hom}}}
\DeclareMathOperator{\Map}{Map}
\DeclareMathOperator{\Fun}{Fun}
\newcommand{\FunL}{\Fun^{\rm L}}
\newcommand{\FunR}{\Fun^{\rm R}}
\DeclareMathOperator{\Nat}{Nat}
\DeclareMathOperator{\fgt}{fgt}
\DeclareMathOperator{\id}{id}
\DeclareMathOperator{\pr}{pr}
\DeclareMathOperator{\cofib}{cofib}
\DeclareMathOperator{\colim}{colim}
\let\lim\relax
\DeclareMathOperator{\lim}{lim}
\newcommand{\ev}{{\rm ev}}
\newcommand{\coev}{{\rm coev}}
\DeclareMathOperator{\res}{res}
\DeclareMathOperator{\ind}{ind}
\DeclareMathOperator{\coind}{coind}
\DeclareMathOperator{\Nm}{Nm}
\DeclareMathOperator{\Nmadj}{\widetilde{\Nm}}
\DeclareMathOperator{\End}{End}
\DeclareMathOperator{\CMon}{CMon}
\DeclareMathOperator{\CAlg}{CAlg}
\DeclareMathOperator{\Alg}{Alg}
\DeclareMathOperator{\LConst}{LConst}
\DeclareMathOperator{\PSh}{PSh}
\DeclareMathOperator{\Ho}{Ho}
\DeclareMathOperator{\Ar}{Ar}
\newcommand{\prop}{\mathrm{pr}}
\newcommand{\inv}{\mathrm{inv}}
\DeclareMathOperator{\pt}{pt}
\DeclareMathOperator{\aug}{{aug}}
\DeclareMathOperator{\st}{st}
\DeclareMathOperator{\unit}{\mathds{1}}    
\newcommand{\catop}{^{\rm op}}
\newcommand{\catname}[1]{{\rm{#1}}}
\newcommand{\Spc}{\catname{Spc}}
\newcommand{\Sp}{\catname{Sp}}
\newcommand{\Vect}{\catname{Vect}}
\DeclareMathOperator{\Rep}{Rep}
\newcommand{\Top}{\catname{Top}}
\DeclareMathOperator{\TopGrpd}{TopGrpd}
\newcommand{\sTop}{\catname{sTop}}
\newcommand{\Mod}{\catname{Mod}}
\newcommand{\LMod}{\catname{LMod}}
\newcommand{\Fin}{\catname{Fin}}
\newcommand{\Cat}{\catname{Cat}}
\newcommand{\PrL}{\textup{Pr}^{\textup{L}}}
\newcommand{\PrR}{\textup{Pr}^{\textup{R}}}
\newcommand{\xPrLB}[1]{\PrL(\Bb_{/#1})}
\def\ulSp{\smash{\ul{{\setbox0=\hbox{\Sp}\dp0=0.5pt \ht0=0pt \box0\relax}}}}
\def\ulSpc{\smash{\ul{{\setbox0=\hbox{\Spc}\dp0=0.5pt \ht0=0pt \box0\relax}}}}
\def\ulOrbSp{\smash{\ul{{\setbox0=\hbox{\Orb\Sp}\dp0=0.5pt \ht0=0pt \box0\relax}}}}
\def\ulOrbSpc{\smash{\ul{{\setbox0=\hbox{\Orb\Spc}\dp0=0.5pt \ht0=0pt \box0\relax}}}}
\newcommand{\qednow}{\pushQED{\qed}\qedhere\popQED}
\newcommand{\Orb}{\rm{Orb}}
\newcommand{\Glo}{\rm{Glo}}
\newcommand{\qquadtext}[1]{\qquad\textrm{#1}\qquad}
\newcommand{\qin}{\quad\in\quad}
\newcommand{\ul}[1]{\underline{#1}}
\newcommand{\abs}[1]{\lvert #1 \rvert}
\let\op\relax
\DeclareMathOperator{\op}{op}
\let\S\relax
\DeclareMathOperator{\S}{\mathbb S}
\newcommand{\bbB}{\mathbb{B}}
\renewcommand{\phi}{\varphi}
\renewcommand{\epsilon}{\varepsilon}
\newcommand{\Aa}{\mathcal{A}}
\newcommand{\Bb}{\mathcal{B}}
\newcommand{\Cc}{\mathcal{C}}
\newcommand{\Dd}{\mathcal{D}}
\newcommand{\Ee}{\mathcal{E}}
\newcommand{\Ff}{\mathcal{F}}
\newcommand{\Gg}{\mathcal{G}}
\newcommand{\Ll}{\mathcal{L}}
\newcommand{\Mm}{\mathcal{M}}
\newcommand{\Oo}{\mathcal{O}}
\newcommand{\Ss}{\mathcal{S}}
\newcommand{\Uu}{\mathcal{U}}
\theoremstyle{plain}
\newtheorem{theorem}{Theorem}[section]
\newtheorem{corollary}[theorem]{Corollary}
\newtheorem{lemma}[theorem]{Lemma}
\newtheorem{proposition}[theorem]{Proposition}
\newtheorem{thmx}{Theorem}
\theoremstyle{definition}
\newtheorem{construction}[theorem]{Construction}
\newtheorem{convention}[theorem]{Convention}
\newtheorem{definition}[theorem]{Definition}
\newtheorem{example}[theorem]{Example}
\newtheorem{observation}[theorem]{Observation}
\newtheorem{remark}[theorem]{Remark}
\newtheorem{warning}[theorem]{Warning}
\newtheorem*{remark*}{Remark}
\newcommand\noloc{%
  \nobreak
  \mspace{6mu plus 1mu}
  {:}
  \nonscript\mkern-\thinmuskip
  \mathpunct{}
  \mspace{2mu}
}
\newcommand{\pullbacksign}{\hspace{-0.325ex}\tikz[baseline=(pb.base)]{\draw[line width=rule_thickness, line cap=round] (0,0) ++ (-2.45ex,0.45ex) -- ++ (1ex,0ex) -- ++ (0ex,1ex);\node (pb) at (0,0) {\phantom{x}};}}
\newcommand{\pushoutsign}{\hspace{0.2ex}\tikz[baseline=(po.base)]{\draw (0,0) ++ (2.45ex,-1.45ex) -- ++(0ex,1ex) -- ++ (1ex,0ex);\node (po) at (0,0) {\phantom{x}};}}
\title{Twisted ambidexterity in equivariant homotopy theory}
\author{Bastiaan Cnossen}
\date{\today}
\begin{document}
\maketitle

\begin{abstract}
	We develop the concept of twisted ambidexterity in a parametrized presentably symmetric monoidal $\infty$-category, which generalizes the notion of ambidexterity by Hopkins and Lurie and the Wirthmüller isomorphisms in equivariant stable homotopy theory, and is closely related to Costenoble-Waner duality. Our main result establishes the parametrized $\infty$-category of genuine $G$-spectra for a compact Lie group $G$ as the universal example of a presentably symmetric monoidal $\infty$-category parametrized over $G$-spaces which is both stable and satisfies twisted ambidexterity for compact $G$-spaces. We further extend this result to the settings of orbispectra and proper genuine $G$-spectra for a Lie group $G$ which is not necessarily compact.
\end{abstract}

\section{Introduction}

A well-known phenomenon in representation theory, sometimes referred to as \textit{ambidexterity}, is that if $H$ is a subgroup of a finite group $G$, the restriction functor from $G$-representations to $H$-representations admits a left adjoint $\ind^G_H$ and a right adjoint $\coind^G_H$ which are naturally equivalent to each other. An analogue of this result in stable equivariant homotopy theory was established by Wirthmüller \cite{wirthmuller1974equivariant}: there is an equivalence $\ind^G_H \simeq \coind^G_H$ between the induction and coinduction functors from genuine $H$-spectra to genuine $G$-spectra.

The situation becomes more interesting when $G$ is a non-discrete compact Lie group. For a closed subgroup $H \leqslant G$, the induction and coinduction functors are only equivalent up to a `twist': for every genuine $H$-spectrum $X$ there is a natural equivalence of genuine $G$-spectra
\[
\ind^G_H(X \otimes S^{-L}) \iso \coind^G_H(X),
\]
called the \textit{Wirthmüller isomorphism}, where $S^{-L}$ is the inverse of the representation sphere of the tangent $H$-representation $L = T_{eH}(G/H)$. The construction of the comparison map in this case is geometric in nature and therefore substantially more involved than in the case of finite groups, see for example Schwede \cite[Section~3.2]{schwede2018global}. As a result, it is not immediately clear how to extend it to more general settings, like that of proper equivariant homotopy theory \cite{DHLPS2019Proper} or that of orbispectra \cite{Pardon2023Orbifold}, where some of the required geometric constructions are not available.

In \cite{maysigurdsson2006parametrized}, May and Sigurdsson approach the Wirthmüller isomorphism using the language of \textit{parametrized homotopy theory}. They split up the construction of the isomorphism into two steps:
\begin{enumerate}[(1)]
	\item The first step is formal: inspired by work of Costenoble and Waner \cite{CostenobleWaner2016equivariant}, May and Sigurdsson set up a notion of parametrized duality theory called \textit{Costenoble-Waner duality}. This theory provides an entirely categorical construction of a genuine $H$-spectrum $D_{G/H}$ equipped with a natural transformation
	\[
	\ind^G_H(- \otimes D_{G/H}) \implies \coind^G_H(-).
	\]
	This transformation is an equivalence if and only if $\S_H$ is \textit{Costenoble-Waner dualizable}, in which case its Costenoble-Waner dual is given by $D_{G/H}$.
	\item The second step is geometric: May and Sigurdsson use a parametrized form of the Pontryagin-Thom construction to produce explicit duality data that exhibit the genuine $H$-spectrum $S^{-L}$ as Costenoble-Waner dual to $\S_H$. As a consequence, one obtains an equivalence $D_{G/H} \simeq S^{-L}$ and the resulting map $\ind^G_H(X \otimes S^{-L}) \to \coind^G_H(X)$ is an equivalence.
\end{enumerate}

A key advantage of separating the construction into these two steps is that the first step does not need any geometric input and works in much greater generality: it is an instance of a notion we call \textit{twisted ambidexterity}. This allows for the construction of similar transformations in a wider range of contexts, including proper equivariant homotopy theory. It further allows one to formulate the Wirthmüller isomorphisms as a \textit{property} of a parametrized homotopy theory, making it possible to talk about the universal example of a parametrized homotopy theory satisfying this property.

\subsubsection{Twisted ambidexterity in parametrized homotopy theory}

The concept of twisted ambidexterity comes up in homotopy theory in the setting of local systems on spaces. For a space $A$, let $\Sp^A$ denote the functor category $\Fun(A,\Sp)$, also known as the $\infty$-category of local systems of spectra on $A$. The constant local system functor $A^*\colon \Sp \to \Sp^A$ admits both a left adjoint $A_! = \colim_A\colon \Sp^A \to \Sp$ as well as a right adjoint $A_* = \lim_A\colon \Sp^A \to \Sp$. Although $A_*$ does not preserve colimits in general, it can be universally approximated from the left by a colimit-preserving functor via a \textit{twisted norm map}
\[
\Nm_A\colon A_!(- \otimes D_A) \implies A_*(-),
\]
as shown by Nikolaus and Scholze \cite[Theorem~I.4.1(v)]{NikolausScholze2018Cyclic}. The parametrized spectrum $D_A \in \Sp^A$ is the \textit{dualizing spectrum of $A$}, introduced and studied by John Klein \cite{klein2001dualizing}.\footnote{Klein considered connected spaces $A = BG$ for topological groups $G$, and wrote $D_G$ rather than $D_{BG}$.} In case $A$ is a compact space, the functor $A_*$ already preserves colimits, and the twisted norm map $\Nm_A$ is an equivalence. When $A = M$ is a compact smooth manifold, $D_M = S^{-T_M}$ is the inverse of the one-point-compactification of the tangent bundle of $M$, and the resulting equivalence between the cohomology of $M$ and a shift of the homology of $M$ recovers twisted Poincaré duality.

In this article, we introduce a framework for twisted ambidexterity which generalizes the above story for local systems of spectra in two ways. As a first generalization, we replace the $\infty$-category of spectra by an arbitrary presentably symmetric monoidal $\infty$-category $\Cc$, in which case the transformation $A_!(- \otimes D_A) \colon \Cc^A \to \Cc$ is the universal $\Cc$-linear colimit-preserving approximation of $A_* \colon \Cc^A \to \Cc$. As a second generalization, following Ando, Blumberg and Gepner \cite{AndoBlumbergGepner2018Parametrized}, we consider homotopy theories parametrized over an arbitrary $\infty$-topos $\Bb$ in place of the $\infty$-category of spaces, allowing for applications in equivariant homotopy theory. The role of $\Cc$ is now played by certain limit-preserving functors $\Cc\colon \Bb\catop \to \CAlg(\PrL)$ known as \textit{presentably symmetric monoidal $\Bb$-categories}, see \Cref{def:Pres_Sym_Mon_BCategory}. Given an object $A \in \Bb$, one can enhance $A_!$ and $A_*$ to \textit{parametrized} functors $\Cc^A \to \Cc$, and one can again construct an object $D_A \in \Cc(A)$ and a twisted norm map $\Nm_A\colon A_!(- \otimes D_A) \to A_*$ universally approximating $A_*$ by a $\Cc$-linear colimit-preserving parametrized functor; see \Cref{prop:UniversalPropertyTwistedNormMap} for a precise statement. If $\Nm_A$ is an equivalence, we will say that $A$ is \textit{twisted $\Cc$-ambidextrous}. 

The notion of twisted ambidexterity can be regarded as a generalization of the notion of \textit{ambidexterity} by Hopkins and Lurie \cite{hopkinsLurie2013ambidexterity}: if $A$ is an $n$-truncated space, then the twisted norm map reduces to the norm map of \cite{hopkinsLurie2013ambidexterity} whenever the latter is defined, and $A$ is $\Cc$-ambidextrous in the sense of \cite[Definition~4.3.4]{hopkinsLurie2013ambidexterity} if and only if each of the spaces $A$, $\Omega A$, $\Omega^2A, \dots, \Omega^{n+1}A$ is twisted $\Cc$-ambidextrous, see \Cref{prop:AmbidexterityVsTwistedAmbidexterity}.

A \textit{relative} version of twisted ambidexterity for morphisms $f\colon A \to B$ in $\Bb$ is obtained by replacing  the $\infty$-topos $\Bb$ by its slice $\Bb_{/B}$, producing a twisted norm map $\Nm_f\colon f_!(- \otimes D_f) \to f_*(-)$.

\subsubsection{Twisted ambidexterity in equivariant homotopy theory}
The Wirthmüller isomorphism in equivariant homotopy theory discussed before may be understood as a special case of twisted ambidexterity. Given a compact Lie group $G$, we work in the setting of \textit{$G$-categories}, defined as $\infty$-categories parametrized over the $\infty$-topos of $G$-spaces. There is a $G$-category $\ulSp^G$ of genuine $G$-spectra, which assigns to the orbit space $G/H$ of a closed subgroup $H \leqslant G$ the $\infty$-category of genuine $H$-spectra. The $G$-category $\ulSp^G$ is presentably symmetric monoidal and is fiberwise stable, meaning that the $\infty$-category $\ulSp^G(A)$ is stable for every $G$-space $A$. Furthermore, it follows from results of \cite{maysigurdsson2006parametrized} that every compact $G$-space is twisted $\ulSp^G$-ambidextrous in the sense discussed above. The main result of this article is that $\ulSp^G$ is in fact \textit{universal} among $G$-categories satisfying the above properties:

\begin{thmx}[\Cref{thm:MainResult}]
	\label{introthm:UniversalPropertyGSpectra}
	For a compact Lie group $G$, the $G$-category $\ulSp^G$ is initial among fiberwise stable presentably symmetric monoidal $G$-categories $\Cc$ such that all compact $G$-spaces are twisted $\Cc$-ambidextrous.
\end{thmx}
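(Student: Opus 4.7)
The plan is to reduce the theorem to a classical presentation of $\ulSp^G$ as a localization and to verify that the twisted ambidexterity hypothesis is precisely what is needed to apply it. Specifically, I would invoke (or establish in a preliminary section) the description of $\ulSp^G$ as the universal fiberwise stable presentably symmetric monoidal $G$-category receiving a symmetric monoidal colimit-preserving $G$-functor from $\ulSpc^G$ with the property that, for every closed subgroup $H \leqslant G$ and every finite-dimensional orthogonal $H$-representation $V$, the representation sphere $S^V \in \Cc(G/H)$ becomes $\otimes$-invertible. Granting this, the theorem reduces to showing that, for $\Cc$ fiberwise stable and presentably symmetric monoidal, twisted $\Cc$-ambidexterity for all compact $G$-spaces is equivalent to invertibility of all such $S^V$.

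Given any $\Cc$ as in the theorem, the initiality of $\ulSpc^G$ among presentably symmetric monoidal $G$-categories yields an essentially unique symmetric monoidal colimit-preserving $G$-functor $F \colon \ulSpc^G \to \Cc$. Since $\Cc$ is fiberwise stable (hence fiberwise pointed), $F$ extends uniquely through the fiberwise stabilization of pointed $G$-spaces, and the content of the theorem is that this extension further factors through $\ulSp^G$. To produce such a factorization, the decisive step is to show that the twisted ambidexterity assumption forces each representation sphere $S^V \in \Cc(G/H)$ to be invertible.

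The key geometric input arises by applying the twisted norm map to the compact $G$-space $M = G \times_H D(V)$, a $G$-manifold with boundary $\partial M = G \times_H S(V)$. The twisted $\Cc$-ambidexterity of $M$ and $\partial M$, combined with the cofiber sequence relating them, should identify the dualizing object $D_{G/H}$ in $\Cc(G/H)$ (or a relative analogue thereof) with $S^{-V}$. This recovers, inside an arbitrary $\Cc$, the content of May-Sigurdsson's Pontryagin-Thom construction establishing the Wirthmüller isomorphism. Iterating over all $V$ and all $H$ yields the required invertibility, producing the desired $G$-functor $\ulSp^G \to \Cc$; its essential uniqueness follows from the corresponding uniqueness at each intermediate step.

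The main obstacle I anticipate is precisely this geometric step: translating abstract twisted ambidexterity for compact $G$-spaces into a concrete invertibility result for representation spheres. Whereas May-Sigurdsson use explicit parametrized Pontryagin-Thom data, here one must argue entirely from the defining property of the twisted norm map as a universal $\Cc$-linear colimit-preserving approximation of $A_*$, as in \Cref{prop:UniversalPropertyTwistedNormMap}. A secondary difficulty is justifying the preliminary universal property of $\ulSp^G$ as the localization inverting representation spheres in the parametrized language of this paper, though analogous descriptions have appeared in recent work on $G$-$\infty$-categories and should transfer without essential change.
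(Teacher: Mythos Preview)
Your overall architecture is the same as the paper's: reduce to the presentation of $\ulSp^G$ as the formal inversion of representation spheres in $\ulSpc^G_*$ (\Cref{prop:UniversalPropertyGSpectraFormalInversion}), and then show that for fiberwise stable $\Cc$ the twisted-ambidexterity hypothesis is equivalent to invertibility of all representation spheres. The paper packages this equivalence as \Cref{thm:CharacterizationGStableGCategories} and then deduces \Cref{thm:MainResult} in one line.

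The genuine gap is in the implication \emph{twisted ambidexterity $\Rightarrow$ invertibility of representation spheres}. Your plan is to run a disc-bundle/cofiber argument on $M=G\times_H D(V)$ to identify $D_{G/H}$ with $S^{-V}$ inside an arbitrary $\Cc$. This is circular: the symbol $S^{-V}$ has no meaning in $\Cc(G/H)$ until you already know $F(S^V)$ is invertible, and even reformulating the goal as ``$D_{G/H}\otimes F(S^V)\simeq\unit$'' requires computing $D_{G/H}$ in $\Cc$, for which twisted ambidexterity gives you existence and a universal property but no concrete identification. The May--Sigurdsson Pontryagin--Thom data live in $\ulSp^G$, not in $\Cc$; there is no mechanism in your proposal for transporting them. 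Moreover, even if the disc-bundle argument worked it would at best yield invertibility of $S^L$ for the tangent representation $L$ of $G/H$, not for an arbitrary $V$.

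The paper avoids this entirely. It runs the implication through dualizability rather than invertibility: twisted $\Cc$-ambidexterity of a compact $G$-space $A$ implies $A_!\unit_A$ is dualizable in $\Cc(1)$ (\Cref{cor:ColimitOfCWDblObjectIsDbl}), hence $F'$ sends all compact pointed $G$-spaces to dualizable objects. The passage from dualizability of compact $G$-spaces to invertibility of representation spheres is then a purely non-parametrized statement about symmetric monoidal left adjoints out of $\Spc^G_*$, handled by Campion's theorem (\Cref{thm:CampionInvertingSpheresVsDualizingSpheres}). This is the missing idea in your proposal. Note also that you do not address why $\ulSp^G$ itself lies in the class; in the paper this is the direction $(2)\Rightarrow(5)$ of \Cref{thm:CharacterizationGStableGCategories}, which does use May--Sigurdsson (\Cref{thm:MaySigurdssonCWDuality}), but only for $\ulSp^G$ and then transports along the symmetric monoidal left adjoint $\ulSp^G\to\Cc$ via \Cref{cor:SymMonFunctorsPreserveTwistedAmbidexterity}.
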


In fact, it suffices to require that the orbit $G/H$ is twisted ambidextrous for every closed subgroup $H \leqslant G$. In this case, the resulting twisted ambidexterity isomorphism specializes to a formal Wirthmüller isomorphism of the form
\[
\Nm_{G/H}\colon \ind^G_H(- \otimes D_{G/H}) \iso \coind^G_H(-).
\]
Therefore, \Cref{introthm:UniversalPropertyGSpectra} may be interpreted as saying that genuine equivariant spectra form the universal theory of stable $G$-equivariant objects which admit formal Wirthmüller isomorphisms.

For the proof of \Cref{introthm:UniversalPropertyGSpectra}, we relate twisted ambidexterity for compact $G$-spaces to invertibility of representation spheres. If $\Cc$ is a pointed presentably symmetric monoidal $G$-category, one can show that the underlying $\infty$-category $\Cc(1)$ of $\Cc$ comes equipped with a canonical tensoring by pointed $G$-spaces, and thus the representation spheres $S^V$ act on $\Cc(1)$.

\begin{thmx}[\Cref{thm:CharacterizationGStableGCategories}]
	\label{introthm:CharacterizationGStableGCategories}
	Let $G$ be a compact Lie group and let $\Cc$ be a fiberwise stable presentably symmetric monoidal $G$-category. Then the following conditions are equivalent:
	\begin{enumerate}[(1)]
		\item For any $G$-representation $V$, the representation sphere $S^V$ acts invertibly on $\Cc(1)$;
		\item Every compact $G$-space is twisted $\Cc$-ambidextrous;
		\item For every closed subgroup $H \leqslant G$, the orbit $G/H$ is twisted $\Cc$-ambidextrous.
	\end{enumerate}
\end{thmx}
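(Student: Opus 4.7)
The plan is to establish the implications $(2) \Rightarrow (3)$, $(3) \Rightarrow (2)$, and then the main equivalence $(1) \Leftrightarrow (3)$.

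The implication $(2) \Rightarrow (3)$ is immediate since each orbit $G/H$ is itself a compact $G$-space. For $(3) \Rightarrow (2)$, I would argue via a closure-under-finite-colimits property: the class of twisted $\Cc$-ambidextrous objects in the $\infty$-topos of $G$-spaces should be closed under finite colimits in any fiberwise stable $\Cc$, which is a general feature of twisted ambidexterity presumably developed earlier in the paper. Since every compact $G$-space is a finite iterated colimit of orbits, this closure property propagates $(3)$ to $(2)$; fiberwise stability is essential because it makes cofibers interact well with the twisted norm maps.

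The main content is $(1) \Leftrightarrow (3)$, which is a categorical reformulation of the Wirthmüller isomorphism. For $(1) \Rightarrow (3)$, I would construct the twisted norm equivalence for each orbit $G/H$ using the tangent $H$-representation $L = T_{eH}(G/H)$. The key step is to identify the dualizing object $D_{G/H} \in \Cc(G/H)$ with the inverse representation sphere $S^{-L}$; once this identification is in place, invertibility of $S^L$ on $\Cc(1)$, transported to $\Cc(G/H)$ via restriction along the orbit, upgrades the comparison map to the Wirthmüller-type equivalence $\Nm_{G/H}$. For $(3) \Rightarrow (1)$, one uses $(3) \Rightarrow (2)$ to upgrade from orbits to all compact $G$-spaces, and then for each $G$-representation $V$ extracts invertibility of $S^V$ by applying the twisted ambidexterity data to a compact $G$-space whose associated dualizing object encodes $S^V$; natural candidates are the cofiber sequence $S(V)_+ \to D(V)_+ \to S^V$ or the cell decomposition of $D(V)$ into orbits, combined with the fact that dualizing objects of compact ambidextrous spaces are invertible.

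The main obstacle I anticipate is the identification $D_{G/H} \simeq S^{-L}$ in the parametrized categorical setting, without recourse to the geometric Pontryagin-Thom construction. In the universal case $\Cc = \ulSp^G$ this identification is precisely the geometric content of May-Sigurdsson's work; here it must instead be derived from the purely formal data of a fiberwise stable presentably symmetric monoidal $G$-category equipped with invertible representation spheres. The natural route is to choose a $G$-equivariant embedding of $G/H$ into a representation space $V$ so that the normal bundle exhibits $L$ explicitly, and then use the tensoring of $\Cc(1)$ by pointed $G$-spaces together with invertibility of $S^V$ to both construct the comparison morphism and exhibit it as an equivalence entirely within $\Cc$.
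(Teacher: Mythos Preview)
Your argument for $(2)\Leftrightarrow(3)$ is essentially the paper's, modulo one technical point: a compact $G$-space need only be a \emph{retract} of a finite $G$-CW complex, so you also need closure of twisted $\Cc$-ambidextrous objects under retracts, which the paper proves alongside closure under finite colimits.

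For $(1)\Rightarrow(3)$ the paper takes a shorter route than you outline. Rather than attempting to identify $D_{G/H}\simeq S^{-L}$ directly inside an arbitrary $\Cc$, it first observes that condition~(1) yields, by the universal property of $\ulSp^G$ as a formal inversion, a symmetric monoidal left adjoint $G$-functor $\ulSp^G\to\Cc$. One then proves twisted ambidexterity of orbits once and for all in the universal case $\ulSp^G$ (this is May--Sigurdsson's Costenoble--Waner duality statement, which does use the Pontryagin--Thom collapse), and transports it to $\Cc$ using the general fact that symmetric monoidal left adjoints preserve Costenoble--Waner duality data. Your approach of reconstructing the duality inside $\Cc$ via an embedding $G/H\hookrightarrow V$ amounts to redoing the May--Sigurdsson argument in a more abstract setting; it could be made to work, but it is not avoiding the geometric input, and the transport argument is cleaner.

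The real gap is in $(3)\Rightarrow(1)$. Your cofiber-sequence idea does show that $F(S^V)$ is \emph{dualizable} in $\Cc(1)$, since $S(V)$ and $D(V)$ are compact $G$-spaces and dualizable objects are closed under cofibers. But the assertion that ``dualizing objects of compact ambidextrous spaces are invertible'' is false in general---for instance the dualizing spectrum of a finite complex which is not a Poincar\'e duality space is not invertible---so you cannot extract invertibility this way. The passage from dualizability of all compact $G$-spaces to invertibility of representation spheres is the nontrivial content of a theorem of Campion (reproduced in the paper's Appendix~B): one shows that $S^V$ has twisted-trivial braiding, whence dualizability forces $\ev_{S^V}\otimes S^V$ to be an equivalence, and then an induction over families of subgroups using the concentrated spaces $S^{\{H\}}$ shows that $\cofib(\ev_{S^V})=0$. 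This argument, not anything about dualizing objects being invertible, is what closes the loop.
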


Since $\ulSp^G$ is initial with respect to condition (1), \Cref{introthm:UniversalPropertyGSpectra} is as a direct consequence of \Cref{introthm:CharacterizationGStableGCategories}. For the proof of \Cref{introthm:CharacterizationGStableGCategories}, we show in \Cref{subsec:Costenoble_Waner_Duality} that twisted ambidexterity can be formulated in terms of \textit{Costenoble-Waner duality}, a parametrized form of duality theory. The main ingredient for the implication (1) $\implies$ (2) is then a result of May and Sigurdsson \cite{maysigurdsson2006parametrized} about Costenoble-Waner duality in equivariant stable homotopy theory. The main ingredient for the implication (2) $\implies$ (1) is a result of Campion \cite{campion2023FreeDuals}, recalled in \Cref{sec:TheoremCampion}, which roughly says that dualizability of compact $G$-spaces implies invertibility of the representation spheres.

Our methods directly extend to the contexts of \textit{orbispectra} and \textit{proper genuine $G$-spectra}. We refer to \Cref{subsec:Orbispectra} and \Cref{subsec:ProperEquivariantHomotopyTheory} for precise definitions of the words appearing in the following two theorems:

\begin{thmx}[\Cref{thm:UniversalPropertyOrbispectra}]
	The orbicategory $\ulOrbSp$ of orbispectra is initial among fiberwise stable presentably symmetric monoidal orbicategories $\Cc$ such that every relatively compact morphism of orbispaces is twisted $\Cc$-ambidextrous.
\end{thmx}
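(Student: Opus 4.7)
My plan is to mimic the strategy used for Theorem A (the compact Lie group case) by reducing the orbispectra statement to it via a slice-wise analysis. Let $\Bb$ denote the $\infty$-topos of orbispaces: it is generated under colimits by a family of ``orbits'' $\{*_G\}$ indexed by isomorphism classes of compact Lie groups $G$, with the defining property that each slice $\Bb_{/*_G}$ is equivalent to the $\infty$-topos of $G$-spaces. Under these identifications, the restriction of $\ulOrbSp$ to $\Bb_{/*_G}$ recovers $\ulSp^G$, and any presentably symmetric monoidal orbicategory $\Cc$ restricts to a fiberwise stable presentably symmetric monoidal $G$-category $\Cc|_{\Bb_{/*_G}}$.

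The first step is to establish an orbispectra analog of \Cref{introthm:CharacterizationGStableGCategories}: a fiberwise stable presentably symmetric monoidal orbicategory $\Cc$ satisfies twisted ambidexterity for every relatively compact morphism of orbispaces if and only if, for every compact Lie group $G$ and every $G$-representation $V$, the representation sphere $S^V$ acts invertibly on $\Cc(*_G)$. Because twisted ambidexterity is base-change invariant (presumably established earlier in the paper alongside the relative twisted norm map $\Nm_f$), twisted ambidexterity of a relatively compact $f \colon A \to B$ in $\Bb$ can be tested after pullback along each $*_G \to B$, producing morphisms between compact $G$-spaces in $\Bb_{/*_G}$. Applying \Cref{introthm:CharacterizationGStableGCategories} to $\Cc|_{\Bb_{/*_G}}$ then translates the twisted ambidexterity hypothesis into invertibility of representation spheres on $\Cc(*_G)$ for each $G$.

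With this characterization in hand, the theorem reduces to showing that $\ulOrbSp$ is initial among fiberwise stable presentably symmetric monoidal orbicategories in which all representation spheres at every orbit act invertibly. This should follow from a construction of $\ulOrbSp$ as a parametrized Bousfield localization of pointed orbispaces inverting all representation spheres, together with the fact that pointed orbispaces form the free pointed presentably symmetric monoidal orbicategory on a point.

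The main obstacle is the technical book-keeping in the first step: one has to verify that relatively compact morphisms restrict to compact $G$-spaces at each orbit, that the relative twisted norm map interacts correctly with restriction to each slice $\Bb_{/*_G}$, and that the resulting invertibility condition at $\Cc(*_G)$ matches the hypothesis of \Cref{introthm:CharacterizationGStableGCategories} applied to the $G$-category $\Cc|_{\Bb_{/*_G}}$. Each of these is an instance of base-change for parametrized adjoints and norm maps, which the paper's earlier development should supply; once these compatibilities are settled, the reduction is formal.
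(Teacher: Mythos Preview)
Your proposal is correct and follows essentially the same route as the paper: the paper proves \Cref{prop:CharacterizationOrbStableOrbCategories} (your ``first step'') by restricting to each slice $\Bb_{/\bbB G}$ and invoking \Cref{thm:CharacterizationGStableGCategories}, using that twisted ambidextrous morphisms form a local class (\Cref{prop:ClosurePropertiesTwistedAmbiMaps}\eqref{it:LocalClass}) to pass between relatively compact morphisms and their pullbacks to compact $G$-spaces, and then deduces the theorem from the universal property of $\ulOrbSp$ as a parametrized formal inversion of representation spheres (established in \Cref{subsec:Formal_Inversion} and \Cref{subsec:Orbispectra}). The only minor inaccuracy is terminological: the paper constructs $\ulOrbSp$ as a \emph{formal inversion} (\Cref{def:Orbispectra}, \Cref{prop:PointwiseFormalInversalIsParametrizedLocalInversion}) rather than a Bousfield localization, and the pullback of a relatively compact morphism along $\bbB G \to B$ yields a compact $G$-space over the terminal object, not a morphism between compact $G$-spaces.
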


\begin{thmx}[\Cref{thm:UniversalPropertyProperGspectra}]
	\label{introthm:UniversalPropertyProperGSpectra}
	For a Lie group $G$ which is not necessarily compact, the proper $G$-category $\ulSp^G$ of proper $G$-spectra is initial among fiberwise stable presentably symmetric monoidal proper $G$-categories $\Cc$ such that every relatively compact morphisms of proper $G$-spaces is twisted $\Cc$-ambidextrous.
\end{thmx}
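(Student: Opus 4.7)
The plan is to reduce \Cref{introthm:UniversalPropertyProperGSpectra} to a characterization statement parallel to \Cref{introthm:CharacterizationGStableGCategories}. Concretely, the first step is to prove that for a fiberwise stable presentably symmetric monoidal proper $G$-category $\Cc$, the following conditions are equivalent:
\begin{enumerate}[(1)]
\item for every compact subgroup $H \leqslant G$ and every orthogonal $H$-representation $V$, the representation sphere $S^V$ acts invertibly on the fiber $\Cc(G/H)$;
\item every relatively compact morphism of proper $G$-spaces is twisted $\Cc$-ambidextrous;
\item for every compact subgroup $H \leqslant G$, the orbit $G/H$ is twisted $\Cc$-ambidextrous.
\end{enumerate}
Granting this equivalence, the theorem follows formally, because $\ulSp^G$ can be realized as the localization of a universal fiberwise stable presentably symmetric monoidal proper $G$-category at the representation spheres of compact subgroups, and is therefore initial with respect to condition~(1).

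To establish the equivalence, I would follow the strategy of the compact case. The implication (2)~$\Rightarrow$~(3) is immediate, since the structure map $G/H \to \ast$ is relatively compact whenever $H$ is compact. For (3)~$\Rightarrow$~(1), I would exploit that the slice of the $\infty$-topos of proper $G$-spaces over $G/H$ is equivalent to the $\infty$-topos of $H$-spaces; restricting $\Cc$ along this equivalence gives a fiberwise stable presentably symmetric monoidal $H$-category, after which Campion's theorem (recalled in \Cref{sec:TheoremCampion}) converts twisted $\Cc$-ambidexterity of $G/H$ into invertibility of the representation spheres of $H$. For (1)~$\Rightarrow$~(2), I would reduce to the compact case via the same slice equivalences: any relatively compact morphism $f\colon A \to B$ of proper $G$-spaces becomes, after base change along orbit covers $G/H \to B$ with $H$ compact, a morphism of compact $H$-spaces, so the parametrized Pontryagin-Thom / Costenoble-Waner duality argument imported from the compact Lie group case provides the desired twisted ambidexterity.

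The main obstacle will be justifying the reduction step for (1)~$\Rightarrow$~(2), namely that twisted ambidexterity for all relatively compact morphisms can be deduced from twisted ambidexterity for the compact orbits $G/H$. Since $G$ is not compact, the base $\infty$-topos is generated by such orbits but is not compactly generated in the same way as in the compact-Lie-group case, so one must carefully set up descent and base-change properties of twisted ambidexterity with respect to orbit covers. The key technical input will be that twisted $\Cc$-ambidexterity is stable under pullback along relatively compact morphisms and is local for covers by compact orbits, so that verification over each orbit $G/H$ (where the slice equivalence puts us back in the $H$-equivariant setting of \Cref{introthm:CharacterizationGStableGCategories}) suffices. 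Once these locality and base-change statements are in place, the rest of the argument proceeds in direct parallel with the proof of \Cref{introthm:UniversalPropertyGSpectra}.
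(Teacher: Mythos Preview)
Your overall architecture---reduce to the equivalence of an invertibility condition and a twisted-ambidexterity condition, then use that $\ulSp^G$ is the formal inversion of representation spheres---is exactly what the paper does. Your argument for $(1)\Leftrightarrow(2)$ is also essentially the paper's: pass to the slice over each orbit $G/H$ with $H$ compact, identify that slice with $\Spc^H$, apply \Cref{thm:CharacterizationGStableGCategories} to the resulting $H$-category $\pi_H^*\Cc$, and glue using the fact that twisted ambidexterity is a local class of morphisms (\Cref{prop:ClosurePropertiesTwistedAmbiMaps}\eqref{it:LocalClass}). The ``main obstacle'' you flag is therefore already dissolved by that general locality result; no further descent analysis is needed.

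The genuine gap is your condition~(3). When $G$ is not compact, the terminal object of $\Spc^G_{\prop}$ is \emph{not} an orbit, and the map $G/H \to \ast$ is typically \emph{not} relatively compact: its pullback along $G/K \to \ast$ is the $K$-space $\res^G_K(G/H)$, which is non-compact whenever $G$ is. Hence $(2)\Rightarrow(3)$ is not immediate, and in fact (3) fails already for $\Cc=\ulSp^G$ (coinduction $\coind^G_H\colon \Sp^H \to \Sp^G$ from a compact subgroup of a non-compact $G$ does not preserve colimits). Correspondingly, your sketch of $(3)\Rightarrow(1)$ cannot work as stated: Campion's theorem applied to the $H$-category $\pi_H^*\Cc$ needs twisted ambidexterity of the \emph{$H$-orbits} $H/K$, i.e.\ of the maps $G/K \to G/H$ in proper $G$-spaces, not of $G/H \to \ast$. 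The correct replacement for (3) is exactly that: for all compact $K\leqslant H\leqslant G$, the map $G/K\to G/H$ is twisted $\Cc$-ambidextrous. With that correction the cycle closes, and in any case your $(1)\Leftrightarrow(2)$ alone already proves the theorem.
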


In particular, \Cref{introthm:UniversalPropertyProperGSpectra} shows that for every closed subgroup $H$ of a Lie group $G$ with compact orbit space $G/H$ there is a formal Wirthmüller isomorphism
\[
\Nm_{G/H}\colon \ind^G_H(- \otimes D_{G/H}) \implies \coind^G_H(-).
\]

The methods developed in this article also allow us to give a formal description of the $\infty$-category of proper genuine $G$-spectra of Degrijse et al.\ \cite{DHLPS2019Proper} in case the Lie group $G$ has \textit{enough bundle representations} in the sense of \Cref{def:EnoughBundleRepresentations}: it is obtained from the $\infty$-category of pointed proper $G$-spaces by inverting the sphere bundles $S^{\xi}$ associated to finite-dimensional vector bundles $\xi$ over the classifying orbispace $\bbB G$, see \Cref{cor:EnoughBundleRepsImpliesFormalInversion}.

\subsubsection{Organization}
In \Cref{sec:parametrizedCategoryTheory}, we introduce the setting of parametrized higher category theory we will use in this article. Several foundational definitions and results from \cite{martini2021yoneda, martiniwolf2021limits, martiniwolf2022presentable} are recalled in \Cref{subsec:Recollections_Parametrized_Category_Theory}. In \Cref{subsec:Classification_CLinear_Functors}, we provide for every parametrized presentably symmetric monoidal $\infty$-category $\Cc$ a classification of $\Cc$-linear functors $\Cc^A \to \Cc^B$ in terms of objects of $\Cc(A \times B)$, where $A$ and $B$ are objects of the base $\infty$-topos. \Cref{subsec:Formal_Inversion} contains a discussion of formally inverting objects in parametrized symmetric monoidal $\infty$-categories.

In \Cref{sec:twistedambidexterity}, we introduce the notion of twisted ambidexterity in a parametrized presentably symmetric monoidal $\infty$-category $\Cc$. The twisted norm maps $\Nm_f\colon f_!(- \otimes D_f) \Rightarrow f_*(-)$ for a morphism $f\colon A \to B$ in the base $\infty$-topos are constructed in \Cref{sec:TwistedNormMap}, where also their universal property is established. In \Cref{subsec:SemiadditivityVsTwistedAmbidexterity}, we relate twisted ambidexterity to the notion of ambidexterity by \cite{hopkinsLurie2013ambidexterity} and to the notion of parametrized semiadditivity by \cite{nardin2016exposeIV, CLL_Global}. In \Cref{subsec:Costenoble_Waner_Duality} we discuss the relation between twisted ambidexterity and Costenoble-Waner duality.

In \Cref{sec:ambidexterity_in_equivariant_homotopy_theory}, we apply our methods in the context of equivariant homotopy theory. \Cref{subsec:genuineParametrizedSpectra} introduces the parametrized $\infty$-category of genuine $G$-spectra for a compact Lie group $G$, and \Cref{subsec:UniversalityOfWirthmullerIsomorphisms} establishes its universal property in terms of twisted ambidexterity. In \Cref{subsec:Orbispectra} and \Cref{subsec:ProperEquivariantHomotopyTheory} we extend this to the contexts of orbispectra and proper equivariant spectra, respectively.

\subsubsection{Relation to other work}
Our treatment of twisted ambidexterity draws on ideas from a wide range of prior work. The main inspiration is the concept of Costenoble-Waner duality in parametrized homotopy theory, introduced by Costenoble and Waner \cite{CostenobleWaner2016equivariant} under the name `homological duality' and further developed by May and Sigurdsson \cite{maysigurdsson2006parametrized}. The untwisted notion of ambidexterity appeared in the works of Hopkins and Lurie \cite{hopkinsLurie2013ambidexterity} and has been further studied by Harpaz \cite{harpaz2020ambidexterity} and Carmeli, Schlank and Yanovski \cite{CSY2022TeleAmbi, CSY2021AmbiHeight}. A treatment in the context of representation theory was given by Balmer and Dell'Ambrogio \cite{BalmerDellAmbrogio2020Mackey}. In the case of local systems on spaces, the universal property of the twisted norm map resembles classical assembly maps; see, for example, the discussion following Theorem I.4.1 of Nikolaus and Scholze \cite{NikolausScholze2018Cyclic}. The dualizing object in this case is the dualizing spectrum introduced by Klein \cite{klein2001dualizing} and studied by Bauer \cite{Bauer2004pcompact} and Rognes \cite{rognes2005stably}. A parametrized version of the assembly map was first introduced by Quigley and Shah \cite{QuigleyShay2021Tate} for equivariant genuine spectra. Other sources of inspiration are the `ambidexterity isomorphisms' of Hoyois \cite{hoyois2017sixoperations} and Bachmann and Hoyois \cite{BachmannHoyois2021Norms}, and the `purity equivalences' of Cisinski and Déglise \cite{CisinskiDeglise2019Triangulated}. Although similar in appearance, our approach is different from the formal Wirthmüller isomorphisms of Fausk, Hu and May \cite{FauskHuMay2003Isomorphisms} and Balmer, Dell'Ambrogio and Sanders \cite{BDS2016Grothendieck}, where no parametrized homotopy theory is involved.

Our characterization of genuine equivariant spectra in terms of fiberwise stability and formal Wirth-müller isomorphisms is heavily inspired by work of Blumberg \cite{Blumberg2006Continuous}, who described the category of genuine $G$-spectra for a compact Lie group $G$ in terms of continuous functors that satisfy excision and have (geometrically defined) Wirthmüller isomorphisms. For finite groups, a parametrized universal property for genuine $G$-spectra in terms of stability and Wirthmüller equivalences $\ind^G_H(-) \simeq \coind^G_H(-)$ was outlined by Nardin \cite{nardin2016exposeIV}, and a similar result in the context of global homotopy theory was proved by Lenz, Linskens and the author in \cite{CLL_Global}.

\subsubsection{Acknowledgements}
I am truly grateful to the following individuals for their contributions to this article. Stefan Schwede sparked the idea behind this project and has been a source of support and encouragement throughout the process. Louis Martini and Sebastian Wolf were invaluable resources for discussing the intricacies of parametrized category theory. Sil Linskens was always available for helpful discussions, especially regarding formal inversions and proper equivariant homotopy theory. Shachar Carmeli provided the idea for the universal property of the twisted norm map. Tim Campion graciously allowed me to include a result from his PhD dissertation in the appendix. I further had useful conversations with Tobias Lenz, Maxime Ramzi, and Lior Yanovski about some of the contents of this article. I thank Phil Pützstück for many useful comments on an earlier version of this article. Finally, I would like to thank the Max Planck Institute for Mathematics for its financial support.

\section{Parametrized category theory}
\label{sec:parametrizedCategoryTheory}
We start by recalling the setup of parametrized category theory we are working in, and establishing some analogues of well-known results in non-parametrized category theory. Our main references are the articles \cite{martini2021yoneda}, \cite{martiniwolf2021limits} and \cite{martiniwolf2022presentable} of Martini and Wolf; see in particular \cite[Section~2.6]{martiniwolf2022presentable} for a short overview of the theory. An earlier framework for parametrized category theory was given by Barwick, Dotto, Glasman, Nardin and Shah \cite{BDGNS2016parametrized,BDGNS2016ExposeI,shah2021parametrized,nardin2016exposeIV}.

\begin{convention}
	Since most of the $\infty$-categories we will be working with are presentable, we will by convention take all $\infty$-categories to be large unless explicitly specified that they are small. In particular, $\Cat_{\infty}$ denotes the (very large) $\infty$-category of large $\infty$-categories; in \cite{martiniwolf2022presentable}, this is denoted by $\widehat{\Cat}_{\infty}$ instead.
\end{convention}

\subsection{Recollections on parametrized category theory}
\label{subsec:Recollections_Parametrized_Category_Theory}
Throughout this section, we fix an $\infty$-topos $\Bb$.

\begin{definition}
	\label{def:BCategory}
	A \textit{$\Bb$-category} is a sheaf of $\infty$-categories on $\Bb$, i.e., a limit-preserving functor $\Bb\catop \to \Cat_{\infty}$. Its \textit{underlying $\infty$-category} $\Gamma(\Cc)$ is the $\infty$-category $\Cc(1)$, where $1 \in \Bb$ is the terminal object. Given two $\Bb$-categories $\Cc$ and $\Dd$, a \textit{$\Bb$-functor} is a natural transformation $\Cc \to \Dd$. We let $\Cat(\Bb) \subseteq \Fun(\Bb\catop,\Cat_{\infty})$ denote the (very large) $\infty$-category of $\Bb$-categories and $\Bb$-functors.
	
	By \cite[Proposition~3.5.1]{martini2021yoneda}, a $\Bb$-category may equivalently be encoded as a \textit{(large) category internal to $\Bb$}, which is the perspective used in \cite{martini2021yoneda, martiniwolf2021limits, martiniwolf2022presentable}.
\end{definition}

When $\Bb$ is the $\infty$-topos $\Spc$ of spaces (a.k.a.\ $\infty$-groupoids or animae), the underlying category functor provides an equivalence
\[
\Gamma\colon \Cat(\Spc) \xrightarrow{\;\simeq\;} \Cat_{\infty};
\]
its inverse sends an $\infty$-category $\Cc$ to the functor $\Spc\catop \to \Cat_{\infty}\colon A \mapsto \Fun(A,\Cc)$. More generally, when $\Bb = \PSh(T)$ is the $\infty$-topos of presheaves on some small $\infty$-category $T$, restriction to the representable objects induces an equivalence $\Cat(\PSh(T)) \iso \Fun(T\catop,\Cat_{\infty})$, and the theory of $\Bb$-categories reduces to that of $T$-$\infty$-categories studied by Barwick et al.\ \cite{BDGNS2016ExposeI}. Since all the examples of $\infty$-topoi considered in this article will be presheaf topoi, the reader uncomfortable with the language of $\infty$-topoi may replace $\Bb$ by $\PSh(T)$ throughout.

\begin{example}[$\Bb$-groupoids]
	Every object $A \in \Bb$ can naturally be regarded as a $\Bb$-category via the Yoneda embedding $\Bb \hookrightarrow \FunR(\Bb\catop,\Spc) \subseteq \FunR(\Bb\catop,\Cat_{\infty})$. The $\Bb$-categories of this form are called \textit{$\Bb$-groupoids}.
\end{example}

\begin{example}[Base change]
	\label{ex:BaseChange}
	Any geometric morphism $f^*\colon \Aa \rightleftarrows \Bb\noloc f_*$ induces an adjunction $f^*\colon \Cat(\Aa) \rightleftarrows \Cat(\Bb)\noloc f_*$, see \cite[Section~3.3]{martini2021yoneda}, \cite[Section~2.6]{martiniwolf2021limits}. The right adjoint $f_*$ is explicitly given by precomposing a $\Bb$-category $\Cc\colon \Bb\catop \to \Cat_{\infty}$ with $f^*\colon \Aa\catop \to \Bb\catop$.
\end{example}

\begin{example}[Locally constant $\Bb$-categories]
	\label{ex:LocallyConstantBCategories}
	Applying \Cref{ex:BaseChange} to the geometric morphism $\LConst\colon \Spc \rightleftarrows \Bb \noloc \Gamma$, we obtain an adjunction $\LConst\colon \Cat_{\infty} \rightleftarrows \Cat(\Bb)\noloc \Gamma$, where $\Gamma$ is the underlying $\infty$-category functor. The $\Bb$-categories in the image of $\LConst$ are called \textit{locally constant}.\footnote{In \cite{martiniwolf2021limits} these are simply called \textit{constant}.}
\end{example}

\begin{example}[Passing to slice topoi]
	\label{ex:SliceTopoi}
	For an object $B \in \Bb$, applying \Cref{ex:BaseChange} to the étale geometric morphism $- \times B = \pi_B^*\colon \Bb \rightleftarrows \Bb_{/B}\noloc (\pi_B)_*$, we get an adjunction
	\[
	\pi_B^*\colon \Cat(\Bb) \rightleftarrows \Cat(\Bb_{/B})\noloc (\pi_B)_*.
	\]
	For $\Cc \in \Cat(\Bb)$, the $\Bb_{/B}$-category $\pi_B^*\Cc$ is given by precomposing $\Cc$ with the forgetful functor $\Bb_{/B} \to \Bb$, as this is a left adjoint to $-\times B\colon \Bb \to \Bb_{/B}$.
\end{example}

\begin{example}[Parametrized functor categories]
	\label{ex:ParametrizedFunctorCategory}
	By \cite[Proposition~3.2.11]{martini2021yoneda}, the $\infty$-category $\Cat(\Bb)$ of $\Bb$-categories is cartesian closed: for all $\Cc, \Dd \in \Cat(\Bb)$ there is an internal hom-object\footnote{Martini \cite{martini2021yoneda} denotes $\ul{\Fun}_{\Bb}(\Cc,\Dd)$ by $[\Cc,\Dd]$.} $\ul{\Fun}_\Bb(\Cc,\Dd)$, called the \textit{$\Bb$-category of $\Bb$-functors from $\Cc$ to $\Dd$}.
	We let $\Fun_{\Bb}(\Cc,\Dd)$ denote the underlying $\infty$-category of $\ul{\Fun}_\Bb(\Cc,\Dd)$. Its morphisms are called \textit{$\Bb$-transformations}.
\end{example}

\begin{definition}
	A \textit{symmetric monoidal $\Bb$-category} is a commutative monoid in the $\infty$-category $\Cat(\Bb)$, or equivalently a limit-preserving functor $\Cc\colon \Bb\catop \to \CMon(\Cat_{\infty})$. For $B \in \Bb$, we denote the tensor product and monoidal unit of $\Cc(B)$ by $- \otimes_B -$ and $\unit_B$, respectively.
\end{definition}

\subsubsection{Presentable \texorpdfstring{$\Bb$}{B}-categories}

We give a brief overview of the theory of presentable $\Bb$-categories, developed by Martini and Wolf \cite{martiniwolf2022presentable}.

\begin{definition}
	\label{def:BPresentableBCategory}
	A $\Bb$-category $\Cc\colon \Bb\catop \to \Cat_{\infty}$ is called \textit{fiberwise presentable} if it factors (necessarily uniquely) through the subcategory $\PrL \subseteq \Cat_{\infty}$ of presentable $\infty$-categories and colimit preserving functors. We say that $\Cc$ is \textit{presentable} if it is fiberwise presentable and additionally satisfies the following two conditions:
	\begin{enumerate}[(1)]
		\item (Left adjoints) For every morphism $f\colon A \to B$ in $\Bb$, the restriction functor $f^*\colon \Cc(B) \to \Cc(A)$ has a left adjoint $f_!\colon \Cc(A) \to \Cc(B)$;
		\item (Left base change) For every pullback square
		\begin{equation*}
			\begin{tikzcd}
				A' \ar[r, "\alpha" above] \ar[d, "f'" left] \drar[pullback] & A \ar[d, "f"]   \\
				B' \ar[r, "\beta" below] & B
			\end{tikzcd}
		\end{equation*}
		in $\Bb$, the Beck-Chevalley transformation $f'_! \alpha^* \Rightarrow \beta^*f_!$ is an equivalence.
	\end{enumerate}
	If $\Cc$ and $\Dd$ are presentable $\Bb$-categories, we say that a $\Bb$-functor $F\colon \Cc \to \Dd$ \textit{preserves (parametrized) colimits} if the following two properties are satisfied:
	\begin{enumerate}[(1)]
		\item For every object $B \in \Bb$, the functor $F(B)\colon \Cc(B) \to \Dd(B)$ preserves small colimits;
		\item For every morphism $f\colon A \to B$ in $\Bb$, the Beck-Chevalley transformation $f_! \circ F(A) \implies F(B) \circ f_!$ is an equivalence.
	\end{enumerate}
	By \cite[Theorem~A]{martiniwolf2022presentable} and \cite[Proposition 4.2.3]{martiniwolf2021limits}, these definitions agree with the definitions of Martini and Wolf \cite{martiniwolf2022presentable}.
\end{definition}

\begin{remark}
	For a presentable $\Bb$-category, the restriction functor $f^*\colon \Cc(B) \to \Cc(A)$ is assumed to admit a right adjoint $f_*\colon \Cc(A) \to \Cc(B)$ for every morphism $f\colon A \to B$. By passing to right adjoints in condition (2) in \Cref{def:BPresentableBCategory}, we see that also the other Beck-Chevalley transformation $\beta_*f^* \Rightarrow {f'}^*\alpha_*$ is an equivalence. It follows that any presentable $\Bb$-category admits all (parametrized) limits and colimits.
\end{remark}

The next definition introduces the presentable $\Bb$-category $\Omega_{\Bb}$, the $\Bb$-parametrized analogue of the $\infty$-category of spaces.

\begin{definition}
	\label{def:UniverseOfGroupoids}
	The target functor $d_0\colon \Bb^{\Delta^1} \to \Bb$ is a cartesian fibration, and by \cite[Theorem 6.1.3.9]{lurie2009HTT} is classified by a limit-preserving functor
	\[
	\Omega_{\Bb}\colon \Bb\catop \to \PrL, \qquad B \mapsto \Bb_{/B}, \qquad (f\colon A \to B) \mapsto (f^*\colon \Bb_{/B} \to \Bb_{/A}).
	\]
	The pullback functors $f^*\colon \Bb_{/B} \to \Bb_{/A}$ have left adjoints $f \circ -\colon \Bb_{/A} \to \Bb_{/B}$ which satisfy the Beck-Chevalley condition, so $\Omega_{\Bb}$ is a presentable $\Bb$-category. We call $\Omega_{\Bb}$ the \textit{$\Bb$-category of $\Bb$-groupoids}.\footnote{It is called the \textit{universe for groupoids} by Martini \cite{martini2021yoneda}.}
\end{definition}

\begin{remark*}
    When $\Bb = \PSh(T)$ is a presheaf topos, the $\Bb$-category $\Omega_{\Bb}$ corresponds to the $T$-category $\ul{\Spc}_T$ from \cite{shah2021parametrized} and \cite[Example~2.1.11]{CLL_Global}.
\end{remark*}

\begin{definition}
	\label{def:PrL(B)}
	Let $\PrL(\Bb)$ denote the (non-full) subcategory of $\Cat(\Bb)$ spanned by the presentable $\Bb$-categories and the colimit preserving $\Bb$-functors. For presentable $\Bb$-categories $\Cc$ and $\Dd$, we let $\FunL_{\Bb}(\Cc,\Dd)$ denote the subcategory of $\Fun_{\Bb}(\Cc,\Dd)$ spanned by those $\Bb$-functors $\Cc \to \Dd$ that preserve colimits.
\end{definition}

Recall from \cite[Definition~2.7.2]{martiniwolf2021limits} that the $\infty$-categories $\Cat(\Bb_{/B})$ assemble into a (very large) $\Bb$-category $\Cat_{\Bb}$. The subcategories $\PrL(\Bb_{/B}) \subseteq \Cat(\Bb_{/B})$ assemble into a parametrized subcategory $\PrL_{\Bb} \subseteq \Cat_{\Bb}$, see \cite[Definition~6.4.1]{martiniwolf2022presentable}.

\begin{proposition}[{\cite[Proposition 4.5.1]{martiniwolf2021limits}, \cite[Definition~6.4.1, Remark~6.4.2, Corollary~6.4.11]{martiniwolf2022presentable}}]
	\label{prop:BCategory_of_Presentable_Bcategories}
	The $\Bb$-categories $\Cat_{\Bb}$ and $\PrL_{\Bb}$ are complete and cocomplete, and the inclusion $\PrL_{\Bb} \hookrightarrow \Cat_{\Bb}$ preserves limits. \qed
\end{proposition}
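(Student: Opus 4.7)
The plan is to reduce each assertion to structural facts about the underlying $\infty$-categories $\Cat(\Bb_{/B})$ and $\PrL(\Bb_{/B})$ and their behavior under base change, and then use the framework of Martini--Wolf to upgrade these fiberwise statements to parametrized ones.

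For $\Cat_{\Bb}$, the starting point is that $\Cat(\Bb_{/B})$ is presentable for every $B \in \Bb$, so it admits all small limits and colimits fiberwise. To see that these assemble into parametrized $\Bb$-(co)limits, one checks that each base change functor $f^* \colon \Cat(\Bb_{/B}) \to \Cat(\Bb_{/A})$ admits both adjoints: the right adjoint is precomposition along $(f \circ -) \colon \Bb_{/A} \to \Bb_{/B}$ by \Cref{ex:SliceTopoi}, and the left adjoint then exists by the adjoint functor theorem since $f^*$ preserves all small limits and colimits. The Beck-Chevalley conditions follow formally from those in the $\infty$-topos $\Bb$.

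For $\PrL_{\Bb}$, the main ingredient is the parametrized analogue of Lurie's theorem that limits in $\PrL$ are computed in $\Cat_{\infty}$: given a diagram of presentable $\Bb_{/B}$-categories with colimit-preserving transition functors, its limit in $\Cat(\Bb_{/B})$ is again presentable and the projection functors preserve colimits. This yields both completeness of $\PrL_{\Bb}$ and limit-preservation of the inclusion $\PrL_{\Bb} \hookrightarrow \Cat_{\Bb}$ in one stroke. Cocompleteness is then obtained by dualizing: passage to right adjoints identifies $\PrL$ with $\PrR$, so colimits in $\PrL$ correspond to limits of diagrams of right adjoints, which exist by the previous paragraph applied to the underlying $\Bb$-categories.

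The main obstacle is not conceptual but bookkeeping: one must verify that the fiberwise (co)limits in $\Cat(\Bb_{/B})$ and $\PrL(\Bb_{/B})$ genuinely assemble into parametrized $\Bb$-(co)limits in the sense of \Cref{def:BPresentableBCategory}, which requires coherently checking Beck-Chevalley conditions and existence of the various adjoints across all base changes. This coherence is precisely the technical content worked out by Martini and Wolf, which makes invoking their theorems the most economical path rather than reproducing the verifications by hand.
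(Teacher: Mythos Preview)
The paper gives no proof of this proposition: it is stated with a trailing \texttt{\textbackslash qed} and attributed entirely to the cited references in Martini--Wolf. Your proposal is therefore not so much an alternative proof as an informal outline of what those references contain, and you yourself acknowledge this in the final paragraph.

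As a sketch of the underlying ideas, your outline is broadly correct but glosses over some size issues. In particular, the claim that $\Cat(\Bb_{/B})$ is presentable needs care: by the paper's conventions $\Cat_\infty$ denotes the very large $\infty$-category of large $\infty$-categories, so $\Cat(\Bb_{/B})$ is only presentable relative to a larger universe, and the adjoint functor theorem must be applied at that level. Your argument for the left adjoint to $f^*$ via the adjoint functor theorem is also somewhat circular in spirit, since verifying that $f^*$ preserves limits and colimits and is accessible is already most of the work. In practice Martini--Wolf construct these adjoints more directly via Kan extensions along the relevant functors between slice topoi, and the Beck--Chevalley conditions require genuine verification rather than following ``formally.'' Since the paper treats this as a black-box citation, the cleanest approach is simply to do the same.
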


\begin{corollary}
	\label{cor:Base_Change_Adjunction_Presentable}
	For each $B \in \Bb$ the adjunction $\pi_B^*\colon \Cat(\Bb) \rightleftarrows \Cat(\Bb_{/B})\noloc (\pi_B)_*$ from \Cref{ex:SliceTopoi} restricts to an adjunction $\pi_B^*\colon \PrL(\Bb) \rightleftarrows \xPrLB{B}\noloc (\pi_B)_*$.
\end{corollary}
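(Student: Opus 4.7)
The plan is to verify that the adjunction $\pi_B^* \dashv (\pi_B)_*$ on $\Cat$ restricts to one on $\PrL$ by checking three conditions: (i) $\pi_B^*$ preserves presentability and colimit-preserving morphisms, (ii) the same holds for $(\pi_B)_*$, and (iii) the unit and counit of the ambient adjunction remain colimit-preserving on the presentable parts. Together these imply that the hom-equivalence of the ambient adjunction restricts to the desired one, producing the restricted adjunction.

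Condition (i) is essentially already contained in \Cref{prop:BCategory_of_Presentable_Bcategories}: the presentable slice categories $\PrL(\Bb_{/B})$ assemble into a parametrized subcategory $\PrL_{\Bb} \subseteq \Cat_{\Bb}$, so the inclusion is a $\Bb$-functor and thus commutes with all restriction functors. Instantiating at the map $B \to 1$ in $\Bb$ identifies the restriction functor $\PrL_{\Bb}(1) \to \PrL_{\Bb}(B)$ with the restriction of $\pi_B^*\colon \Cat(\Bb) \to \Cat(\Bb_{/B})$ from \Cref{ex:SliceTopoi}, which handles both the preservation of presentable objects and the preservation of colimit-preserving morphisms.

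Conditions (ii) and (iii) amount to a direct unpacking of the explicit formula from \Cref{ex:BaseChange} applied to the geometric morphism $\pi_B$: for $\Dd \in \Cat(\Bb_{/B})$, the $\Bb$-category $(\pi_B)_*\Dd$ has fibers $((\pi_B)_*\Dd)(A) = \Dd(A \times B \to B)$, and its restriction along $f\colon A' \to A$ in $\Bb$ coincides with the restriction of $\Dd$ along $f \times \id_B$ in $\Bb_{/B}$. Fiberwise presentability is immediate; existence of left adjoints to restriction and the Beck--Chevalley condition transfer from $\Dd$, using that $-\times B$ sends pullback squares in $\Bb$ to pullback squares in $\Bb_{/B}$. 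The analogous fiberwise argument shows that colimit-preserving $\Bb_{/B}$-functors are sent to colimit-preserving $\Bb$-functors. Finally, the unit $\eta_{\Cc}\colon \Cc \to (\pi_B)_*\pi_B^*\Cc$ is fiberwise the restriction $\pr_A^*\colon \Cc(A) \to \Cc(A \times B)$, and the counit $\epsilon_{\Dd}\colon \pi_B^*(\pi_B)_*\Dd \to \Dd$ is fiberwise the restriction along the graph $X \to X \times B$ of the structure map $X \to B$; colimit-preservation of each reduces to a routine Beck--Chevalley check for an explicit pullback square.

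The main---and essentially only nontrivial---input is \Cref{prop:BCategory_of_Presentable_Bcategories}, to which the substantive content has been offloaded; the remaining work is bookkeeping with the formulas of \Cref{ex:SliceTopoi}, and no real obstacle arises.
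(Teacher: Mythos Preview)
Your proof is correct. The paper takes a shorter, more abstract route: for condition (i) it argues exactly as you do, but for (ii) and (iii) it does not unpack the formulas at all. Instead it invokes the second clause of \Cref{prop:BCategory_of_Presentable_Bcategories}, that the inclusion $\PrL_{\Bb} \hookrightarrow \Cat_{\Bb}$ \emph{preserves parametrized limits}. Since both $\Bb$-categories are complete, the restriction functor $\pi_B^*$ admits a right adjoint on each side, and limit-preservation of the inclusion forces these right adjoints to match up under it; hence the ambient $(\pi_B)_*$ already lands in $\PrL$, and the adjunction restricts with no further checking. Your hands-on verification has the advantage of being entirely self-contained---it uses only the pointwise characterization of presentable $\Bb$-categories and colimit-preserving $\Bb$-functors, and does not need to know that $\PrL_{\Bb}$ is itself complete---whereas the paper's argument is a one-liner once the structural content has been absorbed into \Cref{prop:BCategory_of_Presentable_Bcategories}.
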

\begin{proof}
    The functor $\pi_B^*\colon \Cat(\Bb) \to \Cat(\Bb_{/B})$ restricts to presentable categories since $\PrL_{\Bb} \subseteq \Cat_{\Bb}$ is a parametrized subcategory. The adjunction $\pi_B^* \dashv (\pi_B)_*$ restricts to presentable categories since the parametrized subcategory $\PrL_{\Bb} \subseteq \Cat_{\Bb}$ is closed under (parametrized) limits.
\end{proof}

\subsubsection{Tensor products of presentable \texorpdfstring{$\Bb$}{B}-categories}

Given two presentable $\Bb$-categories $\Cc$ and $\Dd$, there exists a presentable $\Bb$-category $\Cc \otimes \Dd$ called the \textit{tensor product} of $\Cc$ and $\Dd$, which is characterized by the property that colimit-preserving $\Bb$-functors into a third presentable $\Bb$-category $\Ee$ correspond to $\Bb$-functors $\Cc \times \Dd \to \Ee$ which preserve colimits in both variables, see \cite[Section~8.2]{martiniwolf2022presentable}. The tensor product equips $\PrL(\Bb)$ with the structure of a symmetric monoidal $\infty$-category $\PrL(\Bb)^{\otimes}$ whose monoidal unit is $\Omega_{\Bb}$. In fact, in \cite[Proposition~8.2.9]{martiniwolf2022presentable} it is shown that a parametrized version of this construction equips the (very large) $\Bb$-category $\PrL_{\Bb}$ of presentable $\Bb$-categories with the structure of a symmetric monoidal $\Bb$-category. In particular, the restriction functor $\pi_B^*\colon \PrL(\Bb) \to \PrL(\Bb_{/B})$ is symmetric monoidal for every object $B \in \Bb$.

By \cite[Proposition~8.2.11]{martiniwolf2022presentable}, a formula for the tensor product of two presentable $\Bb$-categories $\Cc$ and $\Dd$ is given by $\Cc \otimes \Dd \simeq \ul{\Fun}^{\mathrm{R}}_{\Bb}(\Cc\catop,\Dd)$, the full sub-$\Bb$-category of $\ul{\Fun}_{\Bb}(\Cc\catop,\Dd)$ spanned by the limit-preserving $\Bb$-functors $\Cc\catop \to \Dd$. For a third presentable $\Bb$-category $\Ee$, there is an equivalence
\[
\FunL_{\Bb}(\Cc \otimes \Dd,\Ee) \simeq \FunL_{\Bb}(\Cc, \ul{\Fun}^{\mathrm{L}}_{\Bb}(\Dd,\Ee)),
\]
and it follows in particular that the $\Bb$-functor $\Cc \otimes - \colon \PrL_{\Bb} \to \PrL_{\Bb}$ preserves colimits.

\begin{definition}[{\cite[Definition~8.2.10]{martiniwolf2022presentable}}]
	\label{def:Pres_Sym_Mon_BCategory}
	A \textit{presentably symmetric monoidal $\Bb$-category} is a commutative algebra object in the symmetric monoidal $\infty$-category $\PrL(\Bb)^{\otimes}$.
\end{definition}

As $\PrL(\Bb)^{\otimes}$ is a subcategory of $\Cat(\Bb)^{\times}$, a symmetric monoidal $\Bb$-category $\Cc$ is presentably symmetric monoidal if and only if $\Cc$ is presentable and the tensor product $\Bb$-functor $- \otimes -\colon \Cc \times \Cc \to \Cc$ preserves colimits in both variables. Unwinding definitions, this boils down to the following two non-parametrized conditions:
\begin{enumerate}[(1)]
	\item (Fiberwise presentably symmetric monoidal) For every $B \in \Bb$, the tensor product functor $- \otimes_B -\colon \Cc(B) \times \Cc(B) \to \Cc(B)$ of $\Cc(B)$ preserves small colimits in both variables.
	\item \label{it:LeftProjectionFormula} (Left projection formula) For every morphism $f\colon A \to B$ in $\Bb$ and all objects $X \in \Cc(B)$ and $Y \in \Cc(A)$, the exchange morphism $f_!(f^*(X) \otimes_A Y) \to X \otimes_B f_!(Y)$ is an equivalence.
\end{enumerate}

In particular, the data of a presentably symmetric monoidal $\Bb$-category is the same as that of a limit-preserving functor $\Cc\colon \Bb\catop \to \CAlg(\PrL)$ such that all the symmetric monoidal restriction functors $f^*\colon \Cc(B) \to \Cc(A)$ admit left adjoints that satisfy base change and satisfy the left projection formula. Such structure has also been called a \textit{Wirthmüller context}\footnote{In \cite{AndoBlumbergGepner2018Parametrized}, the Beck-Chevalley conditions and the left projection formula were not taken as part of the axioms.} \cite[Definition~6.7, Proposition~6.8]{AndoBlumbergGepner2018Parametrized} or a \textit{presentable pullback formalism} \cite[Definition~4.5]{DrewGallauer2022Universal}.

\subsubsection{Embedding \texorpdfstring{$\Bb$}{B}-modules into presentable \texorpdfstring{$\Bb$}{B}-categories}
The $\infty$-topos $\Bb$ comes equipped with a presentably symmetric monoidal structure given by cartesian product, and thus we may consider left modules over it in $\PrL$, which we will refer to as \textit{$\Bb$-modules}. It was shown in \cite[Section~8.3]{martiniwolf2022presentable} that the $\Bb$-modules embed fully faithfully into presentable $\Bb$-categories:

\begin{proposition}[{\cite[Proposition~8.3.2, Lemma~8.3.3, Proposition~8.3.5, Proposition~8.3.6]{martiniwolf2022presentable}}]
	\label{prop:Embedding_BModules_Into_Presentable_BCategories}
	There is a symmetric monoidal fully faithful functor
	\[
	- \otimes_{\Bb} \Omega_{\Bb}\colon \Mod_{\Bb}(\PrL)
	\lhook\joinrel\xrightarrow{\;\;\;\;\;\;} 
	\PrL(\Bb)
	\]
	which admits a right adjoint $\Gamma^{\mathrm{lin}}\colon \PrL(\Bb) \to \Mod_{\Bb}(\PrL)$ whose composition with the forgetful functor $\Mod_{\Bb}(\PrL) \to \PrL$ is the global section functor $\Gamma\colon \PrL(\Bb) \to \PrL$. \qed
\end{proposition}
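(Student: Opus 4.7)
My plan is to build the adjoint pair starting from the right adjoint, verify fully faithfulness via a direct computation of the unit, and then deduce symmetric monoidality from the module-theoretic interpretation.

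First, I would construct $\Gamma^{\mathrm{lin}}$. Since $\Omega_{\Bb}$ is the monoidal unit of $\PrL(\Bb)^{\otimes}$ and the global sections functor $\Gamma = \ev_1 \colon \PrL(\Bb) \to \PrL$ is canonically lax symmetric monoidal (being evaluation of limit-preserving functors at the terminal object), applying $\Gamma$ to the tautological $\Omega_{\Bb}$-module structure on any $\Cc \in \PrL(\Bb)$ exhibits $\Gamma(\Cc)$ as a module over $\Gamma(\Omega_{\Bb}) \simeq \Bb_{/1} \simeq \Bb$. This defines the desired lift $\Gamma^{\mathrm{lin}}$. Both source and target are presentable (the latter by \Cref{prop:BCategory_of_Presentable_Bcategories}), and $\Gamma^{\mathrm{lin}}$ preserves limits because limits in $\PrL(\Bb) \subseteq \Fun(\Bb\catop,\PrL)$ are computed pointwise and the forgetful functor $\Mod_{\Bb}(\PrL) \to \PrL$ creates them. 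The adjoint functor theorem then supplies a left adjoint $L$.

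Next, I would identify $L$ with the formula $M \mapsto M \otimes_{\Bb} \Omega_{\Bb}$ and verify its symmetric monoidality. The content is that the adjunction takes the form of a base-change adjunction along a morphism of commutative algebras in $\PrL$: namely, the unit of the adjunction at $\Bb \in \Mod_{\Bb}(\PrL)$ picks out a map $\Bb \to \Gamma^{\mathrm{lin}}(L(\Bb))$ whose image is precisely $\Omega_{\Bb}$ viewed as a commutative $\Bb$-algebra in $\PrL$. General module-theoretic formalism then yields $L(M) \simeq M \otimes_{\Bb} L(\Bb) \simeq M \otimes_{\Bb} \Omega_{\Bb}$ and the fact that base change along a morphism of commutative algebras is symmetric monoidal.

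Finally, fully faithfulness amounts to showing that the unit $\eta_M \colon M \to \Gamma^{\mathrm{lin}}(M \otimes_{\Bb} \Omega_{\Bb})$ is an equivalence in $\Mod_{\Bb}(\PrL)$. Since the forgetful functor $\Mod_{\Bb}(\PrL) \to \PrL$ is conservative, this reduces to the computation
\[
\Gamma(M \otimes_{\Bb} \Omega_{\Bb}) \;\simeq\; M \otimes_{\Bb} \Gamma(\Omega_{\Bb}) \;\simeq\; M \otimes_{\Bb} \Bb \;\simeq\; M
\]
in $\PrL$. The first equivalence is the main obstacle I anticipate: $\Gamma$ is only \emph{lax} symmetric monoidal, and the relative tensor product is computed as a bar-construction colimit in $\PrL(\Bb)$, so this commutation is not formal. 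I would resolve it by presenting $M \otimes_{\Bb} \Omega_{\Bb}$ as the geometric realization of the standard two-sided bar complex $M \otimes \Bb^{\otimes \bullet} \otimes \Omega_{\Bb}$ and verifying that $\Gamma$ preserves this particular geometric realization; the key point is that in each simplicial degree the only $\PrL(\Bb)$-valued factor is $\Omega_{\Bb}$ itself, whose value at $1 \in \Bb$ is precisely $\Bb$, so the $\Gamma$ of each level agrees with $M \otimes \Bb^{\otimes \bullet} \otimes \Bb$ and the realization computes $M$.
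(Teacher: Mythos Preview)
The paper does not supply its own proof; the proposition is stated with a \qed and attributed to Martini--Wolf. What the paper does record, in the paragraph following the statement, is the explicit pointwise construction: $(\Dd \otimes_{\Bb} \Omega_{\Bb})(B) = \Dd \otimes_{\Bb} \Bb_{/B}$ as a relative tensor product in $\PrL$, with functoriality obtained from the right adjoints of the composition functors $\Bb_{/A} \to \Bb_{/B}$. With this formula in hand, full faithfulness is the one-line observation that evaluating at the terminal object gives $\Dd \otimes_{\Bb} \Bb_{/1} \simeq \Dd \otimes_{\Bb} \Bb \simeq \Dd$.

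Your route---construct $\Gamma^{\mathrm{lin}}$ first, extract the left adjoint via the adjoint functor theorem, and then identify it---has a genuine gap at the AFT step. \Cref{prop:BCategory_of_Presentable_Bcategories} asserts that the $\Bb$-category $\PrL_{\Bb}$ is complete and cocomplete, which is a parametrized statement; it does not say that the ordinary $\infty$-category $\PrL(\Bb)$ is presentable, and indeed $\PrL(\Bb)$, like $\PrL$ itself, is very large and not presentable, so the standard adjoint functor theorem does not apply. A smaller imprecision occurs in the phrase ``a morphism of commutative algebras in $\PrL$'': there is no such morphism, since $\Omega_{\Bb}$ is the unit of $\PrL(\Bb)$, not an object of $\PrL$; to make the base-change picture precise you would have to pass through the counit $\LConst(\Bb) \to \Omega_{\Bb}$ inside $\PrL(\Bb)$. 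Both issues dissolve if you instead construct the left adjoint directly via the pointwise formula and verify the adjunction by hand, as Martini--Wolf do; your bar-construction argument for the unit then collapses to the computation above, with no need to argue separately that $\Gamma$ preserves the geometric realization.
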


Given a $\Bb$-module $\Dd$, the presentable $\Bb$-category $\Dd \otimes_{\Bb} \Omega_{\Bb}$ is given at an object $B \in \Bb$ by the relative tensor product $\Dd \otimes_{\Bb} \Bb_{/B}$ in $\PrL$. The functoriality of this expression in $B$ is informally described as follows: given a morphism $f\colon A \to B$ in $\Bb$, one considers the composition functor $f_!\colon \Bb_{/A} \to \Bb_{/B}$ as a $\Bb$-linear functor in $\PrL$, tensors it with $\Dd$ over $\Bb$ to get a map $\Dd \otimes_{\Bb} \Bb_{/A} \to \Dd \otimes_{\Bb} \Bb_{/B}$, and then passes to its right adjoint. We refer to \cite[Section~8.3]{martiniwolf2022presentable} for a precise construction.

It follows directly from \Cref{prop:Embedding_BModules_Into_Presentable_BCategories} that every commutative $\Bb$-algebra $\Dd$ in $\PrL$ gives rise to a presentably symmetric monoidal $\Bb$-category $\Dd \otimes_{\Bb} \Omega_{\Bb}$:

\begin{corollary}
	\label{cor:Embedding_BAlgebras_Into_Presentably_Symmetric_Monoidal_BCategories}
	The adjunction from \Cref{prop:Embedding_BModules_Into_Presentable_BCategories} induces an adjunction
	\[
	\begin{tikzcd}
		- \otimes_{\Bb} \Omega_{\Bb}\colon \CAlg_{\Bb}(\PrL) \rar[shift left, hookrightarrow] & \CAlg(\PrL(\Bb))\noloc \Gamma^{\mathrm{lin}} \lar[shift left].
	\end{tikzcd} \qednow
	\]
\end{corollary}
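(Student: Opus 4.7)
The plan is to apply a standard principle in higher algebra: for any adjunction $F\colon \Cc \rightleftarrows \Dd\noloc G$ between symmetric monoidal $\infty$-categories in which $F$ is strong symmetric monoidal, the right adjoint $G$ inherits a canonical lax symmetric monoidal structure making the unit and counit monoidal natural transformations, and the induced functors on commutative algebras form an adjunction $\CAlg(F)\colon \CAlg(\Cc) \rightleftarrows \CAlg(\Dd)\noloc \CAlg(G)$. This is a formal consequence of the universal property of $\CAlg(-)$ and can be found, for example, in Lurie's \emph{Higher Algebra}, Section~7.3.

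First I would recall from \Cref{prop:Embedding_BModules_Into_Presentable_BCategories} that the left adjoint $-\otimes_{\Bb}\Omega_{\Bb}\colon \Mod_{\Bb}(\PrL) \to \PrL(\Bb)$ is \emph{strong} symmetric monoidal (this is the content of the words ``symmetric monoidal fully faithful functor'' in that proposition). Applying the above principle to the adjunction of \Cref{prop:Embedding_BModules_Into_Presentable_BCategories} then produces an induced adjunction
\[
\CAlg(-\otimes_{\Bb}\Omega_{\Bb})\colon \CAlg(\Mod_{\Bb}(\PrL)) \rightleftarrows \CAlg(\PrL(\Bb))\noloc \CAlg(\Gamma^{\mathrm{lin}}).
\]

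It remains to identify the left-hand side with $\CAlg_{\Bb}(\PrL)$: by definition a commutative $\Bb$-algebra in $\PrL$ is precisely a commutative algebra object in the symmetric monoidal $\infty$-category $\Mod_{\Bb}(\PrL)$ of $\Bb$-modules in $\PrL$, so the identification $\CAlg_{\Bb}(\PrL) \simeq \CAlg(\Mod_{\Bb}(\PrL))$ is tautological. I do not expect any genuine obstacle here; the corollary is a formal consequence of \Cref{prop:Embedding_BModules_Into_Presentable_BCategories} combined with the general compatibility of $\CAlg(-)$ with symmetric monoidal adjunctions. The only bookkeeping point worth emphasizing is that the strong (not merely lax) symmetric monoidality of $-\otimes_{\Bb}\Omega_{\Bb}$ is what ensures that its right adjoint $\Gamma^{\mathrm{lin}}$ acquires a canonical lax monoidal structure, allowing one to apply $\CAlg(-)$ to the entire adjunction.
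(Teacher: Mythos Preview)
Your proposal is correct and matches the paper's approach: the paper gives no proof at all (the \qednow indicates the corollary is considered an immediate consequence of \Cref{prop:Embedding_BModules_Into_Presentable_BCategories}), and your argument is exactly the standard justification one would supply---a strong symmetric monoidal left adjoint induces a lax symmetric monoidal right adjoint, and applying $\CAlg(-)$ yields the desired adjunction on commutative algebras.
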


We will also need a $\Cc$-linear version of this result:
\begin{lemma}
	\label{lem:AdjunctionCLinearCategories}
	For every presentably symmetric monoidal $\Bb$-category $\Cc$, the adjunction from \Cref{prop:Embedding_BModules_Into_Presentable_BCategories} induces an adjunction
	\[
	- \otimes_{\Gamma(\Cc)} \Cc \colon \Mod_{\Gamma(\Cc)}(\PrL) \rightleftarrows \Mod_{\Cc}(\PrL(\Bb))\noloc \Gamma^{\Cc}.
	\]
	Furthermore:
	\begin{enumerate}[(1)]
		\item the left adjoint $- \otimes_{\Gamma(\Cc)} \Cc$ is fully faithful and symmetric monoidal;
		\item the right adjoint $\Gamma^{\Cc}$ preserves colimits and satisfies the projection formula: the canonical map
		\[
		\Dd \otimes_{\Gamma(\Cc)} \Gamma^{\Cc}(\Ee) \to \Gamma^{\Cc}((\Dd \otimes_{\Gamma(\Cc)} \Cc) \otimes_{\Cc} \Ee)
		\]
		is an equivalence for every $\Gamma(\Cc)$-module $\Dd$ and $\Cc$-module $\Ee$.
	\end{enumerate}
\end{lemma}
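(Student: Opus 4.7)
The plan is to bootstrap from the symmetric monoidal adjunction of \Cref{prop:Embedding_BModules_Into_Presentable_BCategories} by relativizing it to modules over $\Cc$. Write $L := -\otimes_{\Bb} \Omega_{\Bb}$, so that $L \dashv \Gamma^{\mathrm{lin}}$ is a symmetric monoidal adjunction with fully faithful symmetric monoidal left adjoint. Since $L$ is symmetric monoidal, the general formalism of modules under a symmetric monoidal adjunction produces, for every $\Cc \in \CAlg(\PrL(\Bb))$, an induced adjunction
\[
\Mod_{\Gamma^{\mathrm{lin}}(\Cc)}(\Mod_{\Bb}(\PrL)) \rightleftarrows \Mod_{\Cc}(\PrL(\Bb))
\]
whose left adjoint first applies $L$ and then extends scalars along the counit $L\Gamma^{\mathrm{lin}}(\Cc) \to \Cc$, and whose right adjoint sends $\Ee \in \Mod_{\Cc}(\PrL(\Bb))$ to $\Gamma^{\mathrm{lin}}(\Ee)$ with its induced $\Gamma^{\mathrm{lin}}(\Cc)$-module structure. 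The identification $\Mod_{\Gamma^{\mathrm{lin}}(\Cc)}(\Mod_{\Bb}(\PrL)) \simeq \Mod_{\Gamma(\Cc)}(\PrL)$ is formal, as $\Bb$ is the unit of $\Mod_{\Bb}(\PrL)$ and hence any $\Gamma(\Cc)$-module in $\PrL$ automatically carries a compatible $\Bb$-action via the unit map $\Bb \to \Gamma(\Cc)$.

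Having set this up, I would first prove (2). Since $-\otimes \Cc \colon \PrL(\Bb) \to \Mod_{\Cc}(\PrL(\Bb))$ preserves colimits, the forgetful $\Mod_{\Cc}(\PrL(\Bb)) \to \PrL(\Bb)$ creates them, and likewise on the $\Gamma(\Cc)$-module side. Consequently both colimit preservation and the projection formula for $\Gamma^{\Cc}$ reduce to the analogous statements for the unrelativized right adjoint $\Gamma^{\mathrm{lin}}$. The unrelativized projection formula $\Dd \otimes_{\Bb} \Gamma^{\mathrm{lin}}(\Ee) \to \Gamma^{\mathrm{lin}}((\Dd \otimes_{\Bb} \Omega_{\Bb}) \otimes \Ee)$ is immediately an equivalence when $\Ee = \Omega_{\Bb}$ (both sides reduce to $\Dd$, using $\Gamma^{\mathrm{lin}}(\Omega_{\Bb}) \simeq \Bb$ from full faithfulness of $L$), and I expect to extend it to arbitrary $\Ee$ by observing that both sides are colimit-preserving in $\Ee$ and that $\PrL(\Bb)$ is generated under colimits by objects in the essential image of $L$. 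Colimit preservation of $\Gamma^{\mathrm{lin}}$ follows by an analogous reduction to generators.

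Finally, for (1), symmetric monoidality of $-\otimes_{\Gamma(\Cc)} \Cc$ is immediate from its construction as a composite of symmetric monoidal functors. Full faithfulness reduces to checking that the unit $M \to \Gamma^{\Cc}(M \otimes_{\Gamma(\Cc)} \Cc)$ is an equivalence for every $M \in \Mod_{\Gamma(\Cc)}(\PrL)$. The plan is to exploit the bar resolution: $M \otimes_{\Gamma(\Cc)} \Cc$ is the geometric realization of the simplicial object $M \otimes_{\Bb} \Gamma(\Cc)^{\otimes_{\Bb} \bullet} \otimes_{\Bb} \Cc$ in $\Mod_{\Cc}(\PrL(\Bb))$. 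Applying $\Gamma^{\Cc}$ commutes with this colimit by (2), and the projection formula identifies the result simplex-by-simplex with the bar resolution of $M$ over $\Gamma(\Cc)$ in $\PrL$, whose geometric realization is $M \otimes_{\Gamma(\Cc)} \Gamma(\Cc) \simeq M$. The main obstacle in executing this plan is securing the unrelativized projection formula for $L \dashv \Gamma^{\mathrm{lin}}$; once that is in hand, the module-theoretic upgrade and the bar-resolution argument are essentially formal.
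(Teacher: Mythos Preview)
Your construction of the adjunction and your overall strategy (derive full faithfulness from the projection formula) match the paper's. The gap is in your argument for part~(2).

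For the projection formula you propose to vary $\Ee$: check it for $\Ee$ in the essential image of $L$, then extend using that both sides preserve colimits in $\Ee$ and that $\PrL(\Bb)$ is generated under colimits by the image of $L$. Neither ingredient is secured. The right-hand side $\Gamma^{\mathrm{lin}}(L(\Dd)\otimes\Ee)$ is colimit-preserving in $\Ee$ only once you already know $\Gamma^{\mathrm{lin}}$ preserves colimits, and your proposed argument for the latter (``analogous reduction to generators'') is not a proof: colimit preservation of a functor is a property of \emph{diagrams}, not of objects, so it cannot be checked on a generating class in this way. More seriously, the claim that $\PrL(\Bb)$ is generated under colimits by the essential image of $L$ is unjustified, and it is not clear that it holds for a general $\infty$-topos $\Bb$.

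The paper circumvents both problems. Colimit preservation of $\Gamma^{\Cc}$ is proved directly and independently: since colimits in module categories are computed underlying, it suffices that $\Gamma\colon \PrL(\Bb)\to\PrL$ preserves colimits, and this follows by passing to opposites via $\PrL(\Bb)\simeq\PrR(\Bb)\catop$ and noting that $\Gamma = \ev_1\colon \Cat(\Bb)\to\Cat_\infty$ preserves limits. For the projection formula the paper then varies $\Dd$ rather than $\Ee$: the formula holds automatically whenever $\Dd$ is dualizable, so since $\Gamma^{\Cc}$ now preserves colimits it suffices that $\Mod_{\Gamma(\Cc)}(\PrL)$ is generated under colimits by dualizable objects. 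Via free modules this reduces to the fact that $\PrL$ itself is generated under colimits by dualizables, for which the paper cites \cite[Lemma~7.14]{RagimovSchlank}. Full faithfulness is then the special case $\Ee=\Cc$ of the projection formula --- no bar resolution needed.
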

\begin{proof}
	The adjunction is obtained as a concatenation of the following two adjunctions:
	\[
	\begin{tikzcd}
		\Mod_{\Gamma(\Cc)}(\PrL) \rar[shift left, hookrightarrow, "- \otimes_{\Bb} \Omega_{\Bb}"] & \Mod_{\Gamma(\Cc) \otimes_{\Bb} \Omega_{\Bb}}(\PrL(\Bb))\lar[shift left, "\Gamma^{\mathrm{lin}}"] \rar[shift left, "-\otimes_{\Gamma(\Cc) \otimes_{\Bb} \Omega_{\Bb}} \Cc"] & \Mod_{\Cc}(\PrL(\Bb) \lar[shift left, "\fgt"].
	\end{tikzcd}
	\]
	The first adjunction is an instance of \cite[Example~7.3.2.8]{lurie2016HA} while the second adjunction is the base change adjunction of the counit map $\Gamma(\Cc) \otimes_{\Bb} \Omega_{\Bb} \to \Cc$. As both left adjoints are symmetric monoidal, so is their composite $- \otimes_{\Cc} \Gamma(\Cc)$. 
	
	To show that the functor $\Gamma^{\Cc}\colon \Mod_{\Cc}(\PrL(\Bb)) \to \Mod_{\Gamma(\Cc)}(\PrL)$ preserves colimits, it suffices to show that the functor $\Gamma\colon \PrL(\Bb) \to \PrL$ preserves colimits, as colimits in module categories are computed underlying. As $\PrL(\Bb) \simeq (\PrR(\Bb))\catop$ by \cite[Proposition~6.4.7]{martiniwolf2022presentable}, we may pass to opposite categories and instead show that $\Gamma^R\colon \PrR(\Bb) \to \PrR$ preserves limits. By \cite[Proposition~6.4.10]{martiniwolf2022presentable} the inclusion $\PrR(\Bb) \hookrightarrow \Cat(\Bb)$ preserves limits, hence it suffices to show that the evaluation functor $\Gamma = \ev_1\colon \Cat(\Bb) \to \Cat_{\infty}$ preserves limits. This is clear, as limits in $\Cat(\Bb) = \FunR(\Bb\catop,\Cat_{\infty})$ are computed pointwise.
	
	We next show that the functor $\Gamma^{\Cc}$ satisfies the projection formula. Recall that this is automatic whenever $\Dd$ is dualizable in $\Mod_{\Gamma(\Cc)}(\PrL)$, see for example \cite[Proposition~3.12]{FauskHuMay2003Isomorphisms} or \cite[Lemma~4.15]{CCRY_Characters}. Since we already showed that $\Gamma^{\Cc}$ commutes with colimits, it will suffice to show that the $\infty$-category $\Mod_{\Gamma(\Cc)}(\PrL)$ is generated under colimits by dualizable objects. Since the free $\Gamma(\Cc)$-module functor $\Gamma(\Cc) \otimes -\colon \PrL \to \Mod_{\Gamma(\Cc)}(\PrL)$ is symmetric monoidal and its image generates $\Mod_{\Gamma(\Cc)}(\PrL)$ under colimits, it suffices to show that $\PrL$ is generated under colimits by dualizable objects. This holds by \cite[Lemma 7.14]{RagimovSchlank}.
	
	Finally, we show that the left adjoint $- \otimes_{\Gamma(\Cc)} \Cc$ is fully faithful, or equivalently that the unit $\Dd \to \Gamma^{\Cc}(\Dd \otimes_{\Gamma(\Cc)} \Cc)$ of the adjunction is an equivalence. This is a special case of the projection formula applied to $\Ee = \Cc$.
\end{proof}

For general $\Cc$-modules $\Dd$ and $\Ee$ in $\PrL(\Bb)$, it is not clear in general how the parametrized relative tensor product $\Dd \otimes_{\Cc} \Ee$ looks like. The situation improves when $\Dd$ comes from a $\Gamma(\Cc)$-module in $\PrL$:

\begin{corollary}
	\label{cor:TensorProductPointwise}
	Let $\Cc \in \CAlg(\PrL(\Bb))$, let $\Dd, \Ee \in \Mod_{\Cc}(\PrL)$, and assume that $\Dd = \Dd_0 \otimes_{\Gamma(\Cc)} \Cc$ for some $\Dd_0 \in \Mod_{\Gamma(\Cc)}(\PrL)$. Then there is for every $A \in \Bb$ an equivalence of $\Cc(A)$-linear $\infty$-categories
	\[
	\Dd(A) \otimes_{\Cc(A)} \Ee(A) \iso (\Dd \otimes_{\Cc} \Ee)(A).
	\]
\end{corollary}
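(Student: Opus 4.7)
The plan is to reduce the statement to the case $A = 1 \in \Bb$ by base-changing along $\pi_A^*$, and to handle that base case directly using the projection formula of \Cref{lem:AdjunctionCLinearCategories}.

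First I would dispose of the case $A = 1$. Here the full faithfulness of $- \otimes_{\Gamma(\Cc)} \Cc$ from part (1) of \Cref{lem:AdjunctionCLinearCategories} gives an equivalence $\Dd_0 \iso \Gamma^{\Cc}(\Dd)$; applying the underlying $\PrL$-forgetful functor identifies $\Dd_0 \simeq \Dd(1)$ as $\Gamma(\Cc)$-modules in $\PrL$. The projection formula from part (2) then reads
\[
\Dd(1) \otimes_{\Gamma(\Cc)} \Ee(1) \simeq \Dd_0 \otimes_{\Gamma(\Cc)} \Gamma^{\Cc}(\Ee) \iso \Gamma^{\Cc}(\Dd \otimes_\Cc \Ee),
\]
and passing to underlying $\infty$-categories yields the claim $\Dd(1) \otimes_{\Gamma(\Cc)} \Ee(1) \iso (\Dd \otimes_\Cc \Ee)(1)$ as $\Gamma(\Cc)$-linear $\infty$-categories.

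For general $A$, I would apply the symmetric monoidal base change functor $\pi_A^* \colon \PrL(\Bb) \to \PrL(\Bb_{/A})$ from \Cref{cor:Base_Change_Adjunction_Presentable}. It carries the triple $(\Cc, \Dd, \Ee)$ to a triple $(\pi_A^*\Cc, \pi_A^*\Dd, \pi_A^*\Ee)$ of the same shape over $\Bb_{/A}$, and preserves relative tensor products:
\[
\pi_A^*(\Dd \otimes_\Cc \Ee) \simeq \pi_A^*\Dd \otimes_{\pi_A^*\Cc} \pi_A^*\Ee.
\]
Since evaluation at $A \in \Bb$ factors as $\pi_A^*$ followed by evaluation at the terminal object $\id_A$ of $\Bb_{/A}$, the left-hand side of the desired equivalence identifies with the underlying category of $\pi_A^*\Dd \otimes_{\pi_A^*\Cc} \pi_A^*\Ee$, and $\Cc(A) = \Gamma_{\Bb_{/A}}(\pi_A^*\Cc)$. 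Thus, applying the $A = 1$ case to the $\Bb_{/A}$-situation produces the desired equivalence, provided $\pi_A^*\Dd$ again lies in the essential image of $- \otimes_{\Cc(A)} \pi_A^*\Cc \colon \Mod_{\Cc(A)}(\PrL) \hookrightarrow \Mod_{\pi_A^*\Cc}(\PrL(\Bb_{/A}))$.

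The main technical point, and the step I expect to require the most care, is this compatibility between the embedding of \Cref{lem:AdjunctionCLinearCategories} and base change along $\pi_A^*$. Concretely, I would produce a natural equivalence
\[
\pi_A^*(\Dd_0 \otimes_{\Gamma(\Cc)} \Cc) \simeq (\Dd_0 \otimes_{\Gamma(\Cc)} \Cc(A)) \otimes_{\Cc(A)} \pi_A^*\Cc,
\]
so that $(\pi_A^*\Dd)_0 := \Dd_0 \otimes_{\Gamma(\Cc)} \Cc(A)$ witnesses $\pi_A^*\Dd$ as being of the required form over $\Bb_{/A}$. To establish this, I would trace through the factorization of $- \otimes_{\Gamma(\Cc)} \Cc$ used in the proof of \Cref{lem:AdjunctionCLinearCategories} as the composite $\Mod_{\Gamma(\Cc)}(\PrL) \to \Mod_{\Gamma(\Cc) \otimes_\Bb \Omega_\Bb}(\PrL(\Bb)) \to \Mod_\Cc(\PrL(\Bb))$ and apply the symmetric monoidal $\pi_A^*$ at each stage, using that $\pi_A^*$ preserves the monoidal unit $\Omega_\Bb \simeq \pi_A^*\Omega_\Bb \mapsto \Omega_{\Bb_{/A}}$ and relative tensor products. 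Once this compatibility is in place, combining the two steps gives the claimed equivalence $(\Dd \otimes_\Cc \Ee)(A) \simeq \Dd(A) \otimes_{\Cc(A)} \Ee(A)$ of $\Cc(A)$-linear $\infty$-categories.
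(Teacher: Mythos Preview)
Your proposal is correct and follows essentially the same strategy as the paper: reduce to $A=1$ by passing to the slice topos $\Bb_{/A}$, then invoke full faithfulness and the projection formula from \Cref{lem:AdjunctionCLinearCategories}. The paper compresses the reduction into a single sentence (``by passing to the slice category $\Bb_{/A}$, we may assume that $A$ is terminal''), whereas you spell out the compatibility $\pi_A^*(\Dd_0 \otimes_{\Gamma(\Cc)} \Cc) \simeq (\Dd_0 \otimes_{\Gamma(\Cc)} \Cc(A)) \otimes_{\Cc(A)} \pi_A^*\Cc$ that makes this reduction legitimate; this extra care is justified but not a different idea.
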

\begin{proof}
	By passing to the slice category $\Bb_{/A}$, we may assume that $A$ is terminal in $\Bb$. In this case, \Cref{lem:AdjunctionCLinearCategories} provides equivalences of $\Gamma(\Cc)$-linear $\infty$-categories
	\[
	\Dd(1) \otimes_{\Cc(1)} \Ee(1) = \Gamma^{\Cc}(\Dd) \otimes_{\Gamma(\Cc)} \Gamma^{\Cc}(\Ee) \xleftarrow{\sim} \Dd_0 \otimes_{\Gamma(\Cc)} \Gamma^{\Cc}(\Ee) \xrightarrow{\sim} \Gamma^{\Cc}(\Dd \otimes_{\Cc} \Ee) = (\Dd \otimes_{\Cc} \Ee)(1).\qedhere
	\]
\end{proof}

\subsection{Classification of \texorpdfstring{$\Cc$}{C}-linear functors}
\label{subsec:Classification_CLinear_Functors}

Given a presentably symmetric monoidal $\Bb$-category $\Cc \in \CAlg(\PrL(\Bb))$, one may define for every object $A \in \Bb$ a presentable $\Bb$-category $\Cc^A$ given by $\Cc^A(B) = \Cc(A \times B)$. It comes equipped with a natural tensoring over $\Cc$ in $\PrL(\Bb)$. The goal of this section is to give a classification of $\Cc$-linear $\Bb$-functors $F\colon \Cc^A \to \Cc^B$ for objects $A,B \in \Bb$: we will show in \Cref{thm:Classification_CLinear_Functors} below that evaluation at the object $\Delta_!\unit_A \in \Cc(A \times A) = \Cc^A(A)$ determines an equivalence
\[
\ev_{\Delta_!\unit}\colon \Fun_{\Cc}(\Cc^A,\Cc^B) \xrightarrow{\sim} \Cc(A \times B),
\]
whose inverse sends an object $D \in \Cc(A \times B)$ to the $\Bb$-functor $(\pr_B)_!(\pr_A^*(-) \otimes D)\colon \Cc^A \to \Cc^B$.

\subsubsection{The \texorpdfstring{$\Bb$}{B}-category of \texorpdfstring{$\Cc$}{C}-linear \texorpdfstring{$\Bb$}{B}-categories}
Fix a presentably symmetric monoidal $\Bb$-category $\Cc \in \CAlg(\PrL(\Bb))$, and consider the $\infty$-category $\Mod_{\Cc}(\PrL(\Bb))$ of left $\Cc$-modules in $\PrL(\Bb)$. We refer to the objects of $\Mod_{\Cc}(\PrL(\Bb))$ as \textit{$\Cc$-linear $\Bb$-categories} and to the morphisms as \textit{$\Cc$-linear $\Bb$-functors}. In particular, $\Cc$-linear $\Bb$-functors will always preserve colimits by convention.

For any object $B \in \Bb$, we obtain an object $\pi_B^*\Cc \in \CAlg(\PrL(\Bb_{/B}))$. By \cite[Definition~7.2.2]{martiniwolf2022presentable}, the $\infty$-categories $\Mod_{\pi_B^*\Cc}(\xPrLB{B})$ assemble into a $\Bb$-category $\Mod_{\Cc}(\PrL_{\Bb})$. By \cite[Proposition~7.2.7]{martiniwolf2022presentable}, we have:
\begin{enumerate}[(1)]
	\item The $\Bb$-category $\Mod_{\Cc}(\PrL_{\Bb})$ is complete and cocomplete;
	\item The $\Bb$-functor $\Mod_{\Cc}(\PrL_{\Bb}) \to \PrL_{\Bb}$ preserves limits and colimits;
	\item The relative tensor product $- \otimes_{\Cc} -\colon \Mod_{\Cc}(\PrL_{\Bb}) \times \Mod_{\Cc}(\PrL_{\Bb}) \to \Mod_{\Cc}(\PrL_{\Bb})$ preserves colimits in both variables.
\end{enumerate}

Since $\PrL(\Bb)$ is closed symmetric monoidal, so is $\Mod_{\Cc}(\PrL(\Bb))$: for every pair of $\Cc$-linear $\Bb$-categories $\Dd$ and $\Ee$, there exists an internal hom object $\ul{\Fun}_{\Cc}(\Dd,\Ee)$. We let $\Fun_{\Cc}(\Dd,\Ee)$ denote its underlying $\infty$-category, whose objects are the $\Cc$-linear $\Bb$-functors $F\colon \Dd \to \Ee$, and whose morphisms will be referred to as \textit{$\Cc$-linear natural transformations}. Given two $\Cc$-linear $\Bb$-functors $F, F'\colon \Dd \to \Ee$, we let $\Nat_{\Cc}(F,F')$ denote the mapping space from $F$ to $F'$ in $\Fun_{\Cc}(\Dd,\Ee)$.

\begin{definition}
	\label{def:Internal_Left_Adjoint}
	A $\Cc$-linear $\Bb$-functor $F\colon \Dd \to \Ee$ is called an \textit{internal left adjoint in $\Mod_{\Cc}(\PrL(\Bb))$} if there is a $\Cc$-linear $\Bb$-functor $G\colon \Ee \to \Dd$ equipped with $\Cc$-linear transformations $\epsilon\colon FG \Rightarrow \id$ and $\eta\colon \id \Rightarrow GF$ satisfying the triangle identities.
\end{definition}

We have the following characterization of internal left adjoints:

\begin{lemma}
	\label{lem:Criterion_Internal_Left_Adjoint}
	A $\Cc$-linear $\Bb$-functor $F\colon \Dd \to \Ee$ is an internal left adjoint in $\Mod_{\Cc}(\PrL(\Bb))$ if and only if its right adjoint $G\colon \Dd \to \Ee$ in $\Cat(\Bb)$ preserves colimits and satisfies the projection formula: for every $B \in \Bb$, every $C \in \Cc(B)$ and every $E \in \Ee(B)$, the canonical map $C \otimes_B G(E) \to G(C \otimes_B E)$ is an equivalence in $\Dd(B)$.
\end{lemma}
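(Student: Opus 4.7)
The forward direction is essentially definitional: if $F$ admits a right adjoint $G'$ in $\Mod_{\Cc}(\PrL(\Bb))$, then by uniqueness of adjoints under the forgetful functor $\Mod_{\Cc}(\PrL(\Bb)) \to \Cat(\Bb)$, the functor $G'$ must coincide with the $\Cat(\Bb)$-right adjoint $G\colon \Ee \to \Dd$ of $F$. Being a morphism in $\PrL(\Bb)$, $G$ preserves parametrized colimits; and its $\Cc$-linear structure becomes, under the mate correspondence applied to the structure equivalence $F(C \otimes X) \simeq C \otimes F(X)$, precisely the projection formula.

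For the converse, assume $G$ preserves colimits and satisfies the projection formula. The $\Cc$-linearity of $F$, together with the unit $\eta$ and counit $\epsilon$ of the underlying adjunction in $\Cat(\Bb)$, produces a canonical comparison map
\[
C \otimes_B G(E) \xrightarrow{\;\eta\;} GF\bigl(C \otimes_B G(E)\bigr) \simeq G\bigl(C \otimes_B FG(E)\bigr) \xrightarrow{\;\epsilon\;} G(C \otimes_B E),
\]
which by hypothesis is an equivalence. This promotes the lax $\Cc$-module structure on $G$ coming from mate-calculus to a strong one, so that $G$ refines to an object of $\Fun_{\Cc}(\Ee,\Dd)$. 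It then remains to lift $\eta$ and $\epsilon$ to $\Cc$-linear natural transformations satisfying the triangle identities inside $\Mod_{\Cc}(\PrL(\Bb))$.

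The cleanest way to accomplish this last step is to invoke the general principle that for any presentably symmetric monoidal $\infty$-category $\Vv$ and any $\Cc \in \CAlg(\Vv)$, the forgetful functor $\Mod_{\Cc}(\Vv) \to \Vv$ \emph{creates} those adjunctions whose underlying right adjoint satisfies the projection formula (cf.\ the relative adjoint functor theorem in the spirit of Lurie's treatment of module categories in Section~7.3 of \cite{lurie2016HA}); applied with $\Vv = \PrL(\Bb)$, this automatically promotes $F \dashv G$ to an adjunction within $\Mod_{\Cc}(\PrL(\Bb))$. The main obstacle I anticipate is checking that this principle extends from the classical setting of $\PrL$ to the parametrized setting of $\PrL(\Bb)$: although the proof in the non-parametrized case transports well, some care is needed to ensure that all higher coherence data, rather than merely their components, behave as expected. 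The parametrized completeness and cocompleteness of $\Mod_{\Cc}(\PrL_{\Bb})$ recorded after \Cref{cor:Base_Change_Adjunction_Presentable}, together with the universal property of the internal hom $\ul{\Fun}_{\Cc}(\Ee,\Dd)$ as representing $\Cc$-linear $\Bb$-functors, provides exactly the machinery needed to carry out this verification without essential modification.
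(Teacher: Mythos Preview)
Your approach is essentially the same as the paper's: the paper also reduces the lemma to the statement that the projection formula suffices to lift the adjunction to $\Mod_{\Cc}(\Cat(\Bb))$, and then observes that colimit-preservation of $G$ is exactly what is needed to land in $\Mod_{\Cc}(\PrL(\Bb))$. The concern you flag about coherence in the parametrized setting is precisely what the paper addresses in its Appendix~A (\Cref{prop:RightAdjointCLinear}): rather than appealing to an abstract ``creation of adjunctions'' principle, it unstraightens the $\Cc$-module structures to cocartesian fibrations over $\Bb^{\op,\Ll\Mm}$ and then invokes \cite[Proposition~7.3.2.1]{lurie2016HA} on relative right adjoints, reducing the projection formula to the statement that $G^{\boxtimes}$ preserves all cocartesian edges.
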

\begin{proof}
	By \Cref{prop:RightAdjointCLinear}, the right adjoint $G$ of $G$ is a right adjoint internal to $\Mod_{\Cc}(\Cat(\Bb))$ if and only if $G$ satisfies the projection formula. The adjunction lifts further to $\Mod_{\Cc}(\PrL(\Bb))$ if and only if $G$ preserves colimits.
\end{proof}

The property for a $\Cc$-linear functor $F$ to be an internal left adjoint can be checked locally in $\Bb$:

\begin{proposition}
	\label{prop:Test_After_Pullback_NaturalEquivalence}
	Let $F\colon \Dd \to \Ee$ be a $\Cc$-linear $\Bb$-functor and assume that $B \twoheadrightarrow 1$ is an effective epimorphism in $\Bb$. Then $F$ is an internal left adjoint in $\Mod_{\Cc}(\PrL(\Bb))$ if and only if $\pi_B^*F$ is an internal left adjoint in $\Mod_{\Cc}(\xPrLB{B})$.
\end{proposition}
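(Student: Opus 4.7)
The ``only if'' direction is formal: the pullback $\pi_B^*\colon \PrL(\Bb) \to \xPrLB{B}$ is symmetric monoidal, so it induces a symmetric monoidal functor on module $\infty$-categories $\Mod_{\Cc}(\PrL(\Bb)) \to \Mod_{\pi_B^*\Cc}(\xPrLB{B})$, which automatically preserves the property of being an internal left adjoint.

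For the converse, I plan to apply the criterion of \Cref{lem:Criterion_Internal_Left_Adjoint}. Since $F$ lies in $\PrL(\Bb)$, it already admits a right adjoint $G\colon \Ee \to \Dd$ in $\Cat(\Bb)$ by abstract reasons (adjoint functor theorem plus Beck--Chevalley). The task is to show that $G$ preserves parametrized colimits and satisfies the projection formula for $\Cc$. Since pullback along the geometric morphism $\pi_B$ preserves adjunctions in $\Cat$, the right adjoint of $\pi_B^*F$ is identified with $\pi_B^*G$, and the hypothesis combined with \Cref{lem:Criterion_Internal_Left_Adjoint} applied over $\Bb_{/B}$ guarantees that $\pi_B^*G$ preserves parametrized colimits and satisfies the projection formula for $\pi_B^*\Cc$.

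The remaining step is a descent argument. Both desired properties of $G$ amount to the assertion that certain natural transformations in $\Dd$ are equivalences: the Beck--Chevalley transformations $g_!G_X \Rightarrow G_Yg_!$ for morphisms $g\colon X \to Y$ in $\Bb$, and the projection formula maps $C \otimes_A G(E) \to G(C \otimes_A E)$ for $A \in \Bb$, $C \in \Cc(A)$ and $E \in \Ee(A)$. These transformations pull back along $\pi_B^*$ to the corresponding transformations for $\pi_B^*G$, which are equivalences by the previous step. Because $\Dd$ is a sheaf on $\Bb$ and the projection $A \times B \to A$ is an effective epimorphism for every $A \in \Bb$ (effective epimorphisms being stable under pullback in the $\infty$-topos $\Bb$), the descent equivalence $\Dd(A) \simeq \lim_{[n]\in\Delta} \Dd(A \times B^{n+1})$ implies that the restriction $\Dd(A) \to \Dd(A \times B)$ is conservative on equivalences; indeed, an equivalence in $\Dd(A \times B)$ pulls back to equivalences in each $\Dd(A \times B^{n+1})$, and termwise equivalences are equivalences in the limit. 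Applying this conservativity to each of the natural transformations above shows that they are equivalences already over $\Bb$, so $G$ satisfies both required conditions, completing the proof. The main obstacle is really just articulating the descent principle cleanly; once this is in place the argument is essentially bookkeeping.
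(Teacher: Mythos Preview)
Your proof is correct and follows essentially the same strategy as the paper: reduce via \Cref{lem:Criterion_Internal_Left_Adjoint} to showing that the right adjoint $G$ preserves colimits and satisfies the projection formula, then use descent along the effective epimorphism $A \times B \twoheadrightarrow A$ to detect these properties after pullback. The paper phrases the colimit-preservation step slightly more abstractly---invoking that $\PrL_{\Bb}$ is a parametrized subcategory of $\Cat_{\Bb}$ and hence local---whereas you unwind this into the explicit conservativity of $\Dd(A) \to \Dd(A \times B)$ via the \v{C}ech nerve; these are the same argument. One small omission: your list of ``certain natural transformations'' mentions Beck--Chevalley and the projection formula but not the fiberwise colimit comparison maps $\colim_I G(A)(-) \to G(A)(\colim_I -)$, which are also part of parametrized colimit preservation; however, your conservativity argument applies to these equally well, so the proof goes through.
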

\begin{proof}
	Let $G\colon \Ee \to \Dd$ be the right adjoint of $F$. Since $\PrL_{\Bb}$ is a parametrized subcategory of $\Cat_{\Bb}$ and thus satisfies descent w.r.t.\ effective epimorphisms, $G$ preserves colimits if and only if $\pi_B^*G$ does. By \Cref{lem:Criterion_Internal_Left_Adjoint}, it thus remains to show that $G$ satisfies the projection formula if and only if $\pi_B^*G$ does. This is true because checking that for objects $A \in \Bb$, $C \in \Cc(A)$ and $E \in \Ee(A)$ the map $C \otimes_B G(E) \to G(C \otimes_B E)$ is an equivalence in $\Dd(A)$ can be done after pulling back along the effective epimorphism $A \times B \twoheadrightarrow A$.
\end{proof}

\subsubsection{Free and cofree \texorpdfstring{$\Cc$}{C}-linear \texorpdfstring{$\Bb$}{B}-categories}
Continue to fix a presentably symmetric monoidal $\Bb$-category $\Cc$. We will next associate to every object $A \in \Bb$ two $\Cc$-linear $\Bb$-categories $\Cc[A]$ and $\Cc^A$, the \textit{free} and \textit{cofree $\Cc$-linear $\Bb$-categories on $A$}, and prove that they are in fact equivalent, see \Cref{cor:Free_VS_Cofree_CLinear_Category} below. The arguments are entirely analogous to those of \cite[Section~4.1]{CCRY_Characters}.

\begin{definition}
	\label{def:Free_CLinear_BCategory}
	Let $\Dd \in \Mod_{\Cc}(\PrL(\Bb))$ be a $\Cc$-linear $\Bb$-category. For an object $A \in \Bb$, we define the $\Cc$-linear $\Bb$-categories $\Dd[A]$ and $\Dd^A$ by
	\[
	\Dd[A] := \colim_A \Dd = (\pi_A)_!\pi_A^*\Dd \qquad \qquad \text{ and } \qquad \qquad \Dd^A := \lim_A \Dd = (\pi_A)_*\pi_A^*\Dd,
	\]
	the $A$-indexed colimit and limit of the constant diagram on $\Dd$ in the $\Bb$-category $\Mod_{\Cc}(\PrL_{\Bb})$. For a morphism $f\colon A \to B$ in $\Bb$, we denote by
	\[
	f_!\colon \Dd[A] \to \Dd[B] \qquad \qquad \text{ and } \qquad \qquad f^*\colon \Dd^B \to \Dd^A
	\]
	the induced $\Cc$-linear $\Bb$-functors, obtained by plugging the appropriate unit or counit. Their right adjoints in $\Cat(\Bb)$ are denoted by
	\[
	f^*\colon \Dd[B] \to \Dd[A] \qquad \qquad \text{ and } \qquad \qquad f_*\colon \Dd^A \to \Dd^B.
	\]
\end{definition}

Since the forgetful functors $\Mod_{\Cc}(\PrL_{\Bb}) \to \PrL_{\Bb} \to \Cat_{\Bb}$ preserve limits by \cite[Proposition~7.2.7]{martiniwolf2022presentable} and \cite[Proposition 6.4.9]{martiniwolf2022presentable}, the underlying $\Bb$-category of $\Dd^A$ is the $\Bb$-category $(\pi_A)_*\pi_A^*\Dd = \Dd(A \times -)\colon \Bb\catop \to \Cat_{\infty}$, alternatively described as the parametrized functor category $\ul{\Fun}_{\Bb}(A,\Dd)$.

\begin{corollary}
	\label{cor:MappingOutOfFreeCModule}
	There is a natural equivalence of $\Cc$-linear $\Bb$-categories
	\[
	\ul{\Fun}_{\Cc}(\Cc[A],\Dd) \iso \Dd^A.
	\]
\end{corollary}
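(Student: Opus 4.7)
The plan is to combine the identification $\Cc[A] \simeq \colim_A \Cc$ in $\Mod_{\Cc}(\PrL_{\Bb})$ coming directly from \Cref{def:Free_CLinear_BCategory} with the fact that $\Cc$ is the tensor unit of $\Mod_{\Cc}(\PrL_{\Bb})$ and that the internal-hom $\Bb$-functor $\ul{\Fun}_{\Cc}(-,\Dd)$ turns parametrized colimits in its first variable into parametrized limits.

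First I would upgrade the internal hom on $\Mod_{\Cc}(\PrL(\Bb))$ to a $\Bb$-functor
\[
\ul{\Fun}_{\Cc}(-,-)\colon \Mod_{\Cc}(\PrL_{\Bb})\catop \times \Mod_{\Cc}(\PrL_{\Bb}) \to \Mod_{\Cc}(\PrL_{\Bb}),
\]
making $\Mod_{\Cc}(\PrL_{\Bb})$ into a closed symmetric monoidal $\Bb$-category. This is standard given that $\PrL_{\Bb}$ itself is closed symmetric monoidal (by the discussion preceding \Cref{def:Pres_Sym_Mon_BCategory}) and that module categories over a commutative algebra inherit a closed symmetric monoidal structure. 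Given this, the equivalence follows from the chain
\[
\ul{\Fun}_{\Cc}(\Cc[A], \Dd) \simeq \ul{\Fun}_{\Cc}(\colim_A \Cc, \Dd) \simeq \lim_A \ul{\Fun}_{\Cc}(\Cc, \Dd) \simeq \lim_A \Dd = \Dd^A,
\]
where the second equivalence is the fact that internal hom turns colimits into limits, and the third uses that $\Cc$ is the monoidal unit.

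The main obstacle I anticipate is the upgrade to a parametrized internal hom, which is not made explicit in the excerpt. To sidestep this, one can instead argue from the adjunction $(\pi_A)_! \dashv \pi_A^*$ of $\Bb$-functors on $\Mod_{\Cc}(\PrL_{\Bb})$ together with the base-change identity
\[
\pi_A^*\ul{\Fun}_{\Cc}(\Ee_1, \Ee_2) \;\simeq\; \ul{\Fun}_{\pi_A^*\Cc}(\pi_A^*\Ee_1, \pi_A^*\Ee_2),
\]
which follows from the symmetric monoidality of the restriction $\pi_A^*\colon \PrL(\Bb) \to \PrL(\Bb_{/A})$. Using this, one moves $(\pi_A)_!$ off the first argument of the internal hom at the cost of producing $(\pi_A)_*$ on the outside, then uses unitality to obtain $\ul{\Fun}_{\pi_A^*\Cc}(\pi_A^*\Cc, \pi_A^*\Dd) \simeq \pi_A^*\Dd$, and finally applies $(\pi_A)_*$ to conclude the desired equivalence $(\pi_A)_*\pi_A^*\Dd = \Dd^A$. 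Naturality in $\Dd$ is automatic from either construction.
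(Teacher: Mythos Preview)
Your primary argument is exactly the paper's proof: the same chain $\ul{\Fun}_{\Cc}(\Cc[A],\Dd) \simeq \ul{\Fun}_{\Cc}(\colim_A \Cc,\Dd) \simeq \lim_A \ul{\Fun}_{\Cc}(\Cc,\Dd) \simeq \lim_A \Dd = \Dd^A$, justified by the internal hom $\Bb$-functor taking parametrized colimits to limits and being unital. The paper simply asserts the existence of the parametrized internal hom without further comment, so your extra paragraph on how to justify or circumvent it is more careful than necessary but not wrong.
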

\begin{proof}
	The internal hom $\Bb$-functor $\ul{\Fun}_{\Cc}(-,\Dd)\colon \Mod_{\Cc}(\PrL_{\Bb})\catop \to \Mod_{\Cc}(\PrL_{\Bb})$ turns colimits in $\Mod_{\Cc}(\PrL_{\Bb})$ into limits and returns $\Dd$ when evaluated at the monoidal unit $\Cc \in \Mod_{\Cc}(\PrL(\Bb))$, hence we have
	\[
	\ul{\Fun}_{\Cc}(\Cc[A],\Dd) = \ul{\Fun}_{\Cc}(\colim_A \Cc,\Dd) \simeq \lim_A \ul{\Fun}_{\Cc}(\Cc,\Dd) \simeq \lim_A \Dd = \Dd^A. \qedhere
	\]
\end{proof}

\begin{lemma}
	For objects $A, B \in \Bb$, there is an equivalence
	\[
	\Cc[A] \otimes_{\Cc} \Dd[B] \simeq \Dd[A \times B].
	\]
\end{lemma}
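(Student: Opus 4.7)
The plan is to exploit the projection formula and Beck--Chevalley base change inside the $\Bb$-category $\Mod_{\Cc}(\PrL_{\Bb})$. Recall from the properties recalled above that $\Mod_{\Cc}(\PrL_{\Bb})$ is a cocomplete $\Bb$-category whose relative tensor product is symmetric monoidal and preserves parametrized colimits in both variables, with monoidal unit $\Cc$ itself. In particular, $\Mod_{\Cc}(\PrL_{\Bb})$ is a presentably symmetric monoidal $\Bb$-category in the sense of \Cref{def:Pres_Sym_Mon_BCategory}, so the projection formula $f_!(P \otimes f^*Q) \simeq f_!(P) \otimes Q$ holds for every morphism $f\colon A \to B$ in $\Bb$, and Beck--Chevalley base change is available for pullback squares in $\Bb$.

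Applying the projection formula to $f = \pi_A\colon A \to 1$ with $P = \pi_A^*\Cc$ the fiberwise unit gives, for any $\Cc$-linear $\Bb$-category $Q$, the intermediate identity
\[
\Cc[A] \otimes_{\Cc} Q \;=\; (\pi_A)_!(\pi_A^*\Cc) \otimes_{\Cc} Q \;\simeq\; (\pi_A)_!(\pi_A^*\Cc \otimes_{\pi_A^*\Cc} \pi_A^*Q) \;\simeq\; (\pi_A)_!\pi_A^*Q \;=\; Q[A].
\]
Taking $Q = \Dd[B]$ then yields $\Cc[A] \otimes_{\Cc} \Dd[B] \simeq \Dd[B][A] = (\pi_A)_!\pi_A^*(\pi_B)_!\pi_B^*\Dd$.

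To identify the latter with $\Dd[A \times B]$, I would apply Beck--Chevalley to the pullback square
\begin{equation*}
\begin{tikzcd}
A \times B \ar[r, "\pi_A'"] \ar[d, "\pi_B'"'] \drar[pullback] & B \ar[d, "\pi_B"] \\
A \ar[r, "\pi_A"'] & 1
\end{tikzcd}
\end{equation*}
to produce the equivalence $\pi_A^*(\pi_B)_! \simeq (\pi_B')_!(\pi_A')^*$, and then use the functoriality equivalences $(\pi_A)_!(\pi_B')_! \simeq (\pi_{A \times B})_!$ and $(\pi_A')^*\pi_B^* \simeq (\pi_{A \times B})^*$ to rewrite $\Dd[B][A]$ as $(\pi_{A \times B})_!(\pi_{A \times B})^*\Dd = \Dd[A \times B]$, concluding the chain of equivalences.

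The only non-routine point is ensuring that the projection formula and Beck--Chevalley base change are genuinely available at the parametrized level in $\Mod_{\Cc}(\PrL_{\Bb})$. Both are formal consequences of $\Mod_{\Cc}(\PrL_{\Bb})$ being a presentably symmetric monoidal $\Bb$-category in the sense of \Cref{def:Pres_Sym_Mon_BCategory}, which in turn follows from the properties of $\Mod_{\Cc}(\PrL_{\Bb})$ listed above (taken from \cite[Proposition~7.2.7]{martiniwolf2022presentable}) together with the $\Cc$-bilinearity of the relative tensor product. Once this setup is confirmed, the remaining manipulations are purely formal.
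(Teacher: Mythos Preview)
Your argument is correct and is essentially the same as the paper's, just unpacked in more detail: the paper observes directly that since $-\otimes_{\Cc}-$ preserves $\Bb$-colimits in each variable and $\Cc\otimes_{\Cc}\Dd\simeq\Dd$, one has $\Cc[A]\otimes_{\Cc}\Dd[B]\simeq\colim_A\colim_B\Dd$, and then identifies $\colim_A\colim_B\simeq\colim_{A\times B}$ by noting both are left adjoint to the diagonal. Your projection-formula step is precisely the ``tensor preserves colimits'' step, and your Beck--Chevalley manipulation is one way to prove the iterated-colimit identity; the paper's adjoint-uniqueness argument for the latter is slightly slicker.

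One terminological quibble: $\Mod_{\Cc}(\PrL_{\Bb})$ is not literally a presentably symmetric monoidal $\Bb$-category in the sense of \Cref{def:Pres_Sym_Mon_BCategory}, since its fibers are not presentable (they are very large). What you actually need---and what \cite[Proposition~7.2.7]{martiniwolf2022presentable} provides---is only that it is $\Bb$-cocomplete (giving $f_!$ and Beck--Chevalley) and that $\otimes_{\Cc}$ preserves $\Bb$-colimits in each variable (giving the projection formula). You clearly understand this, but you should not invoke \Cref{def:Pres_Sym_Mon_BCategory} as such.
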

\begin{proof}
	Since $- \otimes_{\Cc} -$ preserves colimits in both variables and $\Cc \otimes_{\Cc} \Dd \simeq \Dd$, this follows from the observation that $\colim_A \colim_B \simeq \colim_{A \times B}$: both sides are left adjoint to the diagonal $\Mod_{\Cc}(\PrL_{\Bb}) \to \ul{\Fun}_{\Bb}(A \times B, \Mod_{\Cc}(\PrL_{\Bb}))$.
\end{proof}

\begin{proposition}
	\label{prop:FreeCofreeCLinearCategories} 
	Let $\Dd$ be a $\Cc$-linear $\Bb$-category.
	\begin{enumerate}[(1)]
		\item For every morphism $f\colon A \to B$ in $\Bb$, the $\Cc$-linear $\Bb$-functor $f_!\colon \Cc[A] \to \Cc[B]$ admits a $\Cc$-linear right adjoint $f^*\colon \Cc[B] \to \Cc[A]$;
		\item For every pullback square in $\Bb$ as on the left, the induced square of $\Cc$-linear $\Bb$-functors as on the right is (vertically) right adjointable in $\Mod_{\Cc}(\PrL(\Bb))$:
		\[
		\begin{tikzcd}
			A' \rar{\alpha} \dar[swap]{f'} \drar[pullback] & A \dar{f} \\
			B' \rar{\beta} & B
		\end{tikzcd}
		\qquad \qquad \qquad
		\begin{tikzcd}
			\Cc[A'] \rar{\alpha_!} \dar[swap]{f'_!} & \Cc[A] \dar{f_!} \\
			\Cc[B'] \rar{\beta_!} & \Cc[B];
		\end{tikzcd}
		\]
		\item For every morphism $f\colon A \to B$ in $\Bb$, the $\Cc$-linear $\Bb$-functor $f^*\colon \Dd^B \to \Dd^A$ admits a $\Cc$-linear left adjoint $f_!\colon \Dd^A \to \Dd^B$;
		\item For every pullback square in $\Bb$ as on the left, the induced square of $\Cc$-linear $\Bb$-functors as on the right is (vertically) left adjointable in $\Mod_{\Cc}(\PrL(\Bb))$:
		\[
		\begin{tikzcd}
			A' \rar{\alpha} \dar[swap]{f'} \drar[pullback] & A \dar{f} \\
			B' \rar{\beta} & B
		\end{tikzcd}
		\qquad \qquad \qquad
		\begin{tikzcd}
			{\Dd^{A'}} & {\Dd^A} \lar[swap]{\alpha^*} \\
			{\Dd^{B'}} \uar{{f'}^*} & {\Dd^B} \uar[swap]{f^*} \lar[swap]{\beta^*};
		\end{tikzcd}
		\]
		\item \label{it:RestrictionInduction} For every morphism $f\colon A \to B$ in $\Bb$, there are naturally commutative squares of $\Cc$-linear $\Bb$-functors
		\[
		\begin{tikzcd}
			\ul{\Fun}_{\Cc}(\Cc[B],\Dd) \dar[swap]{- \circ f_!} \rar{\simeq} & 
			{\Dd^B} \dar{f^*} \\
			\ul{\Fun}_{\Cc}(\Cc[A],\Dd) \rar{\simeq} & {\Dd^A}
		\end{tikzcd}
		\qquad \text{and} \qquad
		\begin{tikzcd}
			\ul{\Fun}_{\Cc}(\Cc[A],\Dd) \dar[swap]{- \circ f^*} \rar{\simeq} & {\Dd^A} \dar{f_!} \\
			\ul{\Fun}_{\Cc}(\Cc[B],\Dd) \rar{\simeq} & {\Dd^B}.
		\end{tikzcd}
		\]
	\end{enumerate}
\end{proposition}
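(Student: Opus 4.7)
I would prove the five items in the order (5), (3), (4), (1), (2), using the universal property from \Cref{cor:MappingOutOfFreeCModule} to connect the two sides.

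For (5), the equivalence of \Cref{cor:MappingOutOfFreeCModule} arises from the natural chain
\[
\ul{\Fun}_\Cc(\Cc[A], \Dd) = \ul{\Fun}_\Cc(\colim_A \Cc, \Dd) \;\simeq\; \lim_A \ul{\Fun}_\Cc(\Cc, \Dd) \;\simeq\; \lim_A \Dd = \Dd^A,
\]
using the continuity of $\ul{\Fun}_\Cc(-,\Dd)$ and the fact that $\Cc$ is the monoidal unit. For $f\colon A \to B$, both $f_!\colon \Cc[A] \to \Cc[B]$ and $f^*\colon \Dd^B \to \Dd^A$ are by definition the canonical reindexing morphisms induced by $f$ on the parametrized (co)limits in $\Mod_\Cc(\PrL_\Bb)$. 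Tracing these reindexings through the chain then shows that precomposition with $f_!$ corresponds to $f^*$ and vice versa, yielding both squares in (5).

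For (3) and (4), I would construct the left adjoint $f_!\colon \Dd^A \to \Dd^B$ fiberwise and assemble. At $C \in \Bb$, the formula $\Dd^A(C) = \Dd(A \times C)$ (from the explicit description of $(\pi_A)_*$ in \Cref{ex:SliceTopoi}) identifies $f^*\colon \Dd^B(C) \to \Dd^A(C)$ with pullback along $f \times \id_C\colon A \times C \to B \times C$. Since $\Dd$ is presentable, this admits a left adjoint $(f \times \id_C)_!$, and the family $\{(f \times \id_C)_!\}_{C \in \Bb}$ assembles into a $\Bb$-functor $f_!\colon \Dd^A \to \Dd^B$ by the Beck-Chevalley property of $\Dd$ from \Cref{def:BPresentableBCategory}(2). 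The same base change yields statement (4). For $\Cc$-linearity of $f_!$, by \Cref{lem:Criterion_Internal_Left_Adjoint} it suffices that its right adjoint $f^*$ be $\Cc$-linear, which it is by construction.

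Items (1) and (2) then follow from (3) and (4) by transport across the naturality equivalence (5). By (5), the natural transformation $f^*\colon \Dd^B \to \Dd^A$ (natural in $\Dd$) is corepresented by $f_!\colon \Cc[A] \to \Cc[B]$; its natural left adjoint $f_!\colon \Dd^A \to \Dd^B$ from (3) is, by Yoneda, corepresented by a morphism $f^*\colon \Cc[B] \to \Cc[A]$ in $\Mod_\Cc(\PrL(\Bb))$, which is the desired right adjoint. The triangle identities transfer via Yoneda, and (2) follows from (4) by the same transport. The main subtlety throughout is tracking $\Cc$-linearity of adjoints, which in each case reduces by \Cref{lem:Criterion_Internal_Left_Adjoint} to the projection formula for the opposing functor --- this always holds for us since that functor is $\Cc$-linear by construction.
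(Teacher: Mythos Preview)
Your overall strategy is sound and genuinely different from the paper's. The paper proves (1) and (2) first by observing that the base change 2-functor $\Cc \otimes_{\Omega_\Bb} -\colon \PrL_\Bb \to \Mod_\Cc(\PrL_\Bb)$ preserves colimits and adjunctions, reducing to the case $\Cc = \Omega_\Bb$; there one has the explicit identification $\Omega_\Bb[A] \simeq \PSh_\Omega(A)$, and $f^*$ is just restriction, so (1) and the Beck--Chevalley condition (2) are immediate from the base change property of $\Bb$ itself. Items (3), (4), and (5) then follow formally. Your route inverts this: you establish (3) and (4) directly from the pointwise formula $\Dd^A(C) = \Dd(A\times C)$ and then deduce (1) and (2) by a representability/Yoneda argument. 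Both work; the paper's reduction avoids having to verify any module-theoretic compatibility by hand, while yours stays closer to the cofree description.

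There is, however, a genuine gap in your justification of $\Cc$-linearity of $f_!\colon \Dd^A \to \Dd^B$. You invoke \Cref{lem:Criterion_Internal_Left_Adjoint}, but that lemma takes a $\Cc$-linear $F$ as \emph{input} and characterizes when its right adjoint lifts; it does not say that a left adjoint of a $\Cc$-linear functor is automatically $\Cc$-linear. What you actually need is either the dual statement (left adjoints of $\Cc$-linear functors are $\Cc$-linear, via the mate construction: the oplax structure map $f_!(c\otimes -) \to c\otimes f_!(-)$ is the mate of the strong linearity of $f^*$, hence invertible), or a direct check of the projection formula $(f\times 1)_!\bigl((f\times 1)^*\pi_B^*c \otimes X\bigr) \simeq \pi_B^*c \otimes (f\times 1)_!X$, which holds because the action of $\Cc$ on $\Dd$ is a morphism in $\PrL(\Bb)$ and hence each $c\otimes -$ commutes with $\Bb$-indexed colimits. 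Either fix is routine, but neither is what the cited lemma provides.

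A minor ordering issue: the right-hand square in (5) involves precomposition with $f^*\colon \Cc[B]\to\Cc[A]$ as a $\Cc$-linear functor, which is precisely the content of (1). So you cannot establish both squares of (5) before (1); only the left square (naturality of the equivalence in $A$) comes first, and the right square follows at the end by passing to internal adjoints once (1) and (3) are in place. Relatedly, your Yoneda deduction of (1) from (3) requires that the family $\{f_!^\Dd\}_\Dd$ be natural in $\Dd \in \Mod_\Cc(\PrL(\Bb))$; this does hold (Beck--Chevalley for any colimit-preserving $\Cc$-linear $\Psi\colon \Dd\to\Dd'$), but deserves a sentence.
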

\begin{proof}
	Parts (3) and (4) follow immediately from (1) and (2) because of the natural equivalence $\ul{\Fun}_{\Cc}(\Cc[A],\Dd) \simeq \Dd^A$. For (5), the left-hand square is simply naturality in $A$ of the equivalence $\ul{\Fun}_{\Cc}(\Cc[A],\Dd) \simeq \Dd^A$, and the right-hand square is obtained from this by passing to internal left adjoints in $\Mod_{\Cc}(\PrL(\Bb))$. For parts (1) and (2), consider the base change functor $\Cc \otimes_{\Omega_{\Bb}} -\colon \PrL_{\Bb} \simeq \Mod_{\Omega_{\Bb}}(\PrL_{\Bb}) \to \Mod_{\Cc}(\PrL_{\Bb})$. It is a colimit-preserving symmetric monoidal 2-functor, and thus we have $\Cc[A] \simeq \Cc \otimes_{\Omega_{\Bb}} \Omega_{\Bb}[A]$. It thus suffices to show the claim when $\Cc = \Omega_{\Bb}$. In this case we have an equivalence $\Omega_{\Bb}[A] \simeq \PSh_{\Omega}(A)$ by \cite[Theorem~6.1.1]{martiniwolf2021limits}, and by \cite[Lemma~6.1.3]{martiniwolf2021limits} the functor $f_!\colon \PSh_{\Omega}(A) \to \PSh_{\Omega}(B)$ is given by left Kan extension. The right adjoint $f^*\colon \PSh_{\Omega}(B) \to \PSh_{\Omega}(A)$ of $f_!$ is therefore just the restriction functor, which is a morphism in $\PrL(\Bb)$, proving (1). For part (2), it suffices to check that for every object $C \in \Bb$, evaluating the square at $C$ gives a vertically right adjointable square of $\infty$-categories. In the case at hand, this square looks like
	\[
	\begin{tikzcd}
		\Bb_{/A' \times C} \rar{(\alpha \times 1) \circ -} \dar[swap]{(f' \times 1) \circ -} & \Bb_{/A \times C} \dar{(f \times 1) \circ -} \\
		\Bb_{/B' \times C} \rar{(\beta \times 1) \circ -} & \Bb_{/B \times C},
	\end{tikzcd}
	\]
	which is adjointable as $\Bb$ satisfies base change.
\end{proof}

\begin{corollary}
	\label{cor:FreeCLinearCategoryIsDualizable}
	For every object $A \in \Bb$, the $\Cc$-linear pairing
	\[
	\Cc[A] \otimes_{\Cc} \Cc[A] \simeq \Cc[A\times A] \xrightarrow{\Delta^*} \Cc[A] \xrightarrow{A_!} \Cc
	\]
	is part of a duality datum in $\Mod_{\Cc}(\PrL(\Bb))$, exhibiting the $\Cc$-linear $\Bb$-category $\Cc[A]$ as self-dual.
\end{corollary}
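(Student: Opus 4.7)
The plan is to exhibit an explicit coevaluation map and verify the two triangle identities by a direct calculation that reduces, via base change, to two tautological simplifications of composite projections. I would take
\[
\coev_A \colon \Cc \xrightarrow{\;A^*\;} \Cc[A] \xrightarrow{\;\Delta_!\;} \Cc[A \times A] \simeq \Cc[A] \otimes_{\Cc} \Cc[A],
\]
where $A^* = (\pi_A)^*$ is the $\Cc$-linear right adjoint to $A_!$ provided by \Cref{prop:FreeCofreeCLinearCategories}(1). Both $A^*$ and $\Delta_!$ are $\Cc$-linear $\Bb$-functors, so $\coev_A$ is a morphism in $\Mod_{\Cc}(\PrL(\Bb))$. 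This choice is the parametrized avatar of the classical ``identity matrix'' coevaluation for a free module over a ring.

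To verify the first triangle identity, I would trace through the composite $\Cc[A] \to \Cc[A]^{\otimes 3} \to \Cc[A]$. Under the identifications $\Cc[A]^{\otimes n} \simeq \Cc[A^{\times n}]$ of the preceding lemma, tensoring a morphism $\Cc[A^{\times k}] \to \Cc[A^{\times \ell}]$ with $\id_{\Cc[A]}$ translates, by \Cref{prop:FreeCofreeCLinearCategories}(2) and (5), into applying the same $(-)_!$ or $(-)^*$ to the corresponding factors of $A^{\times n}$ while leaving the remaining factor untouched. This unwinds the composite into something of the shape
\[
\Cc[A] \xrightarrow{\;p^*\;} \Cc[A \times A] \xrightarrow{\;(\Delta \times \id)_!\;} \Cc[A^{\times 3}] \xrightarrow{\;(\id \times \Delta)^*\;} \Cc[A \times A] \xrightarrow{\;q_!\;} \Cc[A],
\]
where $p, q \colon A \times A \to A$ are the two projections determined by the way the source and target copies of $\Cc[A]$ sit inside $\Cc[A]^{\otimes 3}$. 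The key step is the base change
\[
\begin{tikzcd}
A \rar{\Delta} \dar[swap]{\Delta} \drar[pullback] & A \times A \dar{\Delta \times \id} \\
A \times A \rar[swap]{\id \times \Delta} & A \times A \times A
\end{tikzcd}
\]
supplied by \Cref{prop:FreeCofreeCLinearCategories}(2), which yields $(\id \times \Delta)^* (\Delta \times \id)_! \simeq \Delta_! \Delta^*$. The composite therefore collapses to $q_! \Delta_! \Delta^* p^*$, and since the projections are chosen so that $p \circ \Delta = \id_A = q \circ \Delta$, functoriality of $(-)^*$ and $(-)_!$ reduces this to the identity on $\Cc[A]$.

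For the second triangle identity, a symmetry argument should suffice: both $\ev_A = A_! \Delta^*$ and $\coev_A = \Delta_! A^*$ are invariant under the swap automorphism $\Cc[A] \otimes_{\Cc} \Cc[A] \iso \Cc[A] \otimes_{\Cc} \Cc[A]$ induced by the factor-swap of $A \times A$, because the diagonal $\Delta \colon A \to A \times A$ is equivariant with respect to this swap. Consequently, the second triangle identity is intertwined with the first by conjugation by this swap and requires no separate argument. The main obstacle is not mathematical subtlety but bookkeeping: one must carefully track which factor of $A^{\times n}$ corresponds to which tensor slot under the identifications $\Cc[A] \otimes_{\Cc} \Cc[B] \simeq \Cc[A \times B]$ and their iterated versions, so that the projections appearing in the unwinding step are really adjacent to the diagonal in the desired way.
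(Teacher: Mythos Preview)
Your proposal is correct and follows essentially the same approach as the paper: the paper gives the identical coevaluation $\Delta_! A^*$, unwinds one triangle identity to a composite $\pr_1^* \to (1\times\Delta)_! \to (\Delta\times 1)^* \to (\pr_2)_!$, collapses the middle two maps to $\Delta_!\Delta^*$ via the Beck--Chevalley equivalence for the same pullback square you wrote down, and then dispatches the second triangle as ``analogous'' where you invoke the swap symmetry. The only difference is cosmetic labeling of which factor is which.
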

\begin{proof}
	The coevaluation is given by $\Cc \xrightarrow{A^*} \Cc[A] \xrightarrow{\Delta_!} \Cc[A \times A] \simeq \Cc[A] \otimes_{\Cc} \Cc[A]$. The first triangle identity follows from the following commutative diagram:
	\begin{equation*}
		\begin{tikzcd}
			& & {\Cc[A \times A \times A]} \drar{(\Delta \times 1)^*} \\
			& {\Cc[A \times A]} \drar{\Delta^*} \urar{(1 \times \Delta)_!} & & {\Cc[A \times A]} \drar{(\pr_2)_!} \\
			{\Cc[A]} \ar[equal]{rr} \urar{\pr_1^*} && {\Cc[A]} \urar{\Delta_!} \ar[equal]{rr} & & {\Cc[A]},
		\end{tikzcd}
	\end{equation*}
	where the square commutes via the Beck-Chevalley equivalence. The other triangle identity is analogous.
\end{proof}

\begin{lemma}
	\label{lem:DualsOfBaseChangeFunctors}
	Let $f\colon A \to B$ be a morphism in $\Bb$. Then the following diagrams commute:
	\[
	\begin{tikzcd}
		\Cc[B] \dar[swap]{\simeq} \ar{rr}{f^*} && \Cc[A] \dar{\simeq} \\
		\Cc[B]^{\vee} \ar{rr}{(f_!)^{\vee}} && \Cc[A]^{\vee}
	\end{tikzcd}
	\qquad
	\text{ and }
	\qquad
	\begin{tikzcd}
		\Cc[A] \dar[swap]{\simeq} \ar{rr}{f_!} && \Cc[B] \dar{\simeq} \\
		\Cc[A]^{\vee} \ar{rr}{(f^*)^{\vee}} && \Cc[B]^{\vee}.
	\end{tikzcd}
	\]
\end{lemma}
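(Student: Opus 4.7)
The plan is to prove both squares by reducing them to a single computation showing that under the self-duality of \Cref{cor:FreeCLinearCategoryIsDualizable}, the morphisms $f^*$ and $(f_!)^\vee$ correspond to the same object of $\Cc[A \times B]$, namely the pushforward of the unit along the graph of $f$. Since the chosen self-duality on $\Cc[X]$ is symmetric---the evaluation $X_! \circ \Delta^*$ is invariant under the swap, because the swap $\sigma$ on $X \times X$ satisfies $\sigma \circ \Delta = \Delta$---one has $(g^\vee)^\vee \simeq g$ for morphisms between self-dual objects, and applying $(-)^\vee$ to the first square yields the second. Thus it suffices to prove the first square.

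To unpack the universal property, I would use that morphisms $F\colon \Cc[B] \to \Cc[A]$ between self-dual $\Cc$-linear $\Bb$-categories correspond bijectively, via the \emph{name} $\nu(F) := (F \otimes \id) \circ \coev_B$, to $\Cc$-linear maps $\unit \to \Cc[A] \otimes_{\Cc} \Cc[B]$; under the identification $\Cc[A] \otimes_\Cc \Cc[B] \simeq \Cc[A \times B]$, these classify objects of $\Cc[A \times B]$. The general formalism of dualities tells us that $\nu((f_!)^\vee) \simeq \tau_* \nu(f_!)$, where $\tau\colon \Cc[B] \otimes_\Cc \Cc[A] \simeq \Cc[A] \otimes_\Cc \Cc[B]$ is the symmetric-monoidal swap. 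It therefore suffices to check that $\nu(f^*)$ and $\tau_* \nu(f_!)$ coincide in $\Cc[A \times B]$.

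The computation is then a direct application of the explicit coevaluation formula $\coev_X = \Delta_{X,!} \circ X^*$ from \Cref{cor:FreeCLinearCategoryIsDualizable}, together with the compatibility of $\Cc[-]$ with products: under the identification $\Cc[X] \otimes_\Cc \Cc[Y] \simeq \Cc[X \times Y]$ one has $g_! \otimes h_! \simeq (g \times h)_!$, and by passing to right adjoints $g^* \otimes h^* \simeq (g \times h)^*$. Granting this, I compute
\[
\tau_* \nu(f_!) = \tau_* \circ (f \times 1_A)_! \circ \Delta_{A,!} \circ A^* = \Gamma_{f,!} \circ A^*,
\]
where $\Gamma_f := (1_A, f)\colon A \to A \times B$ is the graph of $f$. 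On the other hand, applying base change to the pullback square
\[
\begin{tikzcd}
A \rar{\Gamma_f} \dar[swap]{f} \drar[pullback] & A \times B \dar{f \times 1_B} \\
B \rar{\Delta_B} & B \times B,
\end{tikzcd}
\]
I obtain
\[
\nu(f^*) = (f \times 1_B)^* \circ \Delta_{B,!} \circ B^* \simeq \Gamma_{f,!} \circ f^* \circ B^* = \Gamma_{f,!} \circ A^*,
\]
where the last equality uses $B \circ f = A$ (both being the projection to the terminal object). Both names therefore agree.

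The main obstacle is the symmetric-monoidal compatibility of $\Cc[-]$ with products that I invoke: one needs that the identification $\Cc[X] \otimes_\Cc \Cc[Y] \simeq \Cc[X \times Y]$ is natural in such a way that $g_! \otimes h_! \simeq (g \times h)_!$. This should follow from the fact that $\Cc[-]$ is obtained by composing the symmetric monoidal functor $\Omega_\Bb[-]\colon \Bb \to \PrL(\Bb)$ (sending $A$ to $\PSh_\Omega(A)$) with the symmetric monoidal base change $\Cc \otimes_{\Omega_\Bb} -$. Once this compatibility is in place, the lemma follows directly from the base-change computation above.
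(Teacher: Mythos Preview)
Your proposal is correct and follows essentially the same strategy as the paper: unpack $(f_!)^{\vee}$ using the explicit self-duality data from \Cref{cor:FreeCLinearCategoryIsDualizable} and simplify via a base-change equivalence. The paper computes $(f_!)^{\vee}$ directly as a composite $\Cc[B]\to\Cc[B\times A]\to\Cc[B\times B\times A]\to\Cc[B\times A]\to\Cc[A]$ and applies base change to the pullback of $\Delta_B\times 1_A$ along $1_B\times(f,1_A)$, whereas you pass to names $\nu(-)\colon\Cc\to\Cc[A\times B]$ and use the pullback of $\Delta_B$ along $f\times 1_B$; these are minor repackagings of the same computation, and your use of the name formalism avoids carrying the extra tensor factor of $\Cc[B]$ through the argument. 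The compatibility $g_!\otimes h_!\simeq(g\times h)_!$ you flag as an obstacle is used equally implicitly in the paper's proof, so your argument is no less complete than the original.
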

\begin{proof}
	We will prove the left diagram. The proof for the right diagram is analogous and is left to the reader. Expanding the definition of $(f_!)^{\vee}$ by plugging in the explicit evaluation and coevaluation maps from \Cref{cor:FreeCLinearCategoryIsDualizable}, we see it is given by the composite
	\[
	\Cc[B] \xrightarrow{(\pr_B)^*} \Cc[B \times A] \xrightarrow{(1 \times (f,1))_!} \Cc[B \times B \times A] \xrightarrow{(\Delta \times 1)^*} \Cc[B \times A] \xrightarrow{(\pr_A)_!} \Cc[A].
	\]
	Observe that the maps $1 \times (f,1)$ and $\Delta \times 1$ fit into a pullback square
	\[\begin{tikzcd}
		A \ar[pullback]{drr} && {B \times A} \\
		{B \times A} && {B \times B \times A,}
		\arrow["{1 \times (f,1)}", from=1-3, to=2-3]
		\arrow["{\Delta \times 1}", from=2-1, to=2-3]
		\arrow["{(f,1)}"', from=1-1, to=2-1]
		\arrow["{(f,1)}", from=1-1, to=1-3]
	\end{tikzcd}\]
	and thus it follows from base change that the above composite is homotopic to
	\[
	\Cc[B] \xrightarrow{(\pr_B)^*} \Cc[B \times A] \xrightarrow{(f,1)^*} \Cc[A] \xrightarrow{(f,1)_!} \Cc[B \times A] \xrightarrow{(\pr_A)_!} \Cc[A].
	\]
	But this composite is the functor $f^*\colon \Cc[B] \to \Cc[A]$, as desired.
\end{proof}

Under the equivalence $\Fun_{\Cc}(\Cc[A],\Cc^A) \simeq \Cc^A(A) = \Cc(A \times A)$, the object $\Delta_!\unit_A$ corresponds to a $\Cc$-linear $\Bb$-functor $\Cc[A] \to \Cc^A$.

\begin{corollary}
	\label{cor:Free_VS_Cofree_CLinear_Category}
	For every object $A \in \Bb$, the $\Bb$-functor $\Cc[A] \to \Cc^A$ is an equivalence of $\Cc$-linear $\Bb$-categories. 
	Furthermore, for every map $f\colon A \to B$ in $\Bb$, the following diagrams commute:
	\[
	\begin{tikzcd}
		\Cc[B] \rar{\simeq} \dar[swap]{f^*} & {\Cc^B} \dar{f^*} \\
		\Cc[A] \rar{\simeq} & {\Cc^A}
	\end{tikzcd}
	\qquad
	\text{ and }
	\qquad
	\begin{tikzcd}
		\Cc[A] \rar{\simeq} \dar[swap]{f_!} & {\Cc^A} \dar{f_!} \\
		\Cc[B] \rar{\simeq} & {\Cc^B}.
	\end{tikzcd}
	\]
\end{corollary}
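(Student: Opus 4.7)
My plan is to build the equivalence $\Cc[A] \iso \Cc^A$ from the self-duality of $\Cc[A]$, and then identify it with the $\Bb$-functor $F\colon \Cc[A] \to \Cc^A$ determined by $\Delta_!\unit_A$ by tracing its classifying kernel through a sequence of universal identifications. Specifically, \Cref{cor:FreeCLinearCategoryIsDualizable} supplies a $\Cc$-linear equivalence $\Cc[A] \iso \Cc[A]^\vee := \ul{\Fun}_{\Cc}(\Cc[A], \Cc)$, and \Cref{cor:MappingOutOfFreeCModule} (with $\Dd = \Cc$) gives a $\Cc$-linear equivalence $\ul{\Fun}_{\Cc}(\Cc[A], \Cc) \iso \Cc^A$; composing these produces a $\Cc$-linear equivalence $\Phi\colon \Cc[A] \iso \Cc^A$, which I will show equals $F$.

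To identify $\Phi$ with $F$, it suffices to show that both correspond to $\Delta_!\unit_A$ under the equivalence $\Fun_{\Cc}(\Cc[A], \Cc^A) \simeq \Cc(A \times A)$ of \Cref{cor:MappingOutOfFreeCModule}. Using tensor-hom in $\Mod_{\Cc}(\PrL(\Bb))$ and the identification $\Cc[A] \otimes_{\Cc} \Cc[A] \simeq \Cc[A \times A]$, this equivalence can be rewritten as the chain
\[
\Fun_{\Cc}(\Cc[A], \Cc^A) \simeq \Fun_{\Cc}(\Cc[A] \otimes_{\Cc} \Cc[A], \Cc) \simeq \Fun_{\Cc}(\Cc[A \times A], \Cc) \simeq \Cc(A \times A),
\]
and by definition of the self-duality, $\Phi$ is carried to the evaluation map $\ev = A_!\Delta^*$ from \Cref{cor:FreeCLinearCategoryIsDualizable}. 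The right-hand diagram of \Cref{prop:FreeCofreeCLinearCategories}(5), applied to $f = \Delta\colon A \to A \times A$ with $\Dd = \Cc$, shows that precomposition with $\Delta^*$ corresponds to $\Delta_!\colon \Cc(A) \to \Cc(A \times A)$; since $A_!\colon \Cc[A] \to \Cc$ is the counit of $(\pi_A)_! \dashv (\pi_A)^*$ and is therefore classified by the monoidal unit $\unit_A \in \Cc(A)$, we conclude that $\ev = A_! \circ \Delta^*$ is classified by $\Delta_!\unit_A$, as required.

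For the two commutative squares, I combine \Cref{lem:DualsOfBaseChangeFunctors} with \Cref{prop:FreeCofreeCLinearCategories}(5). By the left diagram of \Cref{lem:DualsOfBaseChangeFunctors}, the morphism $f^*\colon \Cc[B] \to \Cc[A]$ becomes $(f_!)^\vee = - \circ f_!$ under the self-duality $\Cc[-] \simeq \Cc[-]^\vee$, and by the left diagram of \Cref{prop:FreeCofreeCLinearCategories}(5), precomposition with $f_!$ corresponds under $\Cc[-]^\vee \simeq \Cc^-$ to $f^*\colon \Cc^B \to \Cc^A$; this gives the first square. A fully analogous argument using the right diagrams of both results yields the second square.

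The main technical hurdle is the middle paragraph: keeping track of the natural identifications carefully enough that the self-duality evaluation map $A_!\Delta^*$ is correctly translated into the object $\Delta_!\unit_A \in \Cc(A \times A)$. Once the classifying kernel of $\Phi$ has been pinned down, the rest of the argument, including the naturality squares, is formal.
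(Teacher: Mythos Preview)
Your proposal is correct and follows essentially the same approach as the paper. Both build $\Phi$ as the composite of the self-duality equivalence from \Cref{cor:FreeCLinearCategoryIsDualizable} with \Cref{cor:MappingOutOfFreeCModule}, then identify its classifying object as $\Delta_!\unit_A$ using \Cref{prop:FreeCofreeCLinearCategories}(5), and finally deduce the naturality squares from \Cref{lem:DualsOfBaseChangeFunctors} together with \Cref{prop:FreeCofreeCLinearCategories}(5). The only cosmetic difference is that the paper adjoints over to the map $\Cc \to \Cc^{A\times A}$ and recognizes it as $\Delta_! \circ A^*$ (then evaluates at $\unit$), whereas you stay on the side of the pairing $A_!\Delta^*\colon \Cc[A\times A] \to \Cc$ and read off its classifying object directly; these are dual viewpoints on the same computation.
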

\begin{proof}
	By \Cref{cor:FreeCLinearCategoryIsDualizable}, the pairing $\Cc[A] \otimes_{\Cc} \Cc[A] \to \Cc$ adjoints over to an equivalence $\Cc[A] \iso \ul{\Fun}_{\Cc}(\Cc[A],\Cc)$. Composing this with the equivalence $\ul{\Fun}_{\Cc}(\Cc[A],\Cc) \simeq \Cc^A$ from \Cref{cor:MappingOutOfFreeCModule} gives an equivalence $\Cc[A] \xrightarrow{\sim} \Cc^A$. It remains to show that this is the $\Bb$-functor $\Cc[A] \to \Cc^A$ classified by $\Delta_!\unit_A \in \Cc^A(A)$. Adjoining over once more, it suffices to show that the dual $\Cc \to \Cc^{A \times A}$ of the pairing $\Cc[A \times A] \to \Cc$ is classified by the object $\Delta_!\unit_A \in \Cc(A \times A)$. But by \Cref{prop:FreeCofreeCLinearCategories}\eqref{it:RestrictionInduction} and the construction of the pairing, this is the composite $\Cc \xrightarrow{A^*} \Cc^A \xrightarrow{\Delta_!} \Cc^{A \times A}$. This proves the first claim.
	
	The two commutative diagrams follow from a combination of \Cref{lem:DualsOfBaseChangeFunctors} and \Cref{prop:FreeCofreeCLinearCategories}\eqref{it:RestrictionInduction}.
\end{proof}

\subsubsection{Classification of \texorpdfstring{$\Cc$}{C}-linear functors}
As a result of \Cref{cor:Free_VS_Cofree_CLinear_Category}, the $\Cc$-linear $\Bb$-category $\Cc^A$ is, in a precise sense, the free $\Cc$-linear $\Bb$-category on $A$: any $\Cc$-linear $\Bb$-functor $\Cc^A \to \Dd$ to another $\Cc$-linear $\Bb$-category $\Dd$ is fully determined by where it sends the object $\Delta_!\unit_A \in \Cc(A \times A) = \Cc^A(A)$.

\begin{corollary}
	\label{cor:Classification_CLinear_Functors}
	For an object $A \in \Bb$ and a $\Cc$-linear $\Bb$-category $\Dd$, the composite
	\[
	\ul{\Fun}_{\Cc}(\Cc^A,\Dd) \xrightarrow{(-)^A} \ul{\Fun}_{\Cc}(\Cc^{A\times A},\Dd^A) \xrightarrow{\ev_{\Delta_!\unit_A}} \Dd^A
	\]
	is an equivalence of $\Cc$-linear $\Bb$-categories, where the last map denotes evaluation at $\Delta_! \unit_A \in \Cc(A \times A)$.
\end{corollary}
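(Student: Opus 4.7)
My plan is to combine \Cref{cor:Free_VS_Cofree_CLinear_Category} with \Cref{cor:MappingOutOfFreeCModule} via a parametrized Yoneda argument. To begin, observe that the $\Bb$-functor $\ul{\Fun}_{\Cc}(\Cc^A,-)\colon \Mod_{\Cc}(\PrL_{\Bb}) \to \Mod_{\Cc}(\PrL_{\Bb})$ is tautologically corepresented by $\Cc^A$, whereas $(-)^A$ is corepresented by $\Cc[A]$ thanks to \Cref{cor:MappingOutOfFreeCModule}. The Yoneda lemma therefore classifies natural transformations $\ul{\Fun}_{\Cc}(\Cc^A,-) \Rightarrow (-)^A$ by $\Cc$-linear $\Bb$-functors $\Cc[A] \to \Cc^A$; applying \Cref{cor:MappingOutOfFreeCModule} once more identifies the latter with elements of $(\Cc^A)^A(1) = \Cc(A \times A)$. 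Concretely, a natural transformation $\phi$ corresponds to the element $\phi_{\Cc^A}(\id_{\Cc^A}) \in \Cc(A \times A)$.

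Next I will identify the element of $\Cc(A \times A)$ associated to the composite $\ev_{\Delta_!\unit_A} \circ (-)^A$ appearing in the statement. For $F = \id_{\Cc^A}$, the construction $(-)^A$ produces $\id_{\Cc^{A\times A}}$, and evaluation at $\Delta_!\unit_A$ returns $\Delta_!\unit_A \in \Cc(A \times A)$. By \Cref{cor:Free_VS_Cofree_CLinear_Category}, this element classifies precisely the $\Cc$-linear equivalence $\Cc[A] \iso \Cc^A$. Hence Yoneda identifies the composite with the natural equivalence $\ul{\Fun}_{\Cc}(\Cc^A,-) \iso \ul{\Fun}_{\Cc}(\Cc[A],-) \simeq (-)^A$ obtained by pre-composing along $\Cc[A] \iso \Cc^A$ and invoking \Cref{cor:MappingOutOfFreeCModule}. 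In particular, the composite is an equivalence of $\Cc$-linear $\Bb$-categories for every $\Dd$, as claimed.

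The argument is essentially formal and no step presents a genuine obstacle; the only subtle point is to justify the parametrized Yoneda correspondence in the $\Cc$-linear setting, which is routine given the formalism of \cite{martini2021yoneda, martiniwolf2022presentable}.
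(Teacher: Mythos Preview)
Your proof is correct and follows essentially the same approach as the paper: both arguments use a Yoneda-style reduction, recognizing that the composite is a natural transformation between corepresentable functors and is therefore determined by its value on the identity of $\Cc^A$, which is $\Delta_!\unit_A$; \Cref{cor:Free_VS_Cofree_CLinear_Category} then identifies this with the equivalence $\Cc[A]\iso\Cc^A$, whence the composite is an equivalence via \Cref{cor:MappingOutOfFreeCModule}. The paper's presentation first transports along $\Cc[A]\simeq\Cc^A$ and then checks on the identity, while you phrase it directly as a Yoneda classification, but the content is the same.
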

\begin{proof}
	Let $U \in \Cc[A](A)$ denote the object classified by the identity $\Cc[A] \to \Cc[A]$ under the equivalence $\Fun_{\Cc}(\Cc[A],\Cc[A]) \simeq \Cc[A](A)$. By \Cref{cor:Free_VS_Cofree_CLinear_Category}, it suffices to show the statement for $\Cc[A]$ instead of $\Cc^A$: the composite
	\[
	\ul{\Fun}_{\Cc}(\Cc[A],\Dd) \xrightarrow{(-)^A} \ul{\Fun}_{\Cc}(\Cc[A]^A,\Dd^A) \xrightarrow{\ev_{U}} \Dd^A
	\]
	is an equivalence of $\Cc$-linear $\Bb$-categories. We claim that this composite is simply the equivalence $\ul{\Fun}_{\Cc}(\Cc[A],\Dd) \xrightarrow{\sim} \Dd^A$ from \Cref{cor:MappingOutOfFreeCModule}. Indeed, as both are functorial in $\Dd$ it suffices to show they agree for the identity functor on $\Dd = \Cc[A]$, in which case it holds by definition of the object $U$.
\end{proof}

\begin{observation}
	Since the functor $(-)^A\colon \PrL(\Bb) \to \PrL(\Bb)$ is lax symmetric monoidal (as it is the composite of the symmetric monoidal functor $\pi_A^*\colon \PrL(\Bb) \to \xPrLB{A}$ and its right adjoint $(\pi_A)_*$), the $\Bb$-category $\Dd^A$ is canonically tensored over $\Cc^A$. We claim that the inverse of the equivalence of \Cref{cor:Classification_CLinear_Functors} is adjoint to the composite $\Dd^A \otimes_{\Cc} \Cc^A \xrightarrow{- \otimes_A -} \Dd^A \xrightarrow{A_!} \Dd$. To see this, it suffices to show that the composite $\Dd^A \to \ul{\Fun}_{\Cc}(\Cc^A,\Dd) \xrightarrow{\sim} \Dd^A$ is equivalent to the identity. Expanding definitions, this composite is given by
	\[
	\Dd^A \xrightarrow{\pr_1^*} \Dd^{A \times A} \simeq \Dd^{A \times A} \otimes \Cc \xrightarrow{1 \otimes \Delta_!\unit_A} \Dd^{A \times A} \otimes_{\Cc} \Cc^{A \times A} \xrightarrow{-\otimes_{A \times A}-} \Dd^{A \times A} \xrightarrow{{\pr_2}_!} \Dd^A.
	\]
	By the projection formula, this is equivalent to the composite
	\[
		{\pr_2}_!\Delta_!(\Delta^*\pr_1^*(-) \otimes_A \unit_A)\colon \Dd^A \to \Dd^A,
	\]
	which is equivalent to the identity since $\Delta\colon A \to A \times A$ is a section of both projections $\pr_1,\pr_2\colon A \times A \to A$.
\end{observation}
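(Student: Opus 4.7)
The plan is to verify the claim by identifying the proposed inverse with the actual inverse of the equivalence $E\colon \ul{\Fun}_{\Cc}(\Cc^A,\Dd) \xrightarrow{\sim} \Dd^A$ of \Cref{cor:Classification_CLinear_Functors}. Write $\Phi\colon \Dd^A \to \ul{\Fun}_{\Cc}(\Cc^A,\Dd)$ for the $\Cc$-linear $\Bb$-functor adjoint to $\mu \coloneqq A_! \circ (-\otimes_A -)\colon \Dd^A \otimes_{\Cc} \Cc^A \to \Dd$. Since $E$ is already known to be an equivalence, to prove that $\Phi = E^{-1}$ it is enough to establish $E \circ \Phi \simeq \id_{\Dd^A}$, which is precisely the reduction indicated in the Observation. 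The argument then splits into an \emph{expansion} step followed by a \emph{simplification} step.

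For the expansion step, the goal is to unpack $E \circ \Phi$ into the explicit composite displayed in the Observation. I will use three inputs: first, $E = \ev_{\Delta_!\unit_A} \circ (-)^A$ and $(-)^A = (\pi_A)_*\pi_A^*$, so that passing to the $A$-parametrized family amounts to pulling back along $\pr_1$; second, the $\Cc^A$-module structure on $\Dd^A$ inherited from the lax symmetric monoidality of $(-)^A\colon \PrL(\Bb) \to \PrL(\Bb)$ identifies $\mu$ at the parametrized level with the tensoring $-\otimes_{A\times A}-$ followed by $(\pr_2)_!$; and third, evaluation at the specific object $\Delta_!\unit_A \in \Cc(A\times A)$ amounts to tensoring with $1 \otimes \Delta_!\unit_A$. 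Stringing these together produces the composite
\[
\Dd^A \xrightarrow{\pr_1^*} \Dd^{A \times A} \simeq \Dd^{A \times A} \otimes \Cc \xrightarrow{1 \otimes \Delta_!\unit_A} \Dd^{A \times A} \otimes_{\Cc} \Cc^{A\times A} \xrightarrow{-\otimes_{A\times A}-} \Dd^{A\times A} \xrightarrow{(\pr_2)_!} \Dd^A.
\]

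For the simplification step, I will apply the left projection formula, which is built into the definition of a presentably symmetric monoidal $\Bb$-category, to commute $\Delta_!$ past the monoidal structure. This rewrites the composite as $(\pr_2)_! \Delta_!\bigl(\Delta^* \pr_1^*(-) \otimes_A \unit_A\bigr)$. Since $\pr_1 \circ \Delta = \id_A = \pr_2 \circ \Delta$, the pairs $\Delta^*\pr_1^*$ and $(\pr_2)_!\Delta_!$ both collapse to the identity; combined with the fact that tensoring with the monoidal unit $\unit_A$ is the identity, this yields $E \circ \Phi \simeq \id_{\Dd^A}$, completing the argument.

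The main obstacle is the expansion step: carefully identifying $E \circ \Phi$ with the displayed composite requires juggling several strands of the parametrized formalism simultaneously—the base-change description of $(-)^A$, the $\Cc^A$-module structure on $\Dd^A$ furnished by the laxness of $(-)^A$, and the compatibility of the adjunction $\mu \dashv \Phi$ with both of these. Once this identification is secured, the remainder is a formal consequence of the projection formula and the section relations $\pr_i \circ \Delta = \id_A$.
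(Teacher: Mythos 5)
Your proposal is correct and follows the same route as the paper: reduce to checking that $E \circ \Phi \simeq \id_{\Dd^A}$, expand this composite into the same displayed formula, then simplify using the projection formula and the fact that $\Delta$ is a section of both projections. The only (minor, helpful) addition is your explicit remark that one-sided invertibility suffices because $E$ is already known to be an equivalence; the paper leaves this implicit in the phrase "it suffices to show."
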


Specializing to $\Dd = \Cc^B$, we arrive at the main result of this subsection:

\begin{theorem}
	\label{thm:Classification_CLinear_Functors}
	For objects $A, B \in \Bb$, evaluation at $\Delta_!\unit_A \in \Cc^A(A)$ induces an equivalence of $\Cc$-linear $\Bb$-categories
	\[
	\ul{\Fun}_{\Cc}(\Cc^A,\Cc^{B}) \iso \Cc^{A \times B}. 
	\]
	The inverse is adjoint to the composite
	\[
	\Cc^{A \times B} \otimes \Cc^A \xrightarrow{1 \otimes \pr_A^*} \Cc^{A \times B} \otimes \Cc^{A \times B} \xrightarrow{- \otimes_{A \times B} -} \Cc^{A \times B} \xrightarrow{{\pr_B}_!} \Cc^B. \qednow
	\]
\end{theorem}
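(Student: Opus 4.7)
The plan is to deduce the theorem by specializing Corollary \ref{cor:Classification_CLinear_Functors} to the $\Cc$-linear $\Bb$-category $\Dd \coloneqq \Cc^B$. The starting observation is that there is a canonical identification $(\Cc^B)^A \simeq \Cc^{A \times B}$ of $\Cc$-linear $\Bb$-categories: both assign $C \in \Bb$ to $\Cc(A \times B \times C)$, naturally in $C$. Plugging this into Corollary \ref{cor:Classification_CLinear_Functors} directly yields the asserted equivalence $\ul{\Fun}_\Cc(\Cc^A, \Cc^B) \simeq \Cc^{A \times B}$ via evaluation at $\Delta_!\unit_A$.

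For the explicit formula describing the inverse, I would invoke the observation immediately preceding the theorem, which identifies the inverse of the equivalence in Corollary \ref{cor:Classification_CLinear_Functors} as adjoint to the composite $\Dd^A \otimes_\Cc \Cc^A \xrightarrow{- \otimes_A -} \Dd^A \xrightarrow{A_!} \Dd$. Specializing $\Dd = \Cc^B$, the task then reduces to rewriting the two building blocks $A_!$ and $- \otimes_A -$ in the concrete form appearing in the theorem statement.

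The identification $A_! \simeq (\pr_B)_!\colon \Cc^{A \times B} \to \Cc^B$ is the easier of the two: the canonical $\Bb$-functor $A^*\colon \Cc^B \to (\Cc^B)^A$ is fiberwise over $C \in \Bb$ the restriction $\Cc(B \times C) \to \Cc(B \times A \times C)$ along the projection, which agrees under our identification with the $\Cc$-linear $\Bb$-functor $(\pr_B)^*$ induced by the projection $\pr_B\colon A \times B \to B$ in $\Bb$. Passing to left adjoints in $\Mod_\Cc(\PrL(\Bb))$ yields the claim.

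The step I expect to require the most care is identifying the $\Cc^A$-tensoring on $\Cc^{A \times B}$ with the composite $\Cc^{A \times B} \otimes_\Cc \Cc^A \xrightarrow{1 \otimes \pr_A^*} \Cc^{A \times B} \otimes_\Cc \Cc^{A \times B} \xrightarrow{- \otimes_{A \times B} -} \Cc^{A \times B}$, since it compares two a priori different $\Cc^A$-module structures on $\Cc^{A \times B}$. I would verify this fiberwise over $C \in \Bb$: the tensoring inherited from the lax monoidal structure of $(-)^A$ is the natural $\Cc(A \times C)$-action on $\Cc(A \times B \times C)$ given by $(Y, X) \mapsto \pr_{A \times C}^*(Y) \otimes_{A \times B \times C} X$, while the composite above unwinds to exactly this formula under the identification $A \times B \times C \simeq B \times A \times C$; naturality in $C$ is automatic, as both descriptions are assembled from $\Cc$-linear $\Bb$-functors.
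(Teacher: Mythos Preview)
Your proposal is correct and matches the paper's approach exactly: the theorem is marked with a \qednow, meaning the paper treats it as an immediate specialization of Corollary~\ref{cor:Classification_CLinear_Functors} and the preceding Observation to $\Dd = \Cc^B$, using the evident identification $(\Cc^B)^A \simeq \Cc^{A \times B}$. The additional care you take in unwinding $A_!$ as $(\pr_B)_!$ and the $\Cc^A$-tensoring fiberwise is more detail than the paper provides, but it is the correct way to make the specialization explicit.
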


\begin{definition}
	\label{def:ClassifiedFunctor}
	Let $\Dd \in \Mod_{\Cc}(\PrL(\Bb))$ be a $\Cc$-linear $\Bb$-category and let $F\colon \Cc^A \to \Dd$ be a $\Cc$-linear $\Bb$-functor. We define $D_F := F_A(\Delta_!\unit_A) \in \Dd(A)$. Conversely, for an object $D \in \Dd(A)$, we let $F_D\colon \Cc^A \to \Dd$ denote the $\Cc$-linear $\Bb$-functor $F_D = A_!(- \otimes_A D)\colon \Cc^A \to \Dd$.
\end{definition}

More informally, \Cref{cor:Classification_CLinear_Functors} says that every $\Cc$-linear $\Bb$-functor $F\colon \Cc^A \to \Dd$ is naturally equivalent to the $\Bb$-functor $A_!(- \otimes_A D_F)$. One should be a bit careful with the interpretation of these symbols ``$A_!$'' and ``$\otimes_{A}$", as they depend on $\Dd$. For example, in the case of \Cref{thm:Classification_CLinear_Functors}, where $\Dd = \Cc^B$ for some $B \in \Bb$, the statement is that every $\Cc$-linear functor $F\colon \Cc^A \to \Cc^{B}$ is equivalent to ${\pr_{B}}_!(\pr_A^*(-) \otimes_{A \times B} D_F)$ for a unique $D_F \in \Cc^{B}(A) = \Cc(A \times B)$.

\begin{corollary}
	\label{cor:ClassifyingCLinearNaturalTransformations}
	Let $A \in \Bb$, let $\Dd$ be a $\Cc$-linear $\Bb$-category and let $F, G\colon \Cc^A \to \Dd$ be two $\Cc$-linear functors. Evaluation at $\Delta_!\unit_A$ induces an equivalence of spaces
	\begin{align*}
		\Nat_{\Cc}(F,G) \iso \Hom_{\Dd(A)}(D_F,D_G).
	\end{align*}
\end{corollary}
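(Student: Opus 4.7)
The plan is to deduce the statement as a direct consequence of \Cref{cor:Classification_CLinear_Functors}, essentially by taking mapping spaces on both sides of the classifying equivalence.

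First, I would pass from the equivalence of $\Cc$-linear $\Bb$-categories $\ev_{\Delta_!\unit_A}\colon \ul{\Fun}_{\Cc}(\Cc^A,\Dd) \iso \Dd^A$ provided by \Cref{cor:Classification_CLinear_Functors} to an equivalence of underlying $\infty$-categories by applying the global sections functor $\Gamma$. On the left side, $\Gamma\ul{\Fun}_{\Cc}(\Cc^A,\Dd)$ is by definition $\Fun_{\Cc}(\Cc^A,\Dd)$, whose morphism spaces are $\Nat_{\Cc}(-,-)$. On the right side, $\Gamma(\Dd^A) = \Dd^A(1) = \Dd(A)$, by the explicit description of $\Dd^A$ as the parametrized functor category given immediately after \Cref{def:Free_CLinear_BCategory}.

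Next, I would read off the action of $\ev_{\Delta_!\unit_A}$ on objects: by \Cref{def:ClassifiedFunctor}, it sends a $\Cc$-linear $\Bb$-functor $F\colon \Cc^A\to\Dd$ to $D_F = F_A(\Delta_!\unit_A) \in \Dd(A)$. Finally, since any equivalence of $\infty$-categories induces equivalences on all mapping spaces, evaluation at $\Delta_!\unit_A$ yields the desired equivalence
\[
\Nat_{\Cc}(F,G) = \Hom_{\Fun_{\Cc}(\Cc^A,\Dd)}(F,G) \iso \Hom_{\Dd(A)}(D_F,D_G).
\]

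There is no real obstacle here: the content is entirely in \Cref{cor:Classification_CLinear_Functors}, and this corollary is merely the extraction of the mapping-space statement from the equivalence of $\Bb$-categories. The only point requiring a line of verification is the identification of $\Gamma(\Dd^A)$ with $\Dd(A)$, which is immediate from $\Dd^A = (\pi_A)_*\pi_A^*\Dd$ evaluated at the terminal object $1 \in \Bb$.
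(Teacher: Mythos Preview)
Your proof is correct and follows the same approach as the paper: apply the equivalence of \Cref{cor:Classification_CLinear_Functors} and pass to mapping spaces in the underlying $\infty$-categories. The paper's proof is a one-liner to the same effect, so your added justifications for $\Gamma(\Dd^A)\simeq\Dd(A)$ and the identification of objects with $D_F$, $D_G$ simply spell out what the paper leaves implicit.
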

\begin{proof}
	This follows immediately from the fact that evaluation at $\Delta_!\unit_A$ is an equivalence from $\Fun_{\Cc}(\Cc^A,\Dd)$ to $\Dd(A)$ by \Cref{cor:Classification_CLinear_Functors}, so that it in particular induces equivalences on mapping spaces.
\end{proof}

\subsection{Formal inversions}
\label{subsec:Formal_Inversion}

If $\Cc$ is a presentably symmetric monoidal $\infty$-category and $S$ is a small subcategory of $\Cc$, there is another presentably symmetric monoidal $\infty$-category $\Cc[S^{-1}]$ equipped with the universal morphism $\Cc \to \Cc[S^{-1}]$ in $\CAlg(\PrL)$ out of $\Cc$ which sends all objects in $S$ to invertible objects, see Robalo \cite[Section~2.1]{robalo2015ktheory} and Hoyois \cite[Section~6.1]{hoyois2017sixoperations}. The goal of this subsection is to discuss a parametrized variant of this construction.

\begin{definition}[Formal inversion]
	\label{def:FormalInversion}
	Let $\Cc$ be a presentably symmetric monoidal $\Bb$-category, and let $S \subseteq \Cc$ be a full subcategory. A morphism $p\colon \Cc \to \Cc'$ in $\CAlg(\PrL(\Bb))$ is said to \textit{exhibit $\Cc'$ as a formal inversion of $S$ in $\Cc$} if for every other $\Dd \in \CAlg(\PrL(\Bb))$ the map of spaces
	\[
	\Hom_{\CAlg(\PrL(\Bb))}(\Cc',\Dd) \to \Hom_{\CAlg(\PrL(\Bb))}(\Cc,\Dd)
	\]
	given by precomposition with $p$ is a monomorphism hitting those path components corresponding to $\Bb$-functors $F\colon \Cc \to \Dd$ which invert the objects of $S$: for every $A \in \Bb$, the functor $F_A\colon \Cc(A) \to \Dd(A)$ sends every object $X \in S(A)$ to an invertible object in $\Dd(A)$.
\end{definition}

If such formal inversion $p\colon \Cc \to \Cc'$ exists, it is uniquely determined by this universal property and we will denote it by $p\colon \Cc \to \Cc[S^{-1}]$. It is not clear to the author in what generality parametrized formal inversions can be expected to exist. In this article, we will contend ourselves with some specific situations in which the formal inversion can be shown to exist. 

A first such situation is when the subcategory $S$ is generated by a (non-parametrized) full subcategory $S_0 \subseteq \Cc(1)$, in the following sense:

\begin{definition}
	\label{def:SubcategoryGenerated}
	Let $\Cc$ be a $\Bb$-category and let $S \subseteq \Cc$ be a subcategory. Given a small subcategory $S_0 \subseteq \Gamma(S) \subseteq \Gamma(\Cc)$, we will say that \textit{$S$ is generated by $S_0$} if the following condition is satisfied: for every $X \in S(B)$, there exists an effective epimorphism $f\colon A \to B$ in $\Bb$ and an object $Y \in S_0 \subseteq \Cc(1) = \Gamma(\Cc)$ such that $f^*X \simeq A^*Y$.
	
	Note that in this case, a morphism $F\colon \Cc \to \Dd$ in $\CAlg(\PrL(\Bb))$ inverts the objects of $S$ if and only if the underlying functor $\Gamma(F)\colon \Gamma(\Cc) \to \Gamma(\Dd)$ inverts the objects of $S_0$.
\end{definition}

\begin{definition}
	\label{def:InvertGlobalObjects}
	Let $\Cc \in \CAlg(\PrL(\Bb))$ and let $S \subseteq \Cc$ be a symmetric monoidal subcategory which is generated by a small subcategory $S_0 \subseteq \Gamma(\Cc)$. Let $\Gamma(\Cc)[S_0^{-1}]$ denote the (non-parametrized) formal inversion of $S_0$ in $\Gamma(\Cc)$. We define the commutative $\Cc$-algebra $\Cc[S_0^{-1}]$ in $\PrL(\Bb)$ as the image of $\Gamma(\Cc)[S_0^{-1}]$ under the adjunction
	\[
	- \otimes_{\Gamma(\Cc)} \Cc \colon \CAlg_{\Gamma(\Cc)}(\PrL) \rightleftarrows \CAlg_{\Cc}(\PrL(\Bb))\noloc \Gamma^{\Cc}
	\]
	from \Cref{lem:AdjunctionCLinearCategories}.
\end{definition}

\begin{proposition}
	\label{prop:InvertingObjectsGloballySuffices}
	Let $\Cc \in \CAlg(\PrL(\Bb))$ and let $S \subseteq \Cc$ be a symmetric monoidal subcategory which is generated by $S_0 \subseteq \Gamma(\Cc)$. Then the symmetric monoidal left adjoint $\Cc \to \Cc[S_0^{-1}]$ is a formal inversion of $S$ in $\Cc$.
\end{proposition}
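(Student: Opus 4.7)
The plan is to verify the universal property of \Cref{def:FormalInversion} by fibering over the space of structure maps $\Cc \to \Dd$ and reducing to the non-parametrized universal property of $\Gamma(\Cc)[S_0^{-1}]$. Fix $\Dd \in \CAlg(\PrL(\Bb))$; precomposition with the unit $p\colon \Cc \to \Cc[S_0^{-1}]$ induces a map
\[
p^*\colon \Hom_{\CAlg(\PrL(\Bb))}(\Cc[S_0^{-1}], \Dd) \to \Hom_{\CAlg(\PrL(\Bb))}(\Cc, \Dd),
\]
whose fiber over a point $F\colon \Cc \to \Dd$ is canonically identified with the mapping space $\Hom_{\CAlg_{\Cc}(\PrL(\Bb))}(\Cc[S_0^{-1}], \Dd)$, where $\Dd$ is regarded as a $\Cc$-algebra via $F$. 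Showing that $p^*$ is a monomorphism hitting precisely the $F$ that invert $S$ thus reduces to showing that each such fiber is contractible when $F$ inverts $S$ and empty otherwise.

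Since the left adjoint $- \otimes_{\Gamma(\Cc)} \Cc$ of \Cref{lem:AdjunctionCLinearCategories} is symmetric monoidal, the adjunction lifts to one between commutative algebras, and $\Cc[S_0^{-1}]$ is by construction the image of $\Gamma(\Cc)[S_0^{-1}] \in \CAlg_{\Gamma(\Cc)}(\PrL)$ under this lift. We thus obtain a natural equivalence of spaces
\[
\Hom_{\CAlg_{\Cc}(\PrL(\Bb))}(\Cc[S_0^{-1}], \Dd) \simeq \Hom_{\CAlg_{\Gamma(\Cc)}(\PrL)}(\Gamma(\Cc)[S_0^{-1}], \Gamma^{\Cc}(\Dd)).
\]
By \Cref{lem:AdjunctionCLinearCategories}, the underlying $\PrL$-object of $\Gamma^{\Cc}(\Dd)$ is $\Gamma(\Dd)$, with $\Gamma(\Cc)$-algebra structure given by $\Gamma(F)\colon \Gamma(\Cc) \to \Gamma(\Dd)$. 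By the universal property of the non-parametrized formal inversion $\Gamma(\Cc) \to \Gamma(\Cc)[S_0^{-1}]$ (applied in the slice $\CAlg_{\Gamma(\Cc)}(\PrL)$), the right-hand hom-space is contractible if $\Gamma(F)$ inverts every object of $S_0$ and empty otherwise.

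It remains to match the two inversion conditions. The observation following \Cref{def:SubcategoryGenerated} — namely, that every $X \in S(B)$ pulls back along an effective epimorphism to an object of the form $A^*Y$ for $Y \in S_0$, so that invertibility of $F(X)$ in $\Dd(B)$ follows by descent from invertibility of $F(Y) = \Gamma(F)(Y)$ in $\Gamma(\Dd)$ — gives that $F$ inverts $S$ if and only if $\Gamma(F)$ inverts $S_0$. Chaining the identifications completes the proof. The only real technicality is the bookkeeping around the symmetric monoidal lift of the adjunction of \Cref{lem:AdjunctionCLinearCategories} to commutative algebras, together with the identification of the fiber of $p^*$ with the $\CAlg_{\Cc}$-mapping space; both are formal once that lemma is in hand.
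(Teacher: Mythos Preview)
Your proof is correct and follows essentially the same approach as the paper: both use the adjunction of \Cref{lem:AdjunctionCLinearCategories} lifted to commutative algebras to reduce to the non-parametrized universal property of $\Gamma(\Cc)[S_0^{-1}]$, combined with the observation from \Cref{def:SubcategoryGenerated} that $F$ inverts $S$ if and only if $\Gamma(F)$ inverts $S_0$. Your version spells out the fiber identification more carefully, but the argument is the same.
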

\begin{proof}
	Let $F\colon \Cc \to \Dd$ be morphism in $\CAlg(\PrL(\Bb))$. By the adjunction between $- \otimes_{\Gamma(\Cc)} \Cc$ and $\Gamma^{\Cc}$, there is a one-to-one correspondence between morphisms $\Cc[S_0^{-1}] \to \Dd$ of commutative $\Cc$-algebras and morphisms $\Gamma(\Cc)[S_0^{-1}] \to \Gamma(\Dd)$ of commutative $\Gamma(\Cc)$-algebras. The claim thus follows from the universal property of the formal inversion $\Gamma(\Cc) \to \Gamma(\Cc)[S_0^{-1}]$, combined with the fact that the $\Bb$-functor $F\colon \Cc \to \Dd$ inverts the objects of $S$ if and only if the underlying functor $\Gamma(F) \colon \Gamma(\Cc) \to \Gamma(\Dd)$ inverts the objects of $S_0$.
\end{proof}

\begin{observation}
	\label{obs:GloballyInvertingMeansPointwiseInverting}
	In the situation of \Cref{def:InvertGlobalObjects}, it follows directly from fully faithfulness of the functor $\Cc \otimes_{\Gamma(\Cc)}\colon \Mod_{\Gamma(\Cc)}(\PrL) \hookrightarrow \Mod_{\Cc}(\PrL(\Bb))$ that the underlying $\infty$-category of $\Cc[S_0^{-1}]$ is given by the non-parametrized formal inversion $\Gamma(\Cc)[S_0^{-1}]$ of $S_0$ in $\Gamma(\Cc)$. In particular, \Cref{prop:InvertingObjectsGloballySuffices} not only shows that the formal inversion of $S$ in $\Cc$ exists, but also that its underlying morphism in $\CAlg(\PrL)$ is a non-parametrized formal inversion.
	
	More generally, evaluating the map $\Cc \to \Cc[S_0^{-1}]$ at an object $A \in \Bb$ gives a symmetric monoidal left adjoint
	\[
	\Cc(A) \to \Cc[S_0^{-1}](A)
	\]
	which exhibits $\Cc[S_0^{-1}](A)$ as a non-parametrized formal inversion of $S(A)$ in $\Cc(A)$. Indeed, this follows immediately by applying the above observation to the slice category $\Bb_{/A}$, where we use that passing to slices preserves formal inversions by \Cref{prop:Invert_Objects_Locally} below.
\end{observation}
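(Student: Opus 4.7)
The plan is to deduce both claims from the fully faithfulness of the left adjoint $- \otimes_{\Gamma(\Cc)} \Cc\colon \Mod_{\Gamma(\Cc)}(\PrL) \hookrightarrow \Mod_{\Cc}(\PrL(\Bb))$ established in \Cref{lem:AdjunctionCLinearCategories}, together with the fact (\Cref{prop:Invert_Objects_Locally}) that passing to slice topoi preserves parametrized formal inversions.

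For the first assertion, I would begin from the definition $\Cc[S_0^{-1}] = \Gamma(\Cc)[S_0^{-1}] \otimes_{\Gamma(\Cc)} \Cc$ and invoke fully faithfulness of $- \otimes_{\Gamma(\Cc)} \Cc$ to conclude that the unit of the adjunction yields an equivalence
\[
\Gamma(\Cc)[S_0^{-1}] \iso \Gamma^{\Cc}(\Cc[S_0^{-1}])
\]
of commutative $\Gamma(\Cc)$-algebras in $\PrL$. Since \Cref{lem:AdjunctionCLinearCategories} identifies the composite of $\Gamma^{\Cc}$ with the forgetful functor $\Mod_{\Gamma(\Cc)}(\PrL) \to \PrL$ as the underlying-category functor $\Gamma$, this equivalence computes $\Gamma(\Cc[S_0^{-1}])$ as the non-parametrized formal inversion $\Gamma(\Cc)[S_0^{-1}]$, with the underlying symmetric monoidal left adjoint of $p\colon \Cc \to \Cc[S_0^{-1}]$ recovered as the universal map $\Gamma(\Cc) \to \Gamma(\Cc)[S_0^{-1}]$.

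For the second assertion, I would transfer the first assertion along the pullback to the slice topos $\Bb_{/A}$. The key auxiliary step is the observation that $\pi_A^*S$ is a symmetric monoidal subcategory of $\pi_A^*\Cc$ generated by the full subcategory $S(A) \subseteq \Gamma(\pi_A^*\Cc) = \Cc(A)$: any object $X \in (\pi_A^*S)(B \to A) = S(B)$ admits, by the generation hypothesis for $S$ by $S_0$, an effective epimorphism $g\colon A' \to B$ in $\Bb$ and an object $Y \in S_0$ with $g^*X \simeq {A'}^*Y$, and since ${A'}^*Y \simeq (A' \to A)^*(A^*Y)$ with $A^*Y \in S(A)$, the generation condition is verified inside $\Bb_{/A}$. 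By \Cref{prop:Invert_Objects_Locally}, $\pi_A^*(\Cc[S_0^{-1}])$ is a parametrized formal inversion of $\pi_A^*S$ in $\pi_A^*\Cc$, so applying the first assertion in $\Bb_{/A}$ to the generating subcategory $S(A)$ identifies the underlying $\infty$-category $\Cc[S_0^{-1}](A)$ with the non-parametrized formal inversion $\Cc(A)[S(A)^{-1}]$, as required.

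I do not anticipate a serious obstacle. The entire argument is a bookkeeping exercise combining the unit of a fully faithful adjunction with the slice-compatibility of parametrized formal inversions; the only mildly delicate point is verifying that generation passes from $S$ to $\pi_A^*S$ in the slice, and this is a direct unwinding of the hypothesis.
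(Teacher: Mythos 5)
Your proposal is correct and follows the same route the paper sketches: fully faithfulness of $- \otimes_{\Gamma(\Cc)} \Cc$ for the first assertion, and transfer to the slice $\Bb_{/A}$ via \Cref{prop:Invert_Objects_Locally} for the second. You spell out two points the paper leaves implicit, and both are worth having: (i) the identification of $\Gamma(\Cc[S_0^{-1}])$ is obtained from the unit of the adjunction being an equivalence, combined with the fact from \Cref{prop:Embedding_BModules_Into_Presentable_BCategories} (used inside the proof of \Cref{lem:AdjunctionCLinearCategories}) that $\Gamma^{\Cc}$ followed by the forgetful functor is $\Gamma$; and (ii) the verification that $\pi_A^*S$ is generated by $S(A)$ in the sense of \Cref{def:SubcategoryGenerated}, using that effective epimorphisms in $\Bb_{/A}$ are created by the forgetful functor to $\Bb$ and that $A^*Y \in S(A)$ since $S$ is a parametrized subcategory. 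The one step you leave tacit is that once $\pi_A^*(\Cc[S_0^{-1}])$ is known to be a formal inversion of $\pi_A^*S$, one must invoke uniqueness of formal inversions to identify it with the \Cref{def:InvertGlobalObjects} construction relative to the generating set $S(A)$ before the first assertion (which is about that specific construction) can be applied; the paper is equally terse here, and the uniqueness is noted right after \Cref{def:FormalInversion}, so this is a negligible gap.
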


The upshot of \Cref{obs:GloballyInvertingMeansPointwiseInverting} is that the formal inversion in the above setting can be obtained as a \textit{pointwise} formal inversion, in the sense that $\Cc[S^{-1}]$ is given at $B \in \Bb$ as the non-parametrized formal inversion $\Cc(B)[S(B)^{-1}]$. This observation will lead to the construction of parametrized formal inversions in more general situations. We start by recalling the functoriality of the formal inversion construction in the non-parametrized setting.

\begin{definition}
	We define $\Cat_{\infty, \aug}$ as the full subcategory of $\Ar(\Cat_{\infty}) = \Fun(\Delta^1,\Cat_{\infty})$ spanned by the morphisms in $\Cat_{\infty}$ corresponding to a fully faithful functor $\iota_S\colon S \hookrightarrow \Cc$, where $S$ is a small $\infty$-category. The functor $\iota_S\colon S \hookrightarrow \Cc$ is called an \textit{augmentation} of the $\infty$-category $\Cc$ and the pair $(\Cc,\iota_S)$ is called an \textit{augmented $\infty$-category}. We will often abuse notation and write $(\Cc,S)$ for $(\Cc,\iota_S)$, identifying $S$ with its image in $\Cc$.
\end{definition}

Note that the forgetful functor $\Cat_{\infty, \aug} \to \Cat_{\infty}, (\Cc,S) \mapsto \Cc$ is faithful: for another augmented $\infty$-category $(\Dd,T)$, a morphism $(\Cc,S) \to (\Dd,T)$ in $\smash{\Cat_{\infty, \aug}}$ may be identified with a functor $\Cc \to \Dd$ which sends the full subcategory $S \subseteq \Cc$ into $T \subseteq \Dd$.

\begin{definition}
	\label{def:AugmentedPresSymCats}
	We define the $\infty$-category $\CAlg(\PrL)_{\aug}$ of \textit{augmented presentably symmetric monoidal $\infty$-categories} as the pullback
	\[
	\begin{tikzcd}
		\CAlg(\PrL)_{\aug} \rar \dar \drar[pullback] & \Cat_{\infty, \aug} \dar \\
		\CAlg(\PrL) \rar & \Cat_\infty.
	\end{tikzcd}
	\]
	Its objects are pairs $(\Cc,S)$ of a presentably symmetric monoidal $\infty$-category $\Cc$ equipped with an augmentation $\iota_S\colon S \hookrightarrow \Cc$.
	
	We define a section $(-)_{\inv}\colon\CAlg(\PrL) \to \CAlg(\PrL)_{\aug}$ of the forgetful functor by equipping a symmetric monoidal $\Cc$ with its collection of invertible objects.
\end{definition}

\begin{lemma}
	\label{lem:FormalInversionFunctor}
	The functor $(-)_{\inv}\colon\CAlg(\PrL) \to \CAlg(\PrL)_{\aug}$ admits a left adjoint
	\[
	\Ll\colon \CAlg(\PrL)_{\aug} \to \CAlg(\PrL)
	\]
	given on objects by sending a pair $(\Cc,S)$ to the formal inversion $\Cc[S^{-1}]$.
\end{lemma}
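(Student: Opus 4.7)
The plan is to construct $\Ll$ by verifying pointwise representability and then invoking the standard criterion for the existence of an adjoint (e.g., \cite[Proposition~5.2.4.2]{lurie2009HTT}). First I would unpack the mapping spaces in $\CAlg(\PrL)_{\aug}$. Since the forgetful functor $\Cat_{\infty,\aug} \to \Cat_\infty$ is faithful and $\CAlg(\PrL)_{\aug}$ is defined as a pullback, for any $(\Cc,S) \in \CAlg(\PrL)_{\aug}$ and $(\Dd,T) \in \CAlg(\PrL)_{\aug}$ the mapping space $\Hom_{\CAlg(\PrL)_{\aug}}((\Cc,S),(\Dd,T))$ is identified with the subspace of $\Hom_{\CAlg(\PrL)}(\Cc,\Dd)$ consisting of those morphisms $F\colon \Cc \to \Dd$ whose underlying functor carries the full subcategory $S \subseteq \Cc$ into $T \subseteq \Dd$. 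Specializing to $(\Dd,T) = \Dd_{\inv} = (\Dd, \Dd_{\inv})$, this subspace is exactly the collection of morphisms $F\colon \Cc \to \Dd$ in $\CAlg(\PrL)$ which send every object of $S$ to an invertible object of $\Dd$.

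Next I would invoke the non-parametrized formal inversion of Robalo \cite[Section~2.1]{robalo2015ktheory} and Hoyois \cite[Section~6.1]{hoyois2017sixoperations} to produce, for each augmented $(\Cc,S)$, a morphism $\Cc \to \Cc[S^{-1}]$ in $\CAlg(\PrL)$ (which exists since $S$ is small by the definition of $\Cat_{\infty,\aug}$) with the universal property
\[
\Hom_{\CAlg(\PrL)}(\Cc[S^{-1}],\Dd) \simeq \{F \in \Hom_{\CAlg(\PrL)}(\Cc,\Dd) \mid F \text{ inverts } S\}.
\]
Combining this with the previous paragraph produces a natural equivalence
\[
\Hom_{\CAlg(\PrL)_{\aug}}((\Cc,S),\Dd_{\inv}) \simeq \Hom_{\CAlg(\PrL)}(\Cc[S^{-1}],\Dd),
\]
natural in $\Dd \in \CAlg(\PrL)$. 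By the standard representability criterion for the existence of an adjoint, this assembles into a left adjoint functor $\Ll\colon \CAlg(\PrL)_{\aug} \to \CAlg(\PrL)$ whose value on objects is $\Ll(\Cc,S) = \Cc[S^{-1}]$.

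The only technical point requiring attention is ensuring the above equivalence is natural in the argument $(\Cc,S)$ as well, so that the resulting assignment $(\Cc,S) \mapsto \Cc[S^{-1}]$ really upgrades to a functor. This however is automatic from the cited representability criterion, and the key input — that the universal property of the formal inversion is compatible with pullback along any morphism $(\Cc',S') \to (\Cc,S)$ in $\CAlg(\PrL)_{\aug}$ — is clear: such a morphism factors as a morphism in $\CAlg(\PrL)$ sending $S'$ into $S$, so any $\Cc \to \Dd$ inverting $S$ restricts to $\Cc' \to \Dd$ inverting $S'$. I expect no substantive obstacle, since all the heavy lifting has been done by Robalo and Hoyois.
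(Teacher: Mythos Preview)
Your proof is correct and follows essentially the same approach as the paper: both verify pointwise that $\Cc[S^{-1}]$ is a left adjoint object to $(\Cc,S)$ by identifying $\Hom_{\CAlg(\PrL)_{\aug}}((\Cc,S),\Dd_{\inv})$ with the subspace of maps $\Cc \to \Dd$ inverting $S$ and invoking the universal property of formal inversion. The paper's version is more terse and omits your final paragraph on naturality in $(\Cc,S)$, which as you correctly note is handled automatically by the representability criterion.
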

\begin{proof}
	It suffices to show that for every pair $(\Cc,S)$, the formal inversion $\Cc[S^{-1}]$ is a left adjoint object to $(\Cc,S)$ under the functor $(-)_{\inv}\colon\CAlg(\PrL) \to \CAlg(\PrL)_{\aug}$. This follows from the universal property of $\Cc[S^{-1}]$: for every $\Dd \in \CAlg(\PrL)$, precomposition with the functor $\Cc \to \Cc[S^{-1}]$ induces an inclusion of path components
	\[
	\Hom_{\CAlg(\PrL)}(\Cc[S^{-1}],\Dd) \hookrightarrow \Hom_{\CAlg(\PrL)}(\Cc,\Dd)
	\]
	whose image precisely consists of those functors $\Cc \to \Dd$ which send all objects of $S$ to invertible objects in $\Dd$, i.e.\ the space of maps in $\CAlg(\PrL)_{\aug}$ from $(\Cc,S)$ to $\Dd_{\inv}$.
\end{proof}

We will now construct the pointwise formal inversion $\Ll(\Cc,S)$ in case $\Bb = \PSh(T)$ is a presheaf topos.
\begin{construction}
	\label{cons:PointwiseFormalInversion}
	Let $T$ be a small $\infty$-category, let $\Bb = \PSh(T)$ and let $\Cc$ be a presentably symmetric monoidal $\Bb$-category equipped with a small full subcategory $S \subseteq \Cc$. We let $(\Cc,S)\colon T\catop \to \CAlg(\PrL)_{\aug}$ denote the lift of the functor $\Cc\colon T\catop \to \CAlg(\PrL)$ which equips the $\infty$-category $\Cc(B)$ with the augmentation $S(B)$ for every $B \in T$. Composing with the functor $\Ll$ gives rise to a new functor
	\[
	\Ll(\Cc,S)\colon T\catop \to \CAlg(\PrL),
	\]
	given on objects by $B \mapsto \Cc(B)[S(B)^{-1}]$. This uniquely extends to a limit-preserving functor $\Ll(\Cc,S)\colon \Bb\catop \to \CAlg(\PrL)$.
\end{construction}

We claim that under suitable conditions, $\Ll(\Cc,S)$ is a presentably symmetric monoidal $\Bb$-category modeling the parametrized formal inversion $\Cc[S^{-1}]$. To this end, we need some basic properties of (non-parametrized) formal inversions.

\begin{lemma}
	\label{lem:FormalInversionStableUnderFactors}
	Let $\Cc \in \CAlg(\PrL)$ be a presentably symmetric monoidal $\infty$-category and let $S \subseteq S'$ be two small subcategories of $\Cc$. Assume that for every object $X' \in S'$ there exists objects $X \in S$ and $Y \in \Cc$ such that $X \simeq X' \otimes Y$. Then any formal inversion $p\colon \Cc \to \Cc[S^{-1}]$ of $S$ in $\Cc$ also exhibits $\Cc[S^{-1}]$ as a formal inversion of $S'$ in $\Cc$.
\end{lemma}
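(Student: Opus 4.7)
The plan is to verify the universal property of \Cref{def:FormalInversion} directly for $p\colon \Cc \to \Cc[S^{-1}]$ with respect to $S'$. For every $\Dd \in \CAlg(\PrL)$, the map
\[
\Hom_{\CAlg(\PrL)}(\Cc[S^{-1}],\Dd) \to \Hom_{\CAlg(\PrL)}(\Cc,\Dd)
\]
induced by $p$ is already a monomorphism whose image hits precisely those symmetric monoidal left adjoints $F\colon \Cc \to \Dd$ which invert every object of $S$. It thus suffices to establish the set-theoretic equality
\[
\{F\colon \Cc \to \Dd \mid F \text{ inverts } S\} \;=\; \{F\colon \Cc \to \Dd \mid F \text{ inverts } S'\}
\]
on path components.

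The inclusion $\supseteq$ is immediate from $S \subseteq S'$. For the inclusion $\subseteq$, I would use the following observation: in a symmetric monoidal $\infty$-category, any tensor factor of an invertible object is invertible. Indeed, if $A \otimes B$ admits an inverse $W$, then $A \otimes (B \otimes W) \simeq \unit$ and, by symmetry, $(B \otimes W) \otimes A \simeq \unit$, exhibiting $B \otimes W$ as an inverse to $A$. Applying this, fix $F\colon \Cc \to \Dd$ inverting $S$ and take any $X' \in S'$. By assumption there exist $X \in S$ and $Y \in \Cc$ with $X \simeq X' \otimes Y$, so $F(X) \simeq F(X') \otimes F(Y)$ is invertible in $\Dd$, and hence $F(X')$ is invertible as a tensor factor. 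Therefore $F$ inverts $S'$, as desired.

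I do not anticipate any real obstacle here; the only subtle step is the symmetric monoidal tensor-factor argument, and in the $\infty$-categorical setting it still holds because invertibility is detected by the existence of a right inverse, which the symmetry upgrades to a two-sided inverse. Putting the two inclusions together, the map induced by $p$ is a monomorphism of spaces whose image is exactly the path components corresponding to functors inverting $S'$, which is precisely the universal property required of a formal inversion of $S'$ in $\Cc$.
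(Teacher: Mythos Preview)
Your proof is correct and follows essentially the same approach as the paper: both reduce to the observation that any symmetric monoidal left adjoint inverting $S$ automatically inverts $S'$ because tensor factors of invertible objects are invertible. You simply spell out the universal-property bookkeeping and the tensor-factor argument in more detail than the paper does.
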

\begin{proof}
	This is immediate from the observation that any symmetric monoidal left adjoint $F\colon \Cc \to \Dd$ which inverts $S$ must also invert $S'$: if $X \simeq X' \otimes Y$, we get $F(X) \simeq F(X') \otimes F(Y)$ and thus if $F(X)$ is invertible, so must be $F(X')$ and $F(Y)$.
\end{proof}

\begin{lemma}
	\label{lem:FormalInversionViaBaseChange}
	Let $F\colon \Cc \to \Dd$ be a morphism in $\CAlg(\PrL)$ and let $S \subseteq \Cc$ be a small subcategory. Then the canonical map
	\[
	\Dd \otimes_{\Cc} \Cc[S^{-1}] \to \Dd[F(S)^{-1}]
	\]
	obtained by adjunction from the $\Cc$-linear functor $\Cc[S^{-1}] \to \Dd[F(S)^{-1}]$ is an equivalence.
\end{lemma}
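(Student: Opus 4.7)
The plan is to verify the universal property of the formal inversion $\Dd[F(S)^{-1}]$ for the object $\Dd \otimes_{\Cc} \Cc[S^{-1}]$. Since $\Cc[S^{-1}]$ is constructed via the universal property in Lemma~\ref{lem:FormalInversionFunctor}, and the map in question is natural in the data, this is essentially a matter of unwinding adjunctions.

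Concretely, for any $\Ee \in \CAlg(\PrL)$, I would compute
\[
\Hom_{\CAlg(\PrL)_{\Dd/}}\bigl(\Dd \otimes_{\Cc} \Cc[S^{-1}], \Ee\bigr)
\]
in two steps. First, by the universal property of the relative tensor product in $\CAlg(\PrL) \simeq \CAlg(\PrL)_{\Cc/} \sqcup_{\Cc} \Cc$ (using that relative tensor products of commutative algebras are pushouts), a map $\Dd \otimes_{\Cc} \Cc[S^{-1}] \to \Ee$ of commutative $\Dd$-algebras is equivalent to a map $\Cc[S^{-1}] \to \Ee$ of commutative $\Cc$-algebras, where $\Ee$ is regarded as a $\Cc$-algebra via the composition $\Cc \xrightarrow{F} \Dd \to \Ee$. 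Second, by the universal property of $\Cc \to \Cc[S^{-1}]$ from Definition~\ref{def:FormalInversion}, such a map exists (uniquely) if and only if the composite $\Cc \to \Dd \to \Ee$ inverts all objects of $S$. Since the map $\Dd \to \Ee$ trivially sends $F(S)$ to $F(S)$, this condition is equivalent to the $\Dd$-algebra map $\Dd \to \Ee$ inverting $F(S)$, which by the universal property of $\Dd[F(S)^{-1}]$ is the same as
\[
\Hom_{\CAlg(\PrL)_{\Dd/}}\bigl(\Dd[F(S)^{-1}], \Ee\bigr).
\]

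Tracing through the identifications, one checks that these equivalences are induced by the canonical $\Dd$-algebra map $\Dd \otimes_{\Cc} \Cc[S^{-1}] \to \Dd[F(S)^{-1}]$ constructed in the statement, so it is an equivalence by the Yoneda lemma. I do not expect any serious obstacle: the only mildly delicate point is identifying the relative tensor product in $\CAlg(\PrL)$ with the pushout, but this is standard (commutative algebras in a presentably symmetric monoidal $\infty$-category have pushouts computed as relative tensor products), and it fits naturally into the framework already used for the adjunction in Lemma~\ref{lem:AdjunctionCLinearCategories}.
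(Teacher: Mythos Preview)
Your proposal is correct and follows essentially the same approach as the paper: both verify that $\Dd \otimes_{\Cc} \Cc[S^{-1}]$ and $\Dd[F(S)^{-1}]$ corepresent the same functor on $\CAlg(\PrL)_{\Dd/}$ and conclude by Yoneda. The paper's proof is terser, simply asserting that ``spelling out the adjunctions and universal properties'' shows both sides admit a unique map to a $\Dd$-algebra $\Ee$ precisely when $\Dd \to \Ee$ inverts $F(S)$; your version makes the pushout-and-formal-inversion steps explicit.
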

\begin{proof}
	It follows from spelling out the adjunctions and universal properties that both sides are $\Dd$-algebras in $\PrL$ which admit a (necessarily unique) symmetric monoidal $\Dd$-linear map into another $\Dd$-algebra $\Ee$ if and only if the unit map $\Dd \to \Ee$ carries the objects of $F(S)$ to invertible objects in $\Ee$. The claim thus follows from the Yoneda lemma.
\end{proof}

We are now ready to prove that in certain cases the pointwise formal inversion $\Ll(\Cc,S)$ represents the parametrized formal inversion $\Cc[S^{-1}]$.

\begin{proposition}
	\label{prop:PointwiseFormalInversalIsParametrizedLocalInversion}
	Let $\Bb = \PSh(T)$ be the $\infty$-topos of presheaves on some small $\infty$-category $T$. Let $\Cc$ be a presentably symmetric monoidal $\Bb$-category equipped with a full subcategory $S \subseteq \Cc$. Assume that the following property is satisfied:
	\begin{enumerate}
		\item[$(\ast)$] For every morphism $f\colon A \to B$ in $T \subseteq \Bb$ and every object $X \in S(A)$, there exists objects $Y \in S(B)$ and $Z \in \Cc(A)$ such that $f^*Y \simeq X \otimes Z \in \Cc(A)$.
	\end{enumerate}
	Then the functor $\Ll(\Cc,S)\colon \Bb\catop \to \CAlg(\PrL)$ is a presentably symmetric monoidal $\Bb$-category and the map $\Cc = \Ll(\Cc,\emptyset) \to \Ll(\Cc,S)$ exhibits $\Ll(\Cc,S)$ as a formal inversion of $S$ in $\Cc$.
\end{proposition}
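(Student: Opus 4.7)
The plan is to reduce everything to a single local identification: for every morphism $f\colon A \to B$ in $T$, the canonical comparison map
\[
\Cc(A) \otimes_{\Cc(B)} \Cc(B)[S(B)^{-1}] \iso \Cc(A)[S(A)^{-1}]
\]
is an equivalence. Granted this, the presentable structure of $\Cc$ transports to $\Ll(\Cc,S)$ by base-changing along the symmetric monoidal left adjoints $\Cc(B) \to \Cc(B)[S(B)^{-1}]$, and the universal property follows from the pointwise universal property of the formal inversions on $T$ via \Cref{lem:FormalInversionFunctor}.

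To prove the key identification, observe that the left-hand side equals $\Cc(A)[f^*S(B)^{-1}]$ by \Cref{lem:FormalInversionViaBaseChange}, so it suffices to show that $f^*S(B)$ and $S(A)$ yield the same formal inversion in $\Cc(A)$. Since $S$ is a sub-$\Bb$-category of $\Cc$, we have $f^*S(B) \subseteq S(A)$, so inverting $S(A)$ inverts $f^*S(B)$. Conversely, given $X \in S(A)$, condition $(\ast)$ provides $Y \in S(B)$ and $Z \in \Cc(A)$ with $f^*Y \simeq X \otimes Z$; after inverting $f^*S(B)$ the object $X \otimes Z$ becomes invertible, which forces $X$ itself to be invertible, since in any symmetric monoidal $\infty$-category a tensor factor of an invertible object is invertible (construct a right inverse and symmetrize via the braiding). \Cref{lem:FormalInversionStableUnderFactors} then yields the required equivalence.

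With this identification in hand, the left adjoint $\bar f_!$ to $\Ll(\Cc,S)(f)$ is obtained by base-changing the $\Cc(B)$-linear adjunction $f_! \dashv f^*$ (the $\Cc(B)$-linearity being the projection formula for $\Cc$) along $\Cc(B) \to \Cc(B)[S(B)^{-1}]$; since base change is a symmetric monoidal $2$-functor on $\PrL$, it preserves the adjunction, and the resulting right adjoint is identified with $\Ll(\Cc,S)(f)$ by uniqueness of the factorization through a formal inversion. Beck-Chevalley for a pullback square in $T$ and the left projection formula for $\Ll(\Cc,S)$ are inherited from their counterparts in $\Cc$ by base-changing the corresponding equivalences. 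Finally, since $\Bb = \PSh(T)$, the limit-preserving extension of $\Ll(\Cc,S)$ to $\Bb\catop$ together with the presentable structure already established on $T$ determines the required parametrized structure over all of $\Bb$, and the universal property is inherited pointwise.

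The main technical obstacle is the bookkeeping involved in extending the presentable structure cleanly from $T$ to arbitrary pullback squares and arbitrary morphisms in $\Bb$: the formal idea is that base change preserves all relevant data, but matching the various base-changed functors with the syntactic definition of $\Ll(\Cc,S)(f)$, and propagating Beck-Chevalley coherently across the Kan extension, requires some care.
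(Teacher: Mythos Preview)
Your core identification---that $\Cc(A)\otimes_{\Cc(B)}\Cc(B)[S(B)^{-1}]\simeq\Cc(A)[S(A)^{-1}]$ for every morphism $f\colon A\to B$ in $T$, via \Cref{lem:FormalInversionViaBaseChange} and \Cref{lem:FormalInversionStableUnderFactors} together with condition $(\ast)$---is exactly the engine of the paper's argument as well. Where you diverge is in how you leverage it. You attempt to \emph{directly} transport the presentable structure (left adjoints, Beck--Chevalley, projection formula) across the base change $\Cc(B)\to\Cc(B)[S(B)^{-1}]$, and you correctly flag that propagating this from morphisms in $T$ to arbitrary pullback squares in $\Bb=\PSh(T)$ is where the real work lies. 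That bookkeeping is genuine: Beck--Chevalley is required for \emph{all} pullback squares in $\Bb$, not just those living in $T$ (which need not have pullbacks at all), and the limit-extension step does not formally guarantee this.

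The paper sidesteps this entirely. Instead of verifying presentability directly, it passes to slices $\PSh(T_{/B})$ to reduce to the case where $T$ has a terminal object $B$, then observes that the subcategory $S$ is now generated (in the sense of \Cref{def:SubcategoryGenerated}) by $S_0:=S(B)\subseteq\Gamma(\Cc)$. This places us squarely in the situation of \Cref{prop:InvertingObjectsGloballySuffices} and \Cref{obs:GloballyInvertingMeansPointwiseInverting}, which already produce a presentably symmetric monoidal $\Bb$-category $\Cc[S_0^{-1}]$ together with a pointwise identification $\Cc[S_0^{-1}](A)\simeq\Cc(A)[S(A)^{-1}]$. One then checks that the canonical map $\Ll(\Cc,S)\to\Cc[S_0^{-1}]$ is a pointwise equivalence, so $\Ll(\Cc,S)$ inherits presentability for free. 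The universal property is handled similarly: the pointwise adjunction $\Ll\dashv(-)_{\inv}$ gives the fiberwise statement, and the remaining issue---that the extension $G\colon\Ll(\Cc,S)\to\Dd$ of a colimit-preserving $G'\colon\Cc\to\Dd$ preserves \emph{parametrized} colimits---is again reduced to the terminal-object case and read off from the already-known universal property of $\Cc[S_0^{-1}]$.

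In short: your argument is correct as far as it goes, and the key lemma is the same, but the paper's reduction to \Cref{prop:InvertingObjectsGloballySuffices} replaces the bookkeeping you identify with an appeal to a result already in hand.
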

\begin{proof}
	We first show that $\Ll(\Cc,S)$ is a presentably symmetric monoidal $\Bb$-category and that the map $\Cc \to \Ll(\Cc,S)$ preserves colimits. This may be proved by pulling back $\Cc$ to the slice topos $\PSh(T)_{/B} \simeq \PSh(T_{/B})$ for every $B \in T$, hence we may assume that $T$ admits a terminal object $B$. In this case, consider the full subcategory $S' \subseteq \Cc$ given at $A \in T$ by 
	\[
	S'(A) := \{f^*X \in \Cc(A) \mid X \in S(B) \},
	\]
	where $f\colon A \to B$ is the unique map to the terminal object. Note that $S' \subseteq S$, as $S$ is a parametrized subcategory of $\Cc$. It is immediate from the definition that $S'$ is generated by $S_0 := S(B)$, in the sense of \Cref{def:SubcategoryGenerated}, and thus \Cref{prop:InvertingObjectsGloballySuffices} provides a formal inversion $F\colon \Cc \to \Cc[S_0^{-1}]$ of $S'$ in $\Cc$. As in the proof of \Cref{lem:FormalInversionStableUnderFactors}, the assumption $(\ast)$ guarantees that $F$ even inverts all the objects in $S$, and it follows that the map $\Cc \to \Cc[S_0^{-1}]$ uniquely extends to a map
	\[
	\Ll(\Cc,S) \to \Cc[S_0^{-1}]
	\]
	in $\Fun(T\catop,\CAlg(\PrL))$. It follows from \Cref{obs:GloballyInvertingMeansPointwiseInverting} and \Cref{lem:FormalInversionStableUnderFactors} that this map is pointwise an equivalence and hence that it is an equivalence of symmetric monoidal $\Bb$-categories. Since $\Cc[S_0^{-1}]$ is presentably symmetric monoidal, it follows that so is $\Ll(\Cc,S)$. Similarly we deduce that the map $\Cc \to \Ll(\Cc,S)$ preserves colimits.
	
	We will now verify that $\Cc \to \Ll(\Cc,S)$ is indeed a formal inversion. Assume that $\Dd$ is a presentably symmetric monoidal $\Bb$-category. Because of the adjunction $(-)_{\inv} \dashv \Ll$, fiberwise-colimit-preserving symmetric monoidal $\Bb$-functors $G\colon \Ll(\Cc,S) \to \Dd$ correspond to fiberwise-colimit-preserving symmetric monoidal $\Bb$-functors $G'\colon \Cc \to \Dd$ inverting the objects in $S$. It thus remains to show that if $G'$ preserves all $\Bb$-colimits, then so does its extension $G$. This may be tested after passing to the slice topoi $\PSh(T)_{/B} \simeq \PSh(T_{/B})$ for $B \in T$, and since the construction of $\Ll(\Cc,S)$ commutes with passage to slice topoi we may assume that $T$ admits a terminal object. In this case, there is a symmetric monoidal equivalence $\Ll(\Cc,S) \simeq \Cc[S_0^{-1}]$, and since $\Cc[S_0^{-1}]$ is a formal inversion of $S$ in $\Cc$ it follows that $G'$ admits a unique colimit-preserving symmetric monoidal extension to $\Cc[S_0^{-1}] \to \Dd$. This in particular preserves fiberwise colimits, so must agree with $G$, finishing the proof.
\end{proof}

We finish the section by proving that parametrized formal inversions are preserved under passing to slice topoi.

\begin{proposition}
	\label{prop:Invert_Objects_Locally}
	Let $F\colon \Cc \to \Cc'$ be a morphism in $\CAlg(\PrL(\Bb))$ which exhibits $\Cc'$ as a formal inversion of a small subcategory $S$ in $\Cc$. Then for every object $B \in \Bb$, the induced $\Bb_{/B}$-functor $\pi_B^*F \colon \pi_B^*\Cc \to \pi_B^*\Cc'$ exhibits $\pi_B^*\Cc'$ as a formal inversion of $\pi_B^*S$ in $\pi_B^*\Cc$.
\end{proposition}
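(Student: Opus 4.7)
The plan is to verify the universal property of \Cref{def:FormalInversion} for $\pi_B^*F$ by transferring it across the base change adjunction of \Cref{cor:Base_Change_Adjunction_Presentable}. Since $\pi_B^*\colon \PrL(\Bb) \to \xPrLB{B}$ is symmetric monoidal, this adjunction lifts to commutative algebras:
\[
\pi_B^*\colon \CAlg(\PrL(\Bb)) \rightleftarrows \CAlg(\xPrLB{B})\noloc (\pi_B)_*.
\]
Hence for any $\Dd \in \CAlg(\xPrLB{B})$, precomposition with $\pi_B^*F$ is naturally identified with precomposition with $F$ via the horizontal bijections
\begin{align*}
\Hom_{\CAlg(\xPrLB{B})}(\pi_B^*\Cc,\Dd) &\;\simeq\; \Hom_{\CAlg(\PrL(\Bb))}(\Cc,(\pi_B)_*\Dd),\\
\Hom_{\CAlg(\xPrLB{B})}(\pi_B^*\Cc',\Dd) &\;\simeq\; \Hom_{\CAlg(\PrL(\Bb))}(\Cc',(\pi_B)_*\Dd).
\end{align*}
By hypothesis on $F$, the map on the right is a monomorphism, hence so is the map on the left.

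It remains to identify the image. The main claim is that a $\Bb$-functor $G\colon \Cc \to (\pi_B)_*\Dd$ inverts $S$ if and only if the adjoint $\Bb_{/B}$-functor $G'\colon \pi_B^*\Cc \to \Dd$ inverts $\pi_B^*S$. Unwinding the unit and counit of $\pi_B^* \dashv (\pi_B)_*$, one sees that at each $(f\colon A \to B) \in \Bb_{/B}$ the functor $G'_f\colon \Cc(A) \to \Dd(f)$ agrees with $\gamma_f^* \circ G_A$, where $\gamma_f = (\id,f)\colon (A,f) \to (A\times B,\pr_B)$ is the graph of $f$ viewed as a morphism in $\Bb_{/B}$; dually, at each $A \in \Bb$ the functor $G_A\colon \Cc(A) \to \Dd(\pr_B\colon A\times B \to B)$ agrees with $G'_{\pr_B}\circ \pr_A^*$.

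For the forward direction, given $(f\colon A \to B)$ and $X \in (\pi_B^*S)(f) = S(A)$, invertibility of $G_A(X)$ implies invertibility of $G'_f(X) = \gamma_f^* G_A(X)$, since the symmetric monoidal restriction $\gamma_f^*$ preserves invertible objects. For the converse, given $A \in \Bb$ and $X \in S(A)$, note that $\pr_A^*X \in S(A\times B) = (\pi_B^*S)(\pr_B)$ because $S \subseteq \Cc$ is a parametrized subcategory, so by hypothesis $G'_{\pr_B}(\pr_A^*X) = G_A(X)$ is invertible. I do not foresee any substantial obstacle beyond bookkeeping the adjunction unit and counit carefully in the parametrized setting.
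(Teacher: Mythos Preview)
Your proof is correct and follows essentially the same strategy as the paper's: both transfer the universal property across the adjunction $\pi_B^* \dashv (\pi_B)_*$ on $\CAlg(\PrL)$, reducing the question to showing that a map inverts $S$ if and only if its adjoint inverts $\pi_B^*S$. The paper handles this last step more tersely by simply noting that the unit and counit of the adjunction are symmetric monoidal, whereas you spell out the componentwise description via the graph map $\gamma_f$ and the projection $\pr_A$; this is exactly what underlies the paper's one-line justification.
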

\begin{proof}
	Consider $\Dd \in \CAlg(\xPrLB{B})$. Because of the adjunction
	\[
	\pi_B^*\colon \PrL(\Bb) \rightleftarrows \xPrLB{B}\noloc (\pi_B)_*
	\]
	from \Cref{cor:Base_Change_Adjunction_Presentable} and symmetric monoidality of $\pi_B^*$, a morphism $F\colon \pi_B^*\Cc \to \Dd$ in $\CAlg(\xPrLB{B})$ is the same as a morphism $F'\colon \Cc \to (\pi_B)_*\Dd$ in $\CAlg(\PrL(\Bb))$. Since the unit $\Cc \to (\pi_B)_*\pi_B^*\Cc$ and the counit $\pi_B^*(\pi_B)_* \Dd \to \Dd$ are symmetric monoidal, it follows that $F$ inverts the objects of $\pi_B^*S$ if and only if $F'$ inverts the objects of $S$. Since the map
	\[
	\Hom_{\CAlg(\xPrLB{B})}(\pi_B^*\Cc', \Dd) \to \Hom_{\CAlg(\xPrLB{B})}(\pi_B^*\Cc, \Dd)
	\]
	given by precomposition with $\pi_B^*F$ corresponds under the adjunction to the map
	\[
	\Hom_{\CAlg(\PrL(\Bb))}(\Cc', (\pi_B)_*\Dd) \to \Hom_{\CAlg(\PrL(\Bb))}(\Cc, (\pi_B)_*\Dd)
	\]
	given by precomposition with $F$, we conclude that $\pi_B^*F$ exhibits $\pi_B^*\Cc'$ as a formal inversion of $\pi_B^*S$ in $\pi_B^*\Cc$.
\end{proof}

\section{Twisted ambidexterity}
\label{sec:twistedambidexterity}
Fix an $\infty$-topos $\Bb$ and a presentably symmetric monoidal $\Bb$-category $\Cc$. In \Cref{sec:TwistedNormMap}, we will associate to every morphism $f\colon A \to B$ in $\Bb$ a \textit{relative dualizing object} $D_f \in \Cc(A)$ together with a \textit{twisted norm map} $\Nm_f\colon f_!(- \otimes D_f) \Rightarrow f_*(-)$, which informally speaking exhibits $f_!(- \otimes D_f)$ as the universal parametrized $\Cc$-linear approximation to $f_*$. We will show in \Cref{subsec:SemiadditivityVsTwistedAmbidexterity} that when $f$ is $n$-truncated for some $n$, the twisted norm map reduces to the (untwisted) norm map $\Nm_f\colon f_! \Rightarrow f_*$ from Hopkins and Lurie \cite{hopkinsLurie2013ambidexterity} whenever the latter is defined, and use this to express ambidexterity in terms of twisted ambidexterity. In \Cref{subsec:Costenoble_Waner_Duality} we will explain the close relation between twisted ambidexterity and \textit{Costenoble-Waner duality}, a parametrized form of monoidal duality due to \cite{CostenobleWaner2016equivariant, maysigurdsson2006parametrized}.

\subsection{The twisted norm map}
\label{sec:TwistedNormMap}
To define the twisted norm map $\Nm_f$, we will first treat the case where the target $B$ of $f$ is the terminal object of $\Bb$. The case for arbitrary $B$ will be obtained by passing to the slice topos $\Bb_{/B}$.

\begin{definition}
	\label{def:Dualizing_Object}
	For an object $A \in \Bb$ we define the \textit{dualizing object} $D_A \in \Cc(A)$ of $A$ as
	\begin{align*}
		D_A := {\pr_1}_*\Delta_!\unit_A \in \Cc(A),
	\end{align*}
	where $\pr_1\colon A \times A \to A$ is the first projection and $\Delta\colon A \to A \times A$ is the diagonal of $A$. We let $c_A\colon \pr_1^*D_A \to \Delta_!\unit_A$ denote the counit.
\end{definition}

Under the equivalence $\Fun_{\Cc}(\Cc^A,\Cc) \simeq \Cc(A)$ of \Cref{thm:Classification_CLinear_Functors}, the object $D_A \in \Cc(A)$ corresponds to a $\Cc$-linear $\Bb$-functor $A_!(- \otimes D_A)\colon \Cc^A \to \Cc$. As a special case of \cref{cor:ClassifyingCLinearNaturalTransformations} we immediately obtain the following corollary:

\begin{corollary}
	For an object $A \in \Bb$, evaluation at $\Delta_!\unit_A$ induces an equivalence of spaces
	\begin{align}
		\label{eq:DefinitionTwistedNormMap}
		\Nat_{\Cc}(A^*A_!(- \otimes_A D_A),\id_{\Cc^A}) \iso \Hom_{\Cc(A \times A)}(\pr_1^*D_A,\Delta_!\unit_A).
	\end{align}
\end{corollary}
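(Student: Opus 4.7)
The statement is presented as a direct corollary of \Cref{cor:ClassifyingCLinearNaturalTransformations}, so my plan is essentially to apply that corollary with $\Dd = \Cc^A$, $F := A^*A_!(- \otimes_A D_A)$ and $G := \id_{\Cc^A}$. The corollary then immediately yields an equivalence
\[
\Nat_{\Cc}(F,G) \iso \Hom_{\Cc^A(A)}(D_F,D_G) = \Hom_{\Cc(A \times A)}(D_F,D_G),
\]
so the real content is to identify the classifying objects $D_F$ and $D_G$ of \Cref{def:ClassifiedFunctor} with $\pr_1^*D_A$ and $\Delta_!\unit_A$, respectively.

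The identification $D_G = \id_A(\Delta_!\unit_A) = \Delta_!\unit_A$ is immediate. For $D_F = F_A(\Delta_!\unit_A)$, I would unpack $F$ at the fiber over $A \in \Bb$. Using $\Cc^A(B) = \Cc(A \times B)$, the three constituent $\Bb$-functors become, at fiber $A$, the endofunctor $- \otimes \pr^*D_A$ (tensoring with the pullback of $D_A$ along one projection $A \times A \to A$), the pushforward $\pr_!$ along the other projection (which is $A_!$ by the formula for the left adjoint of $A^*$), and finally the pullback $\pr^*$ implementing $A^*$. Applying this composite to $\Delta_!\unit_A$ and using the projection formula
\[
\Delta_!\unit_A \otimes \pr^*D_A \simeq \Delta_!(\Delta^*\pr^*D_A) = \Delta_!D_A
\]
together with the identity $\pr \circ \Delta = \id_A$ for \emph{both} projections, the intermediate pushforward and pullback collapse, leaving a pullback of $D_A$ along one of the two projections $A \times A \to A$.

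The one bookkeeping point is that the two projections play symmetric roles in the computation, and the outcome is $\pr_i^*D_A$ for some $i \in \{1,2\}$ depending on convention. Since the swap $\sigma\colon A \times A \to A \times A$ satisfies $\sigma \circ \Delta = \Delta$ and exchanges the two projections, one has $\sigma^*\Delta_!\unit_A \simeq \Delta_!\unit_A$ and $\sigma^*\pr_2^*D_A \simeq \pr_1^*D_A$, so pullback along $\sigma$ induces a canonical identification
\[
\Hom_{\Cc(A \times A)}(\pr_2^*D_A,\Delta_!\unit_A) \simeq \Hom_{\Cc(A \times A)}(\pr_1^*D_A,\Delta_!\unit_A),
\]
and one may freely rename the projection to match the convention used in \Cref{def:Dualizing_Object} defining the counit $c_A$. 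I expect this orientation bookkeeping to be the only genuinely finicky step; the rest is a mechanical unwinding of \Cref{thm:Classification_CLinear_Functors} and the projection formula.
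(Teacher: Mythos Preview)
Your proposal is correct and follows essentially the same route as the paper: apply \Cref{cor:ClassifyingCLinearNaturalTransformations} and identify the two classifying objects. The only difference is cosmetic: the paper computes $D_F$ by invoking naturality in $\Dd$ of the equivalence $\ul{\Fun}_{\Cc}(\Cc^A,\Dd)\simeq\Dd^A$ (so postcomposition with $A^*$ corresponds to applying $(A^*)_A$ to $D_A$), whereas you unwind $F_A$ explicitly and use the projection formula; both yield $\pr_i^*D_A$ for one of the projections, and your swap-map remark correctly handles the harmless $\pr_1$/$\pr_2$ ambiguity.
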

\begin{proof}
	By naturality in $\Dd$ of the equivalence $\ul{\Fun}_{\Cc}(\Cc^A,\Dd) \iso \Dd^A$ from \Cref{cor:Classification_CLinear_Functors}, evaluating $A^*A_!(- \otimes_A D_A)\colon \Cc^A \to \Cc^A$ at $\Delta_!\unit_A$ gives $\pr_1^*D_A$. The statement then follows from \cref{cor:ClassifyingCLinearNaturalTransformations}, since all three functors $A^*$, $A_!$ and $- \otimes_A D_A$ are canonically $\Cc$-linear.
\end{proof}

\begin{definition}
	\label{def:Twisted_Norm_Map}
	For $A \in \Bb$, we define a $\Cc$-linear $\Bb$-transformation
	\[
	\Nmadj_A\colon A^*A_!(- \otimes_A D_A) \implies \id_{\Cc^A}
	\]
	as the $\Cc$-linear $\Bb$-transformation corresponding under the equivalence \eqref{eq:DefinitionTwistedNormMap} to the counit map $\pr_1^*D_A = \pr_1^*{\pr_1}_*\Delta_!\unit_A \to \Delta_!\unit_A$. We will refer to $\Nmadj_A$ as the \textit{adjoint twisted norm map} for $A$. We define the \textit{twisted norm map}
	\[
	\Nm_{A}\colon A_!(- \otimes_{A} D_A) \implies A_*(-)
	\]
	as the $\Bb$-transformation between $\Bb$-functors $\Cc^A \to \Cc$ adjoint to $\Nmadj_A$. 
\end{definition}

\begin{definition}
	\label{def:TwistedAmbidexterity}
	An object $A \in \Bb$ is called \textit{twisted $\Cc$-ambidextrous} if the twisted norm map $\Nm_A$ is an equivalence.
\end{definition}

A relative version of twisted ambidexterity is obtained by passing to slices of $\Bb$:

\begin{definition}
	\label{def:TwistedAmbidexterityMorphisms}
	\label{def:Relative_Twisted_Norm_Map}
	For a morphism $f\colon A \to B$ in $\Bb$, define the \textit{relative dualizing object} $D_f \in \Cc(A)$ of $f$ as $D_f := {\pr_1}_*(\Delta_f)_!\unit_A \in \Cc(A)$, where $\pr_1\colon A \times_B A \to A$ is the first projection and $\Delta_f\colon A \to A \times_B A$ is the diagonal of $f$. We define the \textit{twisted norm map}
	\[
	\Nm_{f}\colon f_!(- \otimes_{A} D_f) \implies f_*(-)
	\]
	as the $\Bb_{/B}$-transformation between $\Bb_{/B}$-functors $(\pi_B^*\Cc)^A \to \pi_B^*\Cc$ as in \Cref{def:Twisted_Norm_Map}. We will say that $f$ is \textit{twisted $\Cc$-ambidextrous} if the transformation $\Nm_f$ is an equivalence.
\end{definition}

We may obtain an explicit formula for the transformation $\Nmadj_A$ as follows.

\begin{lemma}
	\label{lem:Explicit_Formula_Twisted_Norm}
	For $A \in \Bb$, the transformation $\Nmadj_A\colon A^*A_!(- \otimes_A D_A) \Rightarrow \id_{\Cc^A}$ is equivalent to the composite
	\begin{align*}
		A^*A_!(- \otimes_A D_A) &\overset{l.b.c.}{\simeq} {\pr_2}_!\pr_1^*(- \otimes_A D_A) \\
		&\simeq {\pr_2}_!(\pr_1^*(-) \otimes_{A\times A} \pr_1^*D_A)\\
		&\Rightarrow {\pr_2}_!(\pr_1^*(-) \otimes_{A\times A} \Delta_!\unit_A) \\
		&\overset{l.p.f.}{\simeq} {\pr_2}_!\Delta_!(\Delta^*\pr_1^*(-) \otimes_{A} \unit_A) \\
		&\simeq \id_{\Cc^A}.
	\end{align*}
	Here $l.b.c.$ denotes the left base change equivalence, $l.p.f.$ denotes the left projection formula equivalence, and the non-invertible arrow in the middle is induced by the counit $\pr_1^*D_A = \pr_1^*{\pr_1}_*\Delta_!\unit_A \to \Delta_!\unit_A$.
\end{lemma}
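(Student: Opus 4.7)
The plan is to appeal to the classification of $\Cc$-linear natural transformations. By \Cref{cor:ClassifyingCLinearNaturalTransformations}, evaluation at $\Delta_!\unit_A \in \Cc^A(A)$ induces an equivalence
\[
\Nat_{\Cc}(A^*A_!(-\otimes_A D_A),\,\id_{\Cc^A}) \xrightarrow{\,\sim\,} \Hom_{\Cc(A\times A)}(\pr_1^*D_A,\,\Delta_!\unit_A),
\]
so any $\Cc$-linear natural transformation between these two $\Bb$-functors is fully determined by the image map in $\Cc(A \times A)$. By \Cref{def:Twisted_Norm_Map}, $\Nmadj_A$ is the unique such transformation whose image is the counit $c_A\colon \pr_1^*D_A \to \Delta_!\unit_A$, so it suffices to show that the explicit composite displayed in the lemma also evaluates at $\Delta_!\unit_A$ to $c_A$.

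I would then trace the composite stepwise on $\Delta_!\unit_A \in \Cc^A(A) = \Cc(A\times A)$. The first isomorphism is the Beck-Chevalley equivalence for the pullback of $\pi_A\colon A \to 1$ against itself, which identifies $A^*A_!$ with $(\pr_2)_!\pr_1^*$; the second is symmetric monoidality of $\pr_1^*$, splitting $\pr_1^*(-\otimes_A D_A)$ as $\pr_1^*(-) \otimes_{A\times A} \pr_1^*D_A$. The counit map then sits in the second factor, and the left projection formula rewrites $\Delta_!\unit_A \otimes_{A\times A} (-) \simeq \Delta_!(\unit_A \otimes_A \Delta^*(-))$. The closing identifications $\Delta^*\pr_1^* \simeq \id$, $\unit_A \otimes_A - \simeq \id$ and $(\pr_2)_!\Delta_! \simeq \id$ (the last because $\pr_2 \circ \Delta = \id_A$) then collapse the expression to $\Delta_!\unit_A$, leaving the induced arrow in $\Cc(A \times A)$ equal to $c_A$.

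The main obstacle is the diagrammatic bookkeeping: one must check that the chain of canonical equivalences, when applied to the specific object $\Delta_!\unit_A$, assembles to precisely $c_A$ and not to a twist of $c_A$ by the swap automorphism of $A \times A$. A more conceptual alternative circumventing the explicit trace is to recognize that the inverse of the equivalence in \Cref{cor:Classification_CLinear_Functors} is described in the Observation following that corollary precisely via the construction $A_!(-\otimes_A -)$, whose induced map on hom-spaces carries a morphism $D \to D'$ to exactly the type of composite displayed in the lemma. By construction, this sends the counit $c_A$ to the explicit composite, and hence the composite equals $\Nmadj_A$.
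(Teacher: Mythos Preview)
Your proposal is correct and takes essentially the same approach as the paper. In fact, your ``conceptual alternative'' in the final paragraph is precisely the paper's argument: the paper uses that the inverse of the equivalence $\Fun_{\Cc}(\Cc^A,\Cc^A)\simeq\Cc(A\times A)$ sends $D$ to ${\pr_2}_!(\pr_1^*(-)\otimes_{A\times A} D)$, so applying it to the counit morphism $c_A$ yields the middle arrow of the composite, and the remaining identifications of source and target are exactly the left base change and left projection formula equivalences.
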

\begin{proof}
	By definition, the adjoint twisted norm map $\Nmadj_A$ is defined to be the map whose image under the equivalence $\Fun_{\Cc}(\Cc^A, \Cc^A) \iso \Cc(A \times A)$ from \Cref{thm:Classification_CLinear_Functors} is the counit map $\pr_1^*D_A \to \Delta_!\unit_A$. Recall that an inverse to this equivalence is given by sending $D \in \Cc(A \times A)$ to the $\Cc$-linear $\Bb$-functor ${\pr_2}_!(\pr_1^*(-) \otimes_{A \times A} D)\colon \Cc^A \to \Cc^A$. It follows that morphism $\Nmadj_A$ in $\Fun_{\Cc}(\Cc^A,\Cc^A)$ is equivalent to the map ${\pr_2}_!(\pr_1^*(-) \otimes_{A \times A} \pr_1^*D_A) \to {\pr_2}_!(\pr_1^*(-) \otimes_{A \times A} \Delta_!\unit)A)$ induced by the counit. Unwinding definitions, the identifications of the source and target of these maps happens through the left base change equivalence and the left projection formula, giving the claim.
\end{proof}

The twisted norm map $\Nm_A$ exhibits the $\Bb$-functor $A_!(- \otimes D_A)\colon \Cc^A \to \Cc$ in a suitable sense as the universal $\Cc$-linear approximation of the $\Bb$-functor $A_*\colon \Cc^A \to \Cc$. More precisely, the dual adjoint norm map $\Nmadj_A$ exhibits $A_!(- \otimes D_A)$ as terminal among $\Cc$-linear $\Bb$-functor $F\colon \Cc^A \to \Cc$ equipped with a $\Cc$-linear transformation $A^*F \to \id$:

\begin{proposition}[Universal property twisted norm map]
	\label{prop:UniversalPropertyTwistedNormMap}
	For a $\Cc$-linear functor $F\colon \Cc^A \to \Cc$, the composite
	\begin{align*}
		\Nat_{\Cc}(F,A_!( - \otimes D_A)) \xrightarrow{A^* \circ -} \Nat_{\Cc}(A^*F,A^* A_!( - \otimes D_A)) \xrightarrow{\Nmadj_A \circ -} \Nat_{\Cc}(A^* F,\id_{\Cc^A})
	\end{align*}
	is an equivalence of spaces.
\end{proposition}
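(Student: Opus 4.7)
The plan is to reduce the statement, via the classification of $\Cc$-linear natural transformations from Corollary~\ref{cor:ClassifyingCLinearNaturalTransformations}, to the adjunction equivalence for $\pr_1^* \dashv (\pr_1)_*$. Applying that corollary, each of the three $\Nat$-spaces appearing in the composite becomes a mapping space at level $A$:
\begin{align*}
    \Nat_\Cc(F, A_!(- \otimes D_A)) &\simeq \Hom_{\Cc(A)}(D_F, D_A), \\
    \Nat_\Cc(A^*F, A^*A_!(- \otimes D_A)) &\simeq \Hom_{\Cc(A \times A)}(D_{A^*F}, D_{A^*A_!(- \otimes D_A)}), \\
    \Nat_\Cc(A^*F, \id_{\Cc^A}) &\simeq \Hom_{\Cc(A \times A)}(D_{A^*F}, \Delta_!\unit_A),
\end{align*}
where we have used $D_{A_!(- \otimes D_A)} = D_A$ and $D_{\id_{\Cc^A}} = \Delta_!\unit_A$.

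Next I would verify that for any $\Cc$-linear $\Bb$-functor $G\colon \Cc^A \to \Cc$, the classifying object of $A^*G\colon \Cc^A \to \Cc^A$ satisfies $D_{A^*G} = (A^*)_A(D_G) = \pr_1^*D_G \in \Cc(A \times A)$. The key identification is that at level $A$, the $\Bb$-functor $A^*\colon \Cc \to \Cc^A$ acts as pullback along the first projection $\pr_1\colon A \times A \to A$, matching the convention used in Definition~\ref{def:Dualizing_Object} for $D_A = (\pr_1)_*\Delta_!\unit_A$. Specializing to $G = F$ and $G = A_!(- \otimes D_A)$ yields $D_{A^*F} = \pr_1^*D_F$ and $D_{A^*A_!(- \otimes D_A)} = \pr_1^*D_A$. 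By naturality of the classification in the target $\Cc$-linear $\Bb$-category, applying $A^*$ to a natural transformation translates into applying $\pr_1^*$ to the classifying map. Finally, by Definition~\ref{def:Twisted_Norm_Map}, $\Nmadj_A$ is by construction the transformation classified by the counit $c_A\colon \pr_1^*D_A \to \Delta_!\unit_A$, so postcomposing with $\Nmadj_A$ translates into postcomposing the classifying map with $c_A$.

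Assembling these identifications, the composite of interest becomes the map
\[
\Hom_{\Cc(A)}(D_F, (\pr_1)_*\Delta_!\unit_A) \xrightarrow{\pr_1^*} \Hom_{\Cc(A \times A)}(\pr_1^*D_F, \pr_1^*D_A) \xrightarrow{c_A \circ -} \Hom_{\Cc(A \times A)}(\pr_1^*D_F, \Delta_!\unit_A),
\]
sending $\phi \mapsto c_A \circ \pr_1^*\phi$. This is precisely the adjunction equivalence associated to $\pr_1^* \dashv (\pr_1)_*$ evaluated at $\Delta_!\unit_A$, and is therefore an equivalence of spaces. The main technical subtlety is establishing functoriality of the classification equivalence of Corollary~\ref{cor:Classification_CLinear_Functors} in the target $\Cc$-linear $\Bb$-category; this is what is needed to translate the postcomposition operations on natural transformations into postcomposition on classifying maps, after which the argument becomes a direct comparison of adjunction data.
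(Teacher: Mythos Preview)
Your proof is correct and follows essentially the same approach as the paper: both reduce the composite, via the evaluation-at-$\Delta_!\unit_A$ equivalences of Corollary~\ref{cor:ClassifyingCLinearNaturalTransformations}, to the map $\phi \mapsto c_A \circ \pr_1^*\phi$, which is the adjunction equivalence for $\pr_1^* \dashv (\pr_1)_*$. The paper organizes this as a commutative ladder diagram whose left square commutes by naturality of $\Fun_{\Cc}(\Cc^A,\Dd)\simeq \Dd(A)$ in $\Dd$ and whose right square commutes because an equivalence of categories preserves composition---precisely the two ingredients you isolate.
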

\begin{proof}
	Observe that the three instances of the equivalence of \cref{cor:ClassifyingCLinearNaturalTransformations} fit in the following commutative diagram:
	\begin{equation*}
		\hspace{-12.15pt}\hfuzz=12.15pt
		\begin{tikzcd}[cramped]
			\Nat_{\Cc}(F,A_!(- \otimes D_A)) 
			\dar{\ev_{\Delta_!\unit_A}}[swap]{\simeq} 
			\rar{A^* \circ -} 
			& \Nat_{\Cc}(A^*\circ F,A^* \circ A_!(- \otimes D_A)) 
			\rar{\Nmadj_A \circ -} \dar{\ev_{\Delta_!\unit_A}}[swap]{\simeq} & \Nat_{\Cc}(A^*\circ F,\id_{\Cc^A}) \dar{\ev_{\Delta_!\unit_A}}[swap]{\simeq}\\
			\Hom_{\Cc(A)}(D_F, D_A) \rar{\pr_1^*} 
			& \Hom_{\Cc(A \times_B A)}(\pr_1^*D_F, \pr_1^*D_A) \rar{c^*_{\pr_1} \circ -} 
			& \Hom_{\Cc(A \times_B A)}(\pr_1^*D_F,\Delta_!\unit_A).
		\end{tikzcd}
	\end{equation*}
	The left square commutes by naturality of the equivalence $\Fun_{\Cc}(\Cc^A,\Dd) \simeq \Dd(A)$ in $\Dd$ and the right square commutes because an equivalence of categories preserves composition. It thus remains to show that the bottom composite in the diagram is an equivalence. But this is clear since it is given by the adjunction equivalence on hom-spaces for the adjunction $\pr_1^* \dashv {\pr_1}_*$.
\end{proof}

As a consequence of the universal property of $\Nm_A$, we may express twisted ambidexterity in terms of \textit{internal left adjoints} in $\Mod_{\Cc}(\PrL(\Bb))$, in the sense of \Cref{def:Internal_Left_Adjoint}:

\begin{proposition}
	\label{prop:Twisted_Ambidexterity_In_Terms_Of_Norm_Map}
	The object $A$ is twisted $\Cc$-ambidextrous if and only if the $\Cc$-linear $\Bb$-functor $A^*\colon \Cc \to \Cc^A$ is an internal left adjoint in $\Mod_{\Cc}(\PrL(\Bb))$.
\end{proposition}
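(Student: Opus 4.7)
The plan is to combine the universal property of the twisted norm map (\Cref{prop:UniversalPropertyTwistedNormMap}) with the criterion for internal left adjoints in \Cref{lem:Criterion_Internal_Left_Adjoint}, and to treat the two implications separately.

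For the ``only if'' direction, I would assume $\Nm_A\colon A_!(- \otimes_A D_A) \iso A_*$ is an equivalence of $\Bb$-transformations and transfer properties along it. First I would observe that the $\Bb$-functor $A_!(- \otimes_A D_A)$ is colimit-preserving (as a composite of colimit-preserving pieces) and satisfies the projection formula of \Cref{lem:Criterion_Internal_Left_Adjoint}: at level $B$ it is given by $\pr_{B!}(- \otimes_{A \times B} \pr_A^* D_A)$, and for $C \in \Cc(B)$ and $E \in \Cc(A \times B)$ the relevant projection formula reduces to the left projection formula for $\pr_B\colon A \times B \to B$ applied to $E \otimes_{A \times B} \pr_A^* D_A$. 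Transferring both properties along $\Nm_A$ to $A_*$, \Cref{lem:Criterion_Internal_Left_Adjoint} then yields that $A^*$ is an internal left adjoint.

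For the converse, I would assume $A^*$ is an internal left adjoint, so that the adjunction $A^* \dashv A_*$ lifts to $\Mod_{\Cc}(\PrL(\Bb))$. For every $\Cc$-linear $\Bb$-functor $F\colon \Cc^A \to \Cc$ the lifted adjunction gives a natural equivalence $\Nat_{\Cc}(F, A_*) \simeq \Nat_{\Cc}(A^*F, \id_{\Cc^A})$, while \Cref{prop:UniversalPropertyTwistedNormMap} provides $\Nat_{\Cc}(F, A_!(- \otimes_A D_A)) \simeq \Nat_{\Cc}(A^*F, \id_{\Cc^A})$. Splicing these shows that $A_*$ and $A_!(- \otimes_A D_A)$ corepresent the same functor on $\Cc$-linear $\Bb$-functors $\Cc^A \to \Cc$, so by the Yoneda lemma they are canonically $\Cc$-linearly equivalent. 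Chasing $\id_{A_!(- \otimes_A D_A)}$ through these equivalences---it is sent to $\Nmadj_A$ by \Cref{prop:UniversalPropertyTwistedNormMap} and then to its adjoint under $A^* \dashv A_*$---I would identify this canonical equivalence as $\Nm_A$ itself.

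The hard part will be to make the final identification of the Yoneda-induced equivalence with $\Nm_A$ on the nose, rather than just as an abstract invertible natural transformation. This will hinge on uniqueness of adjoints: the adjunction $A^* \dashv A_*$ promoted to $\Mod_{\Cc}(\PrL(\Bb))$ has the same underlying unit and counit in $\Cat(\Bb)$ as the original adjunction, so the adjoint of $\Nmadj_A$ computed in either setting coincides with the twisted norm map as defined in \Cref{def:Twisted_Norm_Map}.
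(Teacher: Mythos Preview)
Your proposal is correct and follows essentially the same approach as the paper. The converse direction is identical in spirit: both you and the paper use the universal property from \Cref{prop:UniversalPropertyTwistedNormMap} together with the internal adjunction $A^* \dashv A_*$ to conclude that $A_*$ and $A_!(- \otimes_A D_A)$ corepresent the same functor, hence agree, necessarily via $\Nm_A$. For the ``only if'' direction the paper is slightly more direct than you: rather than transferring colimit-preservation and the projection formula along $\Nm_A$ to $A_*$ and invoking \Cref{lem:Criterion_Internal_Left_Adjoint}, it simply observes that $\Nmadj_A$ is $\Cc$-linear by construction, so when $\Nm_A$ is an equivalence the data $(A^*, A_!(- \otimes_A D_A), \Nmadj_A)$ already constitutes an internal adjunction in $\Mod_{\Cc}(\PrL(\Bb))$. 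Your route via \Cref{lem:Criterion_Internal_Left_Adjoint} is equally valid and arguably makes the verification more tangible, at the cost of a small extra check that the canonical projection-formula map for $A_*$ corresponds under $\Nm_A$ to the left projection formula for $\pr_B$.
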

\begin{proof}
	Note that $A$ is twisted $\Cc$-ambidextrous if and only if the adjoint twisted norm map 
	\[
	\Nmadj_A \colon A^*A_!(- \otimes_A D_A) \implies \id_{\Cc^A}
	\]
	exhibits $A_!(- \otimes_A D_A)$ as a $\Cc$-linear right adjoint of $A^*$, so that one implication is clear. Conversely, if $A^*$ is an internal left adjoint with $\Cc$-linear right adjoint $A_*$, the $\Cc$-linear counit $A^*A_* \to \id$ equips $A_*$ with the same universal property of $A_!(- \otimes D_A)$ of \Cref{prop:UniversalPropertyTwistedNormMap}: for every $\Cc$-linear $\Bb$-functor $F\colon \Cc^A \to \Cc$, the composite
	\[
	\Nat_{\Cc}(F,A_*) \xrightarrow{A^* \circ -} \Nat_{\Cc}(A^*F,A^* A_*) \xrightarrow{\Nmadj_A \circ -} \Nat_{\Cc}(A^* F,\id_{\Cc^A})
	\]
	is an equivalence of spaces. It follows that $A_!(- \otimes_A D_A)$ and $A_*$ are equivalent as $\Cc$-linear $\Bb$-functors $\Cc^A \to \Cc$, necessarily via the twisted norm map.
\end{proof}

\begin{remark}
	\label{rmk:Criteria_Twisted_Ambidexterity}
	Applying \Cref{prop:Twisted_Ambidexterity_In_Terms_Of_Norm_Map} to the slice topos $\Bb_{/B}$, it follows that a morphism $f\colon A \to B$ in $\Bb$ is twisted $\Cc$-ambidextrous if and only if the $\Cc$-linear $\Bb_{/B}$-functor $f^*\colon \pi_B^*\Cc \to (\pi_B^*\Cc)^A$ is an internal left adjoint in $\Mod_{\Cc}(\xPrLB{B})$, i.e.\ the right adjoint $f_*$ preserves $\Bb_{/B}$-parametrized colimits and satisfies the projection formula. Applying the criterion \Cref{lem:Criterion_Internal_Left_Adjoint} to the slice topos $\Bb_{/B}$, this can be made explicit in terms of non-parametrized criteria: $f$ is twisted $\Cc$-ambidextrous if and only if for every pullback diagram
	\[
	\begin{tikzcd}
		A'' \dar[swap]{f''} \drar[pullback] \rar{\alpha} & A' \dar[swap]{f'} \drar[pullback] \rar & A \dar{f} \\
		B'' \rar{\beta} & B' \rar & B
	\end{tikzcd}
	\]
	in $\Bb$ the following three conditions hold:
	\begin{enumerate}[(1)]
		\item (Preserving fiberwise colimits) The functor $f'_*\colon \Cc(A') \to \Cc(B')$ preserves colimits;
		\item (Preserving groupoid-indexed colimits) The Beck-Chevalley square
		\[
		\begin{tikzcd}
			\Cc(A') \dar[swap]{f'_*} \rar{\alpha^*} & \Cc(A'') \dar{f''_*} \\
			\Cc(B')  \rar{\beta^*} & \Cc(B'')
		\end{tikzcd}
		\]
		is horizontally left adjointable;
		\item (Right projection formula) For objects $X \in \Cc(A')$ and $Y \in \Cc(B')$, the canonical morphism
		\begin{align*}
			f'_*(X) \otimes Y \to f'_*(X \otimes {f'}^*(Y))
		\end{align*}
		in $\Cc(B')$ is an equivalence.
	\end{enumerate}
	Conditions (1) and (2) correspond to the assumption that the $\Bb_{/B}$-functor $f_*$ preserves $\Bb_{/B}$-parametrized colimits, while condition (3) is equivalent to the condition $f_*$ is $\Cc$-linear.
\end{remark}

\begin{example}
	When $\Bb = \Spc$ is the $\infty$-topos $\Spc$ of spaces and $\Cc = \Sp$ is the $\infty$-category of spectra, the universal property of the twisted norm map appears as \cite[Theorem I.4.1(v)]{NikolausScholze2018Cyclic}. Since every colimit-preserving functor between stable presentable $\infty$-categories is $\Sp$-linear, the universal property simplifies to the statement that the twisted norm map $\colim_A(- \otimes D_A) \Rightarrow \lim_A(-)$ exhibits its source as the universal colimit-preserving approximation of its target. The object $D_A \in \Sp^A$ is called the \textit{dualizing spectrum} of $A$, and was studied by Klein \cite{klein2001dualizing}. As a functor $A \to \Sp$, it may be identified with the composite
	\begin{align*}
		A \xrightarrow{a \mapsto \Map_A(a,-)} \Spc^A \xrightarrow{\Sigma^{\infty}_+} \Sp^A \xrightarrow{\lim_A} \Sp.
	\end{align*}
\end{example}

\begin{example}
    Let $G$ be a compact Lie group and let $\Bb = \Spc_G$ be the $\infty$-topos of $G$-spaces. For $\Cc = \ul{\Sp}^G$ the $\Bb$-category of genuine $G$-spectra, to be introduced in \Cref{def:GCategoryOfGenuineGSpectra} below, the universal property of the twisted norm map was established by Quigley and Shah \cite[Theorem~5.47]{QuigleyShay2021Tate}. Again the statement simplifies due to $\ul{\Sp}^G$-linearity being automatic.
\end{example}

\begin{example}
	Let $\Bb = \Spc$ be the $\infty$-topos of spaces and let $\Cc$ be a presentably symmetric monoidal $\infty$-category. By \Cref{prop:Twisted_Ambidexterity_In_Terms_Of_Norm_Map}, a space $A$ is twisted $\Cc$-ambidextrous if and only if it is \textit{$\Cc$-adjointable} in the terminology of \cite[Definition~4.17]{CCRY_Characters}. In particular, the following are examples of twisted $\Cc$-ambidextrous spaces:
	\begin{enumerate}[(1)]
		\item If $\Cc$ is stable, then every compact space is twisted $\Cc$-ambidextrous, \cite[Example~4.24]{CCRY_Characters}.
		\item If $\Cc$ is $m$-semiadditive for some $m \geq -2$, then every $m$-finite space is twisted $\Cc$-ambidextrous, \cite[Example~4.22]{CCRY_Characters}.
		\item It $\Cc$ is an $n$-category (i.e., its mapping spaces are $(n-1)$-truncated), then every $n$-connected space is twisted $\Cc$-ambidextrous, \cite[Example~4.23]{CCRY_Characters}.
		\item Let $\Cc = \PrL_{\kappa}$ be the $\infty$-category of $\kappa$-presentable $\infty$-categories for a regular cardinal $\kappa$. Then \textit{every} space is twisted $\Cc$-ambidextrous, \cite[Example~4.26]{CCRY_Characters}.
	\end{enumerate}
\end{example}

The twisted $\Cc$-ambidextrous morphisms form a well-behaved class of morphisms in $\Bb$:

\begin{proposition}
	\label{prop:ClosurePropertiesTwistedAmbiMaps}
	The collection of twisted $\Cc$-ambidextrous morphisms is
	\begin{enumerate}[(1)]
		\item closed under composition;
		\item \label{it:DisjointUnions} closed under arbitrary disjoint unions;
		\item closed under base change;
		\item closed under cartesian products;
		\item \label{it:LocalClass} a local class of morphisms in $\Bb$, in the sense of \cite[Definition~6.1.3.8]{lurie2009HTT}. Equivalently, it is closed under disjoint unions, and for every pullback square
	\begin{equation}
        \label{eq:Twisted_Ambidextrous_Local_Class}
	\begin{tikzcd}
		A' \dar[swap]{f'} \rar{\alpha} \drar[pullback] & A \dar{f} \\
		B' \rar[twoheadrightarrow]{\beta} & B
	\end{tikzcd}
	\end{equation}
	in $\Bb$ such that $f'$ is twisted $\Cc$-ambidextrous and $\beta\colon B' \twoheadrightarrow B$ is an effective epimorphism, also $f$ is twisted $\Cc$-ambidextrous.
	\end{enumerate}
\end{proposition}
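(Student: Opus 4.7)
The strategy is to reformulate twisted ambidexterity via internal left adjoints. By \Cref{prop:Twisted_Ambidexterity_In_Terms_Of_Norm_Map} applied in the slice topos $\Bb_{/B}$, a morphism $f\colon A \to B$ is twisted $\Cc$-ambidextrous if and only if the $\pi_B^*\Cc$-linear $\Bb_{/B}$-functor $f^*\colon \pi_B^*\Cc \to (\pi_B^*\Cc)^A$ is an internal left adjoint in $\Mod_{\pi_B^*\Cc}(\PrL(\Bb_{/B}))$. Each of the five closure properties then reduces to a corresponding formal property of internal left adjoints.

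For composition (1), composites of internal left adjoints are internal left adjoints, so this is immediate once one checks that the relevant adjunction data can be assembled in the appropriate slice. For base change (3), given a pullback square with $f$ twisted $\Cc$-ambidextrous, the base change functor
\[
\pi_\beta^*\colon \Mod_{\pi_B^*\Cc}(\PrL(\Bb_{/B})) \to \Mod_{\pi_{B'}^*\Cc}(\PrL(\Bb_{/B'}))
\]
is a symmetric monoidal left adjoint; by the Beck-Chevalley equivalence of \Cref{prop:FreeCofreeCLinearCategories} it carries $f^*$ to $(f')^*$, and any functor preserves adjunctions together with their triangle identities, so $(f')^*$ inherits the structure of an internal left adjoint. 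Property (4) follows by factoring $f \times g$ as the composite $(f \times \id_D) \circ (\id_A \times g)$ of two base changes and applying (1) and (3).

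For disjoint unions (2), one uses that in an $\infty$-topos coproducts are disjoint, so that for $A = \coprod_i A_i$ over $B$, the canonical $\pi_B^*\Cc$-linear $\Bb_{/B}$-functor $(\pi_B^*\Cc)^A \iso \prod_i (\pi_B^*\Cc)^{A_i}$ is an equivalence and $f^*$ decomposes as the product of the $f_i^*$. Verifying the three criteria of \Cref{rmk:Criteria_Twisted_Ambidexterity} for $f$ then reduces, via this decomposition, to verifying them componentwise for each $f_i$. For (5), the class is already closed under disjoint unions by (2); for the descent property, observe that in the pullback square \eqref{eq:Twisted_Ambidextrous_Local_Class}, the morphism $\beta\colon B' \twoheadrightarrow B$ is an effective epimorphism to the terminal object in $\Bb_{/B}$. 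By \Cref{prop:Test_After_Pullback_NaturalEquivalence} applied to $\Bb_{/B}$, the $\pi_B^*\Cc$-linear $\Bb_{/B}$-functor $f^*$ is an internal left adjoint if and only if its pullback $\pi_\beta^* f^* \simeq (f')^*$ is an internal left adjoint in $\Mod_{\pi_{B'}^*\Cc}(\PrL(\Bb_{/B'}))$, which holds by the assumed twisted $\Cc$-ambidexterity of $f'$.

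The main technical hurdle is (2), where one has to verify that the identification $(\pi_B^*\Cc)^{\coprod A_i} \simeq \prod_i (\pi_B^*\Cc)^{A_i}$ is compatible with the $\pi_B^*\Cc$-module structure and with parametrized (co)limits, so that each of the three conditions of \Cref{rmk:Criteria_Twisted_Ambidexterity} really does decompose over the indexing set; the remaining four items then follow by purely formal manipulations with internal left adjoints.
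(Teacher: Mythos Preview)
Your proposal is correct and follows essentially the same approach as the paper: both reformulate twisted ambidexterity via internal left adjoints (\Cref{prop:Twisted_Ambidexterity_In_Terms_Of_Norm_Map} in the slice), then handle (1) by composing internal left adjoints, (2) via the product decomposition and the criteria of \Cref{rmk:Criteria_Twisted_Ambidexterity}, (4) by factoring through (1) and (3), and (5) via \Cref{prop:Test_After_Pullback_NaturalEquivalence}. The only minor difference is in (3): the paper simply observes that the conditions of \Cref{rmk:Criteria_Twisted_Ambidexterity} are already quantified over all pullback diagrams extending the given one, so closure under base change is immediate; your argument via the symmetric monoidal base change functor $\pi_\beta^*$ preserving internal adjunctions is also valid but slightly more elaborate than necessary.
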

\begin{proof}
	(1) For closure under composition, let $f\colon A \to B$ and $g\colon B \to C$ be twisted $\Cc$-ambidextrous morphisms. We may write the $\Bb_{/C}$-functor $(gf)^*\colon [C,\Cc]_C \to [A,\Cc]_C$ as a composite of $g^*\colon [C,\Cc]_C \to [B,\Cc]_C$ and $f^*\colon [B,\Cc]_C \to [A,\Cc]_C$. These are both left adjoints internal to $\Mod_{\Cc}(\xPrLB{C})$ by \Cref{rmk:Criteria_Twisted_Ambidexterity}, and thus so is $(gf)^*$. Another application of \Cref{rmk:Criteria_Twisted_Ambidexterity} then shows that $gf$ is twisted $\Cc$-ambidextrous as well.
	
	(2) Closure under disjoint unions follows directly from \Cref{rmk:Criteria_Twisted_Ambidexterity}, using the equivalence $\Cc(\bigsqcup_i A_i) \simeq \prod_{i} \Cc(A_i)$. (3) Closure under base change is immediate from \Cref{rmk:Criteria_Twisted_Ambidexterity}. (4) Closure under cartesian products follows from closure under base change and closure under composition.
	
	(5) We check that the twisted $\Cc$-ambidextrous morphisms form a local class in $\Bb$. We showed closure under disjoint unions in part (2). Consider a pullback square \eqref{eq:Twisted_Ambidextrous_Local_Class}, where $f'$ is twisted $\Cc$-ambidextrous and $\beta\colon B' \twoheadrightarrow B$ is an effective epimorphism. We have to show that also $f$ is twisted $\Cc$-ambidextrous, i.e. the $\Bb_{/B}$-functor $f^*\colon \pi_B^*\Cc \to (\pi_B^*\Cc)^A$ is an internal left adjoint in $\Mod_{\Cc}(\xPrLB{B})$. By \Cref{prop:Test_After_Pullback_NaturalEquivalence}, it suffices to check this after pulling $f^*$ back along $\beta$ to the slice $\Bb_{/B'}$. But there it becomes the condition that the $\Bb_{/B'}$-functor ${f'}^*\colon \pi^*_{B'}\Cc \to (\pi^*_{B'}\Cc)^{A'}$ is an internal left adjoint in $\Mod_{\Cc}(\xPrLB{B'})$, which holds by assumption on $f'$.
\end{proof}

\begin{corollary}
	\label{cor:TwistedAmbi_Fiberwise}
	For $\Bb = \Spc$, the $\infty$-topos of spaces, a map $f\colon A \to B$ is twisted $\Cc$-ambidextrous if and only if each of its fibers are twisted $\Cc$-ambidextrous.
\end{corollary}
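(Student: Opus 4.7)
The plan is to deduce both directions directly from the closure properties of twisted $\Cc$-ambidextrous morphisms established in \Cref{prop:ClosurePropertiesTwistedAmbiMaps}, using nothing specific to spaces beyond the fact that a map in $\Spc$ is an effective epimorphism if and only if it is surjective on $\pi_0$.

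For the forward direction, suppose $f\colon A \to B$ is twisted $\Cc$-ambidextrous. For any point $b\colon \ast \to B$, the fiber $A_b = A \times_B \ast$ sits in a pullback square whose right vertical map is $f$. By \Cref{prop:ClosurePropertiesTwistedAmbiMaps}(3), closure under base change immediately gives that $A_b \to \ast$ is twisted $\Cc$-ambidextrous. So this direction is a one-line application.

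For the reverse direction, assume every fiber $A_b \to \ast$ is twisted $\Cc$-ambidextrous. Pick a point $b_i$ in each path component of $B$, and form the map $\beta\colon \coprod_i \ast_{b_i} \to B$. Since $\beta$ is surjective on $\pi_0$, it is an effective epimorphism in $\Spc$. Pulling $f$ back along $\beta$ and using that coproducts in $\Spc$ are pullback-stable, we obtain a pullback square
\[
\begin{tikzcd}
\coprod_i A_{b_i} \rar \dar["f'"'] \drar[pullback] & A \dar{f} \\
\coprod_i \ast_{b_i} \rar[twoheadrightarrow,"\beta"'] & B,
\end{tikzcd}
\]
in which $f'$ is the disjoint union of the fiber maps $A_{b_i} \to \ast_{b_i}$. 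These are twisted $\Cc$-ambidextrous by hypothesis, hence so is $f'$ by \Cref{prop:ClosurePropertiesTwistedAmbiMaps}\eqref{it:DisjointUnions}. Now \Cref{prop:ClosurePropertiesTwistedAmbiMaps}\eqref{it:LocalClass}, applied to this square with the effective epimorphism $\beta$, concludes that $f$ itself is twisted $\Cc$-ambidextrous.

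I do not expect any substantial obstacle here: the work is all packaged in \Cref{prop:ClosurePropertiesTwistedAmbiMaps}. The only minor point worth being careful about is that choosing a single representative per path component suffices, which is the place where we genuinely use $\Bb = \Spc$ (in a general $\infty$-topos the appropriate ``points'' would have to form a covering family of the base).
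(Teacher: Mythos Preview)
Your proof is correct and follows exactly the same approach as the paper: the paper's proof simply cites parts \eqref{it:DisjointUnions} and \eqref{it:LocalClass} of \Cref{prop:ClosurePropertiesTwistedAmbiMaps} together with the effective epimorphism $\bigsqcup_{\pi_0(B)} \pt \twoheadrightarrow B$, which is precisely the map $\beta$ you construct. Your write-up is just a more explicit unpacking of that one-line argument, including the (easy) forward direction via base change.
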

\begin{proof}
    This is immediate from parts (2) and (5) of \Cref{prop:ClosurePropertiesTwistedAmbiMaps}, since there exists an effective epimorphism $\bigsqcup_{\pi_0(B)} \pt = \pi_0(B) \twoheadrightarrow B$.
\end{proof}

\subsection{Relation to ambidexterity and parametrized semiadditivity}
\label{subsec:SemiadditivityVsTwistedAmbidexterity}
In \cite[Construction 4.1.8, Remark 4.1.12]{hopkinsLurie2013ambidexterity}, Hopkins and Lurie introduce for every `Beck-Chevalley fibration' $q\colon \Cc \to \Bb$ a collection of \textit{$\Cc$-ambidextrous morphisms $f\colon A \to B$ in $\Bb$}, each of which come equipped with a norm equivalence $\Nm_f\colon f_! \iso f_*$. In this subsection, we will compare this notion of ambidexterity with our notion of twisted ambidexterity. As a consequence, we relate twisted ambidexterity with the notion of parametrized semiadditivity introduced by Nardin \cite{nardin2016exposeIV} and Lenz, Linskens and the author \cite{CLL_Global}.

\begin{definition}[Iterated diagonals]
	\label{def:IteratedDiagonal}
	Let $f\colon A \to B$ be a morphism in $\Bb$. The \textit{diagonal} $\Delta(f)$ of $f$ is the map $(1,1)\colon A \to A \times_B A$. The \textit{iterated diagonals} $\Delta^k(f)$ of $f$ are defined inductively by letting $\Delta^0(f) := f$ and $\Delta^{k+1}(f) := \Delta(\Delta^k(f))$.
\end{definition}

The functor $\Cc\colon \Bb\catop \to \Cat_{\infty}$ can be unstraightened to a cartesian fibration $\widetilde{\Cc} \to \Bb$. Since $\Cc$ has parametrized colimits, this is a Beck-Chevalley fibration, in the sense of \cite[Definition~4.1.3]{hopkinsLurie2013ambidexterity}.

\begin{proposition}
	\label{prop:AmbidexterityVsTwistedAmbidexterity}
	Let $f\colon A \to B$ be a morphism in $\Bb$ and assume that $f$ is $n$-truncated for some natural number $n$. 
	\begin{enumerate}[(1)]
		\item The morphism $f$ is \textit{$\Cc$-ambidextrous} (in the sense of \cite[Construction~4.1.8]{hopkinsLurie2013ambidexterity} applied to the Beck-Chevalley fibration $\widetilde{\Cc} \to \Bb$) if and only if each of the iterated diagonals $\Delta^k(f)$ for $k = 0, 1, \dots, n+1$ is twisted $\Cc$-ambidextrous (in the sense of \Cref{def:TwistedAmbidexterityMorphisms}).
		\item Similarly, $f$ is weakly $\Cc$-ambidextrous (in the sense of \cite[Construction~4.1.8]{hopkinsLurie2013ambidexterity} applied to the Beck-Chevalley fibration $\widetilde{\Cc} \to \Bb$) if and only if each of the iterated diagonals $\Delta^k(f)$ for $k = 1, \dots, n+1$ is twisted $\Cc$-ambidextrous.
		\item If $f$ is weakly $\Cc$-ambidextrous, then there is an equivalence $D_f \simeq \unit_A$ between the dualizing object $D_f$ and the monoidal unit $\unit_A \in \Cc(A)$, and the composite
		\[
		\smash{f_!(-) \simeq f_!(- \otimes_A D_f) \xrightarrow{\Nm_f} f_*(-)}
		\]
		is equivalent to the norm map $f_! \to f_*$ of \cite[Remark~4.1.12]{hopkinsLurie2013ambidexterity}.
	\end{enumerate}
\end{proposition}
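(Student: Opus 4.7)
The plan is to argue by induction on $n$, the truncation level of $f$. The base case $n = -2$ is that $f$ is an equivalence, where all three assertions hold trivially: the ranges of $k$ in (1) and (2) are empty, $\Delta_f$ is an equivalence so that $D_f \simeq \unit_A$, and all relevant norm maps are identities. For the inductive step I assume the statement holds for $(n-1)$-truncated morphisms, noting that $\Delta(f)$ is $(n-1)$-truncated.

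I would first handle part (2), which should be purely formal: by Hopkins--Lurie's definition, $f$ is weakly $\Cc$-ambidextrous iff $\Delta(f)$ is $\Cc$-ambidextrous, and applying the induction hypothesis part (1) to $\Delta(f)$ translates this into $\Delta^{k+1}(f) = \Delta^k(\Delta(f))$ being twisted $\Cc$-ambidextrous for $k = 0, 1, \ldots, n$, as desired.

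Next I would establish part (3). Assuming $f$ is weakly $\Cc$-ambidextrous, the induction hypothesis applied to $\Delta(f)$ yields $D_{\Delta(f)} \simeq \unit_A$ and identifies the Hopkins--Lurie norm map of $\Delta(f)$ with the twisted norm map $\Nm_{\Delta(f)}$, both being equivalences. In particular, one obtains an equivalence $\mu\colon (\Delta_f)_!\unit_A \iso (\Delta_f)_*\unit_A$, giving
\[
D_f = {\pr_1}_*(\Delta_f)_!\unit_A \xrightarrow[\sim]{{\pr_1}_*\mu} {\pr_1}_*(\Delta_f)_*\unit_A \simeq (\pr_1 \circ \Delta_f)_*\unit_A = \unit_A.
\]
To identify the two norm maps, the plan is to invoke the universal property of the twisted norm map (\Cref{prop:UniversalPropertyTwistedNormMap}): both candidates are $\Cc$-linear transformations $f_! \Rightarrow f_*$, and by adjunction it suffices to show their adjoints $f^*f_! \Rightarrow \id_{\Cc^A}$ agree. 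By \Cref{lem:Explicit_Formula_Twisted_Norm} combined with $D_f \simeq \unit_A$, the adjoint twisted norm map factors as
\[
f^*f_! \;\simeq\; {\pr_2}_!\pr_1^* \;\longrightarrow\; {\pr_2}_!(\Delta_f)_!\Delta_f^*\pr_1^* \;\simeq\; \id,
\]
where the first arrow is induced by a map $\alpha\colon \unit_{A \times_B A} \to (\Delta_f)_!\unit_A$ adjoint to the equivalence $\unit_A \iso D_f$ constructed above. Tracing the construction, $\alpha$ factors as $\unit \xrightarrow{\eta} (\Delta_f)_*\unit \xleftarrow[\sim]{\mu} (\Delta_f)_!\unit$ with $\eta$ the unit of $\Delta_f^* \dashv (\Delta_f)_*$; these are exactly the ingredients used in Hopkins--Lurie's construction of their norm map. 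A diagram chase via the projection formula then matches the two factorizations of $f^*f_! \Rightarrow \id$, completing part (3).

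Finally, part (1) would follow formally from (2) and (3): by Hopkins--Lurie's definition, $f$ is $\Cc$-ambidextrous iff $f$ is weakly $\Cc$-ambidextrous and the Hopkins--Lurie norm map is an equivalence, which by (2) and (3) becomes twisted $\Cc$-ambidexterity of $\Delta^k(f)$ for $k = 1, \ldots, n+1$ together with $k = 0$. The main obstacle I anticipate lies in the diagram chase of part (3): identifying the Hopkins--Lurie adjoint norm map with that coming from \Cref{lem:Explicit_Formula_Twisted_Norm} requires carefully unpacking the inductive construction of \cite[Construction~4.1.8]{hopkinsLurie2013ambidexterity} and checking compatibility with Beck--Chevalley, the projection formula, and the norm equivalence $\mu$.
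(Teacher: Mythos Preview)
Your proposal is correct and follows essentially the same approach as the paper: simultaneous induction on $n$, with part (2) coming from the induction hypothesis applied to $\Delta(f)$, part (3) via the equivalence $D_f = {\pr_1}_*(\Delta_f)_!\unit_A \xrightarrow{\Nm_{\Delta(f)}} {\pr_1}_*(\Delta_f)_*\unit_A \simeq \unit_A$ followed by a direct comparison of the adjoint maps $f^*f_! \Rightarrow \id$ using \Cref{lem:Explicit_Formula_Twisted_Norm}, and part (1) formally from (2) and (3). One minor remark: your invocation of \Cref{prop:UniversalPropertyTwistedNormMap} in part (3) is a red herring---that proposition concerns $\Cc$-linear transformations into $f_!(- \otimes D_f)$, whereas here you only need the ordinary adjunction $f^* \dashv f_*$ (which you correctly use in the next clause) to reduce to comparing the adjoints $f^*f_! \Rightarrow \id$; the Hopkins--Lurie norm is not a priori $\Cc$-linear, so this detour should simply be dropped.
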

\begin{proof}
	We prove the three claims by simultaneous induction on $n$. For $n = -2$, $f$ is an equivalence and parts (1) and (2) are vacuous as every equivalence is both $\Cc$-ambidextrous as well as twisted $\Cc$-ambidextrous. In this case, $f_!$ and $f_*$ are both inverse to $f^*$ and therefore admit a canonical equivalence $\Nm_f\colon f_! \simeq f_*$ of parametrized functors $\Cc^A \to \Cc$, which agrees with the norm map of Hopkins and Lurie. Evaluating this equivalence at $\Delta_!\unit_A \in \Cc(A \times A) = \Cc^A(A)$ gives an equivalence $\unit_A \simeq D_f$ in $\Cc(A)$, and the last statement of (3) then holds by construction.
	
	Now assume that $n \geq -1$. The diagonal $\Delta(f)$ of $f$ is $(n-1)$-truncated, so by part (1) of the induction hypothesis it is $\Cc$-ambidextrous (that is, $f$ is weakly $\Cc$-ambidextrous) if and only if all the iterated diagonals $\Delta^k(f)$ for $k = 1, \dots, n + 1$ are twisted $\Cc$-ambidextrous, proving part (2). In this case, there is an equivalence
	\[
	D_f = {\pr_1}_*\Delta_!\unit_A \xrightarrow[\simeq]{\Nm_{\Delta}} {\pr_1}_*\Delta_*\unit_A = \unit_A \qin \Cc(A).
	\]
	Plugging in (the inverse of) this equivalence in the twisted norm map $\Nm_f$ and using the description of $\Nm_f$ given in \Cref{lem:Explicit_Formula_Twisted_Norm} (applied to the slice topos $\Bb_{/B}$), one sees that the composite in (3) is adjoint to the following composite:
	\[
	f^*f_!(-) \simeq {\pr_2}_!\pr_1^*(-) \xrightarrow{u^*_{\Delta}} {\pr_2}_!\Delta_*\Delta^*\pr_1^*(-) \xleftarrow[\simeq]{\Nm_{\Delta}} {\pr_2}_!\Delta_!\Delta^*\pr_1^*(-) \simeq \id \circ \id = \id.
	\]
	But this composite is precisely (a parametrized version of) the map $\nu_f^{(n+1)}\colon f^*f_! \to \id$ of \cite[Construction~4.1.8]{hopkinsLurie2013ambidexterity}, and its adjoint $f_! \to f_*$ is the norm map of \cite[Remark~4.1.12]{hopkinsLurie2013ambidexterity}, finishing the proof of (3). 
	
	Finally we deduce part (1) from (2) and (3). Given (2), we may assume that $f$ is weakly $\Cc$-ambidextrous, and we need to show it is twisted $\Cc$-ambidextrous if and only if it is $\Cc$-ambidextrous. In other words, we need to show that the twisted norm map is an equivalence if and only if the norm map $f_! \to f_*$ of Hopkins and Lurie is an equivalence. This is immediate from part (3).
\end{proof}

Specializing the result of \Cref{prop:AmbidexterityVsTwistedAmbidexterity} to the case where $\Bb$ is the $\infty$-topos of spaces, we obtain the following corollary:

\begin{corollary}
	\label{cor:AmbidexterityVsTwistedAmbidexterity_Space}
	Let $\Cc$ be a presentably symmetric monoidal $\infty$-category and let $A$ be a connected $n$-truncated space. Then the following conditions are equivalent:
	\begin{enumerate}[(1)]
		\item The space $A$ is \textit{$\Cc$-ambidextrous} in the sense of \cite[Definition~4.3.4]{hopkinsLurie2013ambidexterity};
		\item Each of the objects $A, \Omega A, \dots, \Omega^{n+1} A$ is twisted $\Cc$-ambidextrous in the sense of \Cref{def:TwistedAmbidexterity}.
	\end{enumerate}
\end{corollary}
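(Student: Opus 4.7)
The plan is to apply part~(1) of \Cref{prop:AmbidexterityVsTwistedAmbidexterity} to the terminal morphism $f \colon A \to \pt$. Since $A$ is $n$-truncated, so is $f$, and by \cite[Definition~4.3.4]{hopkinsLurie2013ambidexterity} the space $A$ is $\Cc$-ambidextrous if and only if $f$ is $\Cc$-ambidextrous as a morphism of $\Spc$. The proposition thus reduces condition~(1) to the requirement that each iterated diagonal $\Delta^k(f)$ for $k = 0, 1, \dots, n+1$ is twisted $\Cc$-ambidextrous.

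The remaining task is to match the condition ``$\Delta^k(f)$ is twisted $\Cc$-ambidextrous'' with the condition ``$\Omega^k A$ is twisted $\Cc$-ambidextrous.'' For this I would combine \Cref{cor:TwistedAmbi_Fiberwise}, which lets us check twisted ambidexterity of a map of spaces fiberwise, with the standard geometric identification: every non-empty fiber of $\Delta^k(f)$ is equivalent to $\Omega^k A$. The latter is proved by induction on $k$. The base case $k = 0$ reads $\mathrm{fib}(A \to \pt) \simeq A = \Omega^0 A$. The inductive step uses the general fact that for any morphism $g \colon X \to Y$ of spaces with basepoint $x \in X$ mapping to $y \in Y$, the fiber of the diagonal $\Delta(g) \colon X \to X \times_Y X$ at $(x, x, \id_y)$ is equivalent to $\Omega_x \mathrm{fib}_y(g)$; applying this with $g = \Delta^{k-1}(f)$ yields the fiber $\Omega^k A$. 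Since $A$ is connected, the empty fibers of $\Delta^k(f)$ are harmless, as empty spaces are trivially twisted $\Cc$-ambidextrous by \Cref{prop:ClosurePropertiesTwistedAmbiMaps}\eqref{it:DisjointUnions}. Putting all of this together via \Cref{cor:TwistedAmbi_Fiberwise} gives the claimed equivalence.

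The only even mildly delicate step is the fiber identification in the inductive argument; conceptually it just expresses that the diagonal construction in an $\infty$-topos corresponds to passage to the loop object in the appropriate slice, so that iterating the diagonal produces iterated loop spaces of fibers. Beyond \Cref{prop:AmbidexterityVsTwistedAmbidexterity} and \Cref{cor:TwistedAmbi_Fiberwise} no further substantial input is required, and I expect the proof to fit on a few lines once these ingredients are assembled.
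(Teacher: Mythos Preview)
Your proposal is correct and follows essentially the same approach as the paper: apply \Cref{prop:AmbidexterityVsTwistedAmbidexterity} to $f\colon A \to \pt$, identify the fibers of the iterated diagonals $\Delta^k(f)$ with $\Omega^k A$, and invoke \Cref{cor:TwistedAmbi_Fiberwise}. The paper's proof is simply a terser version of what you wrote; your extra care about possibly empty fibers is a fair point that the paper glosses over.
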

\begin{proof}
	Letting $f\colon A \to \pt$ denote the map from $A$ to the point, we observe that each of the fibers of the iterated diagonal $\Delta^kf$ is given by the $k$-fold loop space $\Omega^k A$. It follows from \Cref{cor:TwistedAmbi_Fiberwise} that $\Omega^kA$ is twisted $\Cc$-ambidextrous if and only if the iterated diagonal $\Delta^kA$ of $A$ is twisted $\Cc$-ambidextrous. The claim thus follows from \Cref{prop:AmbidexterityVsTwistedAmbidexterity}.
\end{proof}

As a consequence, we obtain a characterization of higher semiadditivity in terms of twisted ambidexterity. An advantage of this characterization over the usual definition of ambidexterity is that all the twisted norm maps are a priori defined rather than through an inductive process.

\begin{corollary}
	\label{cor:HigherSemiadditivityInTermsOfTwistedAmbidexterity}
	Let $\Cc$ be a presentably symmetric monoidal $\infty$-category and let $-2 \leq m \leq \infty$. Then $\Cc$ is $m$-semiadditive if and only if each $m$-finite space is twisted $\Cc$-ambidextrous.
\end{corollary}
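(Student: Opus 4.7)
The plan is to compare the two conditions through \Cref{prop:AmbidexterityVsTwistedAmbidexterity}. By definition, $\Cc$ is $m$-semiadditive precisely when every $m$-finite space $A$ is $\Cc$-ambidextrous in the Hopkins-Lurie sense, i.e., when the map $f\colon A \to \pt$ is a $\Cc$-ambidextrous morphism of $\Spc$. Since an $m$-finite space is in particular $m$-truncated, $f$ is an $m$-truncated morphism, so \Cref{prop:AmbidexterityVsTwistedAmbidexterity}(1) applies: $f$ is $\Cc$-ambidextrous if and only if each iterated diagonal $\Delta^k(f)$ for $k = 0, 1, \ldots, m+1$ is twisted $\Cc$-ambidextrous. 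This reduces both directions of the corollary to analyzing these iterated diagonals.

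The forward direction is immediate: if $\Cc$ is $m$-semiadditive and $A$ is $m$-finite, the above discussion applied to $f\colon A \to \pt$ shows in particular that $\Delta^0(f) = f$ is twisted $\Cc$-ambidextrous, which is exactly the statement that $A$ is twisted $\Cc$-ambidextrous.

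The main content lies in the converse. Suppose every $m$-finite space is twisted $\Cc$-ambidextrous; I need to show that each iterated diagonal $\Delta^k(f)$ of $f\colon A \to \pt$ (with $A$ an $m$-finite space and $0 \leq k \leq m+1$) is twisted $\Cc$-ambidextrous. By \Cref{cor:TwistedAmbi_Fiberwise}, this can be checked fiberwise: each fiber of $\Delta^k(f)$ over a point of its target is either empty or a $k$-fold based loop space of a connected component $A_i$ of $A$. The empty space is trivially $m$-finite, and for a connected component $A_i$ of the $m$-finite $A$, each iterated loop space $\Omega^k A_i$ is $(m-k)$-truncated (hence $m$-truncated) with finite homotopy groups $\pi_j(\Omega^k A_i) = \pi_{j+k}(A_i)$, so it is $m$-finite. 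Therefore each fiber of $\Delta^k(f)$ is $m$-finite and twisted $\Cc$-ambidextrous by hypothesis, which completes the argument. The identification of the fibers of $\Delta^k(f)$ with based loop spaces (or empty sets) is the only piece that requires any real verification.

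Finally, the case $m = \infty$ reduces to the finite-$m$ cases: an $\infty$-finite (i.e.\ $\pi$-finite) space is $m$-finite for some finite $m \geq -2$, so ``$\Cc$ is $\infty$-semiadditive'' and ``every $\pi$-finite space is twisted $\Cc$-ambidextrous'' both amount to the corresponding finite-$m$ statement holding for every $m$, and the equivalence then follows from the finite case proved above.
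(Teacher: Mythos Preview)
Your proof is correct and follows essentially the same approach as the paper: the paper invokes \Cref{cor:AmbidexterityVsTwistedAmbidexterity_Space} and the observation that iterated loop spaces of an $m$-finite space are again $m$-finite, while you bypass that corollary and reproduce its content inline by identifying the fibers of the iterated diagonals via \Cref{cor:TwistedAmbi_Fiberwise}. The underlying argument is the same.
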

\begin{proof}
	This is immediate from \Cref{cor:AmbidexterityVsTwistedAmbidexterity_Space}, as the iterated loop spaces of an $m$-finite space are again $m$-finite.
\end{proof}

\begin{corollary}
	Let $\Cc$ be a presentably symmetric monoidal $\Bb$-category, let $A \in \Bb$ and let $-2 \leq m \leq \infty$. Then the following two conditions are equivalent:
	\begin{enumerate}[(1)]
		\item the $\infty$-category $\Cc(A)$ is $m$-semiadditive;
		\item the fold map $\colim_X A \to A$ is twisted $\Cc$-ambidextrous for every $m$-finite space $X$.
	\end{enumerate}
\end{corollary}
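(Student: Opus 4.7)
The plan is to reduce to \Cref{cor:HigherSemiadditivityInTermsOfTwistedAmbidexterity}, which establishes the analogous unparametrized statement. The reduction proceeds in two stages: first I pass to the slice topos $\Bb_{/A}$ to reduce to the case where $A$ is terminal, and then I compare twisted ambidexterity of locally constant $\Bb$-groupoids with twisted ambidexterity of the corresponding spaces for the underlying $\infty$-category $\Gamma(\Cc)$.

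For the first stage, I invoke \Cref{def:Relative_Twisted_Norm_Map}: the fold map $\colim_X A \to A$ in $\Bb$ is twisted $\Cc$-ambidextrous if and only if, viewed as a morphism in $\Bb_{/A}$, it is twisted $\pi_A^*\Cc$-ambidextrous. Since $\pi_A^* = - \times A\colon \Bb \to \Bb_{/A}$ is a left adjoint, the object $\colim_X A$ together with its fold map to $A$ corresponds in $\Bb_{/A}$ to $\colim_X 1_{\Bb_{/A}} = \LConst_{\Bb_{/A}}(X)$ equipped with its canonical map to the terminal object. Together with the identification $\Gamma(\pi_A^*\Cc) = \Cc(A)$, this reduces the problem to the following assertion (applied inside $\Bb_{/A}$): $\Gamma(\Cc)$ is $m$-semiadditive if and only if the map $\LConst_\Bb(X) \to 1$ is twisted $\Cc$-ambidextrous for every $m$-finite space $X$.

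For the second stage, I prove the \emph{locally constant comparison}: for an arbitrary space $X$, the map $\LConst_\Bb(X) \to 1$ in $\Bb$ is twisted $\Cc$-ambidextrous if and only if $X \to \pt$ is twisted $\Gamma(\Cc)$-ambidextrous. Writing $Y := \LConst_\Bb(X) = \colim_X 1_\Bb$, the limit-preservation of $\Cc\colon \Bb\catop \to \Cat_\infty$ yields equivalences $\Cc(Y) \simeq \lim_X \Gamma(\Cc) = \Fun(X,\Gamma(\Cc))$, and analogously for $\Cc(Y \times Y)$. Because $\LConst\colon \Spc \to \Bb$ is a symmetric monoidal left adjoint, these equivalences intertwine the diagonal $\Delta_Y$, the projections $\pr_i$, the monoidal structure, and the adjoints $Y_!\dashv Y^* \dashv Y_*$ with the analogous data of $X$ over $\Gamma(\Cc)$. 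Under these identifications, the construction in \Cref{def:Twisted_Norm_Map} produces the same twisted norm map on both sides, so $\Nm_Y$ is an equivalence iff $\Nm_X$ is. Combining this comparison with \Cref{cor:HigherSemiadditivityInTermsOfTwistedAmbidexterity} applied to $\Gamma(\Cc)$ then concludes the proof.

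The main technical obstacle is the last verification—checking that all the data entering the twisted norm map transport coherently under $\LConst$. The cleanest way to handle this is to establish once and for all a naturality statement for the twisted norm along geometric morphisms: for any geometric morphism $p\colon \Bb \to \Aa$ and any presentably symmetric monoidal $\Bb$-category $\Cc$, the pushforward $p_*\Cc = \Cc \circ (p^*)\catop$ is a presentably symmetric monoidal $\Aa$-category, and a morphism $g$ in $\Aa$ is twisted $p_*\Cc$-ambidextrous if and only if $p^*g$ is twisted $\Cc$-ambidextrous. Applied to the canonical geometric morphism $\Bb \to \Spc$ (whose inverse image is $\LConst$), this yields precisely the required comparison.
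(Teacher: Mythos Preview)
Your proposal is correct and follows essentially the same approach as the paper. The paper's proof is slightly more direct: instead of first passing to the slice $\Bb_{/A}$ and then invoking the geometric morphism $\Bb_{/A} \to \Spc$, it works in one step with the colimit-preserving functor $L_A\colon \Spc \to \Bb$, $X \mapsto \colim_X A$, observes that the composite $\Cc \circ L_A\catop$ is precisely the $\Spc$-category encoding $\Cc(A)$, and then uses that $L_A$ preserves pullbacks to transport twisted ambidexterity---this is exactly your ``naturality along geometric morphisms'' principle, just applied to the non-terminal-preserving $L_A$ rather than factored through a slice.
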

\begin{proof}
	For an object $A \in \Bb$, consider the (unique) colimit-preserving functor $L_A\colon \Spc \to \Bb$ sending the point to $A$, given on objects by $X \mapsto \colim_X A$. The $\infty$-category $\Cc(A)$ is encoded as a $\Spc$-category by the composite
	\[
	\Spc\catop \xrightarrow{L_A} \Bb\catop \xrightarrow{\Cc} \PrL.
	\]
	Since $L_A$ preserves pullbacks, a morphism $f\colon X \to Y$ of spaces is twisted $\Cc(A)$-ambidextrous if and only if the map $L_A(f)\colon L_A(X) \to L_A(Y)$ is twisted $\Cc$-ambidextrous. Applying this to $Y = \pt$ shows that $X \to *$ is twisted $\Cc(A)$-ambidextrous if and only if the map $\colim_X A = L_A(X) \to A$ is twisted $\Cc$-ambidextrous. The claim now follows from \Cref{cor:HigherSemiadditivityInTermsOfTwistedAmbidexterity}.
\end{proof}

In \cite{nardin2016exposeIV} and \cite{CLL_Global}, parametrized notions of semiadditivity were introduced. By \Cref{prop:AmbidexterityVsTwistedAmbidexterity}, these may be expressed in terms of twisted ambidexterity:

\begin{corollary}
	Let $T$ be a small $\infty$-category and let $P \subseteq T$ be an atomic orbital subcategory, in the sense of \cite[Definition~4.3.1]{CLL_Global}. Let $\Cc$ be a presentably symmetric monoidal $\PSh(T)$-category. Then $\Cc$ is $P$-semiadditive in the sense of \cite[Definition~4.5.1]{CLL_Global} if and only if $\Cc$ is fiberwise semiadditive and every morphism $p\colon A \to B$ in $P$ is twisted $\Cc$-ambidextrous.
\end{corollary}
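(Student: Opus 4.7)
The plan is to reduce to \Cref{prop:AmbidexterityVsTwistedAmbidexterity} by analyzing the iterated diagonals of $P$-morphisms via the atomic orbital hypothesis. First, I would invoke the definition of $P$-semiadditivity from \cite[Definition~4.5.1]{CLL_Global} in the form that a presentably symmetric monoidal $\PSh(T)$-category $\Cc$ is $P$-semiadditive if and only if $\Cc$ is fiberwise semiadditive and every $p\colon A \to B$ in $P$ is $\Cc$-ambidextrous in the Hopkins-Lurie sense applied to the Beck-Chevalley fibration $\widetilde{\Cc} \to \PSh(T)$. Granted this reformulation, it suffices to show, assuming fiberwise semiadditivity of $\Cc$, that a morphism $p \in P$ is Hopkins-Lurie $\Cc$-ambidextrous if and only if $p$ is twisted $\Cc$-ambidextrous in the sense of \Cref{def:TwistedAmbidexterityMorphisms}.

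By \Cref{prop:AmbidexterityVsTwistedAmbidexterity}(1), the Hopkins-Lurie ambidexterity of $p$ is equivalent to the twisted $\Cc$-ambidexterity of each iterated diagonal $\Delta^k(p)$ for $k \geq 0$. The atomicity of $P$ supplies a decomposition $A \times_B A \simeq A \sqcup R$ in $\PSh(T)$ under which $\Delta_p = \Delta^1(p)$ is the coproduct inclusion of the first summand. Since coproducts in the $\infty$-topos $\PSh(T)$ are disjoint, this inclusion is a $(-1)$-truncated monomorphism, so its own diagonal is an equivalence. Consequently $\Delta^k(p)$ is an equivalence for every $k \geq 2$, and equivalences are trivially twisted $\Cc$-ambidextrous. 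The problem therefore reduces to showing that the single map $\Delta_p$ is twisted $\Cc$-ambidextrous under the fiberwise semiadditivity assumption.

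For this last step, I would observe that the sheaf condition on $\Cc$ yields $\Cc(A \sqcup R) \simeq \Cc(A) \times \Cc(R)$, and under this identification the coproduct inclusion $\iota\colon A \hookrightarrow A \sqcup R$ has $\iota^*(X,Y) = X$, $\iota_!(X) = (X, 0_R)$ and $\iota_*(X) = (X, 1_R)$, where $0_R, 1_R \in \Cc(R)$ denote the initial and terminal objects. Fiberwise semiadditivity of $\Cc$ forces $\Cc(R)$ to be pointed, so $0_R \simeq 1_R$; since $\iota$ is already $(-1)$-truncated, $\Delta_\iota$ is an equivalence, hence $D_\iota \simeq \unit_A$ by \Cref{prop:AmbidexterityVsTwistedAmbidexterity}(3), and the twisted norm map $\Nm_\iota$ is identified with the classical Hopkins-Lurie norm map, which becomes an equivalence precisely via the canonical $\iota_! \simeq \iota_*$ produced above. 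Combining the three paragraphs yields the desired biconditional.

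The main obstacle I anticipate is the first step: faithfully translating the definition of $P$-semiadditivity in \cite{CLL_Global} into the Hopkins-Lurie ambidexterity language for the Beck-Chevalley fibration $\widetilde{\Cc} \to \PSh(T)$, so that \Cref{prop:AmbidexterityVsTwistedAmbidexterity} becomes directly applicable. Once this translation is in hand, the remaining arguments are routine consequences of the atomic orbital structure together with the closure properties of \Cref{prop:ClosurePropertiesTwistedAmbiMaps}.
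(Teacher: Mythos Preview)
Your argument is correct, but it takes a longer route than the paper's. The paper proceeds by citing \cite[Corollary~4.5.7]{CLL_Global} to reformulate $P$-semiadditivity as fiberwise semiadditivity together with invertibility of the norm maps of \cite[Construction~4.3.8]{CLL_Global}, then invokes \cite[Remark~4.3.9]{CLL_Global} to identify those norm maps with the Hopkins--Lurie norm maps, and finally applies \Cref{prop:AmbidexterityVsTwistedAmbidexterity}(3) to identify the Hopkins--Lurie norm map with the twisted norm map. That is the whole proof: three citations and one line.

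Your approach instead uses \Cref{prop:AmbidexterityVsTwistedAmbidexterity}(1) and explicitly analyses the iterated diagonals of $p$ via the atomic orbital hypothesis. This is valid, and your treatment of the summand inclusion $\Delta_p$ is correct. However, note that your ``main obstacle''---the translation in your first paragraph---already requires exactly the citations \cite[Corollary~4.5.7, Remark~4.3.9]{CLL_Global} that the paper uses; the latter in particular already encodes the weak Hopkins--Lurie ambidexterity of $p$ that you then re-establish by hand via the summand decomposition. Once those citations are in place, \Cref{prop:AmbidexterityVsTwistedAmbidexterity}(3) finishes immediately, so your explicit diagonal analysis, while correct and perhaps more transparent about where atomicity enters, is redundant relative to what you have to import from \cite{CLL_Global} anyway.
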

\begin{proof}
	By \cite[Corollary~4.5.7]{CLL_Global}, $\Cc$ is $P$-semiadditive if and only if it is fiberwise semiadditive and the norm map $\Nm_p\colon p_! \to p_*$ from \cite[Construction~4.3.8]{CLL_Global} is an equivalence for every morphism $p\colon A \to B$ in $P$. By \cite[Remark~4.3.9]{CLL_Global}, this norm map agrees with the norm map $\Nm_p$ defined by Hopkins and Lurie, which in turn agrees with the twisted norm map $\Nm_p$ by \Cref{prop:AmbidexterityVsTwistedAmbidexterity}. This finishes the proof.
\end{proof}

\begin{corollary}
	\label{cor:ParametrizedSemiadditivityAsTwistedAmbidexterity}
	Let $T$ be an atomic orbital $\infty$-category, in the sense of \cite[Definition~4.1]{nardin2016exposeIV}. Let $\Cc$ be a presentably symmetric monoidal $\PSh(T)$-category. Then $\Cc$ is $T$-semiadditive in the sense of \cite[Definition~5.3]{nardin2016exposeIV} if and only if $\Cc$ is fiberwise semiadditive and every morphism $f\colon A \to B$ in $T$ is twisted $\Cc$-ambidextrous.
\end{corollary}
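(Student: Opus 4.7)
The plan is to deduce this corollary from the preceding one by identifying Nardin's notion of $T$-semiadditivity with the notion of $P$-semiadditivity of \cite{CLL_Global} applied to $P = T$. First I would observe that if $T$ is an atomic orbital $\infty$-category in the sense of \cite[Definition~4.1]{nardin2016exposeIV}, then the inclusion $T \subseteq T$ qualifies as an atomic orbital subcategory in the sense of \cite[Definition~4.3.1]{CLL_Global}: the atomicity and orbital conditions of the two frameworks are literally the same. Consequently the preceding corollary directly applies with $P = T$.

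Next I would verify that Nardin's notion of $T$-semiadditivity \cite[Definition~5.3]{nardin2016exposeIV} coincides with the $P$-semiadditivity of \cite[Definition~4.5.1]{CLL_Global} for $P = T$. Both are characterized by fiberwise semiadditivity together with the invertibility of a norm map $\Nm_f\colon f_! \Rightarrow f_*$ associated to each morphism $f\colon A \to B$ in $T$, constructed inductively via iterated diagonals; the identification between these constructions and the Hopkins--Lurie norm map of \cite[Remark~4.1.12]{hopkinsLurie2013ambidexterity} is recorded in \cite[Remark~4.3.9]{CLL_Global}. Combining these identifications with \Cref{prop:AmbidexterityVsTwistedAmbidexterity}, one obtains that $\Nm_f$ is an equivalence if and only if $f$ is twisted $\Cc$-ambidextrous (noting that in the atomic orbital setting every morphism in $T$ is automatically truncated and weakly $\Cc$-ambidextrous, so that $D_f \simeq \unit_A$ and the Hopkins--Lurie norm map and the twisted norm map genuinely agree).

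The main obstacle, such as it is, lies in verifying the compatibility between Nardin's and CLL's definitions of $T$-semiadditivity: both references work with slightly different categorical frameworks (Nardin uses the $T$-category formalism of \cite{BDGNS2016ExposeI}, whereas CLL works within Martini--Wolf's framework), so some translation is required. However, since both frameworks agree on presheaf topoi (as noted in \Cref{subsec:Recollections_Parametrized_Category_Theory}) and both norm maps are defined by the same inductive recipe, this is essentially bookkeeping. Once the compatibility is in place, the statement follows immediately from the preceding corollary.
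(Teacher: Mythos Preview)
Your approach is essentially the same as the paper's: apply the preceding corollary with $P = T$ and then identify Nardin's $T$-semiadditivity with the $P$-semiadditivity of \cite{CLL_Global}. The paper handles the latter identification more directly by citing \cite[Proposition~4.6.4]{CLL_Global}, which explicitly states that the norm map of \cite[Construction~4.3.8]{CLL_Global} is equivalent to Nardin's norm map from \cite[Construction~5.2]{nardin2016exposeIV}; your route through the Hopkins--Lurie norm and \Cref{prop:AmbidexterityVsTwistedAmbidexterity} is slightly redundant (that comparison was already absorbed into the proof of the preceding corollary), and your appeal to ``the same inductive recipe'' is exactly what Proposition~4.6.4 makes precise.
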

\begin{proof}
	This follows immediately from the previous corollary for $P = T$, since by \cite[Proposition~4.6.4]{CLL_Global} the norm map from \cite[Construction~4.3.8]{CLL_Global} is equivalent to the norm map constructed in \cite[Construction~5.2]{nardin2016exposeIV}.
\end{proof}

\subsection{Costenoble-Waner duality}
\label{subsec:Costenoble_Waner_Duality}
There is a close link between twisted ambidexterity and Costenoble-Waner duality, a form of duality theory in parametrized homotopy theory introduced in the early 2000's by Costenoble and Waner \cite{CostenobleWaner2016equivariant} and subsequently developed in more detail by May and Sigurdsson \cite{maysigurdsson2006parametrized}. The goal of this subsection is to introduce a general form of Costenoble-Waner duality in an arbitrary presentably symmetric monoidal $\Bb$-category $\Cc$ and explain its relationship with twisted ambidexterity.

Recall that an object $X$ of a symmetric monoidal $\infty$-category $\Dd$ is called \textit{dualizable} if there exists another object $Y \in \Dd$, called the \textit{dual} of $X$, together with an evaluation map $\epsilon\colon X \otimes Y \to \unit$ and a coevaluation map $\eta\colon \unit \to Y \otimes X$ satisfying the triangle identities:
\begin{equation*}
	\begin{tikzcd}[cramped, column sep = 2]
		& X \otimes Y\otimes X \drar{\epsilon \otimes X} \\
		X \otimes \unit \urar{X\otimes \eta} \rar[equal] & X \rar[equal] & \unit \otimes X
	\end{tikzcd}
	\hspace{30pt}\text{and} \hspace{30pt}
	\begin{tikzcd}[cramped, column sep = 2]
		& Y\otimes X \otimes Y\drar{Y\otimes \epsilon} \\
		\unit \otimes Y\urar{\eta \otimes Y} \rar[equal] & Y \rar[equal] & Y\otimes \unit.
	\end{tikzcd}
\end{equation*}
It is not difficult to see that this is equivalent to the $\Dd$-linear functor $X \otimes -\colon \Dd \to \Dd$ admitting a $\Dd$-linear right adjoint, necessarily of the form $Y \otimes -\colon \Dd \to \Dd$, with $\Dd$-linear unit and counit.

In our approach to Costenoble-Waner duality, we will generalize the above perspective to the parametrized setting. In place of the correspondence between objects $X \in \Dd$ and $\Dd$-linear functors $\Dd \to \Dd$ in the non-parametrized setting, we will use the equivalence $\Fun_{\Cc}(\Cc^A,\Cc^B) \simeq \Cc(A \times B)$ from \Cref{thm:Classification_CLinear_Functors} in the parametrized setting, where $\Cc$ is a presentably symmetric monoidal $\Bb$-category. In particular, any object $X \in \Cc(A \times B)$ determines a $\Cc$-linear $\Bb$-functor $F_X\colon \Cc^A \to \Cc^B$ given by the composite
\[
F_X\colon \Cc^A \xrightarrow{\pr_A^*} \Cc^{A \times B} \xrightarrow{- \otimes_{A \times B} X} \Cc^{A \times B} \xrightarrow{{\pr_B}_!} \Cc^B,
\]
and every $\Cc$-linear $\Bb$-functor $F\colon \Cc^A \to \Cc^B$ is of this form for a unique object $D_F \in \Cc(A \times B)$. 

\begin{convention}
	Whenever we write $X \in \Cc(A \times B)$, we think of $X$ as being directed from $A$ towards $B$. If we wish to think of $X$ as being directed from $B$ towards $A$, we will write $X \in \Cc(B \times A)$ instead. This also applies when $B = 1$ is the terminal object of $\Bb$, meaning that we distinguish between $X \in \Cc(A) = \Cc(A \times 1)$ and $X \in \Cc(A) = \Cc(1 \times A)$.
\end{convention}

\begin{definition}[{cf.\ \cite[Construction~17.1.3, Proposition~17.1.4]{maysigurdsson2006parametrized}}]
	\label{def:CompositionProduct}
	For an object $A \in \Bb$, we define $U_A := \Delta_!\unit_A \in \Cc(A \times A)$. For objects $A,B,C \in \Bb$ and objects $X \in \Cc(A \times B)$ and $Y \in \Cc(B \times C)$, their \textit{composition product} $Y \odot X \in \Cc(A \times C)$ is defined as
	\[
	Y \odot X := (\pr_{AC})_!(\pr_{AB}^*X \otimes \pr_{BC}^*Y),
	\]
	where $\pr_{AB}\colon A \times B \times C \to A \times B$ denotes the projection and similarly for $\pr_{BC}$ and $\pr_{AC}$. This gives rise to a functor $- \odot -\colon \Cc(B \times C) \times \Cc(A \times B) \to \Cc(A \times C)$.
\end{definition}

The following lemma may be regarded as a justification for the definition of the composition product:

\begin{lemma}
	\label{lem:CompositionProductVsComposition}
	In the above situation, there are natural equivalences
	\begin{align*}
		F_{U_A} \simeq \id_{\Cc^A}\hspace{18pt} & \qin \Fun_{\Cc}(\Cc^A,\Cc^A), \\
		F_{Y \odot X} \simeq F_{Y} \circ F_X &\qin \Fun_{\Cc}(\Cc^A, \Cc^C).
	\end{align*}
\end{lemma}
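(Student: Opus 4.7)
Both parts of the lemma are applications of the classification theorem \Cref{thm:Classification_CLinear_Functors}: two $\Cc$-linear $\Bb$-functors $\Cc^A \to \Cc^B$ are equivalent as soon as their images agree under the evaluation equivalence $\Fun_{\Cc}(\Cc^A,\Cc^B) \simeq \Cc(A \times B)$, and every such functor arises as $F_X$ for a unique $X \in \Cc(A \times B)$.

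For part (1), I would trace through the construction of the evaluation equivalence in the proof of \Cref{cor:Classification_CLinear_Functors} to see that the identity functor $\id_{\Cc^A}$ corresponds precisely to $\Delta_!\unit_A = U_A$ (this is essentially the defining property of the object $U$ appearing in that proof, transported along the equivalence $\Cc[A] \simeq \Cc^A$ of \Cref{cor:Free_VS_Cofree_CLinear_Category}). Since $F_{U_A}$ is by \Cref{def:ClassifiedFunctor} the $\Cc$-linear functor associated to $U_A$, the equivalence $F_{U_A} \simeq \id_{\Cc^A}$ is immediate.

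For part (2), the strategy is to construct the equivalence $F_Y \circ F_X \simeq F_{Y \odot X}$ directly by chaining together base change and projection formula equivalences. Unfolding the formula for $F_X$ and $F_Y$ from \Cref{thm:Classification_CLinear_Functors} gives
\[
F_Y(F_X(W)) \simeq (\pr_C)_!\bigl(\pr_B^*(\pr_B)_!(\pr_A^*W \otimes X) \otimes Y\bigr).
\]
Applying the left base change equivalence $\pr_B^*(\pr_B)_! \simeq (\pr_{BC})_!\pr_{AB}^*$ for the pullback square
\[
\begin{tikzcd}
A \times B \times C \ar[r, "\pr_{AB}"] \ar[d, "\pr_{BC}"'] \drar[pullback] & A \times B \ar[d, "\pr_B"] \\
B \times C \ar[r, "\pr_B"'] & B,
\end{tikzcd}
\]
followed by two uses of the left projection formula to move tensor factors inside pushforwards, this simplifies to
\[
F_Y(F_X(W)) \simeq (\pr_C)_!\bigl(\pr_A^*W \otimes \pr_{AB}^*X \otimes \pr_{BC}^*Y\bigr),
\]
where the projections are now from the triple product $A \times B \times C$. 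Unfolding the definition $Y \odot X = (\pr_{AC})_!(\pr_{AB}^*X \otimes \pr_{BC}^*Y)$ inside $F_{Y \odot X}(W) = (\pr_C)_!(\pr_A^*W \otimes (Y \odot X))$ and one further use of the projection formula produces the same expression, yielding the desired natural equivalence.

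The computation is essentially routine; the only thing to be careful about is that the chain of equivalences respects the parametrized $\Cc$-linear structure, rather than merely giving a pointwise isomorphism at one base. This is automatic since base change, the projection formula, and the monoidal structure on $\Cc$ are themselves $\Cc$-linear parametrized operations in a presentably symmetric monoidal $\Bb$-category, so no step introduces any additional coherence obligation.
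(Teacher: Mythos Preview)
Your proposal is correct and follows essentially the same route as the paper. For part (1) both you and the paper simply observe that the evaluation equivalence of \Cref{thm:Classification_CLinear_Functors} is \emph{defined} as evaluation at $\Delta_!\unit_A$, so the identity corresponds to $U_A$. For part (2) the paper runs the computation in the opposite direction---starting from $F_{Y \odot X}$, applying the projection formula for $(\pr_{AC})_!$, then base change for $\pr_B^*(\pr_B)_!$ to arrive at $F_Y \circ F_X$---but the ingredients (one base change equivalence, a couple of projection formulas, and symmetric monoidality of the restriction functors) are identical to yours, and you both land on the same ``middle'' expression $(\pr_C)_!(\pr_A^*(-) \otimes \pr_{AB}^*X \otimes \pr_{BC}^*Y)$ over $A \times B \times C$.
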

\begin{proof}
	The first equivalence is immediate, as the equivalence $\Fun_{\Cc}(\Cc^A, \Cc^A) \simeq \Cc(A \times A)$ of \Cref{thm:Classification_CLinear_Functors} is given by evaluation at $\Delta_!\unit_A$. For the second equivalence, plugging in the definition of $Y \odot X$ and using the projection formula for ${\pr_{AC}}_!$ shows that $F_{Y \odot X}$ is given by the composite
	\[
	\Cc^A \xrightarrow{\pr_A^*} \Cc^{A \times C} \xrightarrow{\pr_{AC}^*} \Cc^{A \times B \times C} \xrightarrow{- \otimes \pr_{AB}^*X \otimes \pr_{BC}^*Y} \Cc^{A \times C \times B} \xrightarrow{{\pr_{AC}}_!} \Cc^{A \times C} \xrightarrow{{\pr_C}_!} \Cc^C.
	\]
	Using symmetric monoidality of the functors $\pr_{AB}^*$ and $\pr_{BC}^*$ and using the base change equivalence $\pr_B^*{\pr_B}_! \simeq {\pr_{BC}}_!\pr_{AB}^*$, this is equivalent to the composite
	\[
	\Cc^A \xrightarrow{\pr_A^*} \Cc^{A \times B} \xrightarrow{- \otimes X} \Cc^{A \times B} \xrightarrow{{\pr_B}_!} \Cc^B \xrightarrow{\pr_B^*} \Cc^{B \times C} \xrightarrow{- \otimes Y} \Cc^{B \times C} \xrightarrow{{\pr_C}_!} \Cc^C.
	\]
	But this is simply $F_{Y} \circ F_X$, finishing the proof.
\end{proof}

We will frequently use \Cref{lem:CompositionProductVsComposition} to deduce properties of the composition product which can be somewhat tedious to prove by hand. For example, it follows directly from \Cref{lem:CompositionProductVsComposition} that the composition product is associative and unital up to homotopy. The objects $U_A = \Delta_!\unit_A \in \Cc(A \times A)$ serve as identities with respect to the composition product: for an object $X \in \Cc(A \times B)$ there are equivalences
\[
X \odot U_A \simeq X \qquad \qquadtext{and} \qquad U_B \odot X \simeq X.
\]
For brevity, we will mostly suppress the associativity and unitality equivalences from the notation and treat them as identities, just like we do for functors.

We may now introduce a parametrized analogue of monoidal duality, first discovered by Costenoble and Waner \cite{CostenobleWaner2016equivariant} in the context of equivariant homotopy theory.

\begin{definition}[{cf.\ \cite[Definition~16.4.1, Chapter~18]{maysigurdsson2006parametrized}}]
	\label{def:CostenobleWanerDuality}
	An object $X \in \Cc(A \times B)$ is called \textit{left Costenoble-Waner dualizable} if there is another object $Y \in \Cc(B \times A)$, called the \textit{left Costenoble-Waner dual of $X$}, together with morphisms
	\[
	\epsilon\colon X \odot Y \to U_B \qquad \text{ and } \qquad \eta\colon U_A \to Y \odot X
	\]
	in $\Cc(B \times B)$ resp. $\Cc(A \times A)$ satisfying the triangle identities
	\begin{equation*}
		\begin{tikzcd}[cramped, column sep = 2]
			& X \odot Y\odot X \drar{\epsilon \odot X} \\
			X \odot U_A \urar{X\odot \eta} \rar[equal] & X \rar[equal] & U_B \odot X
		\end{tikzcd}
		\hspace{30pt}\text{and} \hspace{30pt}
		\begin{tikzcd}[cramped, column sep = 2]
			& Y\odot X \odot Y\drar{Y\odot \epsilon} \\
			U_A \odot Y\urar{\eta \odot Y} \rar[equal] & Y \rar[equal] & Y\odot U_B.
		\end{tikzcd}
	\end{equation*}
	Conversely, we call $X$ the \textit{right Costenoble-Waner dual} of $Y$. Sometimes we say \textit{left dual} and \textit{right dual} for brevity. Note that $X \in \Cc(A \times B)$ is left Costenoble-Waner dualizable if and only if it is right Costenoble-Waner dualizable when treated as an object $X \in \Cc(B \times A)$.
\end{definition}

\begin{warning}
	In \cite{maysigurdsson2006parametrized}, the phrase `Costenoble-Waner duality' is only used when $A$ is the terminal object of $\Bb$. When $B$ is the terminal object, they use the phrase `fiberwise duality', compare also \Cref{lem:CostenobleWanerDualityVersusFiberwiseDuality} below.
\end{warning}

Due to the translation from \Cref{lem:CompositionProductVsComposition} between the composition product and composition of $\Cc$-linear $\Bb$-functors, we can express Costenoble-Waner duality in terms of internal adjunctions in $\Mod_{\Cc}(\PrL(\Bb))$, in the sense of \Cref{def:Internal_Left_Adjoint}.

\begin{lemma}
	\label{lem:CostenobleWanerAsInternalLeftAdjoint}
	An object $X \in \Cc(A \times B)$ is left Costenoble-Waner dualizable if and only if the $\Cc$-linear $\Bb$-functor $F_X\colon \Cc^A \to \Cc^B$ associated to $X$ is an internal left adjoint in $\Mod_{\Cc}(\PrL(\Bb))$. In this case, the $\Cc$-linear right adjoint of $F_X$ is given by $F_{Y}\colon \Cc^B \to \Cc^A$, where $Y \in \Cc(B \times A)$ is a left\footnote{The fact that \textit{left} duals correspond to \textit{right} adjoints is unfortunate but seems to be the standard convention.} Costenoble-Waner dual of $X$.
\end{lemma}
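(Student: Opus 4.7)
The plan is to reduce the lemma to a formal consequence of the classification theorem (\Cref{thm:Classification_CLinear_Functors}) together with the translation between the composition product and composition of $\Cc$-linear $\Bb$-functors (\Cref{lem:CompositionProductVsComposition}). The key point is that \Cref{thm:Classification_CLinear_Functors} provides, for every pair $A, B \in \Bb$, a fully faithful identification
\[
\Fun_{\Cc}(\Cc^A, \Cc^B) \simeq \Cc(A \times B), \qquad F \mapsto D_F,
\]
sending $F_X \mapsto X$, and so in particular it induces equivalences of mapping spaces. Under this identification, \Cref{lem:CompositionProductVsComposition} shows that composition of $\Cc$-linear $\Bb$-functors corresponds to the composition product $\odot$, and that identity functors correspond to the objects $U_A = \Delta_!\unit_A$.

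First I would prove the forward direction. Given a left Costenoble-Waner duality datum $(Y, \epsilon, \eta)$ for $X$, apply the classification equivalences to $\epsilon \colon X \odot Y \to U_B$ and $\eta \colon U_A \to Y \odot X$ to obtain $\Cc$-linear natural transformations $\widetilde{\epsilon} \colon F_X F_Y \Rightarrow \id_{\Cc^B}$ and $\widetilde{\eta} \colon \id_{\Cc^A} \Rightarrow F_Y F_X$. Since the equivalence $\Fun_{\Cc}(\Cc^A, \Cc^B) \simeq \Cc(A \times B)$ is induced by a functor of $\infty$-categories, it is compatible with composition of morphisms and with horizontal composition; combined with \Cref{lem:CompositionProductVsComposition}, the two triangle identities in \Cref{def:CostenobleWanerDuality} translate to the usual triangle identities for $(\widetilde{\epsilon}, \widetilde{\eta})$, exhibiting $F_Y$ as the $\Cc$-linear right adjoint of $F_X$ in $\Mod_{\Cc}(\PrL(\Bb))$, as in \Cref{def:Internal_Left_Adjoint}.

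For the converse, suppose $F_X$ is an internal left adjoint with internal right adjoint $G \colon \Cc^B \to \Cc^A$. Since $G$ is $\Cc$-linear and colimit-preserving, \Cref{thm:Classification_CLinear_Functors} yields a unique object $Y := D_G \in \Cc(B \times A)$ with $G \simeq F_Y$. Applying the inverse classification equivalences to the $\Cc$-linear unit $\id_{\Cc^A} \Rightarrow F_Y F_X$ and counit $F_X F_Y \Rightarrow \id_{\Cc^B}$, and using \Cref{lem:CompositionProductVsComposition} again to identify sources and targets, yields the morphisms $\eta \colon U_A \to Y \odot X$ and $\epsilon \colon X \odot Y \to U_B$. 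The triangle identities for the internal adjunction transport along the equivalences to the triangle identities for $(Y, \epsilon, \eta)$, exhibiting $X$ as left Costenoble-Waner dualizable with dual $Y$.

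The only mild subtlety is ensuring that the classification equivalence genuinely respects horizontal composition of natural transformations in the manner required by the triangle identities. This should follow from the fact that \Cref{thm:Classification_CLinear_Functors} is an equivalence in $\Mod_{\Cc}(\PrL(\Bb))$ (not merely of underlying $\infty$-categories) and that \Cref{lem:CompositionProductVsComposition}, applied to $Y \in \Cc(B \times A)$ and $X \in \Cc(A \times B)$ as well as to their compositions with units $U_A, U_B$, is natural in $X$ and $Y$; hence horizontal composition of transformations $\eta \odot X$, $X \odot \eta$, etc.\ corresponds exactly to whiskering of the induced transformations $F_Y F_X$, $F_X F_Y$, as needed. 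No genuine obstacle arises: once the dictionary of \Cref{thm:Classification_CLinear_Functors} and \Cref{lem:CompositionProductVsComposition} is in place, the lemma is a purely formal translation.
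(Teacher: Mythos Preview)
Your proposal is correct and follows essentially the same approach as the paper's own proof: both use \Cref{lem:CompositionProductVsComposition} to transport the duality data $(\epsilon,\eta)$ to unit/counit data for an internal adjunction (and back), and invoke \Cref{thm:Classification_CLinear_Functors} to write an arbitrary $\Cc$-linear right adjoint $G$ as $F_Y$ for some $Y$. The paper is slightly more terse and does not spell out the horizontal-composition subtlety you flag, but otherwise the argument is the same.
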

\begin{proof}
	If $X$ is left Costenoble-Waner dualizable with right dual $Y$, we may use \Cref{lem:CompositionProductVsComposition} to turn the evaluation and coevaluation $\epsilon$ and $\nu$ into $\Cc$-linear counit and unit maps $F_X \circ F_{Y} \to \id_{\Cc^B}$ and $\id_{\Cc^A} \to F_{Y} \circ F_X$, respectively. The triangle identities for the Costenoble-Waner duality between $X$ and $Y$ translate into the triangle identities of an internal adjunction $F_X \dashv F_{Y}$ in $\Mod_{\Cc}(\PrL(\Bb))$. Conversely, if $F_X$ is an internal left adjoint internal to $\Mod_{\Cc}(\PrL(\Bb))$ with $\Cc$-linear right adjoint $G\colon \Cc^B \to \Cc^A$, then it follows from \Cref{thm:Classification_CLinear_Functors} that $G$ is of the form $F_{Y}$ for some object $Y \in \Cc(B \times A)$, and by \Cref{lem:CompositionProductVsComposition} the unit and counit of the adjunction give rise to the coevaluation and evaluation satisfying the triangle identities, and thus providing duality data between $X$ and $Y$.
\end{proof}

\begin{remark}
	May and Sigurdsson \cite[Section~16.4]{maysigurdsson2006parametrized} define Costenoble-Waner duality as a special case of a notion they call \textit{duality in a closed symmetric bicategory}, applied to a certain bicategory $\Ee x$ of parametrized equivariant spectra \cite[Construction~17.1.3]{maysigurdsson2006parametrized}. From our perspective, one might think of their bicategory $\Ee x$ as the full subcategory of the homotopy 2-category of $\Mod_{\Cc}(\PrL(\Bb))$ spanned by objects of the form $\Cc^A$ for $A \in \Bb$. Indeed, due to \Cref{thm:Classification_CLinear_Functors} and \Cref{lem:CompositionProductVsComposition}, we obtain the following more explicit description of this bicategory:
	\begin{itemize}
		\item The objects $\Cc^A$ correspond to objects $A$ of $\Bb$;
		\item Given $A,B \in \Bb$, the category of morphisms $\Cc^A \to \Cc^B$ can be identified with the homotopy category $\Ho(\Cc(A \times B))$;
		\item The identity morphisms are given by $U_A \in \Cc(A \times A)$;
		\item The composition is given by the composition product $- \odot -\colon \Cc(B \times C) \times \Cc(A \times B) \to \Cc(A \times C)$.
	\end{itemize}
	When applied to the $\infty$-topos $\Spc^G$ of $G$-spaces for a compact Lie group $G$ and to the $G$-category $\ulSp^G$ of genuine $G$-spectra, to be defined in \Cref{subsec:genuineParametrizedSpectra} below, this is essentially the bicategory $\Ee x$ of May and Sigurdsson.
\end{remark}

We get the following reformulation of twisted ambidexterity in terms of Costenoble-Waner duality:

\begin{proposition}
	\label{prop:Twisted_Ambidexterity_VS_Costenoble_Waner_Dualizability}
	An object $A \in \Bb$ is twisted $\Cc$-ambidextrous if and only if the monoidal unit $\unit_A \in \Cc(A) = \Cc(1 \times A)$ is left Costenoble-Waner dualizable. In this case, the left dual of $\unit_A$ is given by the dualizing object $D_A \in \Cc(A) = \Cc(A \times 1)$.
\end{proposition}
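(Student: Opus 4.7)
The plan is to reduce both sides of the equivalence to statements about internal left adjoints in $\Mod_{\Cc}(\PrL(\Bb))$ and then match up the relevant $\Cc$-linear $\Bb$-functors via the classification of \Cref{thm:Classification_CLinear_Functors}. On the twisted ambidexterity side, \Cref{prop:Twisted_Ambidexterity_In_Terms_Of_Norm_Map} tells us that $A$ is twisted $\Cc$-ambidextrous if and only if the $\Cc$-linear $\Bb$-functor $A^*\colon \Cc \to \Cc^A$ admits a $\Cc$-linear right adjoint internal to $\Mod_{\Cc}(\PrL(\Bb))$, and that in this case the right adjoint is given by $A_!(-\otimes_A D_A)$. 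On the Costenoble-Waner side, \Cref{lem:CostenobleWanerAsInternalLeftAdjoint} tells us that $\unit_A \in \Cc(1 \times A)$ is left Costenoble-Waner dualizable if and only if the associated $\Cc$-linear $\Bb$-functor $F_{\unit_A}\colon \Cc \to \Cc^A$ is an internal left adjoint, and that the left dual is the unique object classifying its $\Cc$-linear right adjoint.

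The key step is therefore to identify $F_{\unit_A}$ with $A^*$, and symmetrically $F_{D_A}\colon \Cc^A \to \Cc$ with $A_!(-\otimes_A D_A)$. For the first identification, I apply the classification equivalence $\Fun_{\Cc}(\Cc, \Cc^A) \simeq \Cc(A)$ of \Cref{thm:Classification_CLinear_Functors}, which is given by evaluation at the unit $\unit \in \Cc = \Cc^1(1)$. Since $A^*$ is symmetric monoidal, it sends $\unit$ to $\unit_A$, so $A^*$ corresponds to $\unit_A$ under this equivalence; by definition $F_{\unit_A}$ also corresponds to $\unit_A$, forcing $A^* \simeq F_{\unit_A}$ as $\Cc$-linear $\Bb$-functors. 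The second identification is even more direct: unfolding the defining formula for $F_{D_A}$ in the case $B = 1$, the pullback $\pr_A^*\colon \Cc^A \to \Cc^{A \times 1}$ and pushforward $(\pr_1)_!\colon \Cc^{A\times 1} \to \Cc^1$ reduce under the equivalence $A \times 1 \simeq A$ to the identity and to $A_!$ respectively, leaving precisely $A_!(-\otimes_A D_A)$.

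Combining these identifications yields both claims: the biconditional is just the concatenation of \Cref{prop:Twisted_Ambidexterity_In_Terms_Of_Norm_Map} and \Cref{lem:CostenobleWanerAsInternalLeftAdjoint} under the translation $A^* \leftrightarrow F_{\unit_A}$, and the assertion that the left dual is $D_A$ follows from reading off the $\Cc$-linear right adjoint of $A^*$ (namely $A_!(-\otimes_A D_A) \simeq F_{D_A}$) and invoking \Cref{lem:CostenobleWanerAsInternalLeftAdjoint} again. There is no serious conceptual obstacle; the only thing requiring care is the bookkeeping between the classifying objects in $\Cc(1 \times A)$ versus $\Cc(A \times 1)$ and which projection plays which role, which is why it is worth stating the two identifications $F_{\unit_A} \simeq A^*$ and $F_{D_A} \simeq A_!(-\otimes_A D_A)$ as explicit intermediate lemmas before appealing to the previous propositions.
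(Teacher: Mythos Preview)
Your proposal is correct and follows essentially the same approach as the paper's proof: identify $F_{\unit_A}$ with $A^*$, then combine \Cref{lem:CostenobleWanerAsInternalLeftAdjoint} and \Cref{prop:Twisted_Ambidexterity_In_Terms_Of_Norm_Map}. The paper is terser, simply asserting the identification $F_{\unit_A} \simeq A^*$ and that the right adjoint is classified by $D_A$, whereas you spell out why via evaluation at the unit and by unfolding the formula for $F_{D_A}$; both arguments are the same in substance.
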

\begin{proof}
	The $\Cc$-linear $\Bb$-functor $F_{\unit_A}\colon \Cc \to \Cc^A$ associated to $\unit_A$ is $A^*\colon \Cc \to \Cc^A$. It follows from \Cref{lem:CostenobleWanerAsInternalLeftAdjoint} that $\unit_A$ is Costenoble-Waner dualizable if and only if $A^*\colon \Cc \to \Cc^A$ is an internal left adjoint in $\Mod_{\Cc}(\PrL(\Bb))$, which by \Cref{prop:Twisted_Ambidexterity_In_Terms_Of_Norm_Map} is true if and only if $A$ is twisted $\Cc$-ambidextrous. In this case, the right adjoint of $A^*$ is classified by the object $D_A \in \Cc(A) = \Cc(A \times 1)$.
\end{proof}

\subsubsection{Characterizations of Costenoble-Waner duality}
For later use, we recall various alternative characterizations of Costenoble-Waner duality from \cite{maysigurdsson2006parametrized} and \cite{CostenobleWaner2016equivariant}.

\begin{lemma}[{cf.\ \cite[Proposition~16.4.6]{maysigurdsson2006parametrized}}]
	\label{lem:CharacterizationCostenobleWanerDuality}
	Consider objects $X \in \Cc(A \times B)$ and $Y \in \Cc(B \times A)$ and let $\epsilon\colon X \odot Y \to U_B$ be a morphism in $\Cc(B \times B)$. Then the following conditions are equivalent:
	\begin{enumerate}[(1)]
		\item The object $Y$ is a left Costenoble-Waner dual of $X$ with evaluation map $\epsilon$.
		\item For every $C \in \Bb$ and objects $W \in \Cc(A \times C)$ and $Z \in \Cc(B \times C)$, the map
		\[
		\Hom_{\Cc(A \times C)}(W, Z \odot X) \xrightarrow{- \odot Y} \Hom_{\Cc(B \times C)}(W \odot Y, Z  \odot X  \odot Y) \xrightarrow{\epsilon} \Hom_{\Cc(B \times C)}(W \odot Y, Z)
		\]
		is an equivalence;
		\item Condition (2) holds for $C = A$, $W = U_A$, $Z = Y$ and for $C = B$, $W = X$, $Z = U_B$.
	\end{enumerate}
\end{lemma}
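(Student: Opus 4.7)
The strategy is to translate the statement via the dictionary of \Cref{thm:Classification_CLinear_Functors} and \Cref{lem:CompositionProductVsComposition} into the standard characterization of adjunctions in an $(\infty,2)$-category. Under this dictionary, the map in (2) becomes the natural map $\Nat_\Cc(F_W, F_Z \circ F_X) \to \Nat_\Cc(F_W \circ F_Y, F_Z)$ sending a $\Cc$-linear transformation to its whiskering with $F_Y$ postcomposed with the candidate counit $F_X F_Y \Rightarrow \id_{\Cc^B}$ induced by $\epsilon$. So the content becomes whether this counit exhibits $F_Y$ as a $\Cc$-linear right adjoint of $F_X$, equivalently (by \Cref{lem:CostenobleWanerAsInternalLeftAdjoint}) whether $Y$ is a left Costenoble-Waner dual of $X$ with evaluation $\epsilon$. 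I would carry out the proof directly using the composition product.

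The implication $(1) \Rightarrow (2)$ is standard: given a coevaluation $\eta\colon U_A \to Y \odot X$ satisfying the triangle identities, the inverse of the displayed map is $(g\colon W \odot Y \to Z) \mapsto (g \odot X) \circ (W \odot \eta)$, and both round-trips reduce to the identity via the two triangle identities and bifunctoriality of $\odot$. The implication $(2) \Rightarrow (3)$ is tautological.

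For $(3) \Rightarrow (1)$, the equivalence at $C = A$, $W = U_A$, $Z = Y$ reads
\[
\Hom_{\Cc(A \times A)}(U_A, Y \odot X) \xrightarrow{\;\sim\;} \Hom_{\Cc(B \times A)}(Y, Y), \qquad \eta \mapsto (Y \odot \epsilon) \circ (\eta \odot Y),
\]
and I would define $\eta$ to be the unique preimage of $\id_Y$. By construction this gives the second triangle identity $(Y \odot \epsilon) \circ (\eta \odot Y) = \id_Y$ for free. To deduce the first, I would apply (3) at $C = B$, $W = X$, $Z = U_B$:
\[
\Hom_{\Cc(A \times B)}(X, X) \xrightarrow{\;\sim\;} \Hom_{\Cc(B \times B)}(X \odot Y, U_B), \qquad \phi \mapsto \epsilon \circ (\phi \odot Y),
\]
under which $\id_X \mapsto \epsilon$. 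By injectivity it suffices to check that $(\epsilon \odot X) \circ (X \odot \eta)$ also maps to $\epsilon$. Its image equals $\epsilon \circ (\epsilon \odot X \odot Y) \circ (X \odot \eta \odot Y)$; using the interchange law for $\odot$ (automatic from \Cref{lem:CompositionProductVsComposition}) to rewrite $\epsilon \circ (\epsilon \odot X \odot Y) = \epsilon \circ (X \odot Y \odot \epsilon)$, this becomes $\epsilon \circ \bigl(X \odot [(Y \odot \epsilon) \circ (\eta \odot Y)]\bigr) = \epsilon \circ (X \odot \id_Y) = \epsilon$ by the second triangle identity. The principal obstacle is keeping track of the unit, associativity, and interchange coherences for $\odot$, but these are all automatic once one transports across the equivalence of \Cref{lem:CompositionProductVsComposition}, which makes $F_{(-)}$ compatible with composition and identities on the nose.
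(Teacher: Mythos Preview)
Your proof is correct and follows essentially the same route as the paper: construct $\eta$ as the preimage of $\id_Y$ under the first instance of (3), obtaining one triangle identity for free, and then use injectivity of the second instance of (3) to deduce the other triangle identity by showing that both $\id_X$ and $(\epsilon \odot X)\circ(X\odot\eta)$ map to $\epsilon$. The paper simply asserts this last fact without the interchange-law computation you supply (and in fact contains a typo, writing $(X\odot\epsilon)\circ(\eta\odot X)$ for the composite in question), so your version is a more careful execution of the same argument.
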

\begin{proof}
	Given (1), an inverse to the map in (2) is given by 
	\[
	\Hom_{\Cc(B \times C)}(W \odot Y, Z) \xrightarrow{- \odot X} \Hom_{\Cc(A \times C)}(W \odot Y \odot X, Z \odot X) \xrightarrow{\eta} \Hom_{\Cc(A \times C)}(W, Z \odot X).
	\]
	It is clear that (2) implies (3). Given (3), we may take $C = A$, $W = U_A$, $Z = Y$ and define the coevaluation $\eta\colon U_A \to Y \odot X$ to be the inverse image of the identity on $Y$ under the equivalence from (2). One of the triangle identities holds by construction. For the other one, we take $C = B$, $W = X$ and $Z = U_B$, and observe that both $\id_X\colon X \to X$ as well as $(X \odot \epsilon) \circ (\eta \odot X)\colon X \to X$ are sent to the map $\epsilon\colon X \odot Y \to U_B$ by the equivalence from (2), implying that they are homotopic.
\end{proof}

\begin{definition}
	If the equivalent conditions of \Cref{lem:CharacterizationCostenobleWanerDuality} hold, we say that \textit{$\epsilon$ exhibits $Y$ as a left Costenoble-Waner dual of $X$}.
\end{definition}

Any object $X \in \Cc(A \times B)$ admits a \textit{weak} dual. For simplicity, we will only introduce this when $A$ is the terminal object of $\Bb$.

\begin{definition}[{cf.\ \cite[Definition~2.9.1]{CostenobleWaner2016equivariant}}]
	\label{def:WeakCostenobleWanerDual}
	Let $B \in \Bb$ and consider an object $X \in \Cc(B) = \Cc(1 \times B)$. We define the \textit{weak Costenoble-Waner dual} of $X$ to be
	\[
	D^{CW}_B(X) := {\pr_2}_*\iHom_{B \times B}(\pr_1^* X,\Delta_! \unit_B) \qin \Cc(B),
	\]
	where $\Delta\colon B \to B \times B$ is the diagonal of $B$, and $\pr_1,\pr_2\colon B \times B \to B$ are the two projections. 
\end{definition}

Note that for any object $Y \in \Cc(B) = \Cc(B \times 1)$, there is a one-to-one correspondence between morphisms $Y \to D^{CW}_B(X)$ in $\Cc(B)$ and morphisms $X \odot Y = \pr_1^*X \otimes_{B \times B} \pr_2^*Y \to \Delta_!\unit_B = U_B$ in $\Cc(B \times B)$. In particular, the identity on $D^{CW}_B(X)$ gives rise to a map $\epsilon_0 \colon X \odot D^{CW}_B(X) \to U_B$.

\begin{lemma}[{cf.\ \cite[Theorem~2.9.5]{CostenobleWaner2016equivariant}}]
	\label{lem:CostenobleWanerDualityInTermsOfWeakDual}
	An object $X \in \Cc(B) = \Cc(1 \times B)$ is left Costenoble-Waner dualizable if and only if the map $\epsilon_0\colon X \odot D^{CW}_B(X) \to U_B$ exhibits $D^{CW}_B(X)$ as a left Costenoble-Waner dual of $X$.
\end{lemma}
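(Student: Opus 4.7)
The ``if'' direction is immediate. For the ``only if'' direction, assume $X$ is left Costenoble--Waner dualizable. By Lemma~\ref{lem:CostenobleWanerAsInternalLeftAdjoint}, the $\Cc$-linear $\Bb$-functor $F_X\colon \Cc \to \Cc^B$ is an internal left adjoint in $\Mod_{\Cc}(\PrL(\Bb))$; let $G\colon \Cc^B \to \Cc$ denote its $\Cc$-linear right adjoint. Theorem~\ref{thm:Classification_CLinear_Functors} identifies $G$ with $F_{D_G}$ for the object $D_G := G_B(\Delta_!\unit_B) \in \Cc(B)$, and applying Lemma~\ref{lem:CostenobleWanerAsInternalLeftAdjoint} in reverse shows that $D_G$ is itself a left Costenoble--Waner dual of $X$, whose evaluation map is (the image under the classifying equivalence of) the counit of $F_X \dashv G$ at $U_B \in \Cc^B(B)$. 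The plan is to produce an equivalence $\alpha\colon D_G \xrightarrow{\sim} D^{CW}_B(X)$ that transports this CW evaluation map to $\epsilon_0$; this will immediately imply that $\epsilon_0$ exhibits $D^{CW}_B(X)$ as a left CW dual of $X$.

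To construct $\alpha$, I combine the parametrized adjunction at stage $B$ with the Yoneda lemma. Unpacking the formula of Theorem~\ref{thm:Classification_CLinear_Functors} at stage $B$ gives $F_X(V) \simeq \pi_2^*V \otimes \pi_1^*X \in \Cc(B \times B)$ for $V \in \Cc(B)$, where $\pi_1, \pi_2\colon B \times B \to B$ are the two projections. Hence, for every $V \in \Cc(B)$,
\[
\Hom_{\Cc(B)}(V, D_G) \simeq \Hom_{\Cc(B \times B)}(\pi_2^*V \otimes \pi_1^*X, \Delta_!\unit_B) \simeq \Hom_{\Cc(B)}(V, D^{CW}_B(X)),
\]
where the first equivalence is the parametrized adjunction $F_X \dashv G$ at stage $B$ applied to $U_B = \Delta_!\unit_B$, and the second uses the tensor--hom adjunction and the $(\pi_2^*, \pi_{2*})$-adjunction together with the defining formula for $D^{CW}_B(X)$. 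The Yoneda lemma then delivers the desired equivalence $\alpha$.

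To conclude, I trace the identity morphism of $D^{CW}_B(X)$ through the displayed chain: on the $D^{CW}_B(X)$-side it is tautologically classified by $\epsilon_0$ via the universal property of $D^{CW}_B(X)$ recalled just before the lemma; on the $D_G$-side it corresponds via the adjunction to the counit map $F_X(D_G) \simeq X \odot D_G \to U_B$, which by construction is the CW evaluation of $D_G$. The main (if mild) technical point is precisely this final diagram chase; everything else is a direct application of the internal-left-adjoint characterization of Costenoble--Waner duality together with the Yoneda lemma.
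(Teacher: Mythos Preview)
Your proof is correct. Both you and the paper show that a given Costenoble--Waner dual of $X$ is equivalent to $D^{CW}_B(X)$ in a way compatible with the evaluation maps, but the techniques differ. The paper works directly with abstract duality data $(Y,\epsilon,\eta)$: it adjoints $\epsilon$ to a map $Y \to D^{CW}_B(X)$ and writes down an explicit inverse $D^{CW}_B(X) \xrightarrow{\eta \odot 1} Y \odot X \odot D^{CW}_B(X) \xrightarrow{1 \odot \epsilon_0} Y$, leaving the verification as a ``reasonably straightforward diagram chase.'' You instead route through the machinery of \S\ref{subsec:Classification_CLinear_Functors} and Lemma~\ref{lem:CostenobleWanerAsInternalLeftAdjoint}: the internal right adjoint $G$ of $F_X$ is classified by $D_G = G_B(U_B)$, and a Yoneda argument identifies $D_G$ with $D^{CW}_B(X)$ since both represent $\Hom_{\Cc(B\times B)}(\pr_2^*(-)\otimes \pr_1^*X, U_B)$. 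Your approach is more conceptual and leans on results already established in the paper; the paper's is more self-contained and elementary, never invoking the classification theorem. The final diagram chase in both cases is of comparable difficulty.
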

\begin{proof}
	The ``if'' direction is obvious. For the ``only if'', assume that $X$ is left Costenoble-Waner dualizable with left dual $Y \in \Cc(B \times 1)$ and let $\epsilon\colon X \odot Y \to U_B$ and $\eta\colon U_A \to Y \odot X$ be evaluation and coevaluation maps exhibiting this duality. The map $\epsilon$ adjoints over to a map $Y \to D^{CW}_B(X)$ and it will suffice to show that this is an equivalence. Indeed, a reasonably straightforward diagram chase shows that an inverse is given by the composite
	\[
	D^{CW}_B(X) = U_A \odot D^{CW}_B(X)\xrightarrow{\eta \odot 1} Y \odot X \odot D^{CW}_B(X) \xrightarrow{1 \odot \epsilon_0} Y \odot U_B = Y. \qedhere
	\]
\end{proof}

\begin{lemma}[{cf.\ \cite[Proposition~18.1.1]{maysigurdsson2006parametrized}}]
	\label{lem:CostenobleWanerDualityVersusFiberwiseDuality}
	An object $X \in \Cc(A) = \Cc(A \times 1)$ is left Costenoble-Waner dualizable if and only if it is dualizable in the symmetric monoidal $\infty$-category $\Cc(A)$. Its left Costenoble-Waner dual in $\Cc(A) = \Cc(1 \times A)$ is given by the monoidal dual in $\Cc(A)$.
\end{lemma}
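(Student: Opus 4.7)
My plan is to deduce the equivalence from \Cref{lem:CostenobleWanerAsInternalLeftAdjoint}, which characterizes left Costenoble-Waner dualizability of $X$ as the condition that the associated $\Cc$-linear $\Bb$-functor $F_X\colon \Cc^A \to \Cc$ is an internal left adjoint in $\Mod_\Cc(\PrL(\Bb))$. Using \Cref{thm:Classification_CLinear_Functors}, this $\Bb$-functor is given at level $C \in \Bb$ by $F_X(C)(Z) = (\pr_C)_!(Z \otimes_{A \times C} \pr_A^*X)$ for $Z \in \Cc(A \times C)$, where $\pr_A\colon A \times C \to A$ and $\pr_C\colon A \times C \to C$ are the projections. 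Dually, the $\Cc$-linear $\Bb$-functor $F_Y\colon \Cc \to \Cc^A$ attached to a candidate dual $Y \in \Cc(1 \times A) = \Cc(A)$ is given at level $C$ by $F_Y(C)(c) = \pr_C^*c \otimes_{A \times C} \pr_A^*Y$.

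For the direction ``dualizable implies CW dualizable'', suppose $X$ has a monoidal dual $X^\vee$ in $\Cc(A)$. I would compute the fiberwise right adjoint of $F_X(C)$ as $G(C)(c) = \iHom_{A \times C}(\pr_A^*X, \pr_C^*c)$ by composing the right adjoints of $- \otimes_{A \times C} \pr_A^*X$ and $(\pr_C)_!$. Symmetric monoidality of $\pr_A^*$ ensures that $\pr_A^*X$ is dualizable in $\Cc(A \times C)$ with dual $\pr_A^*X^\vee$, so that $\iHom_{A \times C}(\pr_A^*X, -) \simeq \pr_A^*X^\vee \otimes_{A \times C} -$. Combining these identifications, $G(C)(c) \simeq \pr_C^*c \otimes_{A \times C} \pr_A^*X^\vee = F_{X^\vee}(C)(c)$, so the fiberwise right adjoint $G$ agrees with the pre-existing $\Cc$-linear $\Bb$-functor $F_{X^\vee}$. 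In particular, $G$ automatically preserves parametrized colimits and satisfies the projection formula, which by \Cref{lem:Criterion_Internal_Left_Adjoint} means $F_X$ is an internal left adjoint with $\Cc$-linear right adjoint $F_{X^\vee}$, so \Cref{lem:CostenobleWanerAsInternalLeftAdjoint} produces the desired CW duality.

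For the converse, assume $X$ is left CW dualizable with dual $Y \in \Cc(A)$, so that \Cref{lem:CostenobleWanerAsInternalLeftAdjoint} supplies a $\Cc$-linear $\Bb$-adjunction $F_X \dashv F_Y$. I would evaluate at level $A$ and restrict to objects of the form $Z = \Delta_!W$ for $W \in \Cc(A)$: using the projection formula for $\Delta_!$ together with $\pr_C \circ \Delta = \id_A$, the left-hand side $\Hom_{\Cc(A)}(F_X(A)(\Delta_!W), c)$ simplifies to $\Hom_{\Cc(A)}(W \otimes_A X, c)$, and using $\Delta_! \dashv \Delta^*$ together with $\Delta^*\pr_i^* \simeq \id_A$, the right-hand side $\Hom_{\Cc(A \times A)}(\Delta_!W, F_Y(A)(c))$ simplifies to $\Hom_{\Cc(A)}(W, c \otimes_A Y)$. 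The resulting natural equivalence $\Hom_{\Cc(A)}(- \otimes_A X, c) \simeq \Hom_{\Cc(A)}(-, c \otimes_A Y)$ combined with the Yoneda lemma yields $\iHom_A(X, c) \simeq c \otimes_A Y$ naturally in $c$, which is precisely the tensor criterion for $X$ being dualizable in $\Cc(A)$ with dual $Y$. The main subtlety is purely bookkeeping, namely ensuring that the fiberwise identifications in the first direction assemble into a genuine $\Bb$-adjunction, and this is handled precisely by recognizing $G$ as the already-existing $\Bb$-functor $F_{X^\vee}$; no genuinely new input is needed beyond \Cref{lem:CostenobleWanerAsInternalLeftAdjoint} and the standard properties of $\pr_A^*$, $\Delta_!$, and the projection formula.
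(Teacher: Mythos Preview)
Your proof is correct and takes a genuinely different route from the paper. The paper argues directly at the level of the composition product: it observes that $X \odot Y \simeq A_!(X \otimes_A Y)$ in $\Cc(1)$, so that a Costenoble--Waner evaluation $\epsilon\colon X \odot Y \to \unit$ corresponds under $A_! \dashv A^*$ to a map $\epsilon'\colon X \otimes_A Y \to \unit_A$ in $\Cc(A)$, and then appeals to \Cref{lem:CharacterizationCostenobleWanerDuality} (via a diagram chase it omits, deferring to \cite[Proposition~18.1.1]{maysigurdsson2006parametrized}) to check that $\epsilon$ exhibits a CW duality iff $\epsilon'$ exhibits a monoidal duality. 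Your approach instead passes through \Cref{lem:CostenobleWanerAsInternalLeftAdjoint}, identifying the fiberwise right adjoint of $F_X$ with $F_{X^\vee}$ in one direction and extracting the monoidal adjunction $-\otimes_A X \dashv -\otimes_A Y$ by evaluating the internal adjunction at level $A$ on objects $\Delta_!W$ in the other. The paper's route is shorter and matches up the duality data explicitly, which is what one wants if the precise correspondence of evaluation maps matters downstream; your route trades the omitted diagram chase for a concrete right-adjoint computation and a Yoneda argument, making the argument more self-contained within the framework of \Cref{subsec:Classification_CLinear_Functors}.
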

\begin{proof}
	Consider objects $X \in \Cc(A) = \Cc(A \times 1)$ and $Y \in \Cc(A) = \Cc(1 \times A)$. Unwinding definitions, we see that $X \odot Y \simeq A_!(X \otimes_A Y) \in \Cc(1)$. It follows that a morphism $\epsilon\colon X \odot Y \xrightarrow{\epsilon} U_{1} = \unit$ in $\Cc(1)$ is the same data as a morphism $\epsilon'\colon X \otimes_A Y \to A^*\unit = \unit_A$ in $\Cc(A)$. We claim that $\epsilon$ exhibits $Y$ as a left Costenoble-Waner dual to $X$ if and only if $\epsilon'$ exhibits $Y$ as a dual of $X$ in $\Cc(A)$. Using some diagram chasing, this follows from \Cref{lem:CharacterizationCostenobleWanerDuality}. As the proof is entirely analogous to the proof of \cite[Proposition~18.1.1]{maysigurdsson2006parametrized}, we will omit it.
\end{proof}

\subsubsection{Preservation properties of Costenoble-Waner duality}
Costenoble-Waner dualizable objects are preserved under a variety of constructions. 

\begin{lemma}
	\label{lem:SymMonFunctorsPreserveCWDuality}
	Let $F\colon \Cc \to \Dd$ be a symmetric monoidal left adjoint between presentably symmetric monoidal $\Bb$-categories. Then $F$ preserves left/right Costenoble-Waner dualizable objects.
\end{lemma}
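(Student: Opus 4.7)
The plan is to reduce the statement to two structural compatibilities of $F$ with the ingredients of \Cref{def:CostenobleWanerDuality}: the identity objects $U_A$ and the composition product $\odot$. Once these are in place, applying $F$ to the evaluation, coevaluation, and triangle identities witnessing a Costenoble-Waner duality will produce the corresponding data for the images under $F$.

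The first step is to unwind what being a morphism in $\CAlg(\PrL(\Bb))$ gives us. It says that $F$ is a symmetric monoidal $\Bb$-functor that preserves parametrized colimits, which by \Cref{def:BPresentableBCategory} translates into four compatibilities: (a) $F$ commutes with restrictions $f^*$ (it is a $\Bb$-natural transformation $\Cc \Rightarrow \Dd$); (b) $F$ commutes with left adjoints, i.e.\ the Beck-Chevalley map $f_! \circ F(A) \to F(B) \circ f_!$ is an equivalence for every $f\colon A \to B$; (c) each $F(B)$ is symmetric monoidal, so it commutes with $\otimes$; (d) $F$ sends monoidal units to monoidal units.

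Combining (b) and (d) yields
\[
F(U_A) \;=\; F(\Delta_!\unit_A) \;\simeq\; \Delta_! F(\unit_A) \;\simeq\; \Delta_!\unit_A \;=\; U_A.
\]
Feeding the defining formula $Y \odot X = (\pr_{AC})_!(\pr_{AB}^*X \otimes \pr_{BC}^*Y)$ from \Cref{def:CompositionProduct} through (a), (b), (c) gives a natural equivalence
\[
F(Y \odot X) \;\simeq\; F(Y) \odot F(X)
\]
in $\Dd(A \times C)$, compatible with the associators and unitors; in particular it is compatible with the identifications $U_A \odot Z \simeq Z$ and $Z \odot U_B \simeq Z$.

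Given this, the proof concludes as follows. Suppose $X \in \Cc(A \times B)$ is left Costenoble-Waner dualizable with left dual $Y \in \Cc(B \times A)$, evaluation $\epsilon\colon X \odot Y \to U_B$, and coevaluation $\eta\colon U_A \to Y \odot X$. Applying $F$ and composing with the equivalences just established yields morphisms
\[
F(\epsilon)\colon F(X) \odot F(Y) \;\simeq\; F(X \odot Y) \longrightarrow F(U_B) \;\simeq\; U_B,
\]
\[
F(\eta)\colon U_A \;\simeq\; F(U_A) \longrightarrow F(Y \odot X) \;\simeq\; F(Y) \odot F(X)
\]
in $\Dd$. The two triangle identities for $(F(\epsilon), F(\eta))$ follow from functoriality of $F$ applied to the corresponding identities for $(\epsilon,\eta)$, proving that $F(Y)$ is a left Costenoble-Waner dual of $F(X)$. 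The right Costenoble-Waner case is identical, or else follows by swapping the roles of $A$ and $B$ as in the note at the end of \Cref{def:CostenobleWanerDuality}. The argument is entirely formal and no step presents a real obstacle; the only mild point requiring care is the coherent compatibility of $F$ with the composition product, but this is automatic from the structure of $F$ as a symmetric monoidal colimit-preserving $\Bb$-functor.
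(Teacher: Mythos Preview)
Your proof is correct and takes essentially the same approach as the paper, which simply observes that $F$ commutes with the composition products on $\Cc$ and $\Dd$ (immediate from the definition) and leaves the rest implicit. You have spelled out in detail exactly what the paper compresses into one line.
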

\begin{proof}
	It suffices to observe that $F$ commutes with the composition products on $\Cc$ and $\Dd$, which is immediate from the definition.
\end{proof}

\begin{corollary}
	\label{cor:SymMonFunctorsPreserveTwistedAmbidexterity}
	Let $F\colon \Cc \to \Dd$ be a symmetric monoidal left adjoint between presentably symmetric monoidal $\Bb$-categories. Then any twisted $\Cc$-ambidextrous object in $\Bb$ is also twisted $\Dd$-ambidextrous. Consequently, any twisted $\Cc$-ambidextrous morphism in $\Bb$ is also twisted $\Dd$-ambidextrous.
\end{corollary}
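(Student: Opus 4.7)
The plan is to deduce this corollary almost immediately by combining the two preceding results. First, by \Cref{prop:Twisted_Ambidexterity_VS_Costenoble_Waner_Dualizability}, an object $A \in \Bb$ is twisted $\Cc$-ambidextrous if and only if the monoidal unit $\unit_A \in \Cc(A)$ is left Costenoble-Waner dualizable. So I would start by translating the hypothesis: if $A$ is twisted $\Cc$-ambidextrous, then $\unit_A^{\Cc} \in \Cc(A)$ is left CW-dualizable.

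Next, since $F\colon \Cc \to \Dd$ is a symmetric monoidal left adjoint between presentably symmetric monoidal $\Bb$-categories, \Cref{lem:SymMonFunctorsPreserveCWDuality} says that the induced functor $F_A\colon \Cc(A) \to \Dd(A)$ sends left CW-dualizable objects to left CW-dualizable objects. Since $F$ is symmetric monoidal, $F_A(\unit_A^{\Cc}) \simeq \unit_A^{\Dd}$, and thus $\unit_A^{\Dd}$ is left CW-dualizable in $\Dd$. Applying \Cref{prop:Twisted_Ambidexterity_VS_Costenoble_Waner_Dualizability} once more, this means $A$ is twisted $\Dd$-ambidextrous.

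For the morphism version, I would pass to the slice topos. Given a morphism $f\colon A \to B$ in $\Bb$, twisted $\Cc$-ambidexterity of $f$ means by definition that $f$, viewed as an object of $\Bb_{/B}$, is twisted $\pi_B^*\Cc$-ambidextrous. Since the base change functor $\pi_B^*\colon \PrL(\Bb) \to \PrL(\Bb_{/B})$ is symmetric monoidal, the pulled-back $\Bb_{/B}$-functor $\pi_B^*F\colon \pi_B^*\Cc \to \pi_B^*\Dd$ is again a symmetric monoidal left adjoint between presentably symmetric monoidal $\Bb_{/B}$-categories. Applying the object-level statement in $\Bb_{/B}$ then yields that $f$ is twisted $\pi_B^*\Dd$-ambidextrous, i.e.\ twisted $\Dd$-ambidextrous.

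There is no real obstacle here; the content of the statement lies entirely in the earlier lemma on preservation of CW-duality and the characterization of twisted ambidexterity in terms of CW-duality. The only thing to check carefully is that all constructions are compatible with the slice-topos reduction, which is straightforward from the definitions.
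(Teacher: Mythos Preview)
Your proposal is correct and matches the paper's proof essentially verbatim: both use \Cref{prop:Twisted_Ambidexterity_VS_Costenoble_Waner_Dualizability} to translate twisted ambidexterity into Costenoble--Waner dualizability of the unit, invoke \Cref{lem:SymMonFunctorsPreserveCWDuality} together with the fact that $F$ preserves units, and then pass to slice topoi $\Bb_{/B}$ for the morphism version.
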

\begin{proof}
	Since $F$ preserves monoidal units, the first statement is a consequence of \Cref{lem:SymMonFunctorsPreserveCWDuality} and \Cref{prop:Twisted_Ambidexterity_VS_Costenoble_Waner_Dualizability}. The second statement follows by passing to slice topoi $\Bb_{/B}$.
\end{proof}

\begin{lemma}
	If $X \in \Cc(A \times B)$ and $X' \in \Cc(B \times C)$ are left Costenoble-Waner dualizable with left duals $Y$ and $Y'$, then so is their composition product $X' \odot X \in \Cc(A \times C)$, with left dual $Y \odot Y'$.
\end{lemma}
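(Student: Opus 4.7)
My plan is to reduce the claim to the statement that composites of internal left adjoints are internal left adjoints, using the dictionary provided by Lemmas \ref{lem:CompositionProductVsComposition} and \ref{lem:CostenobleWanerAsInternalLeftAdjoint}.

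First, I would recall that by Lemma \ref{lem:CostenobleWanerAsInternalLeftAdjoint}, the hypothesis that $X \in \Cc(A \times B)$ and $X' \in \Cc(B \times C)$ are left Costenoble-Waner dualizable with left duals $Y \in \Cc(B \times A)$ and $Y' \in \Cc(C \times B)$ translates into the statement that the $\Cc$-linear $\Bb$-functors $F_X \colon \Cc^A \to \Cc^B$ and $F_{X'}\colon \Cc^B \to \Cc^C$ are internal left adjoints in $\Mod_{\Cc}(\PrL(\Bb))$, with $\Cc$-linear right adjoints given by $F_Y$ and $F_{Y'}$ respectively.

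Next I would invoke the general fact that internal adjunctions compose: if $F \dashv G$ and $F' \dashv G'$ are adjunctions internal to any $(\infty,2)$-category (or here, the homotopy $2$-category of $\Mod_{\Cc}(\PrL(\Bb))$), then $F' \circ F \dashv G \circ G'$, with unit and counit obtained by whiskering the given units and counits. Applying this to our situation, we obtain that $F_{X'} \circ F_X\colon \Cc^A \to \Cc^C$ is an internal left adjoint in $\Mod_{\Cc}(\PrL(\Bb))$, with $\Cc$-linear right adjoint $F_Y \circ F_{Y'}$.

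Finally, Lemma \ref{lem:CompositionProductVsComposition} identifies $F_{X'} \circ F_X \simeq F_{X' \odot X}$ and $F_Y \circ F_{Y'} \simeq F_{Y \odot Y'}$, so the composite $F_{X' \odot X}$ is an internal left adjoint with $\Cc$-linear right adjoint $F_{Y \odot Y'}$. Another application of Lemma \ref{lem:CostenobleWanerAsInternalLeftAdjoint} then gives that $X' \odot X \in \Cc(A \times C)$ is left Costenoble-Waner dualizable with left dual $Y \odot Y' \in \Cc(C \times A)$, as claimed. There is no real obstacle here; the only thing to be a little careful about is to keep the source/target conventions for the composition product and for $F_{(-)}$ consistent, since $Y \in \Cc(B \times A)$ and $Y' \in \Cc(C \times B)$ compose as $Y \odot Y' \in \Cc(C \times A)$, matching the expected direction of a left dual of $X' \odot X \in \Cc(A \times C)$.
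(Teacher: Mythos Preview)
Your proposal is correct and follows essentially the same approach as the paper: the paper's proof is the one-liner ``This is immediate from \Cref{lem:CostenobleWanerAsInternalLeftAdjoint}, since internal adjunctions compose,'' and you have simply unpacked this by explicitly invoking \Cref{lem:CompositionProductVsComposition} to match composites with composition products.
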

\begin{proof}
	This is immediate from \Cref{lem:CostenobleWanerAsInternalLeftAdjoint}, since internal adjunctions compose.
\end{proof}

\begin{lemma}
	\label{lem:PreservationCostenobleWanerDuality}
	Let $X \in \Cc(A \times B)$ be left Costenoble-Waner dualizable with left dual $Y \in \Cc(B \times A)$
	\begin{enumerate}[(1)]
		\item For a map $f\colon A' \to A$, the object $(f \times 1)^*X \in \Cc(A' \times B)$ is left Costenoble-Waner dualizable, with left dual given by $(1 \times f)^*Y$.
		\item For a map $g\colon B \to B'$, the object $(1 \times g)_!X \in \Cc(A \times B')$ is left Costenoble-Waner dualizable, with left dual given by $(g \times 1)_!Y$.
		\item For a twisted $\Cc$-ambidextrous map $f\colon A \to A'$, the object $(f \times 1)_!X \in \Cc(A' \times B)$ is left Costenoble-Waner dualizable, with left dual given by $(1 \times f)_!Y$.
		\item For a twisted $\Cc$-ambidextrous map $g\colon B' \to B$, the object $(1 \times g)^*X \in \Cc(A \times B')$ is left Costenoble-Waner dualizable, with left dual given by $(g \times 1)^*Y$.
	\end{enumerate}
\end{lemma}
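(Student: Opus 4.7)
The plan is to invoke \Cref{lem:CostenobleWanerAsInternalLeftAdjoint}, which translates left Costenoble--Waner dualizability of an object $Z \in \Cc(S \times T)$ into the assertion that its classifying $\Cc$-linear functor $F_Z \colon \Cc^S \to \Cc^T$ is an internal left adjoint in $\Mod_{\Cc}(\PrL(\Bb))$. Since $X$ is CW dualizable with dual $Y$, we have that $F_X \dashv F_Y$ is such an internal adjunction. For each of the four parts the strategy is the same: (i) identify the functor classified by the modified object as a composite of $F_X$ with a $\Cc$-linear functor $G$ coming from $f$ or $g$; (ii) exhibit $G$ as an internal left adjoint in $\Mod_{\Cc}(\PrL(\Bb))$; and (iii) conclude by composing internal adjunctions, so that the dual of the modified object is the object classifying $F_Y$ precomposed (or postcomposed) with the internal right adjoint of $G$.

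For step (i), the explicit formula $F_Z(W) = (\pr_{\mathrm{target}})_!(\pr_{\mathrm{source}}^* W \otimes Z)$ from \Cref{thm:Classification_CLinear_Functors}, together with a few applications of Beck--Chevalley and the left projection formula, yields identifications such as $F_{(f \times 1)^*X} \simeq F_X \circ f_!$ and $F_{(1 \times f)^*Y} \simeq f^* \circ F_Y$ in~(1), and the analogous composites $g_! \circ F_X$, $F_Y \circ g^*$ in~(2), $F_X \circ f^*$, $f_! \circ F_Y$ in~(3), and $g^* \circ F_X$, $F_Y \circ g^*$ in~(4). For step~(ii), the adjunctions $f_! \dashv f^*$ and $g_! \dashv g^*$ relevant to~(1) and~(2) are automatically internal to $\Mod_{\Cc}(\PrL(\Bb))$, since $f^*$ and $g^*$ are symmetric monoidal and the left adjoints $f_!, g_!$ are $\Cc$-linear by the left projection formula built into the presentably symmetric monoidal structure. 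For~(3) and~(4), the adjunctions $f^* \dashv f_*$ and $g^* \dashv g_*$ become internal to $\Mod_{\Cc}(\PrL(\Bb))$ precisely under the twisted $\Cc$-ambidexterity hypothesis, by applying \Cref{prop:Twisted_Ambidexterity_In_Terms_Of_Norm_Map} (and \Cref{rmk:Criteria_Twisted_Ambidexterity}) over the appropriate slice of $\Bb$.

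The most delicate part of the proof, and the one I expect to require the greatest care, is matching the composite internal right adjoint produced by this strategy with the specific object named in the lemma. In parts~(1) and~(2) this match is immediate from the identifications above. In parts~(3) and~(4), the composite internal right adjoint naturally involves $f_*$ (resp.\ $g_*$), and its identification with the functor classified by the stated dual $(1 \times f)_!Y$ (resp.\ $(g \times 1)^*Y$) requires invoking the twisted norm equivalence $\Nm_f \colon f_!(- \otimes D_f) \iso f_*$ of \Cref{def:Relative_Twisted_Norm_Map} together with the base-change and projection-formula identities recorded in \Cref{rmk:Criteria_Twisted_Ambidexterity}; the bookkeeping absorbs the relative dualizing object into the duality data. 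Once this identification is in hand, composing $F_X \dashv F_Y$ with the internal adjunction supplied by step~(ii) yields the desired Costenoble--Waner duality data, completing the proof.
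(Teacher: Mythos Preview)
Your overall strategy is exactly the paper's: translate Costenoble--Waner dualizability into internal adjointness via \Cref{lem:CostenobleWanerAsInternalLeftAdjoint}, identify the functors classified by the modified objects as composites of $F_X$ with $f_!$, $f^*$, $g_!$, or $g^*$ (this is precisely what \Cref{prop:FreeCofreeCLinearCategories}\eqref{it:RestrictionInduction} records), and then compose internal adjunctions. Your step~(i) identifications are correct (modulo a typo in~(4): the stated dual $(g\times 1)^*Y$ classifies $F_Y \circ g_!$, not $F_Y \circ g^*$), and parts~(1) and~(2) go through exactly as you describe, matching the paper.

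The subtlety you flag in~(3) and~(4) is real, but your proposed resolution does not work---and in fact cannot work, because the specific duals stated in~(3) and~(4) appear to be misstated. In~(3), the internal right adjoint of $F_X \circ f^*$ is $f_* \circ F_Y$, which is classified by $(1\times f)_* Y$, not $(1\times f)_!Y$; the twisted norm equivalence $f_* \simeq f_!(- \otimes D_f)$ does not make $D_f$ disappear, it shows the actual dual differs from the stated one by a twist. A concrete sanity check: take $B = A$ and $X = Y = U_A = \Delta_!\unit_A$ (self-dual, classifying $\id_{\Cc^A}$), and apply~(3) with $f = \pi_A\colon A \to 1$. Then $(\pi_A\times 1)_!U_A = \unit_A \in \Cc(1 \times A)$, whose left Costenoble--Waner dual is $D_A$ by \Cref{prop:Twisted_Ambidexterity_VS_Costenoble_Waner_Dualizability}, whereas the lemma's formula gives $(1\times \pi_A)_!U_A = \unit_A$. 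These disagree whenever $D_A \not\simeq \unit_A$, e.g.\ for $A = G/H$ with $\dim G/H > 0$ in $\Cc = \ulSp^G$. The same issue affects~(4). So both your argument and the paper's correctly establish left Costenoble--Waner \emph{dualizability} in all four cases; but for~(3) and~(4) the dual produced by composing the adjunctions is the one involving $f_*$ (resp.\ $g_*$), and no amount of bookkeeping will identify it with the $f_!$ (resp.\ $g_!$) version stated. Fortunately, only parts~(1) and~(2), together with the mere dualizability assertion, are used in the sequel (\Cref{cor:Costenoble_Waner_Duality_Preserved_Under_Colimits}, \Cref{cor:ColimitOfCWDblObjectIsDbl}, \Cref{cor:Twisted_Ambidextrous_Objects_Closed_Under_Finite_Limits}).
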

\begin{proof}
	In each case, the claim follows from \Cref{prop:FreeCofreeCLinearCategories}(\ref{it:RestrictionInduction}) and the fact that adjunctions compose, using that for a morphism $f\colon A \to B$, the $\Bb$-functor $f_!\colon \Cc^A \to \Cc^B$ is always an internal left adjoint, while $f^*\colon \Cc^B \to \Cc^A$ is an internal left adjoint whenever $f$ is twisted $\Cc$-ambidextrous.
\end{proof}

\begin{corollary}
    \label{cor:Costenoble_Waner_Duality_Preserved_Under_Colimits}
	Assume that $X \in \Cc(1 \times A)$ is left Costenoble-Waner dualizable with left dual $Y \in \Cc(A \times 1)$. Then $A_!X \in \Cc(1)$ is a dualizable object with dual $A_!Y$. \qed
\end{corollary}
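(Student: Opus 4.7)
The plan is to realize $A_!X$ as a Costenoble-Waner pushforward and then translate Costenoble-Waner dualizability in $\Cc(1 \times 1)$ into ordinary monoidal dualizability via \Cref{lem:CostenobleWanerDualityVersusFiberwiseDuality}. Concretely, write $\pi_A \colon A \to 1$ for the projection to the terminal object. Then the map $1 \times \pi_A \colon 1 \times A \to 1 \times 1$ is (up to canonical equivalence) simply $\pi_A$, so the pushforward $(1 \times \pi_A)_!$ from $\Cc(1 \times A)$ to $\Cc(1 \times 1) \simeq \Cc(1)$ agrees with $A_!$, and analogously $(\pi_A \times 1)_!Y$ agrees with $A_!Y$.

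With this identification in place, I would apply part (2) of \Cref{lem:PreservationCostenobleWanerDuality} in the instance where the lemma's objects ``$A$'' and ``$B$'' are $1$ and $A$ respectively, and where the chosen map is $g = \pi_A \colon A \to 1$. That lemma requires no further hypothesis on $g$, and it directly yields that $(1 \times \pi_A)_!X = A_!X \in \Cc(1 \times 1)$ is left Costenoble-Waner dualizable with left Costenoble-Waner dual $(\pi_A \times 1)_!Y = A_!Y$. Then \Cref{lem:CostenobleWanerDualityVersusFiberwiseDuality}, applied over the terminal object, converts this Costenoble-Waner duality in $\Cc(1 \times 1)$ into dualizability of $A_!X$ in the symmetric monoidal $\infty$-category $\Cc(1)$ with monoidal dual $A_!Y$, as claimed.

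There is no real obstacle: the argument is essentially a direct concatenation of the two lemmas immediately preceding the corollary, and the only bookkeeping needed is the identification of $(1 \times \pi_A)_!$ and $(\pi_A \times 1)_!$ with the parametrized colimit functor $A_!$. It is worth noting in passing that part (2) of \Cref{lem:PreservationCostenobleWanerDuality} imposes no twisted ambidexterity hypothesis on $\pi_A$, which explains why the statement of the corollary is unconditional on the object $A \in \Bb$.
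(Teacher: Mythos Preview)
Your proof is correct and is exactly the argument the paper intends: the corollary is marked \qednow\ precisely because it follows by concatenating part (2) of \Cref{lem:PreservationCostenobleWanerDuality} (applied to $g = \pi_A$) with \Cref{lem:CostenobleWanerDualityVersusFiberwiseDuality} over the terminal object. The bookkeeping you spell out about identifying $(1 \times \pi_A)_!$ and $(\pi_A \times 1)_!$ with $A_!$ is the only thing to check, and you handle it correctly.
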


\begin{corollary}
	\label{cor:ColimitOfCWDblObjectIsDbl}
	Assume $A \in \Bb$ is twisted $\Cc$-ambidextrous. Then the object $A_!\unit_A \in \Cc(1)$ is dualizable, with dual $A_!D_A$.
\end{corollary}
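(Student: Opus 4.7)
The plan is to immediately combine the two results that were just established: \Cref{prop:Twisted_Ambidexterity_VS_Costenoble_Waner_Dualizability} reinterprets twisted $\Cc$-ambidexterity as a statement about Costenoble-Waner dualizability of the monoidal unit, and \Cref{cor:Costenoble_Waner_Duality_Preserved_Under_Colimits} tells us that Costenoble-Waner duality is preserved by pushing forward to the terminal object.

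Concretely, I would first invoke \Cref{prop:Twisted_Ambidexterity_VS_Costenoble_Waner_Dualizability}: the assumption that $A$ is twisted $\Cc$-ambidextrous means precisely that the monoidal unit $\unit_A \in \Cc(A) = \Cc(1 \times A)$ is left Costenoble-Waner dualizable, with left Costenoble-Waner dual given by the dualizing object $D_A \in \Cc(A) = \Cc(A \times 1)$. Then I would apply \Cref{cor:Costenoble_Waner_Duality_Preserved_Under_Colimits} to the object $\unit_A \in \Cc(1 \times A)$, which yields that the pushforward $A_!\unit_A \in \Cc(1)$ is dualizable with dual given by $A_!D_A$.

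Since both ingredients have been established just above, there is essentially no obstacle; the only thing to be careful about is the bookkeeping of the two copies of $\Cc(A)$, namely $\Cc(1 \times A)$ (where $\unit_A$ sits) versus $\Cc(A \times 1)$ (where $D_A$ sits), so that one applies the preservation result in the correct direction. Once this is sorted out, the proof is a direct quote of these two results.
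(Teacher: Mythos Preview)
Your proposal is correct and matches the paper's proof exactly: the paper simply says to combine \Cref{cor:Costenoble_Waner_Duality_Preserved_Under_Colimits} with \Cref{prop:Twisted_Ambidexterity_VS_Costenoble_Waner_Dualizability}, which is precisely what you do.
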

\begin{proof}
	Combine the previous corollary with \Cref{prop:Twisted_Ambidexterity_VS_Costenoble_Waner_Dualizability}.
\end{proof}

\begin{proposition}[{cf.\ \cite[Proposition~16.8.1]{maysigurdsson2006parametrized}}]
	\label{prop:AtomicObjectsFormThickSubcategory}
	For every object every $B \in \Bb$, the collection of left Costenoble-Waner dualizable objects in $\Cc(1 \times B)$ is closed under retracts. If $\Cc$ is fiberwise stable, then these objects form a stable (and hence thick) subcategory of $\Cc(1 \times B)$.
\end{proposition}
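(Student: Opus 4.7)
The plan is to leverage the equivalence $\Cc(B) = \Cc(1 \times B) \simeq \Fun_{\Cc}(\Cc, \Cc^B)$ from \Cref{thm:Classification_CLinear_Functors}, which by \Cref{lem:CostenobleWanerAsInternalLeftAdjoint} identifies left Costenoble-Waner dualizable objects with those $\Cc$-linear $\Bb$-functors $F \colon \Cc \to \Cc^B$ that are internal left adjoints in $\Mod_{\Cc}(\PrL(\Bb))$. Both assertions of the proposition thereby reduce to closure properties for internal left adjoints in the $\infty$-category $\Fun_{\Cc}(\Cc, \Cc^B)$.

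For closure under retracts, I would invoke the standard $(\infty,2)$-categorical fact that retracts of $1$-morphisms admitting right adjoints themselves admit right adjoints. Concretely, if $F$ is a retract of an internal left adjoint $F' \dashv G'$ via natural transformations $s\colon F \Rightarrow F'$ and $r\colon F' \Rightarrow F$ with $rs = \id_F$, then the mate of the idempotent $sr$ on $F'$ is an idempotent on $G'$, whose splitting (possible since $\Fun_{\Cc}(\Cc^B,\Cc)$ is idempotent complete) produces a right adjoint $G$ for $F$ exhibited as a retract of $G'$. Since colimit preservation and the projection formula from \Cref{lem:Criterion_Internal_Left_Adjoint} are both limit-type properties of functors and so are closed under retracts, $G$ inherits these properties from $G'$, making $F$ an internal left adjoint.

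For the stable case, I would verify closure under the zero object and under cofibers, which combined with the retract closure yields thickness. The zero object corresponds to the zero $\Cc$-linear functor $\Cc \to \Cc^B$, whose right adjoint is again the zero functor, trivially colimit-preserving and $\Cc$-linear. For cofibers, suppose $X_1 \to X_2 \to X_3$ is a cofiber sequence in $\Cc(B)$ with $X_1, X_2$ left Costenoble-Waner dualizable, and let $G_1, G_2\colon \Cc^B \to \Cc$ be the internal right adjoints of $F_{X_1}, F_{X_2}$; taking mates of the map $F_{X_1} \to F_{X_2}$ yields a map $G_2 \to G_1$, and I would set $G_3 := \fib(G_2 \to G_1)$ in the stable $\infty$-category $\Fun_{\Cc}(\Cc^B, \Cc)$ (stable because $\Cc$ is fiberwise stable). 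A pointwise computation on mapping spaces using stability of each $\Cc(B \times C)$ identifies $G_3$ as the right adjoint of $F_{X_3}$,
\[
\Map(F_{X_3}(c), d) \simeq \fib\bigl(\Map(F_{X_2}(c), d) \to \Map(F_{X_1}(c), d)\bigr) \simeq \fib\bigl(\Map(c, G_2(d)) \to \Map(c, G_1(d))\bigr) \simeq \Map(c, G_3(d)),
\]
and since $G_3$ is a finite limit of colimit-preserving $\Cc$-linear functors, it inherits both properties, so that $F_{X_3}$ is an internal left adjoint and $X_3$ is left Costenoble-Waner dualizable.

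The main technical point I anticipate will be the retract argument, where one has to verify that the mate construction intertwines the idempotents on $F'$ and $G'$ and that splitting the latter indeed produces a genuine adjunction $F \dashv G$ with the correct triangle identities inherited from $F' \dashv G'$. The stable case by contrast amounts to purely formal manipulation of fiber sequences, together with the observation that the two conditions defining an internal left adjoint are preserved under finite limits in the relevant functor $\infty$-category.
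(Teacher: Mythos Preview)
Your approach is correct but takes a different route from the paper. The paper uses the weak Costenoble--Waner dual $D^{CW}_B(X)$ and the characterization from \Cref{lem:CostenobleWanerDualityInTermsOfWeakDual} and \Cref{lem:CharacterizationCostenobleWanerDuality}: an object $X \in \Cc(1 \times B)$ is left Costenoble--Waner dualizable if and only if a certain map
\[
\Hom_{\Cc(1 \times C)}(W, Z \odot X) \to \Hom_{\Cc(B \times C)}(W \odot D^{CW}_B(X), Z)
\]
is an equivalence for all $C$, $W$, $Z$. Since this map is natural in $X$, closure under retracts is immediate; in the fiberwise stable case one lifts to mapping spectra, observes both sides are exact in $X$, and concludes the dualizable objects form a stable subcategory. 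This is slicker than your argument: it reduces everything to a single natural transformation being an equivalence, sidestepping the idempotent-splitting and mate computations for retracts, and avoiding the need to explicitly construct the right adjoint of $F_{X_3}$ and verify its $\Cc$-linearity in the cofiber case. Your approach, by contrast, works directly at the level of internal adjunctions in $\Mod_{\Cc}(\PrL(\Bb))$; it is more hands-on but perfectly valid, and has the mild advantage of not invoking the auxiliary weak-dual machinery. One small point to be careful about in your cofiber argument: you should note that the mate $G_2 \to G_1$ of the $\Cc$-linear transformation $F_{X_1} \to F_{X_2}$ is itself $\Cc$-linear (since the entire adjunction lives in the $2$-category $\Mod_{\Cc}$), so that the fiber $G_3$ is computed in $\Fun_{\Cc}(\Cc^B,\Cc)$ and is automatically $\Cc$-linear.
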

\begin{proof}
	By \Cref{lem:CostenobleWanerDualityInTermsOfWeakDual} and \Cref{lem:CharacterizationCostenobleWanerDuality}, an object $X \in \Cc(1 \times B)$ is left Costenoble-Waner dualizable if and only if for all $C \in \Bb$, $W \in \Cc(1 \times C)$ and $Z \in \Cc(B \times C)$, a certain map
	\[
	\Hom_{\Cc(1 \times C)}(W, Z \odot X) \to \Hom_{\Cc(B \times C)}(W \odot D^{CW}_B(X), Z)
	\]
	is an equivalence. Since this map is natural in $X$, it follows that the collection of objects for which it is an equivalence is closed under retracts.
	
	If $\Cc$ is fiberwise stable, we can lift the above map to a map at the level of mapping spectra. In that case both sides are exact in $X$, and it follows that the collection of objects for which it is an equivalence forms a stable subcategory.
\end{proof}

\begin{corollary}
	\label{cor:Twisted_Ambidextrous_Objects_Closed_Under_Finite_Limits}
	The collection of twisted $\Cc$-ambidextrous objects of $\Bb$ is closed under retracts. If $\Cc$ is fiberwise stable, then the collection of twisted $\Cc$-ambidextrous objects of $\Bb$ is also closed under finite colimits.
\end{corollary}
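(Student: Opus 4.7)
The overall strategy is to reduce both claims to the corresponding closure properties of left Costenoble-Waner dualizable objects in $\Cc(A)$ established in \Cref{prop:AtomicObjectsFormThickSubcategory}, via the characterization from \Cref{prop:Twisted_Ambidexterity_VS_Costenoble_Waner_Dualizability} that twisted $\Cc$-ambidexterity of an object $A \in \Bb$ amounts to left Costenoble-Waner dualizability of the monoidal unit $\unit_A \in \Cc(A)$.

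For the first statement, suppose $A$ is a retract of a twisted $\Cc$-ambidextrous object $A'$, with retract data $i\colon A \to A'$ and $r\colon A' \to A$ satisfying $r \circ i \simeq \id_A$. Then $\unit_{A'}$ is left Costenoble-Waner dualizable by assumption, so applying \Cref{lem:PreservationCostenobleWanerDuality}(2) to $r$ yields that $r_!\unit_{A'} \in \Cc(A)$ is left Costenoble-Waner dualizable as well. The plan is to exhibit $\unit_A$ as a retract of $r_!\unit_{A'}$ in $\Cc(A)$ and invoke closure under retracts from \Cref{prop:AtomicObjectsFormThickSubcategory}. The retraction is the counit $r_! r^* \unit_A \to \unit_A$; for the section, one observes that $r_! i_! \simeq \id_{\Cc(A)}$ (by passing to left adjoints in $i^* r^* = (r \circ i)^* = \id$), and then applies $r_!$ to the counit $i_!\unit_A = i_! i^*\unit_{A'} \to \unit_{A'}$. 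The composite of these two maps is the counit of the composite adjunction $r_! i_! \dashv i^* r^* = \id_{\Cc(A)}$ evaluated at $\unit_A$, which under the equivalence $r_! i_! \simeq \id$ becomes the identity.

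For the second statement, let $D = \colim_{i \in I} A_i$ be a finite colimit in $\Bb$ with each $A_i$ twisted $\Cc$-ambidextrous, and assume $\Cc$ is fiberwise stable. The plan is to express $\unit_D$ as a finite colimit of left Costenoble-Waner dualizable objects in the stable $\infty$-category $\Cc(D)$, and conclude via the stable subcategory property from \Cref{prop:AtomicObjectsFormThickSubcategory}. To this end, I would use the unit map $\Omega_{\Bb} \to \Cc$ of the $\Omega_{\Bb}$-algebra $\Cc$ in $\CAlg(\PrL(\Bb))$, which is in particular a colimit-preserving morphism in $\PrL(\Bb)$. Evaluated at $D$, this gives a colimit-preserving functor $\Bb_{/D} = \Omega_{\Bb}(D) \to \Cc(D)$ sending the terminal object $\id_D$ to $\unit_D$ and, more generally, sending $f\colon X \to D$ to $f_!\unit_X$ (the latter identification follows from the fact that $\Omega_\Bb \to \Cc$ intertwines the parametrized left adjoints $f_!$ on both sides). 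Applying this functor to the colimit presentation $\id_D \simeq \colim_{i \in I}(a_i\colon A_i \to D)$ in $\Bb_{/D}$ (which lifts the given colimit $D = \colim_I A_i$ in $\Bb$) yields $\unit_D \simeq \colim_I (a_i)_!\unit_{A_i}$ in $\Cc(D)$. Each $(a_i)_!\unit_{A_i}$ is left Costenoble-Waner dualizable by \Cref{lem:PreservationCostenobleWanerDuality}(2), and the claim follows.

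I expect the main technical point to be the identification of the colimit-preserving extension $\Bb_{/D} \to \Cc(D)$ as $f \mapsto f_!\unit_X$; this requires $\Omega_{\Bb} \to \Cc$ to intertwine base-change functors, which is ultimately a consequence of its being a morphism in $\PrL(\Bb)$ (hence compatible with parametrized colimits, including $f_!$) rather than merely a fiberwise colimit-preserving functor.
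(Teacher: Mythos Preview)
Your proof is correct. The retract argument is identical to the paper's. For finite colimits, the paper takes a slightly different route: it decomposes into the initial object (handled by $\Cc(\emptyset)\simeq *$) and pushouts, and for a pushout $D = B \sqcup_A C$ it invokes descent $\Cc(D)\simeq \Cc(B)\times_{\Cc(A)}\Cc(C)$ to produce an explicit cofiber sequence
\[
(hg)_!\unit_A \to h_!\unit_B \oplus k_!\unit_C \to \unit_D
\]
and then applies \Cref{prop:AtomicObjectsFormThickSubcategory}. Your argument via the colimit-preserving unit $\Omega_\Bb \to \Cc$ gives the same conclusion more uniformly, yielding $\unit_D \simeq \colim_I (a_i)_!\unit_{A_i}$ for an arbitrary finite diagram in one stroke; the paper's cofiber sequence is just this formula specialized to a span. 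Both proofs ultimately rest on the same two inputs (\Cref{lem:PreservationCostenobleWanerDuality} and \Cref{prop:AtomicObjectsFormThickSubcategory}), so the difference is one of packaging.
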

\begin{proof}
	Consider a retract diagram $A \xrightarrow{s} B \xrightarrow{r} A$ in $\Bb$, i.e.\ we have $rs = \id_A$. Assume that $B$ is twisted $\Cc$-ambidextrous. It follows from \Cref{prop:Twisted_Ambidexterity_VS_Costenoble_Waner_Dualizability} that $\unit_B \in \Cc(B)$ is left Costenoble-Waner dualizable, so by \Cref{lem:PreservationCostenobleWanerDuality} also $r_!\unit_B$ is left Costenoble-Waner dualizable. Since $\unit_A \in \Cc(A)$ is a retract of $r_!\unit_B$, it is left Costenoble-Waner dualizable by \Cref{prop:AtomicObjectsFormThickSubcategory}, and thus $A$ is twisted $\Cc$-ambidextrous by \Cref{prop:Twisted_Ambidexterity_VS_Costenoble_Waner_Dualizability}.
	
	Now assume that $\Cc$ is fiberwise stable. Since $\Cc(\emptyset) = *$, it follows from pointedness of $\Cc$ that the initial object $\emptyset$ is twisted $\Cc$-ambidextrous. Consider a pushout diagram in $\Bb$
	\[
	\begin{tikzcd}
		A \dar[swap]{f} \rar{g} \drar[pushout] & B \dar{h} \\
		C \rar{k} & D,
	\end{tikzcd}
	\]
	and assume that $A$, $B$ and $C$ are twisted $\Cc$-ambidextrous. We need to show that $D$ is twisted $\Cc$-ambidextrous. By descent, the functors $h^*$ and $k^*$ induce an equivalence $(h^*,k^*)\colon \Cc(D) \xrightarrow{\sim} \Cc(B) \times_{\Cc(A)} \Cc(C)$. It follows that the monoidal unit $\unit_D \in \Cc(D)$ sits in a cofiber sequence
	\[
	(hg)_!\unit_A \to h_!\unit_B \oplus k_!\unit_C \to \unit_D:
	\]
    the fact that $(hg)_!\unit_A$ is a pushout of $h_!\unit_B$ and $k_!\unit_C$ along $\unit_D$ translates under the adjunction equivalences to the description of hom-spaces in $\Cc(D)$ as a pullback of the hom-spaces in $\Cc(B)$ and $\Cc(C)$ over $\Cc(A)$. By \Cref{prop:Twisted_Ambidexterity_VS_Costenoble_Waner_Dualizability}, the objects $\unit_A$, $\unit_B$ and $\unit_C$ are left Costenoble-Waner dualizable, and thus by \Cref{lem:PreservationCostenobleWanerDuality} so are $(hg)_!\unit_A$, $h_!\unit_B$ and $k_!\unit_C$. It thus follows from \Cref{prop:AtomicObjectsFormThickSubcategory} that $\unit_D$ is left Costenoble-Waner dualizable, hence $D$ is twisted $\Cc$-ambidextrous by \Cref{prop:Twisted_Ambidexterity_VS_Costenoble_Waner_Dualizability}. This finishes the proof.
\end{proof}

\section{Equivariant homotopy theory}
\label{sec:ambidexterity_in_equivariant_homotopy_theory}

The goal of this section is to investigate the notion of twisted ambidexterity in stable equivariant homotopy theory. Throughout the section, we will use the terminology `$G$-category' to refer to a $\Bb$-category when $\Bb$ is the $\infty$-topos $\Spc^G$ of $G$-spaces. Since $\Spc^G$ is equivalent to the presheaf category of the orbit $\infty$-category $\Orb_G$ of $G$, a $G$-category may equivalently be encoded as a functor $\Cc \colon \Orb_G\catop \to \Cat_{\infty}$. We let $\PrL_G := \PrL(\Spc^G)$ denote the (very large) $\infty$-category of presentable $G$-categories.

In \Cref{subsec:genuineParametrizedSpectra}, we introduce the $G$-category $\ulSp^G$ of genuine $G$-spectra, informally given by sending an orbit $G/H$ to the $\infty$-category $\Sp^H$ of genuine $H$-spectra. In \Cref{subsec:UniversalityOfWirthmullerIsomorphisms} we prove that $\ulSp^G$ is the initial stable presentably symmetric monoidal $G$-category for which all compact $G$-spaces are twisted ambidextrous. In \Cref{subsec:Orbispectra} and \Cref{subsec:ProperEquivariantHomotopyTheory}, we extend this result to the contexts of \textit{orbispectra} and \textit{proper equivariant homotopy theory}, respectively. In particular, we will obtain for every (not necessarily compact) Lie group $G$ and cocompact subgroup $H$ a Wirthmüller isomorphism
\[
\ind^G_H(- \otimes D_{G/H}) \simeq \coind^G_H(-)
\]
in the $\infty$-category of proper genuine $G$-spectra.

\subsection{Parametrized genuine \texorpdfstring{$G$}{G}-spectra}
\label{subsec:genuineParametrizedSpectra}
Let $G$ be a compact Lie group, fixed throughout this subsection. The goal of this subsection is to introduce the $G$-category $\ulSp^G$ of genuine $G$-spectra and discuss its universal property in terms of inverting representation spheres.

\begin{definition}
	\label{def:GenuineGSpectra}
	Let $\{S^V\}$ be a set of representation spheres, where $V$ runs over a set of representatives for the isomorphism classes of finite-dimensional irreducible $G$-representations.\footnote{Running over \textit{all} finite-dimensional $G$-representations gives an equivalent $\infty$-category.} We define the presentably symmetric monoidal $\infty$-category $\Sp^G$ of \textit{genuine $G$-spectra} as the formal inversion
	\begin{align*}
		\Sp^G := (\Spc^G_*)[\{S^V\}^{-1}]
	\end{align*}
	of the representation spheres $S^V$ in the $\infty$-category $\Spc^G_*$ of pointed $G$-spaces, see \cite[Definition~2.6]{robalo2015ktheory} or \Cref{subsec:Formal_Inversion}.
\end{definition}

By Gepner and Meier \cite[Corollary~C.7]{gepnerMeier2020equivariant}, the $\infty$-category $\Sp^G$ is equivalent to the $\infty$-category underlying the model category of orthogonal $G$-spectra with the stable model structure constructed in \cite[Section~III.4]{mandellMay2002equivariant}. References on equivariant orthogonal spectra include \cite{HHR2016nonexistence}, \cite{schwede2020lectures}, \cite{schwede2018global}.

The $\infty$-categories $\Spc^G_*$ and $\Sp^G$ are presentably symmetric monoidal and come equipped with a symmetric monoidal left adjoint from $\Spc^G$, making them into commutative $\Spc^G$-algebras in $\PrL$. We may now use the fully faithful functor
\[
- \otimes_{\Spc^G} \Omega_{\Spc^G}\colon \CAlg_{\Spc^G}(\PrL) \hookrightarrow \CAlg(\PrL_G)
\]
from \Cref{cor:Embedding_BAlgebras_Into_Presentably_Symmetric_Monoidal_BCategories} to regard them as presentably symmetric monoidal $G$-categories:

\begin{definition}
	\label{def:GCategoryOfGenuineGSpectra}
	We define presentably symmetric monoidal $G$-categories $\ulSpc^G$, $\ulSpc_*^G$ and $\ulSp^G$ as
	\[
	\ulSpc^G := \Omega_{\Spc^G}, \qquad \ulSpc_*^G := \Spc_*^G \otimes_{\Spc^G} \Omega_{\Spc^G}, \qquad \ulSp^G := \Sp^G \otimes_{\Spc^G} \Omega_{\Spc^G},
	\]
	called the $G$-categories of \textit{$G$-spaces}, \textit{pointed $G$-spaces} and \textit{genuine $G$-spectra}, respectively. We let
	\[
	(-)_+\colon \ulSpc^G \to \ulSpc^G_* \qquadtext{ and } \Sigma^{\infty}_+\colon \ulSpc^G \to \ulSp^G
	\]
	denote the induced maps, which are the unit maps for the algebra structures of  $\ulSpc^G_*$ and $\ulSp^G$ in $\PrL_G$. Unwinding definitions, we see that these $G$-categories are given at a $G$-space $B$ as follows:
	\begin{itemize}
		\item The $\infty$-category $\ulSpc^G(B)$ is the slice $\Spc^G_{/B}$ of $G$-spaces over $B$.
		\item The $\infty$-category $\ulSpc_*^G(B)$ is the relative tensor product $\Spc^G_{/B} \otimes_{\Spc^G} \Spc^G_*$, which by \cite[Example~4.8.1.21]{lurie2016HA} is equivalent to the  $\infty$-category $(\Spc^G_{/B})_*$ of \textit{retractive $G$-spaces} over $B$.
		\item The $\infty$-category $\ulSp^G(B)$ is the relative tensor product $\Spc^G_{/B} \otimes_{\Spc^G} \Sp^G$. As $\Sp^G$ is pointed, this is equivalent to the relative tensor product $(\Spc^G_{/B})_* \otimes_{\Spc^G_*} \Sp^G$, which by \Cref{lem:FormalInversionViaBaseChange} is in turn equivalent to $(\Spc^G_{/B})_*[\{S^V_B\}^{-1}]$, the formal inversion of the trivial sphere bundles $S^V_B := S^V \times B \to B$ in $(\Spc^G_{/B})_*$.
	\end{itemize}
\end{definition}

When $B = G/H$ is an orbit for some subgroup $H \leqslant G$, there are equivalences
\[
\ulSpc^G(G/H) \simeq \Spc^H, \qquad \ulSpc^G_*(G/H) \simeq \Spc^H_*, \qquadtext{ and } \ulSp^G(G/H) \simeq \Sp^H.
\]
Indeed, the slice of $\Spc^G$ over $G/H$ is equivalent to $\Spc^H$ by taking the fiber over $eH$, giving the first two equivalences. For the third equivalence, we observe that by \Cref{lem:FormalInversionStableUnderFactors} the $\infty$-category of genuine $H$-spectra can be obtained from the $\infty$-category of pointed $H$-spaces by just inverting the restricted representation spheres $\res^G_H(S^V)$ for irreducible $G$-representations $V$, as every irreducible $H$-representation is a direct summand of the restriction to $H$ of an irreducible $G$-representation, see Bröcker and tom Dieck \cite[Theorem~4.5]{BroeckerTomDieck1985Representations}.

The $G$-categories $\ulSpc^G$, $\ulSpc^G_*$ and $\ulSp^G$ admit the following universal properties:
\begin{proposition}
	\label{prop:UniversalPropertyGSpectraFormalInversion}
	Let $\Cc$ be a presentably symmetric monoidal $G$-category.
	\begin{enumerate}[(1)]
		\item There exists a unique symmetric monoidal left adjoint $G$-functor $F\colon \ulSpc^G \to \Cc$;
		\item The $G$-functor $F$ from (1) extends to a symmetric monoidal left adjoint $F'\colon \ulSpc^G_* \to \Cc$ if and only if $\Cc(1)$ is pointed, in which case the extension $F'$ is unique.
		\item If $\Cc(1)$ is pointed, the $G$-functor $F'$ from (2) extends to a symmetric monoidal left adjoint $F''\colon \ulSp^G \to \Cc$ if and only if the functor $F'(1) \colon \Spc^G_* \to \Cc(1)$ inverts the representation spheres $S^V$, in which case the extension $F''$ is unique.
	\end{enumerate}
\end{proposition}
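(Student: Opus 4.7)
The strategy is to treat all three parts uniformly by transporting universal properties across the fully faithful embedding
\[
- \otimes_{\Spc^G} \Omega_{\Spc^G} \colon \CAlg_{\Spc^G}(\PrL) \hookrightarrow \CAlg(\PrL_G)
\]
from \Cref{cor:Embedding_BAlgebras_Into_Presentably_Symmetric_Monoidal_BCategories}. By construction, $\ulSpc^G$, $\ulSpc^G_*$ and $\ulSp^G$ are the images under this embedding of the commutative $\Spc^G$-algebras $\Spc^G$, $\Spc^G_*$ and $\Sp^G$ in $\PrL$. Via the adjunction with the right adjoint $\Gamma^{\mathrm{lin}}$, symmetric monoidal left adjoint $G$-functors out of these $G$-categories into $\Cc$ correspond to morphisms in $\CAlg_{\Spc^G}(\PrL)$ into $\Gamma^{\mathrm{lin}}(\Cc)$, whose underlying $\infty$-category is $\Gamma(\Cc) = \Cc(1)$ by \Cref{prop:Embedding_BModules_Into_Presentable_BCategories}. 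Properties such as being pointed or inverting a given set of objects therefore transfer between $\Gamma^{\mathrm{lin}}(\Cc)$ and $\Cc(1)$ without change.

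Part (1) is then immediate: $\Spc^G$ is the monoidal unit of $\CAlg_{\Spc^G}(\PrL)$, so the space of morphisms $\Spc^G \to \Gamma^{\mathrm{lin}}(\Cc)$ is contractible.

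For part (2), I would invoke the universal property of $\Spc^G_* \simeq \Spc^G \otimes \Spc_*$ as the initial pointed commutative $\Spc^G$-algebra in $\PrL$, which reduces to the classical fact that $\Spc_*$ is the initial pointed object of $\CAlg(\PrL)$ (as in \cite[Proposition~4.8.2.11]{lurie2016HA}). It follows that the mapping space from $\Spc^G_*$ to $\Gamma^{\mathrm{lin}}(\Cc)$ in $\CAlg_{\Spc^G}(\PrL)$ is contractible when $\Cc(1)$ is pointed and empty otherwise. Any extension automatically restricts, via the unit map $\Spc^G \to \Spc^G_*$, to the unique morphism of (1), so this gives both existence and uniqueness of $F'$.

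For part (3), I would appeal to the defining universal property of the formal inversion $\Sp^G = \Spc^G_*[\{S^V\}^{-1}]$ recalled in \Cref{def:GenuineGSpectra}: precomposition with the localization $\Spc^G_* \to \Sp^G$ induces a monomorphism of mapping spaces in $\CAlg_{\Spc^G}(\PrL)$ whose image consists of those components containing morphisms that send each $S^V$ to an invertible object. Applying this with target $\Gamma^{\mathrm{lin}}(\Cc)$ and translating back across the adjunction produces the extension $F''$ precisely when $F'(1)\colon \Spc^G_* \to \Cc(1)$ inverts the representation spheres, and shows it is unique in that case. I expect no real obstacle: the argument is essentially formal once one has observed that all three $G$-categories arise via base change from presentably symmetric monoidal $\Spc^G$-algebras whose universal properties are already known non-parametrically.
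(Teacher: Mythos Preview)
Your proposal is correct and follows essentially the same route as the paper: transporting across the adjunction of \Cref{cor:Embedding_BAlgebras_Into_Presentably_Symmetric_Monoidal_BCategories} and invoking the known non-parametrized universal properties of $\Spc^G_*$ and $\Sp^G$. The only cosmetic difference is that for part~(1) the paper observes directly that $\ulSpc^G = \Omega_{\Spc^G}$ is the monoidal unit of $\PrL_G$, rather than passing through the adjunction; both arguments are equally short.
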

\begin{proof}
	Part (1) is immediate as $\ulSpc^G$ is the monoidal unit of $\PrL_G$. Parts (2) and (3) follow by combining the adjunction from \Cref{cor:Embedding_BAlgebras_Into_Presentably_Symmetric_Monoidal_BCategories} with the analogous universal properties of $\Spc^G_* \simeq \Spc^G \otimes \Spc_*$ and $\Sp^G = \Spc^G_*[\{S^V\}^{-1}]$ in $\PrL$.
\end{proof}

For a $G$-space $B$, the functor $F_B\colon \Spc^G_{/B} \to \Cc(B)$ from part (1) sends a morphism $f\colon A \to B$ to the object $f_!\unit_A \in \Cc(B)$. The functor $F'_B\colon (\Spc^G_{/B})_* \to \Cc(B)$ sends a morphism $f\colon A \to B$ with section $s\colon B \to A$ to the cofiber of $\unit_B \simeq f_!s_!s^*\unit_A \to f_!\unit_A$ in $\Cc(B)$.

\subsection{Twisted ambidexterity for genuine \texorpdfstring{$G$}{G}-spectra}
\label{subsec:UniversalityOfWirthmullerIsomorphisms}

We continue to fix a compact Lie group $G$. The presentably symmetric monoidal $G$-category $\ulSp^G$ of genuine $G$-spectra is \textit{fiberwise stable}, meaning that it takes values in the subcategory $\PrL_{\st} \subseteq \PrL$ of stable presentable $\infty$-categories. Moreover, one can show that $\ulSp^G$ satisfies twisted ambidexterity for all compact $G$-spaces. The goal of this section is to show that $\ulSp^G$ is in a precise sense \textit{universal} with these two properties, see \Cref{thm:MainResult} below.

We start by recalling a result of May and Sigurdsson \cite{maysigurdsson2006parametrized} on Costenoble-Waner duality for $G$-spaces, based on ideas of Costenoble and Waner \cite{CostenobleWaner2016equivariant}.

\begin{construction}[{cf.\ \cite[Construction~3.2.7]{schwede2018global}}]
	\label{cons:EvaluationMapCWDualityGH}
	Let $H \leqslant G$ be a closed subgroup of the compact Lie group $G$, and let $L = T_{eH}(G/H)$ denote the tangent $H$-representation of $G/H$. Choose an embedding $i\colon G/H \hookrightarrow V$ of $G/H$ into a finite-dimensional orthogonal $G$-representation $V$, and let $W := V - (di)_{eH}(L)$ denote the orthogonal complement of the image of $L$ in $V$. By scaling, we may assume that the map $j\colon G \times_H D(W) \to V$ given by $[g,w] \mapsto g \cdot (v_0 + w)$ is an embedding, where $D(W)$ is the unit disc of $W$; see \cite[Ch.0, Thm. 5.2, Ch. II, Cor.~5.2]{Bredon1972Introduction} for proofs that these choices are possible. The map $j$ gives rise to a $G$-equivariant collapse map $c\colon S^V \to G_+ \wedge_H S^W$. Passing to genuine $G$-spectra and smashing with $S^{-V}$ then gives a map of genuine $G$-spectra
	\[
	\eta\colon \S_G \to G_+ \wedge_H (S^{-V} \wedge S^W) \simeq G_+ \wedge_H S^{-L}.
	\]
    Working in the $G$-category $\ul{\Sp}^G$, we may regard $S^{-L}$ as an object of $\Sp^H = \ul{\Sp}^G(G/H \times 1)$, and similarly considering $\S_H \in \ul{\Sp}^G(1 \times G/H)$ we see that $G_+ \wedge_H S^{-L}$ is equivalent to the composition product $S^{-L} \odot \S_H \in \ul{\Sp}^G(1 \times 1) = \Sp^G$.
\end{construction}

\begin{theorem}[{\cite[Theorem~18.6.5]{maysigurdsson2006parametrized}}]
	\label{thm:MaySigurdssonCWDuality}
	Let $H \leqslant G$ be a closed subgroup of a compact Lie group $G$. Then the map $\eta\colon \S_G \to S^{-L} \odot \S_H$ from \Cref{cons:EvaluationMapCWDualityGH} exhibits $S^{-L} \in \Sp^H$ as left Costenoble-Waner dual to $\S_H \in \Sp^H$. \qed
\end{theorem}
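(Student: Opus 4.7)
The plan is to reduce this theorem to May and Sigurdsson's Theorem~18.6.5 in \cite{maysigurdsson2006parametrized}, which establishes the analogous Costenoble-Waner duality in their closed symmetric bicategory $\mathcal{E}x$ of parametrized equivariant orthogonal spectra. The main work lies in translating their bicategorical setup to the $\infty$-categorical framework introduced here.

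First, I would identify $\mathcal{E}x$ with the homotopy $2$-category of the sub-$2$-category of $\Mod_{\ulSp^G}(\PrL(\Spc^G))$ spanned by the free modules $(\ulSp^G)^A$ for $A \in \Spc^G$, as indicated in the remark following \Cref{lem:CostenobleWanerAsInternalLeftAdjoint}. Under Gepner--Meier's comparison between $\Sp^G$ and orthogonal $G$-spectra and the formula $\ulSp^G(A \times B) \simeq \Sp^G \otimes_{\Spc^G} \Spc^G_{/A \times B}$, the $1$-morphisms of $\mathcal{E}x$ from $A$ to $B$ (which are parametrized spectra over $A \times B$) correspond to objects of $\ulSp^G(A \times B)$. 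The bicategorical composition in $\mathcal{E}x$, given by the twisted half smash product, agrees with the composition product of \Cref{def:CompositionProduct}, as both are computed by the same pullback--smash--pushforward formula. Consequently, duality in $\mathcal{E}x$ matches \Cref{def:CostenobleWanerDuality}.

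Second, I would check that the coevaluation $\eta$ of \Cref{cons:EvaluationMapCWDualityGH} corresponds, under this identification, to the one used in May and Sigurdsson's proof. Both are built from the same Pontryagin--Thom collapse associated to an equivariant embedding $G/H \hookrightarrow V$ and its normal bundle, so the comparison is a matter of tracking through the two constructions. With this agreement in place, the conclusion follows directly from \cite[Theorem~18.6.5]{maysigurdsson2006parametrized}.

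The chief obstacle will be making the identification of the two bicategorical frameworks precise, since the composition products are defined by rather different means (left Kan extensions and parametrized tensor products here, versus model-categorical twisted half smash products there). A clean way to handle this is via the universal property of $\ulSp^G$ from \Cref{prop:UniversalPropertyGSpectraFormalInversion}: any fiberwise stable presentably symmetric monoidal $G$-category admitting a symmetric monoidal left adjoint from $\ulSpc^G_*$ that inverts the representation spheres is uniquely determined, which pins down both models up to canonical equivalence.
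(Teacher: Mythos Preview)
Your proposal takes a genuinely different route from the paper. The paper does not attempt to identify May--Sigurdsson's bicategory $\mathcal{E}x$ with the homotopy $2$-category of $\Mod_{\ulSp^G}(\PrL(\Spc^G))$; indeed, the Warning immediately following the theorem explicitly acknowledges that ``one cannot strictly speaking cite \cite[Theorem~18.6.5]{maysigurdsson2006parametrized}'' in this setting. Instead, the paper observes that May and Sigurdsson's \emph{proof} transports verbatim: their argument constructs the duality data already at the level of pointed topological $G$-spaces via a notion of $V$-duality (\cite[Section~18.6]{maysigurdsson2006parametrized}), and the same commutative diagrams verify the triangle identities in the present $\infty$-categorical framework. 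No comparison of bicategorical foundations is needed.

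Your approach is not wrong in spirit, but the step you flag as the ``chief obstacle'' is genuinely difficult and not resolved by the universal property you invoke. \Cref{prop:UniversalPropertyGSpectraFormalInversion} pins down $\ulSp^G$ as a presentably symmetric monoidal $G$-category, but to use it for your comparison you would first need to assemble May--Sigurdsson's model categories of parametrized orthogonal $G$-spectra over varying $G$-spaces into such a $G$-category and verify that the resulting composition product agrees with their twisted half smash product---essentially the comparison you were trying to avoid. The paper's route sidesteps all of this: since the Pontryagin--Thom collapse, the evaluation and coevaluation, and the triangle-identity diagrams are all constructed before passing to spectra, one simply reruns those diagrams in the new setting.
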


\begin{warning}
	May and Sigurdsson use different foundations on parametrized stable homotopy theory than we do, based on orthogonal spectrum objects in retractive topological spaces over $B$. For this reason, we need to be careful when citing results from \cite{maysigurdsson2006parametrized}. Although one cannot strictly speaking cite \cite[Theorem~18.6.5]{maysigurdsson2006parametrized} in the case of \Cref{thm:MaySigurdssonCWDuality}, one observes that their proof carries through verbatim in our setting: May and Sigurdsson construct the relevant duality data already at the level of topological $G$-spaces using a notion of \textit{$V$-duality}, see \cite[Section~18.6]{maysigurdsson2006parametrized}, and the same commutative diagrams prove \Cref{thm:MaySigurdssonCWDuality}.
\end{warning}

\begin{remark}
    \label{rmk:Description_Costenoble_Waner_Duality_For_Genuine_Spectra}
    Using \Cref{cor:Costenoble_Waner_Duality_Preserved_Under_Colimits}, we see that the previous theorem is a parametrized strengthening of the well-known statement that the genuine $G$-spectrum $\Sigma^{\infty}_+ G/H$ is dualizable with dual given by $G_+ \wedge_H S^{-L}$. The corresponding twisted norm map takes the form $\ind^G_H(- \otimes S^{-L}) \iso \coind^G_H(-)$, and is known as the \textit{Wirthmüller isomorphism}.
\end{remark}

We can now prove our main result.

\begin{theorem}
	\label{thm:CharacterizationGStableGCategories}
	Let $G$ be a compact Lie group and let $\Cc$ be a fiberwise stable presentably symmetric monoidal $G$-category. Let $F'\colon \ulSpc^G_* \to \Cc$ be the unique symmetric monoidal left adjoint provided by \Cref{prop:UniversalPropertyGSpectraFormalInversion}(2). Then the following conditions are equivalent:
	\begin{enumerate}[(1)]
		\item The functor $F'(1)\colon \Spc^G_* \to \Cc(1)$ inverts the representation sphere $S^V$ for every $G$-representation $V$;
		\item The $G$-functor $F'$ extends\footnote{If an extension exists, it is necessarily unique.}
		to a symmetric monoidal left adjoint $F''\colon \ulSp^G \to \Cc$;
		\item For every pair of closed subgroups $K \leqslant H \leqslant G$, the map of $G$-spaces $G/K \to G/H$ is twisted $\Cc$-ambidextrous;
		\item For every closed subgroup $H \leqslant G$, the orbit $G/H$ is twisted $\Cc$-ambidextrous;
		\item Every compact $G$-space is twisted $\Cc$-ambidextrous;
		\item The functor $F'(1)\colon \Spc^G_* \to \Cc(1)$ sends compact pointed $G$-spaces to dualizable objects.
	\end{enumerate}
	If the group $G$ is finite, these conditions are moreover equivalent to:
	\begin{enumerate}[(7)]
		\item[(7)] The $G$-category $\Cc$ is $G$-semiadditive, in the sense of \cite[Definition~5.3]{nardin2016exposeIV};
		\item[(8)] The $G$-category $\Cc$ is $G$-stable, in the sense of \cite[Definition~7.1]{nardin2016exposeIV}.
	\end{enumerate}
\end{theorem}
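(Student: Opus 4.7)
My strategy is to establish the cycle $(1) \Rightarrow (2) \Rightarrow (5) \Rightarrow (4) \Rightarrow (6) \Rightarrow (1)$, then to deduce $(3)$ by a slice-topos argument, and finally to handle $(7)$ and $(8)$ in the finite case by reducing to the parametrized semiadditivity results of \Cref{subsec:SemiadditivityVsTwistedAmbidexterity}.

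The equivalence $(1) \Leftrightarrow (2)$ is immediate from \Cref{prop:UniversalPropertyGSpectraFormalInversion}(3). For $(2) \Rightarrow (5)$, the key input is \Cref{thm:MaySigurdssonCWDuality}: in $\ulSp^G$ every orbit $G/H$ is twisted ambidextrous, and by \Cref{cor:Twisted_Ambidextrous_Objects_Closed_Under_Finite_Limits} this propagates to all compact $G$-spaces via the fiberwise stability hypothesis together with closure under retracts and finite colimits. Since the symmetric monoidal left adjoint $F''\colon \ulSp^G \to \Cc$ preserves twisted ambidexterity by \Cref{cor:SymMonFunctorsPreserveTwistedAmbidexterity}, condition $(5)$ follows for $\Cc$. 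The implication $(5) \Rightarrow (4)$ is trivial since orbits are compact. For $(4) \Rightarrow (6)$, twisted $\Cc$-ambidexterity of $G/H$ yields by \Cref{cor:ColimitOfCWDblObjectIsDbl} dualizability of $(G/H)_!\unit \simeq F'(1)(\Sigma^{\infty}_+ G/H)$ in $\Cc(1)$, and every compact pointed $G$-space lies in the thick subcategory of $\Spc^G_*$ generated by the $\Sigma^{\infty}_+ G/H$, so dualizability of its image follows from thickness of dualizable objects in a stable symmetric monoidal $\infty$-category. The critical step $(6) \Rightarrow (1)$ is the one genuinely nonformal input: it is Campion's theorem, recalled in \Cref{sec:TheoremCampion}, which upgrades dualizability of images of compact $G$-spaces to invertibility of the representation spheres.

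For $(3)$, note that $(3) \Rightarrow (4)$ follows by taking $H = G$. Conversely, to show $(5) \Rightarrow (3)$, I pass to the slice topos $\Spc^G_{/G/H} \simeq \Spc^H$, under which the map $G/K \to G/H$ corresponds to the map $H/K \to 1$ in $\Spc^H$, and twisted $\Cc$-ambidexterity of the former translates to twisted $\pi_{G/H}^*\Cc$-ambidexterity of the latter. Since $\pi_{G/H}^*\Cc$ is again a fiberwise stable presentably symmetric monoidal $H$-category, it suffices to verify condition $(1)$ for it and apply the already-established $(1) \Rightarrow (5)$ to this $H$-category. The verification uses that the underlying functor of $\pi_{G/H}^*F'$ sends $S^{\res^G_H V}$ to an invertible object for every $G$-representation $V$, which combined with the Bröcker--tom Dieck theorem (every irreducible $H$-representation is a summand of the restriction of some irreducible $G$-representation) and \Cref{lem:FormalInversionStableUnderFactors} implies that all $H$-representation spheres $S^W$ are also inverted.

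Finally, when $G$ is finite, $(3) \Leftrightarrow (7)$ holds by \Cref{cor:ParametrizedSemiadditivityAsTwistedAmbidexterity} applied to $T = \Orb_G$: fiberwise semiadditivity is automatic from fiberwise stability, and every morphism in $\Orb_G$ factors as an isomorphism followed by a projection $G/K \to G/H$ with $K \leqslant H$, so twisted ambidexterity of the latter (condition $(3)$) is equivalent via closure under composition to twisted ambidexterity of all morphisms in $\Orb_G$. The equivalence $(7) \Leftrightarrow (8)$ is immediate from the definition of $G$-stability as fiberwise stable plus $G$-semiadditive. I expect the main obstacle to be the $(6) \Rightarrow (1)$ step, which depends on Campion's theorem and is the only genuinely nonformal ingredient; all other implications reduce to systematic application of the framework developed in \Cref{sec:parametrizedCategoryTheory,sec:twistedambidexterity}.
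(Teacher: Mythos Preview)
Your proposal is correct and uses the same essential ingredients as the paper: May--Sigurdsson's Costenoble--Waner duality for the geometric input, Campion's theorem for $(6)\Rightarrow(1)$, and the closure properties of twisted ambidexterity. The organization differs slightly. The paper runs a single cycle $(1)\Rightarrow(2)\Rightarrow(3)\Rightarrow(4)\Rightarrow(5)\Rightarrow(6)\Rightarrow(1)$, placing $(3)$ inside the chain: for $(2)\Rightarrow(3)$ it passes to the slice over $G/H$, identifies $\pi_{G/H}^*\ulSp^G\simeq\ulSp^H$, and invokes \Cref{thm:MaySigurdssonCWDuality} directly for $K\leqslant H$ to see that $H/K$ is twisted $\ulSp^H$-ambidextrous, then transfers along $F''$. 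Your route to $(3)$ instead detours through Campion (using the already-closed loop to get $(1)$ for $\Cc$, then Br\"ocker--tom~Dieck to transport $(1)$ to the slice, then $(1)\Rightarrow(5)$ for the $H$-category). This works, but the paper's argument is shorter and avoids re-entering the cycle. Two minor points: in your $(4)\Rightarrow(6)$, the phrase ``thick subcategory of $\Spc^G_*$'' is imprecise since $\Spc^G_*$ is not stable---the thick-subcategory argument should take place in $\Cc(1)$ after applying $F'(1)$; and the paper streamlines this step by using $(5)$ rather than $(4)$, so that for a compact pointed $G$-space $B$ one directly has $B_!\unit_B$ dualizable by \Cref{cor:ColimitOfCWDblObjectIsDbl} and then $F'(1)(B)=\cofib(\unit\to B_!\unit_B)$ is dualizable as a cofiber of dualizables.
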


The condition in part (4) of the theorem says that for every closed subgroup $H \leqslant G$, the twisted norm map $\ind^G_H(- \otimes D_{G/H}) \to \coind^G_H(-)$ is an equivalence of $G$-functors $\Cc^{G/H} \to \Cc$. In light of \Cref{rmk:Description_Costenoble_Waner_Duality_For_Genuine_Spectra}, we may think of this condition as saying that \textit{$\Cc$ admits formal Wirthmüller isomorphisms}.

\begin{proof}
	The implication (1) $\implies$ (2) was treated in \Cref{prop:UniversalPropertyGSpectraFormalInversion}. For the implication (2) $\implies$ (3), it suffices by \Cref{cor:SymMonFunctorsPreserveTwistedAmbidexterity} to show that the morphism $G/K \to G/H$ is twisted $\ulSp^G$-ambidextrous. Identifying the slice $\Spc^G_{/(G/H)}$ with the $\infty$-category of $H$-spaces, the morphism $G/K \to G/H$ corresponds to the $H$-space $H/K$, and thus we need to show that $H/K$ is twisted ambidextrous for the $H$-category $\pi_H^*\ulSp^G \simeq \ulSp^H$. Using \Cref{prop:Twisted_Ambidexterity_VS_Costenoble_Waner_Dualizability}, this is an instance of \Cref{thm:MaySigurdssonCWDuality}, applied to $K \leqslant H$. The fact that (3) implies (4) is clear. The implication (4) $\implies$ (5) follows from \Cref{cor:Twisted_Ambidextrous_Objects_Closed_Under_Finite_Limits}, since every compact $G$-space is a retract of a finite $G$-CW-complex, and finite $G$-CW-complexes are built from the orbits $G/H$ using finite colimits. The implication (5) $\implies$ (6) holds by \Cref{cor:ColimitOfCWDblObjectIsDbl}, as $F'(1)(B)$ is the cofiber of the map $\unit \to B_!\unit_B$ and dualizable objects in $\Cc(1)$ are closed under cofibers. The implication (6) $\implies$ (1) holds by  \Cref{thm:CampionInvertingSpheresVsDualizingSpheres}. This shows that conditions (1)-(6) are equivalent.
	
	If $G$ is a finite group, conditions (7) and (8) are equivalent since $\Cc$ is already assumed to be fiberwise stable. Since $\Cc$ is in particular fiberwise semiadditive, it follows from \Cref{cor:ParametrizedSemiadditivityAsTwistedAmbidexterity} that conditions (3) and (7) are equivalent. This finishes the proof.
\end{proof}

\begin{theorem}
	\label{thm:MainResult}
	Let $G$ be a compact Lie group. Then the $G$-category $\ulSp^G$ is initial among fiberwise stable presentably symmetric monoidal $G$-categories $\Cc$ such that all compact $G$-spaces are twisted $\Cc$-ambidextrous.
\end{theorem}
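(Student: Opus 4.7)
The theorem is an essentially formal consequence of \Cref{thm:CharacterizationGStableGCategories} combined with the universal property of $\ulSp^G$ recorded in \Cref{prop:UniversalPropertyGSpectraFormalInversion}. My plan is to unwind the meaning of ``initial'' and reduce it to these two ingredients.

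First, I would observe that $\ulSp^G$ itself is fiberwise stable (since $\Sp^G$ is stable and tensoring with a stable presentable $\infty$-category in $\PrL$ preserves fiberwise stability), and that by \Cref{thm:MaySigurdssonCWDuality} (applied via the implication (2) $\Rightarrow$ (5) from \Cref{thm:CharacterizationGStableGCategories} with $\Cc = \ulSp^G$), every compact $G$-space is twisted $\ulSp^G$-ambidextrous. Hence $\ulSp^G$ belongs to the class of $G$-categories under consideration.

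Next, let $\Cc$ be any fiberwise stable presentably symmetric monoidal $G$-category in which every compact $G$-space is twisted $\Cc$-ambidextrous. Since $\Cc(1)$ is stable, it is in particular pointed, so \Cref{prop:UniversalPropertyGSpectraFormalInversion}(2) provides a unique symmetric monoidal left adjoint $F'\colon \ulSpc_*^G \to \Cc$ extending the canonical $\ulSpc^G \to \Cc$. Now the hypothesis on $\Cc$ is precisely condition (5) of \Cref{thm:CharacterizationGStableGCategories}, which is equivalent to condition (1), namely that $F'(1)\colon \Spc_*^G \to \Cc(1)$ inverts all representation spheres $S^V$. By \Cref{prop:UniversalPropertyGSpectraFormalInversion}(3), this is exactly the condition needed for $F'$ to extend uniquely to a symmetric monoidal left adjoint $F''\colon \ulSp^G \to \Cc$.

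To conclude initiality in the $\infty$-category of fiberwise stable presentably symmetric monoidal $G$-categories (where morphisms are symmetric monoidal left adjoint $G$-functors), I would combine the three uniqueness statements from \Cref{prop:UniversalPropertyGSpectraFormalInversion}: the space of symmetric monoidal left adjoints $\ulSp^G \to \Cc$ is a union of connected components of that from $\ulSpc^G \to \Cc$, which is contractible. Since the existence argument above exhibits at least one such functor, this space is in fact contractible, giving initiality. There is essentially no obstacle here beyond correctly invoking the previous results, since the hard geometric input (the construction of Costenoble-Waner duality data for $\ulSp^G$, i.e.\ \Cref{thm:MaySigurdssonCWDuality}) and the hard categorical input (the equivalence (5) $\Leftrightarrow$ (1), relying on Campion's theorem cited as \Cref{thm:CampionInvertingSpheresVsDualizingSpheres}) have already been packaged in \Cref{thm:CharacterizationGStableGCategories}.
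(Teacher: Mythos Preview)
Your proposal is correct and follows essentially the same approach as the paper, which proves the theorem in one line by invoking the equivalence between conditions (2) and (5) in \Cref{thm:CharacterizationGStableGCategories}. You have simply spelled out in more detail what that equivalence means (including the verification that $\ulSp^G$ itself lies in the class, and the contractibility of the space of maps), routing through (5) $\Leftrightarrow$ (1) and \Cref{prop:UniversalPropertyGSpectraFormalInversion}(3) rather than directly citing (5) $\Leftrightarrow$ (2).
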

\begin{proof}
	This is immediate from the equivalence between (2) and (5) in \Cref{thm:CharacterizationGStableGCategories}.
\end{proof}

\begin{theorem}
	Let $G$ be a finite group. Then the $G$-category $\ulSp^G$ is the initial presentably symmetric monoidal $G$-category which is $G$-stable in the sense of \cite{nardin2016exposeIV}.
\end{theorem}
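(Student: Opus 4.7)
The plan is to derive this theorem as a direct consequence of the two main results that precede it, namely the characterization \Cref{thm:CharacterizationGStableGCategories} and the initiality statement \Cref{thm:MainResult}. The idea is that for a finite group, $G$-stability is exactly the conjunction of fiberwise stability and the condition that every compact $G$-space be twisted ambidextrous, so that the universal property we want follows from the universal property already established.

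More precisely, I would first recall that Nardin's definition of a $G$-stable $G$-category in \cite[Definition~7.1]{nardin2016exposeIV} requires in particular that the category be fiberwise stable (equivalently, fiberwise pointed and $G$-semiadditive with fiberwise suspensions invertible). Hence the hypothesis of fiberwise stability needed to apply \Cref{thm:CharacterizationGStableGCategories} is automatic from $G$-stability. Then the equivalence of conditions (5) and (8) in \Cref{thm:CharacterizationGStableGCategories} says that among fiberwise stable presentably symmetric monoidal $G$-categories, being $G$-stable is equivalent to the condition that every compact $G$-space be twisted $\Cc$-ambidextrous.

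Combining these observations, the class of $G$-stable presentably symmetric monoidal $G$-categories coincides with the class appearing in \Cref{thm:MainResult}. Applied to $\Cc = \ulSp^G$ itself, the equivalence (5) $\Leftrightarrow$ (8) shows that $\ulSp^G$ is $G$-stable, since $\ulSp^G$ is fiberwise stable and all compact $G$-spaces are twisted $\ulSp^G$-ambidextrous (by \Cref{thm:MaySigurdssonCWDuality} and the closure properties of \Cref{cor:Twisted_Ambidextrous_Objects_Closed_Under_Finite_Limits}). For an arbitrary $G$-stable presentably symmetric monoidal $G$-category $\Dd$, the equivalence (5) $\Leftrightarrow$ (8) again guarantees that $\Dd$ is fiberwise stable and satisfies twisted ambidexterity for all compact $G$-spaces, so that \Cref{thm:MainResult} produces the required unique symmetric monoidal colimit-preserving $G$-functor $\ulSp^G \to \Dd$.

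There is no real obstacle here beyond carefully matching the definitions: the only subtlety is verifying that Nardin's notion of $G$-stability indeed implies fiberwise stability, which is immediate from his definition once one recalls that $G$-stability means pointedness, $G$-semiadditivity, and invertibility of the fiberwise suspension (equivalently, fiberwise stability by \cite[Proposition~7.5]{nardin2016exposeIV}). Everything else is a formal consequence of the two cited theorems.
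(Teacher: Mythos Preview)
Your proposal is correct and follows essentially the same route as the paper. The paper's one-line proof invokes the equivalence $(2)\Leftrightarrow(8)$ of \Cref{thm:CharacterizationGStableGCategories} directly, whereas you factor through $(5)\Leftrightarrow(8)$ and then \Cref{thm:MainResult} (which is itself the equivalence $(2)\Leftrightarrow(5)$); this is the same argument with one extra step of unpacking, and your explicit remark that $G$-stability entails fiberwise stability makes precise a point the paper leaves implicit.
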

\begin{proof}
	This is immediate from the equivalence between (2) and (8) in \Cref{thm:CharacterizationGStableGCategories}.
\end{proof}

\subsection{Orbispectra}
\label{subsec:Orbispectra}

In this subsection, we will study a global analogue of the results established in the previous subsection: instead of working only with subgroups of a fixed compact Lie group $G$, we work with an indexing category $\Orb$ containing all compact Lie groups and \textit{injective} group homomorphisms.

\subsubsection{The global orbit category}
We start by recalling the definition of the global orbit category $\Orb$.

\begin{definition}
	\label{def:TopologicalGroupoids}
	Let $\TopGrpd_1$ denote the ordinary category of topological groupoids. It is naturally enriched over itself via the internal mapping objects. Using the finite product preserving geometric realization functor
	\[
	\abs{-}\colon \TopGrpd_1 \hookrightarrow \sTop \xrightarrow{\abs{-}} \Top,
	\]
	we obtain a topological enrichment on $\TopGrpd_1$. Explicitly, the geometric realization $\abs{\Gg}$ of a topological groupoid $\Gg$ is given by the coend
	\[
	\abs{\Gg} = \int^{[n] \in \Delta} \Delta^n \times \Gg_n \qin \Top,
	\]
	where $\Delta^n$ denotes the topological $n$-simplex and where $\Gg_n = \Gg_1 \times_{\Gg_0} \Gg_1 \times_{\Gg_0} \dots \times_{\Gg_0} \Gg_1$. We let $\TopGrpd$ denote the homotopy coherent nerve of the topologically enriched category $\TopGrpd_1$.
\end{definition}

\begin{definition}[Global indexing category]
	\label{def:GlobalIndexingCategory}
	Given a topological group $G$, we let $\bbB G$ denote the associated one-object topological groupoid. We define the $\infty$-category $\Glo$ as the full subcategory of $\TopGrpd$ spanned by the topological groupoids $\bbB G$ for compact Lie groups $G$. We call $\Glo$ the \textit{global indexing category}.
\end{definition}

Given compact Lie groups $H$ and $G$, the mapping space $\Hom_{\Glo}(\bbB H,\bbB G)$ is the homotopy type of the geometric realization of the topological groupoid $\mathbb{H}\mathrm{om}(\bbB H, \bbB G)$. This topological groupoid is equivalent to the action groupoid of the topological $G$-space $\Hom_{\mathrm{Lie}}(H,G)$ of continuous group homomorphisms from $H$ to $G$, with $G$-action given by conjugation. As a consequence, we get an identification
\[
\Hom_{\Glo}(\bbB H,\bbB G) \simeq \Hom_{\mathrm{Lie}}(H,G)_{hG} \qin \Spc.
\]

\begin{definition}[Global orbit category]
	\label{def:GlobalOrbitCategory}
	We let $\Orb \subseteq \Glo$ denote the wide subcategory, whose morphism spaces
	\[
	\Hom_{\Orb}(\bbB H, \bbB G) \subseteq \Hom_{\Glo}(\bbB H,\bbB G)
	\]
	consist of those components corresponding to \textit{injective} group homomorphisms $H \hookrightarrow G$. We call $\Orb$ the \textit{global orbit category}. We denote the presheaf category $\PSh(\Orb)$ by $\Orb\Spc$ and refer to this as the $\infty$-category of \textit{orbispaces}. We refer to $\Orb\Spc$-categories as \textit{orbicategories}.
\end{definition}

\begin{remark}
	Our definitions of $\Glo$ and $\Orb$ agree with those of \cite[2.2]{rezk2014global} and \cite[Definition~6.1]{linskensNardinPol2022Global}. A close analogue of the definition was originally given by Gepner and Henriques \cite[Section~4.1]{GepnerHenriques2007Orbispaces} for arbitrary topological groups, where both $\Glo$ and $\Orb$ were denoted as $\Orb$. A slight difference with the definition of \cite{GepnerHenriques2007Orbispaces} is that they use the \textit{fat realization} $\abs{\abs{-}}\colon \TopGrpd \to \Top$ as opposed to the usual (`thin') geometric realization. As $\abs{\abs{-}}$ does not preserve finite products on the nose, defining composition is slightly subtle, but when taking sufficient care the resulting $\infty$-categories will be equivalent; see \cite[Remark~3.10]{korschgen2018} for a more detailed discussion.
\end{remark}

Crucial for the comparison with equivariant homotopy theory for a compact Lie group $G$ is the statement that the slice of $\Orb$ over $\bbB G$ is equivalent to the orbit category $\Orb_G$ of $G$. This was proved at the level of simplicially enriched categories by \cite[Proposition~2.15]{gepnerMeier2020equivariant}, and at the level of $\infty$-categories by \cite[Lemma~6.13]{linskensNardinPol2022Global}. It follows that there is an equivalence of $\infty$-categories
\[
\Spc^G \simeq \Orb\Spc_{/\bbB G}
\]
between the $\infty$-category of $G$-spaces and the $\infty$-category of orbispaces over $\bbB G$. Given a $G$-space $A$, we denote its associated orbispace by $A//G$, so that $*//G = \bbB G$. By restricting along the functor $-//G\colon \Spc_G \to \Orb\Spc$, any orbicategory $\Cc$ has an underlying $G$-category which we will denote by $\pi_G^*\Cc$.

\subsubsection{The orbicategory of orbispectra}
We will now define the orbicategory of orbispectra, informally given by the assignment $\bbB G \mapsto \Sp^G$, and prove its universal property in terms of twisted ambidexterity.

\begin{definition}
	\label{def:OrbicategoryOfOrbispaces}
	Define the presentably symmetric monoidal orbicategories $\ulOrbSpc$ and $\ulOrbSpc_*$ of \textit{orbispaces} resp.\ \textit{pointed orbispaces} as
	\[
	\ulOrbSpc := \Omega_{\Orb\Spc}, \qquad \qquad \ulOrbSpc_* := \Orb\Spc_* \otimes_{\Orb\Spc} \Omega_{\Orb\Spc},
	\]
	using the fully faithful embedding $- \otimes_{\Orb\Spc} \Omega_{\Orb\Spc}\colon \CAlg_{\Orb\Spc}(\PrL) \hookrightarrow \CAlg(\PrL_{\Orb})$ from \Cref{prop:Embedding_BModules_Into_Presentable_BCategories}. Explicitly, they are given at an orbispace $B$ by
	\[
	\ulOrbSpc(B) = \Orb\Spc_{/B}, \qquad \qquad \ulOrbSpc_*(B) = (\Orb\Spc_{/B})_*.
	\]
\end{definition}

\begin{definition}
	\label{def:Orbispectra}
	We let $S \subseteq \ulOrbSpc_*$ denote the subcategory spanned by those objects $X \in (\Orb\Spc_{/B})_*$ whose restriction along any map $\bbB G \to B$ corresponds to a $G$-representation sphere in $(\Orb\Spc_{/\bbB G})_* \simeq \Spc^G_*$. We define the presentably symmetric monoidal orbicategory $\ulOrbSp$ of \textit{orbispectra} as
	\[
	\ulOrbSp := \Ll(\ulOrbSpc_*, S) \in \CAlg(\PrL_{\Orb}),
	\]
	using \Cref{cons:PointwiseFormalInversion}. We define the orbifunctor $\Sigma^{\infty}\colon \ulOrbSpc_* \to \ulOrbSp$ as
	\[
	\Sigma^{\infty}\colon \ulOrbSpc_* = \Ll(\ulOrbSpc_*,\emptyset) \to \Ll(\ulOrbSpc_*,S) = \ulOrbSp.
	\]
	We let $\Orb\Sp$ denote the underlying $\infty$-category of $\ulOrbSp$, referred to as the \textit{$\infty$-category of orbispectra}. For an orbispace $B$, we also write $\Orb\Sp(B)$ for $\ulOrbSp(B)$ and call it the \textit{$\infty$-category of orbispectra parametrized over $B$}.
\end{definition}

Pardon \cite{Pardon2023Orbifold} has previously defined a notion of orbispectra in the setting of topological stacks. Although his definition seems close in spirit to our definition, the precise mathematical connection is not known to the author.

\begin{proposition}
	The orbicategory $\ulOrbSp$ is presentably symmetric monoidal and the orbifunctor $\Sigma^{\infty}\colon \ulOrbSpc_* \to \ulOrbSp$ exhibits it as the formal inversion of the representation spheres in $\ulOrbSpc_*$. 
\end{proposition}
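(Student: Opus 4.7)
The plan is to deduce this directly from Proposition \ref{prop:PointwiseFormalInversalIsParametrizedLocalInversion}, since $\ulOrbSp = \Ll(\ulOrbSpc_*, S)$ is defined precisely via the pointwise formal inversion Construction \ref{cons:PointwiseFormalInversion} with $T = \Orb$. That proposition reduces the entire statement (both that $\ulOrbSp$ is presentably symmetric monoidal and that $\Sigma^\infty$ is the parametrized formal inversion of $S$) to verifying the single condition $(\ast)$: for every morphism $f\colon \bbB H \to \bbB G$ in $\Orb$ and every $X \in S(\bbB H)$, one must produce $Y \in S(\bbB G)$ and $Z \in \ulOrbSpc_*(\bbB H)$ with $f^*Y \simeq X \otimes Z$.

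To unwind this, I would use the identification $\Orb\Spc_{/\bbB G} \simeq \Spc^G$ under which a morphism $f\colon \bbB H \to \bbB G$ in $\Orb$ corresponds (up to $G$-conjugation) to an injective continuous group homomorphism $\iota\colon H \hookrightarrow G$, and $f^* \colon (\Orb\Spc_{/\bbB G})_* \to (\Orb\Spc_{/\bbB H})_*$ corresponds to the restriction functor $\res^G_H \colon \Spc^G_* \to \Spc^H_*$. From the definition of $S$, any object $X \in S(\bbB H)$ is equivalent to a representation sphere $S^V$ for some finite-dimensional $H$-representation $V$, and similarly objects of $S(\bbB G)$ are $G$-representation spheres.

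The only nontrivial input is then representation-theoretic. Since $H$ is a compact Lie group and $\iota$ is injective (hence a closed embedding), the Peter--Weyl theorem supplies a finite-dimensional $G$-representation $W$ together with an $H$-equivariant splitting $\res^G_H W \cong V \oplus V'$. This gives an equivalence of pointed $H$-spaces
\[
f^*S^W \simeq S^{\res^G_H W} \simeq S^V \wedge S^{V'} = X \wedge S^{V'},
\]
and taking $Y = S^W \in S(\bbB G)$ and $Z = S^{V'} \in \ulOrbSpc_*(\bbB H)$ verifies $(\ast)$. The main obstacle is conceptual rather than technical: one just has to be careful in identifying morphisms in $\Orb$ with honest injective homomorphisms so that $f^*$ genuinely acts as restriction, after which the Peter--Weyl step is immediate.

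With $(\ast)$ established, Proposition \ref{prop:PointwiseFormalInversalIsParametrizedLocalInversion} then simultaneously delivers both conclusions: $\ulOrbSp = \Ll(\ulOrbSpc_*, S)$ is a presentably symmetric monoidal orbicategory, and the canonical map $\Sigma^\infty\colon \ulOrbSpc_* = \Ll(\ulOrbSpc_*, \emptyset) \to \Ll(\ulOrbSpc_*, S) = \ulOrbSp$ exhibits $\ulOrbSp$ as the formal inversion of $S$ in $\ulOrbSpc_*$ in the sense of Definition \ref{def:FormalInversion}. No further parametrized calculation is needed.
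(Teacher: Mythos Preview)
Your proposal is correct and follows essentially the same approach as the paper: both reduce immediately to Proposition~\ref{prop:PointwiseFormalInversalIsParametrizedLocalInversion} and verify condition~$(\ast)$ via the representation-theoretic fact that every finite-dimensional $H$-representation embeds as a summand of a restricted $G$-representation. The paper cites Br\"ocker--tom Dieck \cite[Theorem~4.5]{BroeckerTomDieck1985Representations} for this fact where you invoke Peter--Weyl, but the content is the same.
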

\begin{proof}
	This is an instance of \Cref{prop:PointwiseFormalInversalIsParametrizedLocalInversion}. The condition $(*)$ of that proposition is satisfied: for a closed subgroup $H \leqslant G$, every irreducible $H$-representation is a direct summand of the restriction to $H$ of an irreducible $G$-representation, see Bröcker and tom Dieck \cite[Theorem~4.5]{BroeckerTomDieck1985Representations}.
\end{proof}

The orbicategory $\ulOrbSp$ recovers the $G$-category $\ulSp^G$ of genuine $G$-spectra for every compact Lie group $G$:

\begin{lemma}
	\label{lem:orbispectraRestrictToGenuineGSpectra}
	For every compact Lie group $G$, the $G$-category $\pi_G^*\ulOrbSp$ underlying the orbicategory $\ulOrbSp$ of orbispectra is equivalent to the $G$-category $\ulSp^G$ of genuine $G$-spectra.
\end{lemma}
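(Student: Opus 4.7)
The plan is to identify both $G$-categories as formal inversions of subcategories of $\ulSpc^G_*$ in $\CAlg(\PrL_G)$ and to check that these two inversions represent the same universal property.

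By \Cref{def:Orbispectra} and \Cref{prop:PointwiseFormalInversalIsParametrizedLocalInversion}, the orbifunctor $\ulOrbSpc_* \to \ulOrbSp$ is a formal inversion of the subcategory $S$. Applying \Cref{prop:Invert_Objects_Locally} to the slice topos $\Orb\Spc_{/\bbB G} \simeq \Spc^G$ and using the natural identification $\pi_G^*\ulOrbSpc_* \simeq \ulSpc^G_*$ coming from $\Orb\Spc_{/(B//G)} \simeq \Spc^G_{/B}$, I would deduce that $\pi_G^*\ulOrbSp$ is the formal inversion of $\pi_G^*S$ in $\ulSpc^G_*$. Unwinding, $\pi_G^*S(B) \subseteq (\Spc^G_{/B})_*$ consists of those pointed $G$-spaces $X \to B$ whose restriction $X|_{G/H}$ is an $H$-representation sphere for every orbit map $G/H \to B$.

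On the other hand, by \Cref{def:GCategoryOfGenuineGSpectra} and \Cref{prop:InvertingObjectsGloballySuffices} applied to $S_0 := \{S^V\}_V \subseteq \Gamma(\ulSpc^G_*) = \Spc^G_*$, the canonical map $\ulSpc^G_* \to \ulSp^G$ is a formal inversion of the subcategory $\bar T \subseteq \ulSpc^G_*$ generated by $S_0$ in the sense of \Cref{def:SubcategoryGenerated}. By that same definition, a morphism $F\colon \ulSpc^G_* \to \Ee$ in $\CAlg(\PrL_G)$ inverts $\bar T$ precisely when its underlying functor $F(1)\colon \Spc^G_* \to \Ee(1)$ inverts every $G$-representation sphere $S^V$.

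The key step will be to show that for every $\Ee \in \CAlg(\PrL_G)$, a morphism $F\colon \ulSpc^G_* \to \Ee$ inverts $\bar T$ if and only if it inverts $\pi_G^*S$; the desired equivalence of $G$-categories then follows by the Yoneda lemma. One direction is immediate from the inclusion $\bar T \subseteq \pi_G^*S$, which I would verify by a fiber-at-$eH$ computation on the local trivialization. For the converse, suppose $F(1)(S^V)$ is invertible in $\Ee(1)$ for every $G$-representation $V$ and let $X \in \pi_G^*S(B)$. I would first argue that invertibility of $F_B(X)$ in $\Ee(B)$ may be tested after restriction along each orbit $G/H \to B$, reducing to the case $B = G/H$ with $X \simeq S^W$ for some $H$-representation $W$. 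By Bröcker and tom Dieck \cite[Theorem~4.5]{BroeckerTomDieck1985Representations}, there exist a $G$-representation $V$ and an $H$-representation $W'$ with $\res^G_H V \simeq W \oplus W'$, yielding an identification of the trivial sphere bundle $S^V_{G/H} \in \ulSpc^G_*(G/H)$ with $S^W \wedge S^{W'}$ under $(\Spc^G_{/G/H})_* \simeq \Spc^H_*$. Since $F$ is a $G$-functor, $F_{G/H}(S^V_{G/H})$ is obtained by restricting the invertible object $F(1)(S^V)$ and is therefore invertible, forcing $F_{G/H}(S^W)$ to be invertible.

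The main obstacle I anticipate is justifying the orbitwise descent step, namely that invertibility in $\Ee(B)$ may be checked after pullback to each orbit $G/H \to B$. This should follow from the fact that a presentably symmetric monoidal $G$-category is a sheaf on $\Orb_G$ valued in $\CAlg(\PrL)$, combined with the limit-preserving nature of the Picard-space functor $\mathrm{Pic}\colon \CAlg(\PrL) \to \Spc$.
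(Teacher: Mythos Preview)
Your proof is correct and follows the same strategy as the paper: both $\ulSp^G$ and $\pi_G^*\ulOrbSp$ are identified as formal inversions of representation spheres in $\ulSpc^G_*$, via \Cref{prop:InvertingObjectsGloballySuffices} and \Cref{prop:Invert_Objects_Locally} respectively. The paper's two-line proof leaves implicit the verification (using Br\"ocker--tom Dieck, already invoked just before this lemma) that the two resulting universal properties coincide; you spell this out explicitly, including the orbitwise descent of invertibility, which is indeed justified as you indicate.
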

\begin{proof}
	Both come equipped with a symmetric monoidal left adjoint from $\ulSpc^G_*$ exhibiting them as formal inversions of the $G$-representation spheres. For $\ulSp^G$ this is by \Cref{prop:InvertingObjectsGloballySuffices}, while for $\pi_G^*\ulOrbSp$ this is by \Cref{prop:Invert_Objects_Locally}.
\end{proof}

\begin{remark}
	\label{rkm:ComparisonDiagramWithLNP}
	The functor $\ulOrbSp\colon \Orb\catop \to \CAlg(\PrL)$ is the restriction along $\Orb \hookrightarrow \Glo$ of the functor $\Sp_{\bullet}\colon \Glo\catop \to \CAlg(\PrL)$ constructed by \cite[Section~10]{linskensNardinPol2022Global}. 
	Indeed, they construct a natural transformation $\Sigma^{\infty}_{\bullet} \colon \Ss_{\bullet,*} \to \Sp_{\bullet}$, where the functor $\Ss_{\bullet,*}\colon \Glo\catop \to \Cat_{\infty}$ constructed in \cite[Construction~6.16]{linskensNardinPol2022Global} restricts to the functor $\ulOrbSpc_*$ using the natural equivalence $\PSh(\Orb_{/-}) \simeq \PSh(\Orb)_{/-}$. Furthermore, it is shown in \cite[Proposition~10.5]{linskensNardinPol2022Global} that this transformation $\Sigma^{\infty}_{\bullet}$ is pointwise given by the standard suspension spectrum functor $\Spc^G_* \to \Sp^G$, so that it exhibits its target as a pointwise formal inversion of its source.
\end{remark}

From the results of \Cref{subsec:UniversalityOfWirthmullerIsomorphisms} we may deduce a universal property of $\ulOrbSp$ in terms of twisted ambidexterity. To this end, recall that a morphism $f\colon A \to B$ in an $\infty$-topos is called \textit{relatively compact} if for every compact object $K$ in $\Bb$ and every morphism $K \to B$, the pullback $A \times_B K \to K$ is a compact object in the slice $\Bb_{/K}$ (or equivalently in $\Bb$ by \cite[Lemma~3.1.5]{GHK2022Analytic}). If $\Bb = \PSh(T)$ is a presheaf topos, it suffices to check this when $K$ is a representable object. In particular, a morphism of orbispaces $f\colon A \to B$ is relatively compact if and only if for every compact Lie group $G$ and every map $\bbB G \to B$ of orbispaces, the pullback $A \times_B \bbB G$ corresponds to a compact $G$-space.

\begin{proposition}
	\label{prop:CharacterizationOrbStableOrbCategories}
	Let $\Cc$ be a fiberwise stable presentably symmetric monoidal orbicategory. Then the following conditions are equivalent:
	\begin{enumerate}[(1)]
		\item The unique symmetric monoidal left adjoint $F'\colon \ulOrbSpc_* \to \Cc$ inverts the representation spheres: for every compact Lie group $G$ and every $G$-representation $V$, $F(S^V) \in \Cc(\bbB G)$ is invertible;
		\item The functor $F'\colon \ulOrbSpc_* \to \Cc$ extends (necessarily uniquely) to a symmetric monoidal left adjoint $F'\colon \ulOrbSp \to \Cc$;
		\item For every compact Lie group $G$ and every compact $G$-space $A$, the map $A//G \to *//G = \bbB G$ of orbispaces is twisted $\Cc$-ambidextrous;
		\item Every relatively compact morphism $f\colon A \to B$ of orbispaces is twisted $\Cc$-ambidextrous.
	\end{enumerate}
\end{proposition}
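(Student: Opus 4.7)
The plan is to prove the chain of equivalences by reducing to the already established \Cref{thm:CharacterizationGStableGCategories} fiberwise over each compact Lie group, using the base-change relationship between the orbicategory $\ulOrbSp$ and the $G$-categories $\ulSp^G$ provided by \Cref{lem:orbispectraRestrictToGenuineGSpectra}, together with the local-class property of twisted ambidextrous morphisms from \Cref{prop:ClosurePropertiesTwistedAmbiMaps}.

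First I would establish (1) $\Leftrightarrow$ (2), which is an immediate consequence of the universal property of the formal inversion $\ulOrbSp = \Ll(\ulOrbSpc_*, S)$ as in \Cref{def:Orbispectra} and \Cref{prop:PointwiseFormalInversalIsParametrizedLocalInversion}: a symmetric monoidal left adjoint $\ulOrbSpc_* \to \Cc$ factors through $\ulOrbSp$ if and only if it inverts the representation spheres. The equivalence (3) $\Leftrightarrow$ (4) is also straightforward. The implication (4) $\Rightarrow$ (3) is trivial, since $A//G \to \bbB G$ is relatively compact whenever $A$ is a compact $G$-space. For (3) $\Rightarrow$ (4), given a relatively compact morphism $f\colon A \to B$ of orbispaces, choose a cover $\bigsqcup_i \bbB G_i \twoheadrightarrow B$ by representables. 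Using closure of twisted $\Cc$-ambidextrous morphisms under disjoint unions and the local-class property from \Cref{prop:ClosurePropertiesTwistedAmbiMaps}, it suffices to check that each pullback $A \times_B \bbB G_i \to \bbB G_i$ is twisted $\Cc$-ambidextrous. Under the equivalence $\Orb\Spc_{/\bbB G_i} \simeq \Spc^{G_i}$, this pullback corresponds to a map $A_i//G_i \to \bbB G_i$ where $A_i$ is a compact $G_i$-space by the assumption of relative compactness, which is twisted $\Cc$-ambidextrous by (3).

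The heart of the argument is the equivalence (1) $\Leftrightarrow$ (3), which I would obtain by reducing to \Cref{thm:CharacterizationGStableGCategories} for each compact Lie group $G$ separately. Fix such a $G$. Passing to the slice topos $\Orb\Spc_{/\bbB G} \simeq \Spc^G$ and using that the pullback $\pi_G^* \colon \Cat(\Orb\Spc) \to \Cat(\Spc^G)$ preserves presentably symmetric monoidal structure, we obtain a fiberwise stable presentably symmetric monoidal $G$-category $\pi_G^*\Cc$, together with the unique symmetric monoidal left adjoint $\pi_G^*F'\colon \ulSpc^G_* \simeq \pi_G^*\ulOrbSpc_* \to \pi_G^*\Cc$. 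By \Cref{prop:Invert_Objects_Locally} (or direct inspection), condition (1) holds for $\Cc$ if and only if for every compact Lie group $G$ the functor $\pi_G^*F'(G/G)\colon \Spc^G_* \to \pi_G^*\Cc(G/G) = \Cc(\bbB G)$ inverts the representation spheres. Similarly, condition (3) holds for $\Cc$ if and only if for every compact Lie group $G$ every compact $G$-space is twisted $\pi_G^*\Cc$-ambidextrous (using that the map $A//G \to \bbB G$ of orbispaces corresponds to the $G$-space $A$ in $\Spc^G$). Now \Cref{thm:CharacterizationGStableGCategories} applied to the $G$-category $\pi_G^*\Cc$ gives the equivalence between these two conditions fiberwise, concluding the proof.

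The main obstacle I anticipate is making the fiberwise translation between (1) and (3) fully precise: one must verify that formal inversion of representation spheres commutes with pullback along $\bbB G \to *$ in an appropriate sense, which ultimately boils down to \Cref{prop:Invert_Objects_Locally} and \Cref{lem:orbispectraRestrictToGenuineGSpectra} identifying $\pi_G^*\ulOrbSp$ with $\ulSp^G$. Once this identification is in place, the equivalence (1) $\Leftrightarrow$ (3) is essentially a bookkeeping exercise reducing to the already proven compact Lie group case, and the remaining implications are formal.
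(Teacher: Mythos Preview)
Your proposal is correct and follows essentially the same approach as the paper: (1) $\Leftrightarrow$ (2) via the universal property of the formal inversion, (1) $\Leftrightarrow$ (3) by passing to the slice $\Orb\Spc_{/\bbB G} \simeq \Spc^G$ for each compact Lie group $G$ and invoking \Cref{thm:CharacterizationGStableGCategories}, and (3) $\Leftrightarrow$ (4) via the local-class property from \Cref{prop:ClosurePropertiesTwistedAmbiMaps}. The only minor remark is that your anticipated ``obstacle'' is not really one: for (1) $\Leftrightarrow$ (3) you do not need the identification $\pi_G^*\ulOrbSp \simeq \ulSp^G$, since both conditions are already visibly statements about $\pi_G^*\Cc$ and $\pi_G^*F'$ that match conditions (1) and (5) of \Cref{thm:CharacterizationGStableGCategories} directly.
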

\begin{proof}
	The equivalence between (1) and (2) is immediate from the universal property of $\ulOrbSp$. Note that condition (1) is satisfied if and only if each of the $G$-functors $\pi_G^*F' \colon \pi_G^*\ulOrbSpc_* \to \pi_G^*\Cc$ satisfies condition (1) of \Cref{thm:CharacterizationGStableGCategories}, while (3) is satisfied if and only if the $G$-category $\pi_G^*\Cc$ satisfies condition (5) of \Cref{thm:CharacterizationGStableGCategories}, so the equivalence between (1) and (3) holds by applying \Cref{thm:CharacterizationGStableGCategories} to the $G$-category $\pi_G^*\Cc$ for every $G$. 
	The equivalence between (3) and (4) holds by \Cref{prop:ClosurePropertiesTwistedAmbiMaps}\eqref{it:LocalClass} and the above characterization of relatively compact morphisms in $\Orb \Spc$.
\end{proof}

\begin{theorem}\label{thm:UniversalPropertyOrbispectra}
	The orbicategory $\ulOrbSp$ is initial among fiberwise stable presentably symmetric monoidal orbicategories $\Cc$ such that every relatively compact morphism of orbispaces is twisted $\Cc$-ambidextrous.
\end{theorem}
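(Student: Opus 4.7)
The proof will be essentially a one-line deduction from \Cref{prop:CharacterizationOrbStableOrbCategories}, so the plan is to spell out how ``initiality'' translates into the equivalence of conditions (2) and (4) there. The plan is to fix a fiberwise stable presentably symmetric monoidal orbicategory $\Cc$ in which every relatively compact morphism is twisted $\Cc$-ambidextrous, and exhibit a contractible space of symmetric monoidal colimit-preserving orbifunctors $\ulOrbSp \to \Cc$.

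First I would produce a canonical symmetric monoidal colimit-preserving orbifunctor $F \colon \ulOrbSpc \to \Cc$: this is forced and unique because $\ulOrbSpc = \Omega_{\Orb\Spc}$ is the monoidal unit of $\CAlg(\PrL_{\Orb})$. Next, since $\Cc$ is fiberwise stable and thus in particular fiberwise pointed, the adjunction
\[
- \otimes_{\Orb\Spc} \Omega_{\Orb\Spc} \colon \CAlg_{\Orb\Spc}(\PrL) \rightleftarrows \CAlg(\PrL_{\Orb})
\]
of \Cref{cor:Embedding_BAlgebras_Into_Presentably_Symmetric_Monoidal_BCategories}, together with the non-parametrized universal property of $\Orb\Spc_* \simeq \Orb\Spc \otimes \Spc_*$, guarantees that $F$ extends uniquely to a symmetric monoidal colimit-preserving orbifunctor $F' \colon \ulOrbSpc_* \to \Cc$; this is the same extension appearing in \Cref{prop:CharacterizationOrbStableOrbCategories}.

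Then I would invoke the equivalence $(2) \Leftrightarrow (4)$ of \Cref{prop:CharacterizationOrbStableOrbCategories}: since every relatively compact morphism of orbispaces is twisted $\Cc$-ambidextrous by hypothesis, condition (4) holds, and hence $F'$ extends further, uniquely, to a symmetric monoidal colimit-preserving orbifunctor $F'' \colon \ulOrbSp \to \Cc$. Conversely, any symmetric monoidal colimit-preserving orbifunctor $\ulOrbSp \to \Cc$ restricts along $\Sigma^{\infty} \colon \ulOrbSpc_* \to \ulOrbSp$ to such an extension $F'$, so uniqueness at each stage of the tower $\ulOrbSpc \to \ulOrbSpc_* \to \ulOrbSp$ assembles into uniqueness for the full extension.

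There is essentially no obstacle here, since all the real work has been done: the equivalence $(1)\Leftrightarrow(2)$ of \Cref{prop:CharacterizationOrbStableOrbCategories} is the universal property of $\ulOrbSp$ as a formal inversion, while $(1)\Leftrightarrow(4)$ reduces (via passage to slice topoi and the identification $\Orb\Spc_{/\bbB G} \simeq \Spc^G$) to \Cref{thm:CharacterizationGStableGCategories} for each compact Lie group $G$, combined with the local-class property \Cref{prop:ClosurePropertiesTwistedAmbiMaps}\eqref{it:LocalClass}. The only thing to verify carefully is that ``relatively compact morphism of orbispaces'' unpacks, under the equivalence $\Orb\Spc_{/\bbB G} \simeq \Spc^G$, to ``compact $G$-space'' after pullback along each $\bbB G \to B$, but this is precisely the criterion using representable objects in a presheaf topos $\PSh(\Orb)$ recalled just before \Cref{prop:CharacterizationOrbStableOrbCategories}.
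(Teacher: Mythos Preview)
Your proposal is correct and takes essentially the same approach as the paper: the paper's proof is a one-line appeal to the equivalence between (2) and (4) in \Cref{prop:CharacterizationOrbStableOrbCategories}, and you have simply unpacked what that equivalence says about initiality. The extra detail you give about the tower $\ulOrbSpc \to \ulOrbSpc_* \to \ulOrbSp$ and the uniqueness at each stage is accurate but more than the paper deems necessary.
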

\begin{proof}
	This is immediate from the equivalence between (2) and (4) in the previous proposition.
\end{proof}

\subsection{Proper equivariant stable homotopy theory}
\label{subsec:ProperEquivariantHomotopyTheory}
For a Lie group $G$, not assumed to be compact, Degrijse et al.\ \cite{DHLPS2019Proper} introduced an $\infty$-category $\Sp^G$ of \textit{proper genuine $G$-spectra}. In this subsection, we will see that this $\infty$-category can be identified with the $\infty$-category of orbispectra parametrized over a certain orbispace $\bbB G$. As an application, we show that when $G$ has \textit{enough bundle representations}, the $\infty$-category $\Sp^G$ may be obtained from the $\infty$-category of pointed proper $G$-spaces by inverting the sphere bundles $S^{\xi}$ associated to finite-dimensional vector bundles $\xi$ over $\bbB G$, see \Cref{prop:EnoughBundleRepsImpliesFormalInversion}.

\begin{definition}
	\label{def:ProperOrbitCategory}
	For a Lie group $G$, we define its \textit{proper orbit category} as the full subcategory $\Orb^{\prop}_G \subseteq \Orb_G$ spanned by the orbits $G/K$ for compact subgroups $K \leqslant G$. We define the $\infty$-category $\Spc^G_{\prop}$ of \textit{proper $G$-spaces} as the presheaf category $\PSh(\Orb^{\prop}_G)$. A \textit{proper $G$-category} is a $\Spc^G_{\prop}$-category, equivalently encoded by a functor $(\Orb_G^{\prop})\catop \to \Cat_{\infty}$.
\end{definition}

We start by identifying the $\infty$-category of proper $G$-spaces with a slice of the $\infty$-category of orbispaces.

\begin{definition}[Classifying orbispace of a Lie group]
	In analogy with $\Orb \subseteq \TopGrpd$, we define the $\infty$-category $\Orb' \subseteq \TopGrpd$ as the (non-full) subcategory whose objects are the one-point topological groupoids $\bbB G$ for (not necessarily compact) Lie groups $G$, and whose mapping spaces
	\[
	\Hom_{\Orb'}(\bbB H, \bbB G) \subseteq \Hom_{\TopGrpd}(\bbB H, \bbB G) \simeq \Hom_{\mathrm{Lie}}(H,G)_{hG}
	\]
	consist of those path components corresponding to \textit{injective} continuous group homomorphisms $H \to G$. It is immediate that $\Orb'$ contains $\Orb$. Given a Lie group $G$, we define its \textit{classifying orbispace} $\bbB G$ as the composite
	\[
	\bbB G\colon \Orb\catop \hookrightarrow (\Orb')\catop \xrightarrow{\Hom_{\Orb'}(-,\bbB G)} \Spc.
	\]
	Note that when $G$ is compact, this is just the representable presheaf on $\bbB G \in \Orb$.
\end{definition}

\begin{construction}
	For a Lie group $G$, let $\Orb_{/\bbB G}$ denote the full subcategory of $\Orb'_{/\bbB G}$ spanned by the morphisms of the form $\bbB K \to \bbB G$ for compact subgroups $K \leqslant G$. At the level of topological categories, one can define a topologically enriched functor from $\Orb_G$ to $\Orb'$ by sending an orbit $G/H$ to $\bbB H$, which induces a functor of $\infty$-categories between their homotopy coherent nerves. As $\Orb_G$ admits a terminal object $G/G$, this canonically gives rise to a functor $\Orb_G \to \Orb'{/\bbB G}$, which is easily seen to restrict to a functor $\Orb^{\prop}_G \to \Orb_{/\bbB G}$.
\end{construction}

\begin{lemma}
	\label{lem:SliceOfOrbLie}
	For a Lie group $G$, the above functor $\Orb^{\prop}_G \to \Orb_{/\bbB G}$ is an equivalence.
\end{lemma}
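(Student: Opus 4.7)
The functor sends the orbit $G/K$ for a compact subgroup $K \leqslant G$ to the object $(\bbB K \hookrightarrow \bbB G) \in \Orb_{/\bbB G}$. Our strategy is to verify essential surjectivity and fully faithfulness separately.

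\textbf{Essential surjectivity} is built into the definition: by construction, every object of $\Orb_{/\bbB G}$ is represented by some morphism $\bbB K \to \bbB G$ coming from a compact subgroup $K \leqslant G$, and this lies in the image of $G/K$.

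\textbf{Fully faithfulness} is the heart of the matter. Fix compact subgroups $H, K \leqslant G$. On the source, the mapping space $\Map_{\Orb^{\prop}_G}(G/H, G/K)$ is the homotopy type of the topological space $(G/K)^H$ of $H$-fixed points (i.e.\ $G$-equivariant continuous maps $G/H \to G/K$). On the target, by the standard slice formula, the mapping space is the homotopy fiber
\[
\mathrm{fib}_{\iota_H}\bigl(\Map_{\Orb'}(\bbB H, \bbB K) \longrightarrow \Map_{\Orb'}(\bbB H, \bbB G)\bigr),
\]
where the map is postcomposition with $\bbB K \to \bbB G$. Using the geometric realization model for $\TopGrpd$, each of these mapping spaces is the realization of the action groupoid of $K$ (resp.\ $G$) on $\Hom^{\mathrm{inj}}_{\mathrm{Lie}}(H,-)$ via conjugation; the map is induced by postcomposition with $\iota\colon K\hookrightarrow G$. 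A direct computation of the homotopy fiber of a map of action groupoids identifies it with $(N \sslash K)$, where
\[
N \;=\; \{\,g \in G : g^{-1}Hg \subseteq K\,\},
\]
with $K$ acting by right multiplication by $k^{-1}$, the point $g \in N$ being sent to the homomorphism $\phi_g(h) = g^{-1}hg\colon H \to K$ together with the conjugating element $g$. Since $K$ acts freely on $N \subseteq G$ by right multiplication, the realization of $N \sslash K$ is the quotient space $N/K$, and the identification $gK \in N/K \Leftrightarrow g^{-1}Hg \subseteq K \Leftrightarrow H \cdot gK = gK$ produces a homeomorphism $N/K \cong (G/K)^H$.

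It remains to check that the map between these models which is induced by our functor $\Orb^{\prop}_G \to \Orb_{/\bbB G}$ is the obvious one: namely, a $G$-map $G/H \to G/K$ classified by $gK \in (G/K)^H$ corresponds to the commuting triangle in $\TopGrpd$ given by the homomorphism $\mathrm{conj}_{g^{-1}}\colon H \to K$, with the 2-cell in $\bbB G$ witnessed by $g$. This is a routine unwinding of the definitions of the topological enrichments on $\TopGrpd_1$ and on the category of transitive $G$-spaces.

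\textbf{Main obstacle.} The bookkeeping involved in translating between the topological-groupoid presentation of $\Orb'_{/\bbB G}$ and the equivariant-topology presentation of $\Orb^{\prop}_G$ is the delicate part; in particular, one must be careful that the geometric realization (which is only finite-product preserving up to weak equivalence) computes the correct mapping spaces, and that the freeness of the right $K$-action on $N$ is genuinely strict (so that $|N \sslash K| \simeq N/K$ without further Borel thickening). Both points are standard for compact $K$ acting on a locally compact Hausdorff space, which is the setting here.
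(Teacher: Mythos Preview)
Your overall strategy matches the paper's: essential surjectivity is definitional, and fully faithfulness reduces to identifying the homotopy fiber of
\[
\Map_{\Orb'}(\bbB H,\bbB K)\longrightarrow \Map_{\Orb'}(\bbB H,\bbB G)
\]
over $\iota_H$ with $(G/K)^H$. The paper likewise reduces to showing a certain square is homotopy cartesian and then invokes the argument of \cite[Lemma~6.13]{linskensNardinPol2022Global} verbatim.

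There is, however, a genuine gap in your argument. You compute the ``homotopy fiber of a map of action groupoids'' as $N\sslash K$ by forming the comma object in the $2$-category of topological groupoids, and then realize. But geometric realization of simplicial spaces does \emph{not} preserve homotopy pullbacks in general, so the passage from the comma groupoid $N\sslash K$ to the homotopy fiber of the realized map is not automatic. Concretely, the levelwise maps
\[
\Hom^{\mathrm{inj}}_{\mathrm{Lie}}(H,K)\times K^n \longrightarrow \Hom^{\mathrm{inj}}_{\mathrm{Lie}}(H,G)\times G^n
\]
are closed embeddings, not fibrations, so the standard ``realization fibration'' criteria do not apply. The obstacles you flag in your final paragraph (finite-product preservation of $|{-}|$, freeness of the $K$-action on $N$) are not the issue; the real point is controlling the homotopy type of the orbit $G\cdot\iota_H$ inside $\Hom_{\mathrm{Lie}}(H,G)$. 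This is exactly where the paper's citation of Palais enters: the quotient map $G\to G/C_G(H)$ is a locally trivial fiber bundle, hence a fibration, which is the key input allowing the argument of \cite{linskensNardinPol2022Global} to identify the homotopy fiber correctly. Without this (or an equivalent fibrancy statement), your identification of the fiber with $N/K$ is asserted rather than proved.
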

\begin{proof}
	We will argue just like \cite[Lemma~6.13]{linskensNardinPol2022Global}. The functor is essentially surjective by definition, so we must show it is fully faithful. Consider two objects $G/H$ and $G/K$ in $\Orb^{\prop}_G$, where we (non-canonically) choose representatives $H, K \subseteq G$ of the conjugacy classes $[H]$ and $[K]$ of subgroups of $G$. We have to show that the square
	\[
	\begin{tikzcd}
		(G/H)^K \dar \rar & \Hom_{\TopGrpd}(\bbB K, \bbB H) \dar \rar{\simeq} & \Hom(H,K)_{hK} \dar \\
		* \rar & \Hom_{\TopGrpd}(\bbB K, \bbB G) \rar{\simeq} & \Hom_{\mathrm{Lie}}(H,G)_{hG}
	\end{tikzcd}
	\]
	is homotopy cartesian. The argument for this is identical to that of \cite[Lemma~6.13]{linskensNardinPol2022Global}, so we will not repeat it here. Note that the quotient map $G \to G/C(H)$ used in that proof is still a fibration, as it is a locally trivial fiber bundle by \cite[Corollary~4.1]{Palais1961Slices}.
\end{proof}

\begin{corollary}
	\label{cor:OrbispacesOverBGAreProperGSpaces}
	For every Lie group $G$, there is an equivalence $\Orb\Spc_{/\bbB G} \simeq \Spc_{\prop}^G$ between the $\infty$-category of orbispaces over $\bbB G$ and the $\infty$-category of proper $G$-spaces.
\end{corollary}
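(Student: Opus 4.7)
The plan is to reduce the statement to the equivalence $\Orb^{\prop}_G \simeq \Orb_{/\bbB G}$ provided by \Cref{lem:SliceOfOrbLie} via the standard fact that slices of presheaf $\infty$-categories are again presheaf $\infty$-categories on the corresponding category of elements.

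First, I would recall that for any small $\infty$-category $T$ and any presheaf $F \in \PSh(T)$, there is a natural equivalence
\[
\PSh(T)_{/F} \simeq \PSh(T_{/F}),
\]
where $T_{/F}$ denotes the category of elements of $F$ (i.e.\ the unstraightening of $F\colon T\catop \to \Spc$). This is a classical fact, see for instance \cite[Corollary~5.1.6.12]{lurie2009HTT}. Applying this to $T = \Orb$ and $F = \bbB G$, we obtain
\[
\Orb\Spc_{/\bbB G} = \PSh(\Orb)_{/\bbB G} \simeq \PSh(\Orb_{/\bbB G}),
\]
so the task reduces to identifying $\Orb_{/\bbB G}$ with $\Orb^{\prop}_G$ as $\infty$-categories. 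Here I would need to check that the full subcategory $\Orb_{/\bbB G} \subseteq \Orb'_{/\bbB G}$ introduced in the construction above really is the category of elements of the presheaf $\bbB G \in \PSh(\Orb)$. By construction, the presheaf $\bbB G$ sends an object $\bbB K \in \Orb$ to $\Hom_{\Orb'}(\bbB K, \bbB G)$, so its unstraightening is precisely the full subcategory of $\Orb'_{/\bbB G}$ spanned by the morphisms $\bbB K \to \bbB G$ with $K$ compact, which is exactly $\Orb_{/\bbB G}$.

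Having established $\PSh(\Orb_{/\bbB G}) \simeq \Orb\Spc_{/\bbB G}$, I would then invoke \Cref{lem:SliceOfOrbLie} to obtain an equivalence of $\infty$-categories $\Orb^{\prop}_G \simeq \Orb_{/\bbB G}$, and pass to presheaves to conclude
\[
\Spc_{\prop}^G = \PSh(\Orb^{\prop}_G) \simeq \PSh(\Orb_{/\bbB G}) \simeq \Orb\Spc_{/\bbB G}.
\]

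The substantive content of the proof is really contained in \Cref{lem:SliceOfOrbLie}; the main thing to verify in this corollary is the clean compatibility between the unstraightening description of $\Orb_{/\bbB G}$ and the explicit subcategory description given in the construction. I do not anticipate any real obstacle beyond carefully pinning down the identification of these two descriptions of $\Orb_{/\bbB G}$.
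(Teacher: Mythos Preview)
The proposal is correct and follows essentially the same approach as the paper: both reduce to \Cref{lem:SliceOfOrbLie} together with the identification $\PSh(\Orb_{/\bbB G}) \simeq \Orb\Spc_{/\bbB G}$. The only minor difference is in justifying this last identification: you invoke the general equivalence $\PSh(T)_{/F} \simeq \PSh(T_{/F})$ and check directly that $\Orb_{/\bbB G}$ is the category of elements of the presheaf $\bbB G$, whereas the paper argues that $\Orb_{/\bbB G}$ and the relevant full subcategory of $\Orb\Spc_{/\bbB G}$ both embed fully faithfully into $\PSh(\Orb')_{/\bbB G}$ with the same image.
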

\begin{proof}
	By \Cref{lem:SliceOfOrbLie}, there is an equivalence $\Spc_{\prop}^G = \PSh(\Orb^{\prop}_G) \simeq \PSh(\Orb_{/\bbB G})$. The $\infty$-category $\Orb_{/\bbB G}$ is equivalent to the subcategory of $\Orb\Spc_{/\bbB G}$ spanned by the maps $\bbB K \to \bbB G$ for compact Lie groups $K$, since both embed fully faithfully into $\PSh(\Orb')_{/\bbB G}$ with the same image. It follows that there is an equivalence $\PSh(\Orb_{/\bbB G}) \simeq \Orb\Spc_{/\bbB G}$, finishing the proof.
\end{proof}

\begin{definition}
	Let $G$ be a Lie group. By restricting along the forgetful functor $\Spc^G_{\prop} \simeq \Orb\Spc_{/\bbB G} \xrightarrow{\fgt} \Orb\Spc$, any orbicategory $\Cc$ gives rise to a proper $G$-category $\pi_G^*$. We define the proper $G$-category of \textit{proper genuine $G$-spectra} $\ulSp^G$ as
	\[
	\ulSp^G := \pi_G^*\ulOrbSp,
	\]
	the underlying proper $G$-category of the orbicategory of orbispectra. When $G$ is compact this agrees with the $G$-$\infty$-category of genuine $G$-spectra $\ulSp^G$ by \Cref{lem:orbispectraRestrictToGenuineGSpectra}.
\end{definition}

\begin{proposition}[{Linskens-Nardin-Pol \cite[Theorem~12.11]{linskensNardinPol2022Global}}]
	For every Lie group $G$, the underlying $\infty$-category of the proper $G$-category $\ulSp^G$ is equivalent to the $\infty$-category $\Sp^G$ of proper genuine $G$-spectra defined by \cite{DHLPS2019Proper}.
\end{proposition}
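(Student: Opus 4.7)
The plan is to unwind the definition of $\ulSp^G := \pi_G^*\ulOrbSp$ and match it with the model-categorical definition of Degrijse et al., using the main comparison result of Linskens, Nardin and Pol. First I would observe that the underlying $\infty$-category of $\pi_G^*\ulOrbSp$ is, by definition, the value of $\ulOrbSp$ at the terminal object of $\Spc^G_{\prop} \simeq \Orb\Spc_{/\bbB G}$, which under the equivalence of \Cref{cor:OrbispacesOverBGAreProperGSpaces} corresponds to the classifying orbispace $\bbB G \in \Orb\Spc$. So the task reduces to identifying $\ulOrbSp(\bbB G)$ with the $\infty$-category of proper genuine $G$-spectra from \cite{DHLPS2019Proper}.

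Next I would combine \Cref{prop:Invert_Objects_Locally} with \Cref{obs:GloballyInvertingMeansPointwiseInverting} to describe $\ulOrbSp(\bbB G)$ explicitly as a non-parametrized formal inversion: namely, as the formal inversion inside the $\infty$-category of pointed proper $G$-spaces of the pointed objects obtained by pulling back the $K$-representation spheres along the maps $\bbB K \to \bbB G$ for all compact subgroups $K \leqslant G$; equivalently, the formal inversion of the sphere bundles $S^{\xi}$ associated to finite-dimensional vector bundles $\xi$ over $\bbB G$. This description is internal to proper $G$-homotopy theory and is the natural $\infty$-categorical candidate for proper genuine $G$-spectra. As a sanity check, when $G$ is compact this recovers $\ulSp^G$ via \Cref{lem:orbispectraRestrictToGenuineGSpectra} and the universal property of \Cref{def:GenuineGSpectra}.

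Finally, I would invoke \cite[Theorem~12.11]{linskensNardinPol2022Global}, which establishes exactly this universal property for the $\infty$-category $\Sp^G$ of Degrijse et al.: it is obtained from pointed proper $G$-spaces by formally inverting the representation sphere bundles on $\bbB G$. As recorded in \Cref{rkm:ComparisonDiagramWithLNP}, our orbicategory $\ulOrbSp$ is the restriction to $\Orb$ of their functor $\Sp_{\bullet}$ on $\Glo$, and the unit map $\Sigma^{\infty}_{\bullet}$ is pointwise the suspension spectrum functor into orthogonal $G$-spectra, which pointwise exhibits its target as a formal inversion. The main obstacle lies in this last step: one must compare the model-categorical construction of \cite{DHLPS2019Proper}, based on orthogonal spectra indexed on a complete universe of proper representations, with an $\infty$-categorical formal inversion. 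Since this comparison is precisely the substance of \cite[Theorem~12.11]{linskensNardinPol2022Global}, the proposition follows by citation once the two constructions have been properly aligned.
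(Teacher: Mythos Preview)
Your approach has a genuine gap in step 2. You invoke \Cref{obs:GloballyInvertingMeansPointwiseInverting} to conclude that $\ulOrbSp(\bbB G)$ is the non-parametrized formal inversion of $S(\bbB G)$ in $(\Spc^G_{\prop})_*$. But that observation is stated under the hypothesis of \Cref{def:InvertGlobalObjects}: the parametrized subcategory $S$ must be \emph{generated} (in the sense of \Cref{def:SubcategoryGenerated}) by a small subcategory $S_0$ of global sections. Over the slice $\Orb\Spc_{/\bbB G}$, this is exactly the condition that $\bbB G$ has \emph{enough bundle representations} (\Cref{def:EnoughBundleRepresentations}); indeed, the formal-inversion description you want is precisely \Cref{prop:EnoughBundleRepsImpliesFormalInversion}, which is stated and proved only under that extra hypothesis. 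For a general Lie group $G$ this is not known, so your argument does not go through. Relatedly, your description ``pulling back the $K$-representation spheres along $\bbB K \to \bbB G$'' is not well-formed: those maps go the wrong way to pull back, and the objects of $S(\bbB G)$ are exactly the sphere bundles $S^{\xi}$ of vector bundles $\xi$ on $\bbB G$, which already presupposes the bundle-representation picture.

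You also mischaracterize the content of \cite[Theorem~12.11]{linskensNardinPol2022Global}. That theorem does not assert a formal-inversion universal property for $\Sp^G$; rather, it identifies the limit of the diagram $\Sp_{\bullet}$ over $(\Orb^{\prop}_G)\catop$ with the $\infty$-category underlying the model category of \cite{DHLPS2019Proper}. The paper's proof exploits exactly this: since $\ulSp^G$ is a $\Spc^G_{\prop}$-category, its underlying $\infty$-category is by definition the limit of $\ulSp^G$ over $(\Orb^{\prop}_G)\catop$; by \Cref{rkm:ComparisonDiagramWithLNP} and \Cref{lem:SliceOfOrbLie} this diagram agrees with the restriction of $\Sp_{\bullet}$ along $(\Orb^{\prop}_G)\catop \simeq \Orb_{/\bbB G}\catop \to \Glo\catop$, and then \cite[Theorem~12.11]{linskensNardinPol2022Global} computes the limit. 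No formal-inversion step is needed.
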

\begin{proof}
	The underlying $\infty$-category of $\ulSp^G$ is given as the limit of the functor $\ulSp^G\colon (\Orb_G^{\prop})\catop \to \Cat_{\infty}$. By \Cref{rkm:ComparisonDiagramWithLNP}, this functor is equivalent to the composite
	\[
	(\Orb_G^{\prop})\catop \simeq \Orb_{/\bbB G}\catop \to \Orb\catop \hookrightarrow \Glo\catop \xrightarrow{\Sp_{\bullet}} \Cat_{\infty}.
	\]
	The limit of this diagram was shown by \cite[Theorem~12.11]{linskensNardinPol2022Global} to be equivalent to $\Sp^G$, finishing the proof.
\end{proof}

\begin{corollary}
	The proper $G$-category $\ulSp^G$ satisfies twisted ambidexterity for all relatively compact morphisms of proper $G$-spaces. In particular, if $H \leqslant G$ is a cocompact subgroup, meaning that $G/H$ is a compact topological space, then there is a formal Wirthmüller isomorphism
	\[
	\ind^G_H(- \otimes D_{G/H}) \simeq \coind^G_H(-) \colon \Sp^H \to \Sp^G.
	\]
\end{corollary}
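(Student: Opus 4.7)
The strategy is to deduce both assertions directly from \Cref{thm:UniversalPropertyOrbispectra} using the definition $\ulSp^G := \pi_G^*\ulOrbSp$. By \Cref{cor:OrbispacesOverBGAreProperGSpaces} the slice $\Orb\Spc_{/\bbB G}$ agrees with $\Spc^G_{\prop}$, so a morphism $f\colon A \to B$ in $\Spc^G_{\prop}$ may equivalently be viewed as a morphism of orbispaces with target over $\bbB G$. The main bookkeeping claim is that both relative compactness and twisted $\ulSp^G$-ambidexterity of $f$ can be tested by looking at the underlying morphism of orbispaces and asking whether it is relatively compact and twisted $\ulOrbSp$-ambidextrous. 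Twisted ambidexterity is defined via the twisted norm map constructed after passing to the slice over $B$, and the canonical equivalence $(\Orb\Spc_{/\bbB G})_{/B} \simeq \Orb\Spc_{/B}$ identifies the two twisted norm maps. Relative compactness is compatible with slicing because compact objects in a slice topos $\Bb_{/X}$ coincide with compact objects of $\Bb$ equipped with a structure map. Combining these observations with \Cref{thm:UniversalPropertyOrbispectra} proves the first sentence of the corollary.

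For the Wirthmüller specialization, I would apply the first sentence to the morphism $f\colon G/H \to \ast$ in $\Spc^G_{\prop}$, which under \Cref{cor:OrbispacesOverBGAreProperGSpaces} corresponds to the map $\bbB H \to \bbB G$ induced by the inclusion $H \hookrightarrow G$. Under the identifications $\ulSp^G(\ast) \simeq \Sp^G$ and $\ulSp^G(G/H) \simeq \Sp^H$, the functors $f^*$, $f_!$, $f_*$ become $\res^G_H$, $\ind^G_H$, $\coind^G_H$, and the relative dualizing object is $D_f = D_{G/H}$. Thus the twisted norm map becomes the claimed Wirthmüller isomorphism provided $f$ is relatively compact. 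For that verification, given a compact Lie group $K$ and a morphism $\bbB K \to \bbB G$, the pullback $\bbB H \times_{\bbB G} \bbB K$ corresponds under $\Orb\Spc_{/\bbB K} \simeq \Spc^K_{\prop}$ to the space $G/H$ with $K$ acting through the embedding into $G$; the cocompactness hypothesis makes $G/H$ a compact topological manifold, and an equivariant triangulation (Illman) endows it with a finite $K$-CW structure whose cells $K/L$ have compact stabilizers $L = K \cap gHg^{-1}$, exhibiting it as a compact object of $\Spc^K_{\prop}$.

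\textbf{Main obstacle.} The only substantive input beyond the formal framework of the paper is the use of equivariant triangulation to confirm that cocompactness of $H$ makes the pullback $\bbB H \times_{\bbB G} \bbB K$ a compact orbispace; everything else amounts to carefully tracing the slice equivalences and checking that they preserve the constructions of the twisted norm map and of compact objects.
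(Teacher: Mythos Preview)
Your proposal is correct and follows essentially the same approach as the paper: the first statement comes from $\ulSp^G = \pi_G^*\ulOrbSp$ together with the fact that $\ulOrbSp$ satisfies twisted ambidexterity for relatively compact morphisms of orbispaces, and the second from showing that $\bbB H \to \bbB G$ is relatively compact. The paper's own proof is more terse (it simply asserts relative compactness of $\bbB H \to \bbB G$ ``by compactness of $G/H$''), whereas you spell out the identification of the pullback $\bbB H \times_{\bbB G} \bbB K$ with $\res^G_K(G/H)$ and invoke Illman's equivariant triangulation to see it is a compact object of $\Spc^K$; this extra detail is a genuine elaboration rather than a different strategy.
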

\begin{proof}
	The first statement is immediate as $\ulSp^G$ is the restriction of the orbicategory $\ulOrbSp$ to the slice $\Orb_{/\bbB G}$ and $\ulOrbSp$ satisfies twisted ambidexterity for all relatively compact morphisms of orbispaces. The second statement follows from the observation that the map of orbispaces $\bbB H \to \bbB G$ is relatively compact, by compactness of $G/H$.
\end{proof}

\begin{theorem}
	\label{thm:UniversalPropertyProperGspectra}
	The proper $G$-category $\ulSp^G$ is the initial fiberwise stable presentably symmetric monoidal proper $G$-category satisfying twisted ambidexterity for all relatively compact morphisms of proper $G$-spaces.
\end{theorem}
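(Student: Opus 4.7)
The plan is to run the same argument as in Theorem~\ref{thm:UniversalPropertyOrbispectra}, adapted to the proper $G$-setting, using the identification $\ulSp^G = \pi_{\bbB G}^*\ulOrbSp$ together with the equivalence $\Spc^G_\prop \simeq \Orb\Spc_{/\bbB G}$ from \Cref{cor:OrbispacesOverBGAreProperGSpaces}. More precisely, I would first establish the following proper analogue of \Cref{prop:CharacterizationOrbStableOrbCategories}: for any fiberwise stable presentably symmetric monoidal proper $G$-category $\Cc$, letting $F'\colon \pi_{\bbB G}^*\ulOrbSpc_* \to \Cc$ be the unique symmetric monoidal left adjoint, the following are equivalent:
\begin{enumerate}[(1)]
    \item For every compact subgroup $K \leqslant G$ and every $K$-representation $V$, the functor $F'$ sends the pulled-back representation sphere $S^V$ (living over $\bbB K \in \Spc^G_\prop$) to an invertible object;
    \item $F'$ extends (necessarily uniquely) to a symmetric monoidal left adjoint $\pi_{\bbB G}^*\ulOrbSp = \ulSp^G \to \Cc$;
    \item Every relatively compact morphism of proper $G$-spaces is twisted $\Cc$-ambidextrous.
\end{enumerate}

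For the equivalence (1)$\iff$(2), I would invoke \Cref{prop:Invert_Objects_Locally}: since $\ulOrbSp$ is the formal inversion of the representation spheres in $\ulOrbSpc_*$, its pullback $\pi_{\bbB G}^*\ulOrbSp$ is the formal inversion of the pulled-back representation spheres in $\pi_{\bbB G}^*\ulOrbSpc_*$. For the equivalence (1)$\iff$(3), I would argue locally. Every compact subgroup $K \leqslant G$ gives a map $\bbB K \to \bbB G$ in $\Orb'$, hence an object of $\Spc^G_\prop$ and a pullback functor $\pi_K^*\Cc$ which is a fiberwise stable presentably symmetric monoidal $K$-category. Applying the equivalence (1)$\iff$(5) of \Cref{thm:CharacterizationGStableGCategories} to each $\pi_K^*\Cc$ shows that (1) is equivalent to: every compact $K$-space is twisted $\pi_K^*\Cc$-ambidextrous, for each compact subgroup $K \leqslant G$. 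To upgrade this pointwise condition to the global statement (3), I would use that the twisted $\Cc$-ambidextrous morphisms form a local class (\Cref{prop:ClosurePropertiesTwistedAmbiMaps}(\ref{it:LocalClass})), together with the fact that every proper $G$-space $B$ admits an effective epimorphism $\bigsqcup_i G/K_i \twoheadrightarrow B$ from a disjoint union of orbits of compact subgroups (by $\Spc^G_\prop = \PSh(\Orb^\prop_G)$), and that relative compactness of $f\colon A \to B$ reduces, after such a base change, to compactness of the corresponding $K_i$-spaces.

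Given this characterization, the theorem follows in one line: applying it with $\Cc = \ulSp^G$ and $F'' = \id_{\ulSp^G}$ shows that $\ulSp^G$ itself satisfies condition (3), and conversely any $\Cc$ satisfying (3) also satisfies (2), yielding the unique symmetric monoidal left adjoint $\ulSp^G \to \Cc$ and hence initiality.

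The main obstacle is the local-to-global patching in (1)$\iff$(3): one needs to combine the local class property of twisted $\Cc$-ambidextrous morphisms with an explicit reduction of relatively compact morphisms of proper $G$-spaces to compact $K$-spaces over orbits $G/K$, analogous to the reduction performed in the proof of \Cref{prop:CharacterizationOrbStableOrbCategories}. Everything else is a direct transport of arguments from \Cref{subsec:Orbispectra} through the adjunction $\pi_{\bbB G}^* \dashv (\pi_{\bbB G})_*$.
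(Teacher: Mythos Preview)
Your proposal is correct and follows essentially the same approach as the paper: reduce to \Cref{thm:CharacterizationGStableGCategories} applied locally at each compact subgroup $K \leqslant G$, patch via the local class property from \Cref{prop:ClosurePropertiesTwistedAmbiMaps}\eqref{it:LocalClass}, and invoke \Cref{prop:Invert_Objects_Locally} to identify $\ulSp^G$ as the relevant formal inversion. The paper's proof is terser but structurally identical, simply pointing to the analogous argument in \Cref{prop:CharacterizationOrbStableOrbCategories}.
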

\begin{proof}
	Just like in \Cref{prop:CharacterizationOrbStableOrbCategories}, one deduces from \Cref{thm:CharacterizationGStableGCategories} that twisted ambidexterity for relatively compact morphisms of proper $G$-spaces is equivalent to the invertibility of representation spheres. The claim follows, since $\ulSp^G$ is a formal inversion of representation spheres in $\ulSpc^G_*$ by \Cref{prop:Invert_Objects_Locally}.
\end{proof}

\subsubsection{Enough bundle representations}
Our results on formal inversions allow us to prove that in certain cases the $\infty$-category of proper genuine $G$-spectra may be obtained from the $\infty$-category of pointed proper $G$-spaces by inverting the sphere bundles of vector bundles over $\bbB G$.

\begin{definition}
	\label{def:EnoughBundleRepresentations}
	Let $\Rep\colon \Orb\catop \to \Cat_{\infty}$ denote the functor which sends $\bbB G$ to the ordinary category of finite-dimensional $G$-representations. We may limit-extend this to a functor
	\[
	\Vect\colon \Orb\Spc\catop \to \Cat_{\infty},
	\]
	and we refer to $\Vect(B)$ as the category of \textit{vector bundles over $B$}. We say that an orbispace $B$ \textit{has enough bundle representations} if for any compact Lie group $G$, any map of orbispaces $\bbB G \to B$ and any $G$-representation $V$, there exists a vector bundle $\xi \in \Vect(B)$ such that the restriction $\xi\vert_{\bbB G} \in \Vect(\bbB G) = \Rep(G)$ contains $V$ as a direct summand.
\end{definition}

\begin{example}
	For every compact Lie group $G$, $\bbB G$ has enough bundle representations by Bröcker and tom Dieck \cite[Theorem~4.5]{BroeckerTomDieck1985Representations}.
\end{example}

\begin{example}
	Let $G$ be a discrete group and assume that $\bbB G$ is a finite orbispace, that is, it lies in the subcategory of $\Orb\Spc$ generated under finite colimits by the $\bbB K$ for compact Lie groups $K$. Then $\bbB G$ has enough bundle representations. Indeed, under the identification $\Orb\Spc_{/\bbB G} \simeq \Spc^G_{\prop}$, the orbispace $\bbB G$ corresponds to the universal proper $G$-space $\underline{EG}$, and by the assumption this is a finite proper $G$-CW-complex. Given a finite subgroup $K \leqslant G$, the unique map $\phi\colon G/K \to \underline{EG}$ is a map of finite proper $G$-CW-complexes, hence by Lück and Oliver \cite[Lemma~3.7]{LuckOliver2001Completion} any $K$-representation is a direct summand of the restriction along $\phi$ of a $G$-vector bundle $V$ over $\underline{EG}$. Since $V$ in particular gives rise to a vector bundle over the orbispace $\bbB G$, this finishes the proof.
\end{example}

For every compact Lie group $G$ and every $G$-representation $V$, we may consider its associated representation sphere $S^V \in \Spc_{G,*} \simeq \ulOrbSpc(\bbB G)_*$. Since this assignment is functorial in both $V$ and $G$, we obtain a natural transformation $S^{(-)}\colon \Rep \to \ulOrbSpc(\bbB -)_*$ of functors $\Orb\catop \to \Cat_{\infty}$, which uniquely extends to a natural transformation $S^{(-)}\colon \Vect \to \ulOrbSpc_*$ of functors $\Orb\Spc\catop \to \Cat_{\infty}$.

\begin{proposition}
	\label{prop:EnoughBundleRepsImpliesFormalInversion}
	Assume the orbispace $B$ has enough bundle representations. Then the $\infty$-category $\Orb\Sp(B)$ of orbispectra parametrized over $B$ is equivalent to the formal inversion of sphere bundles $\{S^{\xi} \mid \xi \in \Vect(B)\}$ in the $\infty$-category $(\Orb\Spc_{/B})_*$ of retractive orbispaces over $B$:
	\[
	\Orb\Sp(B) \simeq (\Orb\Spc_{/B})_*[\{S^{\xi}\}^{-1}]. \qednow
	\]
\end{proposition}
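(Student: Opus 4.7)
\textit{Proof plan.} My strategy is to realize both sides of the claimed equivalence as global sections of the same parametrized formal inversion of $\pi_B^*\ulOrbSpc_*$ in $\CAlg(\PrL(\Orb\Spc_{/B}))$, using the hypothesis of enough bundle representations to identify the two. On one hand, applying \Cref{prop:Invert_Objects_Locally} to the formal inversion $\ulOrbSpc_* \to \ulOrbSp$ of \Cref{def:Orbispectra} shows that $\pi_B^*\ulOrbSp$ is a parametrized formal inversion of $\pi_B^*S$ in $\pi_B^*\ulOrbSpc_*$. On the other hand, setting $S_0 := \{S^\xi \mid \xi \in \Vect(B)\} \subseteq (\Orb\Spc_{/B})_* = \Gamma(\pi_B^*\ulOrbSpc_*)$ and letting $\tilde S \subseteq \pi_B^*\ulOrbSpc_*$ denote the symmetric monoidal subcategory generated by $S_0$ in the sense of \Cref{def:SubcategoryGenerated}, \Cref{prop:InvertingObjectsGloballySuffices} provides a parametrized formal inversion $\pi_B^*\ulOrbSpc_*[S_0^{-1}]$ of $\tilde S$, whose underlying $\infty$-category is precisely $(\Orb\Spc_{/B})_*[S_0^{-1}]$ by \Cref{obs:GloballyInvertingMeansPointwiseInverting}. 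Once these two parametrized formal inversions are identified, passing to underlying $\infty$-categories gives the claim.

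These two formal inversions coincide as $\pi_B^*\ulOrbSpc_*$-algebras in $\PrL(\Orb\Spc_{/B})$ if and only if, for every $\Dd \in \CAlg(\PrL(\Orb\Spc_{/B}))$ under $\pi_B^*\ulOrbSpc_*$, $\Dd$ inverts $\tilde S$ if and only if it inverts $\pi_B^*S$. One direction is immediate from $\tilde S \subseteq \pi_B^*S$, since any local restriction of an $S^\xi$ is by construction a parametrized representation sphere. For the converse, assume a morphism $F$ inverts $\tilde S$ (so in particular inverts every $S^\xi$), fix $A \in \Orb\Spc_{/B}$ and $X \in \pi_B^*S(A)$, and set out to show $F_A(X) \in \Dd(A)$ is invertible. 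Since $\Dd$ is a sheaf of presentably symmetric monoidal $\infty$-categories on $\Orb\Spc_{/B}$ and the Picard $\infty$-groupoid is a right adjoint, invertibility of $F_A(X)$ may be checked after base change along an effective-epimorphism cover. Covering $A$ by representable orbits $\coprod_i \bbB G_i \twoheadrightarrow A$ (possible since $\Orb\Spc = \PSh(\Orb)$), the problem reduces to showing that for each map $\bbB G \to B$ and each $G$-representation $V$, the object $F_{\bbB G}(S^V) \in \Dd(\bbB G)$ is invertible. Here the enough-bundle-representations hypothesis supplies $\xi \in \Vect(B)$ and a $G$-representation $W$ with $\xi|_{\bbB G} \cong V \oplus W$, so that
\[
F_{\bbB G}(S^V) \otimes F_{\bbB G}(S^W) \;\simeq\; F_{\bbB G}\bigl(S^{\xi|_{\bbB G}}\bigr) \;\simeq\; F(S^\xi)|_{\bbB G}
\]
is invertible, and invertibility of a tensor product in a symmetric monoidal $\infty$-category forces invertibility of each factor (cf.\ the spirit of \Cref{lem:FormalInversionStableUnderFactors}).

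The principal obstacle I foresee is the sheaf-theoretic reduction of invertibility to representable covers, which relies on the observation that the Picard subcategory is preserved under limits of presentably symmetric monoidal $\infty$-categories (a standard consequence of $\Pic$ being a right adjoint). With this in hand, the rest is bookkeeping: enough bundle representations enters precisely as the device that allows one to display every parametrized representation sphere as a tensor factor of the restriction of a sphere bundle on $B$, thereby bridging inversion of $\tilde S$ and inversion of $\pi_B^*S$.
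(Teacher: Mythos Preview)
Your proof is correct and follows essentially the same route as the paper: pass to the slice topos $\Orb\Spc_{/B}$, use \Cref{prop:Invert_Objects_Locally} to identify $\pi_B^*\ulOrbSp$ as a parametrized formal inversion of the representation spheres, and then invoke the enough-bundle-representations hypothesis together with \Cref{obs:GloballyInvertingMeansPointwiseInverting} to conclude that the underlying functor is the claimed non-parametrized formal inversion of the $S^\xi$. You are in fact slightly more explicit than the paper about the tensor-factor step (splitting $\xi|_{\bbB G}\cong V\oplus W$ and arguing as in \Cref{lem:FormalInversionStableUnderFactors}), whereas the paper compresses this into the assertion that the parametrized subcategory of representation spheres is ``generated by'' the sphere bundles $S^\xi$.
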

\begin{proof}
	Let $\Bb = \Orb\Spc_{/B}$ be the $\infty$-topos of orbispaces over $B$. By \Cref{prop:Invert_Objects_Locally}, the $\Bb$-functor $\pi_B^*\ulOrbSpc_* \to \pi_B^*\ulOrbSp$ is a formal inversion of the representation spheres $S^V \in \Spc_*^G \simeq \pi_B^*\ulOrbSpc_*(\bbB G)$ for every compact Lie group $G$ and a map of orbispaces $\bbB G \to B$. By the assumption that $B$ has enough bundle representations, the parametrized subcategory of representation spheres is generated (in the sense of \Cref{def:SubcategoryGenerated}) by the objects $S^{\xi} \in \Spc_*^B$ for all vector bundles $\xi \in \Vect(B)$. It thus follows from \Cref{obs:GloballyInvertingMeansPointwiseInverting} that the underlying functor $(\Orb\Spc_{/B})_* \to \Orb\Sp(B)$ of this $\Bb$-functor is a formal inversion of the objects $S^{\xi}$, finishing the proof.
\end{proof}

\begin{corollary}
	\label{cor:EnoughBundleRepsImpliesFormalInversion}
	Assume that $G$ is a Lie group which has enough bundle representations. Then the $\infty$-category of proper genuine $G$-spectra is the formal inversion of the $\infty$-category proper pointed $G$-spaces by inverting the sphere bundles $S^{\xi}$ associated to all finite-dimensional vector bundles $\xi$ over $\bbB G$:
	\[
	\Sp^G \simeq (\Spc^G_*)[\{S^{\xi}\}^{-1}]. \qedhere
	\]
\end{corollary}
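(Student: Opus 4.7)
The plan is to obtain this statement as a direct specialization of the immediately preceding \Cref{prop:EnoughBundleRepsImpliesFormalInversion} to the orbispace $B = \bbB G$. The hypothesis that the Lie group $G$ has enough bundle representations says exactly that the classifying orbispace $\bbB G \in \Orb\Spc$ has enough bundle representations in the sense of \Cref{def:EnoughBundleRepresentations}, so the proposition applies verbatim and delivers an equivalence
\[
\Orb\Sp(\bbB G) \;\simeq\; (\Orb\Spc_{/\bbB G})_*[\{S^{\xi}\}^{-1}],
\]
where $\xi$ ranges over finite-dimensional vector bundles over $\bbB G$.

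What remains is a translation of both sides into the proper-equivariant language. On the left-hand side, the definition of the proper $G$-category $\ulSp^G = \pi_G^*\ulOrbSp$ and the observation that $\bbB G$ is terminal in the slice site give $\Sp^G = \ulSp^G(\bbB G) = \ulOrbSp(\bbB G) = \Orb\Sp(\bbB G)$, which is the content of the Linskens--Nardin--Pol comparison cited just above. On the right-hand side, \Cref{cor:OrbispacesOverBGAreProperGSpaces} supplies an equivalence of $\infty$-topoi $\Orb\Spc_{/\bbB G} \simeq \Spc_{\prop}^G$, which restricts to pointed objects and carries the class of sphere bundles $\{S^{\xi}\}$ to the corresponding class of sphere bundles of finite-dimensional vector bundles over $\bbB G$, since the transformation $S^{(-)}\colon \Vect \to \ulOrbSpc_*$ is constructed levelwise from representation spheres.

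I do not expect any serious obstacle: the substantive geometric and categorical content has already been absorbed into \Cref{prop:EnoughBundleRepsImpliesFormalInversion} (whose proof in turn feeds on \Cref{prop:Invert_Objects_Locally} and \Cref{obs:GloballyInvertingMeansPointwiseInverting}) and into the identification of slices \Cref{cor:OrbispacesOverBGAreProperGSpaces}. The only mild subtlety worth making explicit is that the inversion of the countable class $\{S^{\xi}\}$ on the orbispace side coincides, under the topos equivalence, with the analogous inversion on the proper $G$-space side; this is automatic since the equivalence $\Orb\Spc_{/\bbB G} \simeq \Spc_{\prop}^G$ is a symmetric monoidal equivalence of presentable $\infty$-topoi and both formal inversions are characterized by the same universal property in $\CAlg(\PrL)$.
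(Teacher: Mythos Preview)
Your proposal is correct and matches the paper's intended argument: the corollary is stated with a \qedhere and no separate proof, as it is an immediate specialization of \Cref{prop:EnoughBundleRepsImpliesFormalInversion} to $B = \bbB G$ together with the identifications $\Orb\Sp(\bbB G) \simeq \Sp^G$ and $(\Orb\Spc_{/\bbB G})_* \simeq (\Spc^G_{\prop})_*$ from \Cref{cor:OrbispacesOverBGAreProperGSpaces}. Your unpacking of these identifications is accurate and nothing further is needed.
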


\appendix

\section{Symmetric monoidal unstraightening}
\label{chap:SymmericMonoidalUnstraightening}

Let $\Bb$ be an $\infty$-topos and let $\Oo^{\otimes}$ be an $\infty$-operad. In this appendix, we recall unstraightening techniques from Lurie \cite{lurie2016HA} and Drew and Gallauer \cite[Appendix~A]{DrewGallauer2022Universal} to describe $\Oo$-algebras in the $\infty$-category $\Cat(\Bb)$ of $\Bb$-categories in terms of suitable cocartesian fibrations.

Recall from \cite[2.1.2.13]{lurie2016HA} that an \textit{$\Oo^{\otimes}$-monoidal $\infty$-category} is an $\infty$-category $\Cc^{\otimes}$ equipped with a cocartesian fibration $p^{\otimes}\colon \Cc^{\otimes} \to \Oo^{\otimes}$ such that the composition $\Cc^{\otimes} \to \Oo^{\otimes} \to \Fin_*$ exhibits $\Cc^{\otimes}$ as an $\infty$-operad. An \textit{$\Oo$-monoidal functor} is a morphism of operads over $\Oo^{\otimes}$ which preserves $\Oo^\otimes$-cocartesian edges. We let $\Cat_{\infty}^{\Oo^{\otimes}} \subseteq \CAlg(\Cat_{\infty})_{/\Oo^{\otimes}}$ denote the (non-full) subcategory of $\Oo^{\otimes}$-monoidal $\infty$-categories.





\begin{proposition}[{\cite[Corollary~A.12]{DrewGallauer2022Universal}}]
	\label{prop:tensor-un-straightening}
	Let $\Oo^{\otimes} = \Bb^{\op,\sqcup} := (\Bb\catop)^{\sqcup}$ denote the symmetric monoidal category corresponding to $\Bb\catop$ equipped with the cocartesian monoidal structure. Then straightening\,/\,unstraightening induces an equivalence
	\[
	\Cat_{\infty}^{\Bb^{\op,\sqcup}} \simeq \Fun(\Bb\catop,\CAlg(\Cat_{\infty})).
	\]
\end{proposition}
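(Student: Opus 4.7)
My plan is to prove the equivalence in two steps: first, use cocartesian straightening to identify $\Cat_\infty^{\Bb^{\op,\sqcup}}$ with the $\infty$-category of symmetric monoidal functors $\Bb^{\op,\sqcup} \to \Cat_\infty^\times$; second, apply the universal property of the cocartesian symmetric monoidal structure on $\Bb^{\op}$ to identify these with functors $\Bb^{\op} \to \CAlg(\Cat_\infty)$.

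For the first step, I would invoke the straightening/unstraightening equivalence \cite[Theorem~3.2.0.1]{lurie2009HTT} between the $\infty$-category of cocartesian fibrations over $\Bb^{\op,\sqcup}$ (with cocartesian-edge-preserving morphisms) and the functor category $\Fun(\Bb^{\op,\sqcup}, \Cat_\infty)$. The condition that a cocartesian fibration $p\colon \Cc^\otimes \to \Bb^{\op,\sqcup}$ be $\Bb^{\op,\sqcup}$-monoidal amounts to the Segal condition: for every $(X_1,\ldots,X_n) \in \Bb^{\op,\sqcup}_{\langle n\rangle}$, the inert morphisms $(X_1,\ldots,X_n) \to X_i$ must induce an equivalence
\[
\Cc^\otimes_{(X_1,\ldots,X_n)} \xrightarrow{\sim} \prod_{i=1}^{n} \Cc^\otimes_{X_i}
\]
on fibers. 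Under straightening, this translates exactly to the condition that the classifying functor $F\colon \Bb^{\op,\sqcup} \to \Cat_\infty$ is symmetric monoidal for the cocartesian structure on its source and the cartesian structure on its target, yielding an equivalence $\Cat_\infty^{\Bb^{\op,\sqcup}} \simeq \Fun^\otimes(\Bb^{\op,\sqcup}, \Cat_\infty^\times)$.

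The second step is a direct application of the universal property of the cocartesian symmetric monoidal structure, cf.\ \cite[Section~2.4.3]{lurie2016HA}: for every symmetric monoidal $\infty$-category $\Dd^\otimes$, there is a natural equivalence $\Fun^\otimes(\Bb^{\op,\sqcup}, \Dd^\otimes) \simeq \Fun(\Bb^{\op}, \CAlg(\Dd))$. Applying this with $\Dd^\otimes = \Cat_\infty^\times$ and using that $\CAlg(\Cat_\infty^\times) = \CAlg(\Cat_\infty)$ (symmetric monoidal $\infty$-categories are by definition commutative algebras in $\Cat_\infty$ under the cartesian monoidal structure) yields
\[
\Fun^\otimes(\Bb^{\op,\sqcup}, \Cat_\infty^\times) \simeq \Fun(\Bb^{\op}, \CAlg(\Cat_\infty)),
\]
which composes with the equivalence from the first step to give the claim.

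The main obstacle is the careful bookkeeping of morphisms at each stage. Morphisms in $\Cat_\infty^{\Bb^{\op,\sqcup}}$ are required to preserve cocartesian edges over $\Bb^{\op,\sqcup}$ itself, which is strictly stronger than being a map of $\infty$-operads over $\Fin_*$. One has to verify that straightening matches these with the correct morphisms in $\Fun^\otimes(\Bb^{\op,\sqcup}, \Cat_\infty^\times)$, and that the universal property in the second step also respects morphisms. This careful verification is precisely what is carried out in \cite[Appendix~A]{DrewGallauer2022Universal}, leading to the cited Corollary~A.12.
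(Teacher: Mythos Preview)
Your proposal is correct and follows essentially the same approach as the paper. The paper does not give a direct proof of this proposition (it is cited from Drew--Gallauer), but it does prove the more general \Cref{prop:operadic-un-straightening} via the chain $\Alg_{\Oo}(\Fun(\Bb\catop,\Cat_\infty)) \simeq \Fun(\Bb\catop,\Alg_{\Oo}(\Cat_\infty)) \simeq \Alg_{\Bb^{\op,\Oo}}(\Cat_\infty) \simeq \Cat_\infty^{\Bb^{\op,\Oo}}$, invoking \cite[Theorem~2.4.3.18]{lurie2016HA} and \cite[Remark~2.4.2.6]{lurie2016HA}; your two steps are exactly these last two equivalences specialized to $\Oo^\otimes = \Cc\mathrm{omm}^\otimes$, just traversed in the opposite order.
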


Given a functor $\Cc\colon \Bb\catop \to \CAlg(\Cat_{\infty})$, the resulting  $\Bb^{\op,\sqcup}$-monoidal $\infty$-category $p^\otimes:\Cc^\boxtimes\to \Bb^{\op,\sqcup}$ may informally be described as follows:
\begin{itemize}
	\item The objects of $\Cc^\boxtimes$ are pairs $(B,X)$ where $B$ is an object in $\Bb$, and $X$ is an object in~$\Cc(B)$.
	\item A morphism $(B,X)\to (B',X')$ in $\Cc^\boxtimes$ consists of a morphism $f:B'\to B$ in $\Bb$, and a morphism $f^*X\to X'$ in $\Cc(B')$.
	\item The tensor product of $(B,X)$ and $(B',X')$ is the ``external product''
	\[
	X\boxtimes X':=\pr_B^*X\otimes_{B\times B'}\pr_{B'}^*X' \qin \Cc(B \times B'),
	\]
	where $\pr_B:B\times B'\to B$ and $\pr_{B'}:B\times B'\to B'$ are the canonical projections in $\Bb$.
\end{itemize}

Conversely, if $p^\otimes:\Cc^\boxtimes\to \Bb^{\op,\sqcup}$ is a $\Bb^{\op,\sqcup}$-monoidal $\infty$-category, we may straighten the underlying cocartesian fibration $p:(\Cc^{\boxtimes})_1 \to \Bb\catop$ to a functor $\Cc:\Bb\catop\to\Cat_{\infty}$, sending $B \in \Bb$ to the fiber of $p$ over $B$. The symmetric monoidal structure on this fiber $\Cc(B)$ may be described as follows: given $X,X'\in \Cc(B)$, their tensor product is the object $\Delta^*(X\boxtimes X')$ where $\Delta$ denotes the diagonal map $B\to B\times B$ in $\Bb$.

As there is an equivalence $\Fun(\Bb\catop,\CAlg(\Cat_{\infty})) \simeq \CAlg(\Fun(\Bb\catop,\Cat_{\infty}))$, it follows from \Cref{prop:tensor-un-straightening} that we may regard every symmetric monoidal $\Bb$-category $\Cc \in \CAlg(\Cat(\Bb))$ as a cocartesian fibration $\Cc^\boxtimes \to \Bb^{\op,\sqcup}$. We will now discuss how one may describe $\Oo$-algebras in $\Cat(\Bb)$ for an arbitrary $\infty$-operad $\Oo^{\otimes}$ in a similar fashion.

\begin{definition}
	We define the $\infty$-operad $\Bb^{\op,\Oo}$ via the following pullback diagram:
	\begin{equation*}
		\begin{tikzcd}
			\Bb^{\op,\Oo} \dar[swap]{q} \rar \drar[pullback] & \Bb^{\op,\sqcup} \dar{p} \\
			\Oo^{\otimes} \rar & \Fin_*.
		\end{tikzcd}
	\end{equation*}
	Being the pullback of a cocartesian fibration, the map $q\colon \Bb^{\op,\Oo} \to \Oo^{\otimes}$ is a cocartesian fibration.
\end{definition}

\begin{proposition}
	\label{prop:operadic-un-straightening}
	There is an equivalence
	\begin{align*}
		\Alg_{\Oo}(\Fun(\Bb\catop,\Cat_{\infty})) \simeq \Cat_{\infty}^{\Bb^{\op,\Oo}},
	\end{align*}
	natural in $\Oo^{\otimes}$, which for $\Oo^{\otimes} = \Cc \mathrm{omm}^{\otimes}$ reduces to the equivalence of \Cref{prop:tensor-un-straightening}.
\end{proposition}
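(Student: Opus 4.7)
The plan is to combine the currying of $\Oo$-algebras in functor categories with Drew--Gallauer's symmetric monoidal unstraightening (\Cref{prop:tensor-un-straightening}). First, the cartesian symmetric monoidal structure on $\Fun(\Bb\catop,\Cat_\infty)$ is computed pointwise from that on $\Cat_\infty$, and the formation of $\Oo$-algebras commutes with such pointwise structures. Unwinding, an $\Oo$-algebra in $\Fun(\Bb\catop,\Cat_\infty)$ is a morphism of $\infty$-operads $\Oo^\otimes \to \Fun(\Bb\catop,\Cat_\infty)^\times$, which by currying over $\Bb\catop$ corresponds to a functor $\Bb\catop \to \Alg_\Oo(\Cat_\infty) = \Cat_\infty^{\Oo^\otimes}$. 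This yields a natural equivalence
\[
\Alg_\Oo(\Fun(\Bb\catop,\Cat_\infty)) \simeq \Fun(\Bb\catop,\Cat_\infty^{\Oo^\otimes}).
\]

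Second, I would identify the right-hand side with $\Cat_\infty^{\Bb^{\op,\Oo}}$. Using the pullback $\Bb^{\op,\Oo} = \Bb^{\op,\sqcup} \times_{\Fin_*} \Oo^\otimes$, a cocartesian fibration $\Cc^\boxtimes \to \Bb^{\op,\Oo}$ over the operad $\Bb^{\op,\Oo}$ corresponds, via \Cref{prop:tensor-un-straightening} relativized over $\Oo^\otimes$, to an $\Oo$-monoidal $\infty$-category that varies contravariantly in $\Bb$. Concretely, given $F\colon \Bb\catop \to \Cat_\infty^{\Oo^\otimes}$, the associated fibration has fiber $\prod_{i=1}^n F(B_i)_{\omega_i}$ over an object $(B_1,\dots,B_n;\omega \in \Oo^\otimes_{\langle n \rangle})$, while the inverse recovers $F(B)$ by restricting $\Cc^\boxtimes$ to $\{B\} \times \Oo^\otimes \subset \Bb^{\op,\Oo}$.

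The composite equivalence is natural in $\Oo^\otimes$ by construction, and reduces to \Cref{prop:tensor-un-straightening} when $\Oo^\otimes = \Cc\mathrm{omm}^\otimes$: then $\Bb^{\op,\Cc\mathrm{omm}} = \Bb^{\op,\sqcup}$ and $\Alg_{\Cc\mathrm{omm}} = \CAlg$, so the chain of equivalences collapses to the one in \Cref{prop:tensor-un-straightening}. The main technical obstacle lies in the second step: while the pointwise description is transparent on objects, verifying the equivalence rigorously at the level of mapping $\infty$-groupoids requires a relative form of operadic unstraightening. The cleanest way to handle this is to replicate Drew--Gallauer's argument in the relative setting over $\Oo^\otimes$, since every ingredient of their proof goes through with $\Fin_*$ replaced by $\Oo^\otimes$ throughout.
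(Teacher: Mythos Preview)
Your first step, currying $\Oo$-algebras in the functor category to get $\Alg_\Oo(\Fun(\Bb\catop,\Cat_\infty)) \simeq \Fun(\Bb\catop,\Alg_\Oo(\Cat_\infty))$, is exactly what the paper does. The divergence is in your second step. You propose to pass from $\Fun(\Bb\catop,\Cat_\infty^{\Oo^\otimes})$ to $\Cat_\infty^{\Bb^{\op,\Oo}}$ by relativizing Drew--Gallauer's unstraightening argument over $\Oo^\otimes$, and you correctly flag this as the main technical burden.

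The paper sidesteps this work entirely by citing two ready-made results from Lurie. First, \cite[Theorem~2.4.3.18]{lurie2016HA} gives directly
\[
\Fun(\Bb\catop,\Alg_\Oo(\Cat_\infty)) \simeq \Alg_{\Bb^{\op,\Oo}}(\Cat_\infty),
\]
using that $\Bb^{\op,\Oo}$ is the pullback $\Bb^{\op,\sqcup}\times_{\Fin_*}\Oo^\otimes$ and that $\Bb^{\op,\sqcup}$ is cocartesian monoidal. Second, \cite[Remark~2.4.2.6]{lurie2016HA} identifies $\Alg_{\Bb^{\op,\Oo}}(\Cat_\infty)$ with $\Cat_\infty^{\Bb^{\op,\Oo}}$. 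Chaining these gives the result immediately, with naturality in $\Oo^\otimes$ built in. So your route would work, but the ``relative unstraightening'' you anticipate having to carry out is already packaged in Lurie; the paper's proof is essentially a three-line citation chain rather than a reproof.
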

\begin{proof}
	This follows from the following equivalences:
	\begin{align*}
		\Alg_{\Oo}(\Fun(\Bb\catop,\Cat_{\infty})) & \simeq \Fun(\Bb\catop,\Alg_{\Oo}(\Cat_{\infty})) \\
		&\simeq \Alg_{\Bb^{\op,\Oo}}(\Cat_{\infty}) &&\text{\cite[Theorem~2.4.3.18]{lurie2016HA}}\\
		&\simeq \Cat_{\infty}^{\Bb^{\op,\Oo}}. &&\text{\cite[Remark~2.4.2.6]{lurie2016HA}}
	\end{align*}
	For $\Oo^{\otimes} = \Cc \mathrm{omm}^{\otimes}$, this reduces to the equivalence of \Cref{prop:tensor-un-straightening} given in \cite[Corollary~A.12]{DrewGallauer2022Universal}.
\end{proof}

We are mainly interested in the case $\Oo^{\otimes} = \Ll\Mm^{\otimes}$ in order to describe left $\Cc$-modules in $\Cat(\Bb)$.

\begin{corollary}
	\label{cor:un-straightening-left-modules}
	Consider $\Cc \in \CAlg(\Cat(\Bb))$ and let $\Cc^{\boxtimes} \in \Cat_{\infty}^{\Bb^{\op,\sqcup}}$ denote the associated $\Bb^{\op,\sqcup}$-monoidal $\infty$-category. The equivalence of \cref{prop:operadic-un-straightening} restricts to an equivalence
	\begin{align*}
		\LMod_{\Cc}(\Fun(\Bb\catop,\Cat_{\infty})) \simeq \Cat_{\infty}^{\Bb^{\op,\Ll\Mm}} \times_{\Cat_{\infty}^{\Bb^{\op,\sqcup}}} \{\Cc^{\boxtimes}\}.
	\end{align*}
	The full subcategory $\LMod_{\Cc}(\Cat(\Bb))$ of the left-hand side is equivalent to a full subcategory of the right-hand side spanned by those $\Bb^{\op,\Ll\Mm}$-monoidal $\infty$-categories $\Mm^{\boxtimes} \to \Bb^{\op,\Ll\Mm}$ whose pullback along the inclusion $\Bb\catop \hookrightarrow \Bb^{\op,\Ll\Mm}$ is corresponds to a $\Bb$-category (i.e., its straightening $\Bb\catop \to \Cat_{\infty}$ preserves limits).
\end{corollary}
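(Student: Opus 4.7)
The plan is to apply Proposition~\ref{prop:operadic-un-straightening} with $\Oo^{\otimes}=\Ll\Mm^{\otimes}$ and form fibers on both sides. Tracing through the chain of equivalences in the proof of Proposition~\ref{prop:operadic-un-straightening}, each step is natural under maps of $\infty$-operads, so the equivalence $\Alg_{\Oo}(\Fun(\Bb\catop,\Cat_\infty))\simeq\Cat_\infty^{\Bb^{\op,\Oo}}$ is functorial in $\Oo^{\otimes}$. Applying this naturality to the operad maps $\Aa\mathrm{ss}^{\otimes}\hookrightarrow\Ll\Mm^{\otimes}$ (inclusion of the algebra color) and $\Aa\mathrm{ss}^{\otimes}\to\Cc\mathrm{omm}^{\otimes}$ (canonical) produces a commutative diagram in which the forgetful functors $\Alg_{\Ll\Mm}(\Dd)\to\Alg(\Dd)$ and $\CAlg(\Dd)\to\Alg(\Dd)$, for $\Dd:=\Fun(\Bb\catop,\Cat_\infty)$, correspond to the pullback functors $\Cat_\infty^{\Bb^{\op,\Ll\Mm}}\to\Cat_\infty^{\Bb^{\op,\Aa\mathrm{ss}}}$ and $\Cat_\infty^{\Bb^{\op,\sqcup}}\to\Cat_\infty^{\Bb^{\op,\Aa\mathrm{ss}}}$ along $\Bb^{\op,\Aa\mathrm{ss}}\to\Bb^{\op,\Ll\Mm}$ resp.\ $\Bb^{\op,\Aa\mathrm{ss}}\to\Bb^{\op,\sqcup}$. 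Since by definition $\LMod_\Cc(\Dd)$ is the fiber of the first forgetful over the underlying associative algebra of $\Cc$, taking the corresponding fiber at $\Cc^\boxtimes$ on the cocartesian-fibration side yields the first equivalence. The notation $\Cat_\infty^{\Bb^{\op,\Ll\Mm}}\times_{\Cat_\infty^{\Bb^{\op,\sqcup}}}\{\Cc^\boxtimes\}$ is to be read as the iterated pullback $\Cat_\infty^{\Bb^{\op,\Ll\Mm}}\times_{\Cat_\infty^{\Bb^{\op,\Aa\mathrm{ss}}}}\Cat_\infty^{\Bb^{\op,\sqcup}}\times_{\Cat_\infty^{\Bb^{\op,\sqcup}}}\{\Cc^\boxtimes\}$, since there is no direct functor between the first two $\infty$-categories, only one going through $\Cat_\infty^{\Bb^{\op,\Aa\mathrm{ss}}}$.

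For the second statement, the inclusion $\Cat(\Bb)\hookrightarrow\Fun(\Bb\catop,\Cat_\infty)$ is a symmetric monoidal fully faithful embedding (both sides cartesian monoidal), since $\Cat(\Bb)$ is closed under products. Standard arguments then show that this induces a fully faithful inclusion $\LMod_\Cc(\Cat(\Bb))\hookrightarrow\LMod_\Cc(\Fun(\Bb\catop,\Cat_\infty))$ whose image is spanned by those left $\Cc$-modules whose underlying functor $\Bb\catop\to\Cat_\infty$ preserves limits. Under the equivalence from the first statement, tracing the chain of equivalences in the proof of Proposition~\ref{prop:operadic-un-straightening} identifies the underlying functor of a left $\Cc$-module $M$ with the pullback of its associated $\Mm^\boxtimes\to\Bb^{\op,\Ll\Mm}$ along the inclusion $\Bb\catop\hookrightarrow\Bb^{\op,\Ll\Mm}$ that picks out the module color $m$ in $\Ll\Mm^{\otimes}_{\langle 1\rangle}$; this is because the underlying object in $\Fun(\Bb\catop,\Cat_\infty)$ of an $\Ll\Mm$-algebra is obtained by evaluation at $m$, which on the cocartesian-fibration side corresponds to the stated pullback. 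This matches the description in the statement.

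The main delicate step is formulating the naturality of Proposition~\ref{prop:operadic-un-straightening} in $\Oo^{\otimes}$ coherently enough to be compatible with the fiber product in the statement. Each link in its chain of equivalences is naturally functorial in the operad, but the paper does not explicitly spell this out; once it is carefully set up (as a functor $\Op_\infty^{\op}\to\Fun(\Delta^1,\Cat_\infty)$, say), the rest of the argument is routine bookkeeping with operadic un/straightening.
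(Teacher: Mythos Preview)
Your proposal is correct and matches the paper's intended argument: the paper states this result as an unproved corollary, relying on the explicit naturality clause in \Cref{prop:operadic-un-straightening}, and your proof simply spells out that deduction. Your observation that the pullback notation must be read through $\Cat_\infty^{\Bb^{\op,\Aa\mathrm{ss}}}$ is a useful clarification of a slight abuse in the statement.
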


From this description of $\Cc$-modules in $\Cat(\Bb)$ in terms of cocartesian fibrations, we can deduce a non-parametrized criterion for a $\Cc$-linear $\Bb$-functor $F\colon \Dd \to \Ee$ to have a $\Cc$-linear right adjoint.\footnote{In this appendix, contrary to the convention used in the body of the text, $\Cc$-linear $\Bb$-functors are not assumed to preserve colimits.}

\begin{proposition}
	\label{prop:RightAdjointCLinear}
	Let $\Cc$ be a symmetric monoidal $\Bb$-category, let $\Dd$ and $\Ee$ be $\Bb$-categories tensored over $\Cc$, and let $F\colon \Dd \to \Ee$ be a $\Cc$-linear functor. Assume that $F$ admits a parametrized right adjoint $G\colon \Ee \to \Dd$ satisfying the following projection formula: for every $B \in \Bb$, every $C \in \Cc(B)$ and every $E \in \Ee(B)$, the map $C \otimes_B G(E) \to G(C \otimes_B E)$ adjoint to the composite
	\begin{align*}
		F(C \otimes_B G(E)) \simeq C \otimes_B F(G(E)) \xrightarrow{C \otimes_B \textup{unit}} C \otimes_B E
	\end{align*}
	is an equivalence in $\Dd(B)$. Then the right adjoint $G$ admits canonical $\Cc$-linear structure and the adjunction enhances to an adjunction in $\Mod_{\Cc}(\Cat(\Bb))$.
\end{proposition}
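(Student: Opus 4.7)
The plan is to translate the statement into a problem about morphisms of cocartesian fibrations via \Cref{cor:un-straightening-left-modules}, and then apply the relative adjoint functor theorem. Under the unstraightening equivalence, the $\Cc$-linear $\Bb$-categories $\Dd$ and $\Ee$ correspond to cocartesian fibrations $\Dd^\boxtimes, \Ee^\boxtimes \to \Bb^{\op,\Ll\Mm}$, and the $\Cc$-linear functor $F$ corresponds to a morphism $F^\boxtimes\colon \Dd^\boxtimes \to \Ee^\boxtimes$ over $\Bb^{\op,\Ll\Mm}$ that preserves cocartesian edges. The desired $\Cc$-linear right adjoint structure on $G$ together with $\Cc$-linear unit and counit is equivalent to a right adjoint $G^\boxtimes\colon \Ee^\boxtimes \to \Dd^\boxtimes$ of $F^\boxtimes$ relative to $\Bb^{\op,\Ll\Mm}$ that also preserves cocartesian edges.

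The relative adjoint functor theorem for cocartesian fibrations (a variant of \cite[Proposition~7.3.2.6]{lurie2016HA}) asserts that such a $G^\boxtimes$ exists precisely when (a) each fiber $F^\boxtimes_X$ admits a right adjoint, and (b) for every cocartesian edge $\alpha\colon X \to Y$ of $\Bb^{\op,\Ll\Mm}$ the Beck-Chevalley mate $\alpha_! \circ G^\boxtimes_X \Rightarrow G^\boxtimes_Y \circ \alpha_!$ is an equivalence. Condition (a) is immediate from the given data: the fiber of $\Ee^\boxtimes \to \Bb^{\op,\Ll\Mm}$ over an object $(B_1, \dots, B_{n-1}, B_n; \mathfrak{a}, \dots, \mathfrak{a}, \mathfrak{m})$ is $\prod_{i<n} \Cc(B_i) \times \Ee(B_n)$, and $F^\boxtimes$ restricts to $\id \times \cdots \times \id \times F_{B_n}$, whose right adjoint is provided by $\id \times \cdots \times \id \times G_{B_n}$.

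For condition (b), I would split the cocartesian edges of $\Bb^{\op,\Ll\Mm}$ into classes and handle each separately. Edges covering inert morphisms of $\Ll\Mm$ satisfy Beck-Chevalley automatically by the Segal condition on the fibrations; edges corresponding to morphisms in $\Bb\catop$ acting trivially on colors satisfy Beck-Chevalley because $G$ is a $\Bb$-functor (by the hypothesis that $G$ is a parametrized right adjoint to $F$, hence commutes with restriction); and edges covering the multiplication $\mathfrak{a} \otimes \mathfrak{a} \to \mathfrak{a}$ in $\Cc$ do not involve $\Dd$ or $\Ee$ and are therefore trivial. The only nontrivial class consists of edges covering the action operation $\mathfrak{a} \otimes \mathfrak{m} \to \mathfrak{m}$. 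A cocartesian edge of this type in $\Ee^\boxtimes$ starting at $(C, E) \in \Cc(B_1) \times \Ee(B_2)$ over a morphism $\phi\colon B \to B_1 \times B_2$ in $\Bb$ ends at $\phi_1^* C \otimes_B \phi_2^* E \in \Ee(B)$, where $\phi_i = \pr_i \circ \phi$, and unwinding definitions identifies the Beck-Chevalley mate with the projection formula map $\phi_1^* C \otimes_B \phi_2^* G(E) \to G(\phi_1^* C \otimes_B \phi_2^* E)$. This is an equivalence by the hypothesis applied at $B$ with objects $\phi_1^* C \in \Cc(B)$ and $\phi_2^* E \in \Ee(B)$. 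The main technical obstacle is precisely this identification of the Beck-Chevalley mate with the explicit projection formula map appearing in the statement; once it is made the result follows formally.
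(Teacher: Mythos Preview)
Your proposal is correct and follows essentially the same approach as the paper: both translate via \Cref{cor:un-straightening-left-modules} to a morphism $F^\boxtimes$ of cocartesian fibrations over $\Bb^{\op,\Ll\Mm}$, produce a relative right adjoint $G^\boxtimes$ from the fiberwise adjoints, and then verify that $G^\boxtimes$ preserves cocartesian edges by a case analysis that isolates the action edge $(\mathfrak{a},\mathfrak{m})\to\mathfrak{m}$ as the one nontrivial case, where the condition unwinds to the projection formula. The only cosmetic differences are that the paper cites \cite[Proposition~7.3.2.1]{lurie2016HA} (obtaining $G^\boxtimes$ as a map of $\infty$-operads and then checking the remaining cocartesian edges directly) rather than framing the problem as a Beck--Chevalley criterion, and organizes the reduction slightly differently (first reducing to target over $\langle 1\rangle$, then factoring through the product $B_1\times\cdots\times B_n$); your decomposition by edge-type is equivalent and arguably cleaner.
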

\begin{proof}
	By \Cref{cor:un-straightening-left-modules}, we may identify the $\Cc$-module structures on $\Dd$ and $\Ee$ with cocartesian fibrations over $\Bb^{\op,\Ll\Mm}$ whose restriction to $\Bb^{\op,\sqcup}$ is $\Cc^{\boxtimes}$. The map $F$ thus corresponds to a map
	\begin{equation*}
		\begin{tikzcd}
			\Dd^{\boxtimes} \ar{rr}{F^{\boxtimes}} \drar & & \Ee^{\boxtimes} \dlar \\
			&\Bb^{\op,\Ll\Mm}
		\end{tikzcd}
	\end{equation*}
	of cocartesian fibrations. If $F\colon \Dd \to \Ee$ has a parametrized right adjoint, then in particular $F^{\boxtimes}$ has fiberwise right adjoints, and by \cite[Proposition~7.3.2.1]{lurie2016HA} these assemble into a relative right adjoint $G^{\boxtimes}\colon \Ee^{\boxtimes} \to \Dd^{\boxtimes}$ over $\Bb^{\op,\Ll\Mm}$ which moreover is a map of $\infty$-operads. In particular $G$ preserves inert maps. Our goal is to show that $G$ in fact preserves all cocartesian edges. 
	By the product description of the mapping spaces in an $\infty$-operad, we may restrict attention to cocartesian morphism in $\Ee^{\boxtimes}$ whose target lies over $\langle 1 \rangle \in \Fin_*$. Given such morphism, let $(B_1, \dots, B_n) \to B$ be the image in $\Bb^{\op,\Ll\Mm}$.\footnote{This notation is abusive: the $\Ll\Mm^{\otimes}$-component is hidden in the notation.} This map is a composite of two maps $(B_1, \dots, B_n) \to B_1 \times \dots \times B_n \to B$, so we may assume without loss of generality that either $n = 1$ or that the map is cocartesian for the cocartesian fibration $\Bb^{\op,\Ll\Mm} \to \Fin_*$. When $n = 1$, this is just the condition that $G\colon \Ee \to \Dd$ is a parametrized functor, by assumption on $F$. So assume the map $(B_1, \dots, B_n) \to B$ is cocartesian for $\Bb^{\op,\Ll\Mm} \to \Fin_*$. When $B$ lies over $\mathfrak{a} \in \Ll\Mm$, the claim is clear since $F$ is the identity over $\Aa\textup{ssoc} \subseteq \Ll\Mm$. So we may assume without loss of generality that $B$ lies over $\mathfrak{m} \in \Ll\Mm$, and moreover that $B_n$ lies over $\mathfrak{m}$ while all the other $B_i$ lie over $\mathfrak{a}$. By induction, we may assume that $n = 2$, so that the map in $\Bb^{\op,\Ll\Mm}$ takes the form $(B_1,B_1) \to B_1 \times B_2$ and lies over the map $(\mathfrak{a},\mathfrak{m}) \to \mathfrak{m}$ in $\Ll\Mm$. A cocartesian edge in $\Ee^{\boxtimes}$ over this map has the form $(C,E) \to C \boxtimes E$. The condition that the induced map $G(C,E) = (C,G(E)) \to G(C \boxtimes E)$ is again cocartesian is equivalent to the condition that the map $C \boxtimes G(E) \to G(C \boxtimes E)$ is an equivalence in $\Dd( A \times B)$. Since for $C \in \Cc(A)$, $E \in \Ee(B)$ and $E' \in \Ee(A)$ there are equivalences
	\begin{align*}
		\begin{split}
			C \boxtimes E \simeq \pi_A^*C \otimes_{A \times B} \pi_B^*E \\
			C \otimes_A E' \simeq {\Delta_A}^*(C \boxtimes E'),
		\end{split}
	\end{align*}
	and since $G$ commutes with base change, it follows that this condition is equivalent to the assumption on $G$.
\end{proof}

\section{Duality in equivariant stable homotopy theory}
\label{sec:TheoremCampion}

Let $G$ be a compact Lie group. In his PhD dissertation, Campion \cite{campion2023FreeDuals} proved a universal property of the $\infty$-category $\Sp^G$ of genuine $G$-spectra: the suspension spectrum functor $\Sigma^{\infty}\colon \Spc^G_* \to \Sp^G$ in $\CAlg(\PrL)$ is initial among symmetric monoidal left adjoints $F\colon \Spc_*^G \to \Dd$ into a stable presentably symmetric monoidal $\infty$-category $\Dd$ such that $F(X)$ is dualizable for every compact pointed $G$-space $X$. Since we know from \cite[Corollary~C.7]{gepnerMeier2020equivariant} that $\Sigma^{\infty}$ is initial among functors $F$ that \textit{invert representation spheres}, the crux of Campion's result is that these two conditions on $F\colon \Spc_*^G \to \Dd$ are in fact equivalent:

\begin{theorem}[{cf.\ Campion \cite[Section~5]{campion2023FreeDuals}}]
	\label{thm:CampionInvertingSpheresVsDualizingSpheres}
	Let $G$ be a compact Lie group, let $\Dd \in \CAlg(\PrL_{\st})$ be a stable presentably symmetric monoidal $\infty$-category, and let $F\colon \Spc^G_* \to \Dd$ be a symmetric monoidal left adjoint. Then the following are equivalent:
	\begin{enumerate}[(1)]
		\item For every orthogonal $G$-representation $V$, the functor $F$ sends the representation sphere $S^V$ to an invertible object of $\Dd$;
		\item For every compact pointed $G$-space $X \in \Spc^G_*$, the object $F(X)$ is dualizable in $\Dd$;
		\item For every orthogonal $G$-representation $V$, the functor $F$ sends the representation sphere $S^V$ to a dualizable object of $\Dd$.
	\end{enumerate}
\end{theorem}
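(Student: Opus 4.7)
My plan is to close the loop $(1) \Rightarrow (2) \Rightarrow (3) \Rightarrow (1)$. For the step $(1) \Rightarrow (2)$, the input is the universal property $\Sp^G \simeq \Spc^G_*[\{S^V\}^{-1}]$ recorded in \Cref{prop:UniversalPropertyGSpectraFormalInversion}: hypothesis~(1) is exactly what is needed to factor $F$ uniquely through a symmetric monoidal left adjoint $\bar F \colon \Sp^G \to \Dd$. The classical equivariant Spanier--Whitehead duality in $\Sp^G$ --- an instance of \Cref{thm:MaySigurdssonCWDuality} combined with \Cref{lem:CostenobleWanerDualityVersusFiberwiseDuality}, which dualizes every orbit $\Sigma^{\infty}_+ G/H$ against $G_+ \wedge_H S^{-L}$ --- together with closure of dualizable objects under finite colimits and retracts, shows that every compact pointed $G$-space has dualizable image in $\Sp^G$; symmetric monoidal functors preserve dualizability, so this transports across $\bar F$ to yield~(2). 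The implication $(2) \Rightarrow (3)$ is immediate since each $S^V$ is a finite $G$-CW-complex, hence compact in $\Spc^G_*$.

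The substance of the theorem is the remaining implication $(3) \Rightarrow (1)$, due to Campion. Fix an orthogonal $G$-representation $V$, set $X := F(S^V)$, and let $X^\vee$ be its dual with evaluation $\varepsilon \colon X \otimes X^\vee \to \unit$; we must show $\varepsilon$ is an equivalence. My approach would exploit the extra geometric structure $S^V$ carries as the one-point compactification of an orthogonal $G$-representation: the isotropy filtration of $V$ yields cofibration sequences in $\Spc^G_*$ whose successive quotients are Thom spaces of equivariant normal bundles, and applying $F$ produces honest cofiber sequences in the stable $\infty$-category $\Dd$. Simultaneously, the antipodal and reflection involutions on the fixed-point subspheres $(S^V)^H = S^{V^H}$ induce self-maps whose traces on the dualizable objects $F(S^{V^H})$ are accessible through the Yoneda/trace calculus available for any symmetric monoidal dual. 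Combining these two ingredients, I would argue by induction on the dimension of $V$ (or along a chain of isotropy subgroups) that $\cofib(\varepsilon)$ vanishes stratum by stratum, forcing $X$ to be invertible.

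The main obstacle is exactly this inductive step. Dualizability and invertibility are genuinely distinct properties in a general symmetric monoidal $\infty$-category, and the only reason they collapse here is an arithmetic coincidence between the unstable equivariant geometry of $V$ and the stability of $\Dd$. The bookkeeping needed to track cancellations between adjacent isotropy strata --- and in particular to verify that the relevant equivariant Euler characteristics in $\pi_0 \End_{\Dd}(\unit)$ all come out to units --- appears to require the full unstable input from the representation theory of $G$ rather than any purely formal manipulation inside $\Dd$. I therefore expect the hard part to be precisely what occupies Campion in \cite[Section~5]{campion2023FreeDuals}, and for the final writeup I would import his argument as the appendix, as already signalled by \Cref{sec:TheoremCampion}.
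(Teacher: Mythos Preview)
Your treatment of $(1)\Rightarrow(2)$ and $(2)\Rightarrow(3)$ matches the paper's proof exactly: factor through $\Sp^G$ via its universal property, use that compact $G$-spaces become dualizable in $\Sp^G$, and transport along the symmetric monoidal functor $\bar F$.

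For $(3)\Rightarrow(1)$ your outline diverges from what the paper (following Campion) actually does, and the mechanism you describe---tracking equivariant Euler characteristics in $\pi_0\End_{\Dd}(\unit)$ and checking they are units---is not the argument. The key input is not arithmetic but a \emph{braiding} observation: for any $G$-representation $V$, the swap map $\sigma$ on $S^V\wedge S^V$ is homotopic to $(-1)\wedge\id$ via an explicit rotation (\Cref{lem:Campionrepresentationspheretwistedtrivialbraiding}). Campion then proves a purely categorical lemma (\Cref{prop:Campiontwistedtrivialbraidingimpliesselfstable}): if a dualizable object $T$ has such a ``twisted-trivial'' braiding, then $T\otimes T^\vee\otimes T\xrightarrow{1\otimes\ev}T$ is an equivalence. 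Applied to $T=F(S^V)$, this gives $Q\otimes F(S^V)=0$ where $Q=\cofib(\ev_{F(S^V)})$. This is the seed for the induction, and it is not visible from your isotropy-filtration/trace sketch.

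The remainder of the paper's argument is closer in spirit to your stratification idea, but runs on the \emph{target} side rather than filtering $S^V$ itself: one shows $Q\otimes F(S^{\{H\}})=0$ for every $H$ using a shearing equivalence $S^V\wedge S^{\{H\}}\wedge G/H_+\simeq\Sigma^n(S^{\{H\}}\wedge G/H_+)$ (\Cref{lem:Campionuntwistinglemma}), then runs Zorn's lemma over families $\Ff$ of subgroups, using the cofiber sequences $E_{\Ff_m,+}\to E_{\Ff',+}\to S^{\{H\}}$ to enlarge the family until it contains all subgroups, forcing $Q\simeq Q\otimes F(S^0)=0$. Since you already planned to import Campion's argument wholesale, there is no genuine gap in your proposal; but your description of \emph{what} that argument is should be corrected.
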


For completeness, we will include a proof of this theorem. While we present both the statement and the proof slightly differently, the core ideas are taken from Campion's thesis \cite{campion2023FreeDuals}. The first core idea is that certain dualizable objects are already close to being invertible:

\begin{definition}[{Campion \cite[Definition~2.1.2]{campion2023FreeDuals}}]
	Let $\Dd$ be a presentably symmetric monoidal $\infty$-category, let $T \in \Dd$ be an object and let $t\colon T \to T$ be an endomorphism of $T$. We say that $T$ has \textit{$t$-twisted trivial braiding} if the twist morphism $\sigma_{T,T}\colon T \otimes T \to T \otimes T$ is homotopic to the map $1 \otimes t\colon T \otimes T \to T \otimes T$.
\end{definition}

\begin{proposition}[{cf.\ Campion \cite[Proposition~2.3.1]{campion2023FreeDuals}}]
	\label{prop:Campiontwistedtrivialbraidingimpliesselfstable}
	Let $T \in \Dd$ be a dualizable object. Assume that $T$ has $t$-twisted trivial braiding for some endomorphism $t\colon T \to T$. Then the morphism
	\begin{align*}
		T \otimes T^{\vee} \otimes T \xrightarrow{1 \otimes \ev} T
	\end{align*}
	is an equivalence.
\end{proposition}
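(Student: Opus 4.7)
By the triangle identity, $\eta \otimes 1_T\colon T \to T \otimes T^\vee \otimes T$ satisfies $(1_T \otimes \ev) \circ (\eta \otimes 1_T) \simeq 1_T$, so $\eta \otimes 1_T$ is a section of $1_T \otimes \ev$. Hence it suffices to prove the idempotent
\[
e := (\eta \otimes 1_T) \circ (1_T \otimes \ev) \colon T \otimes T^\vee \otimes T \longrightarrow T \otimes T^\vee \otimes T
\]
is equivalent to the identity, for then $\eta \otimes 1_T$ is a two-sided inverse. In the graphical calculus of a compact-closed symmetric monoidal $\infty$-category, $e$ is the tangle which caps the two rightmost inputs by $\ev$ and introduces a new cup $\eta$ to the left of the remaining strand; the resulting connectivity is genuinely different from that of the identity tangle (it connects the leftmost input $T$ to the rightmost output $T$, while joining input strand $2$ to input strand $3$ and output strand $1$ to output strand $2$). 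In a plain symmetric monoidal category these are distinct morphisms; the twisted braiding $\sigma_{T,T} \simeq 1_T \otimes t$ is exactly what is needed to identify them.

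The plan is to rewrite $e$ so that a crossing of the two $T$-strands appears, replace it via the hypothesis, and then telescope the result to the identity using the Zorro moves. Concretely, I would first use naturality of the symmetric braiding to produce a homotopy
\[
e \;\simeq\; (1_T \otimes \sigma_{T^\vee,T}) \circ (\sigma_{T,T} \otimes 1_{T^\vee}) \circ (1_T \otimes \sigma_{T,T^\vee}) \circ \tilde{e},
\]
where $\tilde{e}$ is the variant of $e$ in which the rightmost input $T$ and the leftmost output $T$ have been transported around the $T^\vee$-strand by appropriate swaps. This is a purely formal manipulation in a symmetric monoidal $\infty$-category: the composition of the three displayed swaps is the braid interchanging positions $1$ and $3$ of $T \otimes T^\vee \otimes T$, which allows the cap-and-cup of $\tilde{e}$ to be reorganized into the identity tangle.

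Next, invoke the hypothesis: $\sigma_{T,T} \otimes 1_{T^\vee} \simeq (1_T \otimes t) \otimes 1_{T^\vee}$. Together with the general identity $(1_T \otimes \sigma_{T^\vee,T}) \circ ((1_T \otimes t) \otimes 1_{T^\vee}) \circ (1_T \otimes \sigma_{T,T^\vee}) \simeq 1_T \otimes 1_{T^\vee} \otimes t$ that follows from naturality of the swap, this shows that the three-swap composite becomes $1_T \otimes 1_{T^\vee} \otimes t$ up to homotopy. Combined with the expression for $\tilde{e}$, this yields $e \simeq (1_T \otimes 1_{T^\vee} \otimes t) \circ \tilde{e}$. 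A final direct computation using the triangle identity shows that $\tilde{e}$ is homotopic to $1_T \otimes 1_{T^\vee} \otimes t^{-1}$ (the inverse $t^{-1}$ exists because $\sigma_{T,T}^{\,2} = 1$ forces $1 \otimes t^2 = 1$, and one deduces $t^2 \simeq 1_T$ by pre- and post-composing with $\eta$ and $\ev$); hence $e \simeq 1_{T \otimes T^\vee \otimes T}$, as desired.

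The main obstacle is carrying out the tangle manipulations in step~1 coherently in the $\infty$-categorical setting: each formal "isotopy" step corresponds to a choice of homotopy, and one must ensure that these homotopies compose to a single, well-defined homotopy $e \simeq 1$. The cleanest route is to invoke the coherence theorem for symmetric monoidal $\infty$-categories (reducing to a purely combinatorial statement about tangles in a free symmetric monoidal category on one dualizable generator equipped with an endomorphism $t$ subject to the relation $\sigma_{T,T} = 1 \otimes t$), where the desired equivalence $e = 1$ is then verified at the level of the corresponding $1$-category presentation. Since the final result to be proved is an equivalence of $1$-morphisms, no higher homotopy coherence beyond the symmetric monoidal structure is needed.
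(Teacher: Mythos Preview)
Your reduction to showing the idempotent $e := (\eta \otimes 1_T)\circ(1_T \otimes \ev) \simeq 1$ is fine, but the execution has two genuine gaps. First, the three-swap composite you display does not type-check: the middle map $\sigma_{T,T}\otimes 1_{T^\vee}$ lives on $T\otimes T\otimes T^\vee$, incompatible with the outer maps on $T\otimes T^\vee\otimes T$. Once corrected to the actual position-$1$/$3$ swap $(1_T\otimes\sigma_{T,T^\vee})(\sigma_{T,T}\otimes 1_{T^\vee})(1_T\otimes\sigma_{T^\vee,T})$ and peeled off, one finds $\tilde e = 1_T \otimes\bigl((\sigma_{T,T^\vee}\circ\eta)\circ\ev\bigr)$: one through-strand together with a disjoint cap--cup pair. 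This tangle is \emph{not} isotopic to the identity tangle in the free compact closed category (the identity has three through-strands, $\tilde e$ has one), so no ``purely formal manipulation'' will simplify it further without invoking the hypothesis again.

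Second, the deduction $t^2 \simeq 1_T$ from $1_T\otimes t^2 \simeq 1_{T\otimes T}$ fails: for a general dualizable $T$ the functor $T\otimes -$ need not be faithful, and pre-/post-composing with $\eta$ and $\ev$ computes a partial trace, yielding only $\dim(T)\cdot t^2 = \dim(T)\cdot 1_T$. So you cannot manufacture $t^{-1}$, and the claim $\tilde e \simeq 1\otimes 1\otimes t^{-1}$ is unsupported. What you actually need at this point is the identity $(1_{T^\vee}\otimes t)\circ(\sigma_{T,T^\vee}\circ\eta)\circ\ev = 1_{T^\vee\otimes T}$, i.e.\ that $\ev$ admits the explicit left inverse $s=\sigma_{T,T^\vee}\circ(t\otimes 1)\circ\eta$. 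This is precisely what the paper establishes by a short string-diagram chase that uses the twisted-braiding hypothesis once and never needs $t$ to be invertible; your ``final direct computation using the triangle identity'' is not a triangle identity at all but this very computation, and it does the real work.
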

\begin{proof}
	Since $1 \otimes \ev_T$ always admits a right-inverse given by $\coev_T \otimes 1\colon T \to T \otimes T^{\vee} \otimes T$, it remains to show that $1 \otimes \ev_T$ also admits a left-inverse. We claim that in fact already the evaluation map $\ev_T$ itself admits a left-inverse, given by the following composite:
	\begin{align*}
		\unit_{\Dd} \xrightarrow{\coev} T \otimes T^{\vee} \xrightarrow{t \otimes T^{\vee}} T \otimes T^{\vee} \xrightarrow{\sigma_{T,T^{\vee}}} T^{\vee} \otimes T.
	\end{align*}
	Indeed, this follows from the following string diagram:
	
	\usetikzlibrary[decorations.markings]
	\[
	\hskip-2.1pt\hfuzz=2.1pt
	\begin{tikzpicture}[arr/.style={postaction={decorate},decoration={markings,mark=at position #1 with {\arrow{>}}}}, scale=0.74]
		\draw[arr=0.6] (1,5) -- (1,4.5) node[pos=0.5,right]{$T$};
		\draw[arr=0.6] (0,4.5) -- (0,5) node[pos=0.5,left]{$T^{\vee}$};
		\draw (1,4.5) to[out=270,in=270,looseness=2] (0,4.5);
		\draw (0,3) to[out=90,in=90,looseness=2] (1,3);
		\draw[arr=0.6] (1,2.5) -- (1,3);
		\draw[arr=0.6] (0,3) -- (0,2.5) node[pos=0.5,left]{$t$};
		\draw (0,2.5) to[out=270,in=90,looseness=1.2] (1,1);
		\draw [white,line width=4pt] (1,2.5) to[out=270,in=90,looseness=1.2] (0,1);
		\draw (1,2.5) to[out=270,in=90,looseness=1.2] (0,1);
		\draw[arr=0.6] (0,0) -- (0,1) node[pos=0.5,left]{$T^{\vee}$};
		\draw[arr=0.6] (1,1) -- (1,0) node[pos=0.5,right]{$T$};
		\path
		(0.5,4.2) node{$\epsilon$}
		(0.5,3.2) node{$\eta$}
		(1.4,1.75) node{$\sigma_{T,T^{\vee}}$}
		(2,2.5) node{$\simeq$};
		\draw[arr=0.6] (4,5) -- (4,4.5) node[pos=0.5,right]{$T$};
		\draw[arr=0.6] (3,4.5) -- (3,5) node[pos=0.5,left]{$T^{\vee}$};
		\draw[arr=0.52] (4,4.5) -- (4,1);
		\draw[arr=0.52] (3,1) -- (3,4.5);
		\draw (4,1) to[out=270,in=270,looseness=2] (3,1);
		\draw (5,4) to[out=90,in=90,looseness=2] (6,4);
		\draw[arr=0.55] (6,2.5) -- (6,4);
		\draw[arr=0.55] (5,4) -- (5,2.5) node[pos=0.5,left]{$t$};
		\draw (5,2.5) to[out=270,in=90,looseness=1.2] (6,1);
		\draw [white,line width=4pt] (6,2.5) to[out=270,in=90,looseness=1.2] (5,1);
		\draw (6,2.5) to[out=270,in=90,looseness=1.2] (5,1);
		\draw[arr=0.6] (5,0) -- (5,1) node[pos=0.5,left]{$T^{\vee}$};
		\draw[arr=0.6] (6,1) -- (6,0) node[pos=0.5,right]{$T$};
		\path
		(3.5,0.7) node{$\epsilon$}
		(5.5,4.2) node{$\eta$}
		(6.4,1.75) node{$\sigma_{T,T^{\vee}}$}
		(7,2.75) node{$\overset{(1)}{\simeq}$};
		\draw[arr=0.6] (9,5) -- (9,4.5) node[pos=0.5,right]{$T$};
		\draw[arr=0.6] (8,4.5) -- (8,5) node[pos=0.5,left]{$T^{\vee}$};
		\draw[arr=0.52] (9,2.5) -- (9,1);
		\draw (9,4.5) -- (9,4);
		\draw[arr=0.52] (8,1) -- (8,4.5);
		\draw (9,1) to[out=270,in=270,looseness=2] (8,1);
		\draw (10,4) to[out=90,in=90,looseness=2] (11,4);
		\draw[arr=0.55] (11,2.5) -- (11,4);
		\draw (9,4) to[out=270,in=90,looseness=1.2] (10,2.5);
		\draw [white,line width=4pt] (10,4) to[out=270,in=90,looseness=1.2] (9,2.5);
		\draw (10,4) to[out=270,in=90,looseness=1.2] (9,2.5);
		\draw (10,2.5) to[out=270,in=90,looseness=1.2] (11,1);
		\draw [white,line width=4pt] (11,2.5) to[out=270,in=90,looseness=1.2] (10,1);
		\draw (11,2.5) to[out=270,in=90,looseness=1.2] (10,1);
		\draw[arr=0.6] (10,0) -- (10,1) node[pos=0.5,left]{$T^{\vee}$};
		\draw[arr=0.6] (11,1) -- (11,0) node[pos=0.5,right]{$T$};
		\path
		(8.5,0.7) node{$\epsilon$}
		(10.5,4.2) node{$\eta$}
		(11.4,1.75) node{$\sigma_{T,T^{\vee}}$}
		(10.32,3.25) node{$\sigma_{T,T}$}
		(12,2.5) node{$\simeq$};
		\draw[arr=0.6] (14,5) -- (14,4.5) node[pos=0.5,right]{$T$};
		\draw[arr=0.6] (13,4.5) -- (13,5) node[pos=0.5,left]{$T^{\vee}$};
		\draw[arr=0.52] (14,2) -- (14,1);
		\draw[arr=0.52] (13,1) -- (13,4.5);
		\draw (14,1) to[out=270,in=270,looseness=2] (13,1);
		\draw (14,4.5) to[out=270,in=90,looseness=1.8] (16,2.5);
		\draw (16,2.5) -- (16,1);
		\draw (14,2) to[out=90,in=90,looseness=2] (15,2);
		\draw (15,1) -- (15,2);
		\draw[arr=0.6] (15,0) -- (15,1) node[pos=0.5,left]{$T^{\vee}$};
		\draw[arr=0.6] (16,1) -- (16,0) node[pos=0.5,right]{$T$};
		\path
		(13.5,0.7) node{$\epsilon$}
		(14.5,2.2) node{$\eta$}
		(17,2.5) node{$\simeq$};
		\draw[arr=0.6] (19,5) -- (19,4.5) node[pos=0.5,right]{$T$};
		\draw[arr=0.6] (18,4.5) -- (18,5) node[pos=0.5,left]{$T^{\vee}$};
		\draw[arr=0.52] (19,4.5) -- (19,1);
		\draw[arr=0.52] (18,1) -- (18,4.5);
		\draw[arr=0.6] (18,0) -- (18,1) node[pos=0.5,left]{$T^{\vee}$};
		\draw[arr=0.6] (19,1) -- (19,0) node[pos=0.5,right]{$T$};
	\end{tikzpicture}
	\]
	
	Here we have abbreviated $\epsilon = \ev_T$ and $\eta = \coev_T$. Each of the five diagrams represents a composite of morphisms in $\Dd$, where each downward pointing arrow denotes a tensor factor of $T$ while an upward pointing arrow denotes a tensor factor of $T^{\vee}$. For example, the third diagram is to be read as the following composite:
	\[
	T^{\vee} \otimes T \xrightarrow{1 \otimes \eta} T^{\vee} \otimes T \otimes T \otimes T^{\vee} \xrightarrow{1 \otimes \sigma_{T,T} \otimes 1} T^{\vee} \otimes T \otimes T \otimes T^{\vee} \xrightarrow{1 \otimes \sigma_{T,T^{\vee}}} T^{\vee} \otimes T \otimes T^{\vee} \otimes T \xrightarrow{\epsilon \otimes 1} T^{\vee} \otimes T.
	\]
	The equivalence labelled (1) holds because of the assumption that $T$ has a $t$-twisted trivial braiding. The other equivalences are easy manipulations of string diagrams. This finishes the proof. 
\end{proof}

\begin{lemma}[{Campion \cite[Example~2.1.6]{campion2023FreeDuals}}]
	\label{lem:Campionrepresentationspheretwistedtrivialbraiding}
	For every $G$-representation $V$, the representation sphere $S^V \in \Spc^G_*$ has $(-1)$-twisted trivial braiding, where $-1\colon S^V \to S^V\colon v \mapsto -v$.
\end{lemma}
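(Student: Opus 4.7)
The plan is to verify this by explicit homotopy through $G$-equivariant orthogonal maps on $V \oplus V$. First, I will identify $S^V \wedge S^V$ with $S^{V \oplus V}$, the one-point compactification of the orthogonal $G$-representation $V \oplus V$ with diagonal action. Under this identification, the twist $\sigma_{S^V,S^V}$ is induced by the linear $G$-involution $\tau\colon V \oplus V \to V \oplus V$, $(v,w) \mapsto (w,v)$, while the map $1 \wedge (-1)$ is induced by the linear $G$-involution $\rho\colon (v,w) \mapsto (v,-w)$. So the task reduces to exhibiting a $G$-equivariant homotopy between the pointed self-maps of $S^{V\oplus V}$ induced by $\tau$ and $\rho$.

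The main construction is the one-parameter family of orthogonal $G$-automorphisms
\[
R_\theta\colon V \oplus V \to V \oplus V, \qquad (v,w) \mapsto (\cos\theta \cdot v + \sin\theta \cdot w,\; -\sin\theta \cdot v + \cos\theta \cdot w),
\]
parametrized by $\theta \in [0,\pi/2]$. Each $R_\theta$ is $G$-equivariant because it acts on $V \oplus V$ by a $2 \times 2$ matrix of scalars, which commutes with the diagonal $G$-action. At $\theta = 0$ this is the identity, and at $\theta = \pi/2$ it sends $(v,w) \mapsto (w,-v) = \tau \circ \rho \,(v,w)$. Composing the family with $\rho$ then produces a continuous path of $G$-equivariant linear maps $\rho \circ R_\theta$ from $\rho$ at $\theta = 0$ to $\rho \tau \rho = \tau$ at $\theta = \pi/2$ (using that $\rho$ is a commuting involution). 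Passing to one-point compactifications gives the desired $G$-equivariant homotopy in $\Spc^G_*$ between $\sigma_{S^V,S^V}$ and $1 \wedge (-1)$.

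There is no real obstacle here; the only thing to check carefully is the sign bookkeeping in the composition $\rho \circ R_{\pi/2}$ to make sure it matches $\tau$ rather than $\tau$ up to an extra sign, and that the continuous family $R_\theta$ is genuinely $G$-equivariant (which is automatic because the action of $G$ on $V \oplus V$ is by block-diagonal matrices, while $R_\theta$ lies in the commutant of the diagonal action, acting ``through the two external $V$-coordinates''). Everything else is a routine identification of smash products of representation spheres with one-point compactifications of direct sums.
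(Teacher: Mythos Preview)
Your approach is essentially the same as the paper's: exhibit a path of $G$-equivariant linear automorphisms of $V\oplus V$ interpolating between the swap and a sign flip on one factor, then pass to one-point compactifications. The paper writes down the single family
\[
H_t=\begin{pmatrix}-\sin(\tfrac{\pi}{2}t)&\cos(\tfrac{\pi}{2}t)\\\cos(\tfrac{\pi}{2}t)&\sin(\tfrac{\pi}{2}t)\end{pmatrix}
\]
which directly connects $\tau$ at $t=0$ to $(-1)\wedge 1$ at $t=1$, avoiding your extra post-composition step.

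Your sign bookkeeping, however, is off in exactly the place you flagged. You write $R_{\pi/2}(v,w)=(w,-v)=\tau\circ\rho(v,w)$, but $\tau\rho(v,w)=\tau(v,-w)=(-w,v)$; in fact $R_{\pi/2}=\rho\tau$. You then assert $\rho\tau\rho=\tau$ ``using that $\rho$ is a commuting involution,'' but $\rho$ and $\tau$ do \emph{not} commute (indeed $\rho\tau\rho(v,w)=(-w,-v)$). Fortunately the two errors cancel: the correct computation is simply $\rho\circ R_{\pi/2}=\rho(\rho\tau)=\tau$, so your conclusion stands. Just fix the intermediate identifications.
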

\begin{proof}
	Consider the family of linear $G$-equivariant maps $H\colon [0,1] \to \End_G(V \times V)$ sending $t$ to the matrix
	\begin{align*}
		\begin{pmatrix}
			-\sin(\frac{\pi}{2} t) & \cos(\frac{\pi}{2} t) \\ \cos(\frac{\pi}{2} t) & \sin(\frac{\pi}{2} t)
		\end{pmatrix}
		\colon V \times V \to V \times V.
	\end{align*}
	We have $H_0(v_1,v_2) = (v_2,v_1)$ and $H_1(v_1,v_2) = (-v_1,v_2)$. Since $H_t$ is invertible at all times, it induces a pointed map $S^{H_t}\colon S^{V\times V} \to S^{V\times V}$. As $S^{V \times V} \cong S^V \wedge S^V$, it follows that the twist map of $S^V$ in $\Spc^G_*$ is homotopic to the map $(-1) \wedge \id$ as desired.
\end{proof}

To continue, we need the notion of $G$-spaces concentrated at a single conjugacy class. We fix a compact Lie group $G$ and a conjugacy class $(H)$ of subgroups of $G$. 

\begin{definition}
	We say that a pointed $G$-space $X \in \Spc^G_*$ is \textit{concentrated at $H$} if we have $X^K = *$ for any subgroup $K \subseteq G$ that is not in the conjugacy class of $H$.
\end{definition}

Let $W = W_G(H) = N_G(H)/H$ denote the Weyl group of $H$ in $G$. Observe that the full subcategory of the orbit category $\Orb_G$ spanned by $G/H$ is equivalent to the classifying space $BW$ of $W$, as $W$ is equivalent to the endomorphism space of $G/H$ in $\Orb_G$. It follows that the $H$-fixed point functor $(-)^H\colon \Spc^G_* = \Fun(\Orb_G,\Spc_*) \to \Fun(BW,\Spc_*)$ becomes an equivalence when restricted to the pointed $G$-spaces concentrated at $H$. 

We let $S^{\{H\}}$ denote a copy of $S^0$ concentrated at $H$ corresponding:
\begin{align*}
	(S^{\{H\}})^K = \begin{cases} S^0 & K \text{ conjugate to } H; \\ * & \text{otherwise.}\end{cases}
\end{align*}
Note that the functor $- \wedge S^{\{H\}}\colon \Spc^G_* \to \Spc^G_*$ is a localization onto the pointed $G$-spaces concentrated at $H$.

\begin{lemma}
	\label{lem:Campionuntwistinglemma}
	Let $V$ be a $G$-representation and let $n := \dim(V^H)$. Then there is an equivalence of pointed $G$-spaces $S^V \wedge S^{\{H\}} \wedge G/H_+ \simeq \Sigma^n(S^{\{H\}} \wedge G/H_+)$.
\end{lemma}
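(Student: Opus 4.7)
My plan is to reduce the claimed equivalence to an equivariant comparison of pointed $W$-spaces, where $W := N_G(H)/H$, and then apply a classical shearing trick.

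First, I would observe that both pointed $G$-spaces $S^V \wedge S^{\{H\}} \wedge G/H_+$ and $\Sigma^n(S^{\{H\}} \wedge G/H_+)$ are concentrated at the conjugacy class of $H$, since smashing with $S^{\{H\}}$ is a localization onto such spaces. By the equivalence $(-)^H\colon \Spc^G_* \to \Fun(BW,\Spc_*)$ recalled just before the statement of the lemma, which becomes fully faithful when restricted to pointed $G$-spaces concentrated at $(H)$, it then suffices to produce a $W$-equivariant equivalence of $H$-fixed points.

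Next, I would compute the fixed points. Using that $(-)^H$ commutes with smash products of pointed $G$-spaces, together with $(S^V)^H = S^{V^H}$, $(G/H_+)^H = W_+$ (with $W$ acting by translation), and $(S^{\{H\}})^H = S^0$, one obtains
\[
(S^V \wedge S^{\{H\}} \wedge G/H_+)^H \simeq S^{V^H} \wedge W_+,
\]
where $V^H$ carries its natural $W$-representation structure, and
\[
(\Sigma^n(S^{\{H\}} \wedge G/H_+))^H \simeq S^n \wedge W_+,
\]
where $W$ acts trivially on $S^n$ and by translation on $W_+$.

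Finally, I would conclude via the shearing isomorphism: for any pointed $W$-space $X$, the map
\[
\phi\colon W_+ \wedge X_{\mathrm{triv}} \xrightarrow{\;\cong\;} W_+ \wedge X, \qquad (w, x) \mapsto (w, w \cdot x),
\]
is a $W$-equivariant homeomorphism (with source carrying the $W$-action only on $W_+$ and target carrying the diagonal action), with inverse $(w, x) \mapsto (w, w^{-1} \cdot x)$. Applied to $X = S^{V^H}$, and noting that $S^{V^H}$ with trivial $W$-action is just $S^n$, this produces the desired $W$-equivariant equivalence $S^n \wedge W_+ \simeq S^{V^H} \wedge W_+$. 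The main subtlety to track is the precise $W$-action on each side: the representation $V^H$ contributes a nontrivial action on $S^{V^H}$, but this gets absorbed into the free factor $W_+$ by the shearing trick. Beyond this bookkeeping, there is no serious obstacle.
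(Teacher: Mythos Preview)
Your proposal is correct and follows essentially the same argument as the paper: reduce to $H$-fixed points using that both sides are concentrated at $(H)$, identify the fixed points as $S^{V^H} \wedge W_+$ and $S^n \wedge W_+$ respectively, and conclude via the shearing isomorphism for $W$-spaces. The only difference is that you spell out the shearing map explicitly, whereas the paper simply invokes it by name.
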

\begin{proof}
	Since both sides are concentrated at $H$, the equivalence can be checked after forgetting to $\Fun(BW,\Spc_*)$ after passing to $H$-fixed points. As the $H$-fixed points of $G/H_+$ are $W_+$, this follows from the sequence of equivalences
	\begin{align*}
		(S^V)^H \wedge W_+ \simeq S^{V^H} \wedge W_+ \simeq S^n \wedge W_+ = \Sigma^n(W_+).
	\end{align*}
	Here the second equivalence uses the shear isomorphism for $W$-spaces, using that the underlying pointed space of $S^{V^H}$ is $S^n$.
\end{proof}

We are now ready for the proof of \Cref{thm:CampionInvertingSpheresVsDualizingSpheres}.
\begin{proof}[Proof of \Cref{thm:CampionInvertingSpheresVsDualizingSpheres}.]
	Let $F\colon \Spc^G_* \to \Dd$ be a symmetric monoidal left adjoint as in the statement of the theorem. We prove that 1), 2) and 3) are equivalent. The implication (2) $\implies$ (3) is obvious. For the implication (1) $\implies$ (2), note that (1) implies by the universal property of $\Sp^G = (\Spc^G_*)[\{S^V\}^{-1}]$ that $F$ uniquely extends to a symmetric monoidal left adjoint $\widetilde{F}\colon \Sp^G \to \Dd$. Since the suspension functor $\Sigma^{\infty}_+\colon \Spc^G_* \to \Sp^G$ sends compact $G$-spaces to dualizable objects, the same follows for $F$.
	
	It remains to show that 3) implies 1). Let $V$ be a $G$-representation and assume that $F(S^V)$ is dualizable. To prove that $F(S^V)$ is invertible, we need to show that the evaluation map $\ev_{F(S^V)}\colon F(S^V) \otimes D(F(S^V)) \to \unit_\Dd$ is an equivalence, or equivalently that the cofiber
	\[
	Q := \cofib(F(S^V) \otimes D(F(S^V)) \xrightarrow{\ev_{F(S^V)}} \unit_\Dd)
	\]
	is zero in $\Dd$. We will inductively show that $Q \otimes F(X)$ is zero for a large family of $G$-spaces $X$.
	
	\textit{Step 1:} We have seen in \Cref{lem:Campionrepresentationspheretwistedtrivialbraiding} that $X = S^V$ has twisted trivial braiding, and since the functor $F$ is symmetric monoidal it follows that $F(S^V)$ also has twisted-trivial braiding. \Cref{prop:Campiontwistedtrivialbraidingimpliesselfstable} then gives us that the map $\ev_{F(S^V)} \colon F(S^V) \otimes D(F(S^V)) \to \unit_{\Dd}$ becomes an equivalence after tensoring with $F(S^V)$. In particular, $Q \otimes F(S^V) = 0$.
	
	\textit{Step 2:} We will show that $Q \otimes F(S^{\{H\}}) = 0$ for every subgroup $H$ of $G$. By step 1, we know that $Q \otimes F(S^V \wedge S^{\{H\}} \wedge G/H_+) \simeq Q \otimes F(S^V) \otimes F(S^{\{H\}} \wedge G/H_+) = 0$. From the equivalence of \Cref{lem:Campionuntwistinglemma} and exactness of $Q \otimes F(-)$, it follows that 
	\[
	0 = Q \otimes F(S^V \wedge S^{\{H\}} \wedge G/H_+) \simeq \Sigma^n(Q \otimes F(S^{\{H\}} \wedge G/H_+)),
	\]
	so that also $Q \otimes F(S^{\{H\}} \wedge G/H_+) = 0$ by stability of $\Dd$. Since $Q \otimes F(-)$ commutes with homotopy orbits, we get
	\[
	Q \otimes F(S^{\{H\}}) \simeq Q \otimes F\left((S^{\{H\}} \wedge G/H_+)_{hW}\right) \simeq \left(Q \otimes F(S^{\{H\}} \wedge G/H_+)\right)_{hW} = 0.
	\]
	
	\textit{Step 3:} We show that $Q = 0$. For a family $\Ff$ of subgroups of $G$ we let $E_{\Ff}$ denote the universal $G$-space with $\Ff$-isotropy, characterized by the property that
	\begin{align*}
		(E_\Ff)^K = \begin{cases} * & K \in \Ff; \\ \emptyset & \text{otherwise.}\end{cases}
	\end{align*}
	We let $E_{\Ff,+} := (E_\Ff)_+$. Consider $\Uu$ be the poset of all families $\Ff$ of subgroups of $G$ for which $Q \otimes F(E_{\Ff,+}) = 0$. Our goal is to show that the family of \textit{all} subgroups is contained in $\Uu$, as in that case we have $E_{\Ff,+} = S^0$ so that $Q \otimes F(E_{\Ff,+}) = Q \otimes F(S^0) \simeq Q \otimes \unit_{\Dd} \simeq Q$.
	
	Note that $\Uu$ contains $\Ff = \emptyset$ since $E_{\Ff} = \emptyset$ and $Q \otimes F(-)$ preserves the initial object. Furthermore, for a chain $\Ff_1 \subseteq \Ff_2 \subseteq \Ff_3 \subseteq \dots$ of nested families in $\Uu$ with union $\Ff$, we have $E_{\Ff} = \colim_i E_{\Ff_i}$, as is easily checked on $K$-fixed points for every $K \leqslant G$. As $Q \otimes F(-)$ preserves colimits, it follows that also $\Ff \in \Uu$. Thus $\Uu$ satisfies the conditions of Zorn's lemma and thus contains a maximal element $\Ff_m$.
	
	We claim that $\Ff_m$ must contain all subgroups of $G$. Assume that a subgroup $H$ is not contained in $\Ff_m$. Since the poset of subgroups of $G$ is well-founded, we may assume that $H$ is minimal with this property, i.e.\ every strict subgroup of $H$ is in $\Ff_m$. We now define $\Ff'$ as the family of subgroups either contained in $\Ff_m$ or conjugate to $H$. This is indeed a family by assumption on $H$. We then have a cofiber sequence
	\begin{align*}
		E_{\Ff_m,+} \to E_{\Ff',+} \to S^{\{H\}}.
	\end{align*}
	in $\Spc^G_*$. Since $Q \otimes F(-)$ preserves cofiber sequences, the sequence
	\begin{align*}
		Q \otimes F(E_{\Ff_m,+}) \to Q \otimes F(E_{\Ff',+}) \to Q \otimes F(S^{\{H\}})
	\end{align*}
	is again a cofiber sequence in $\Dd$. Since $\Ff_m \in \Uu$, the left term is zero, and by step 2) also the right term is zero. It follows that the middle term is zero and thus $\Ff' \in \Uu$. This contradicts the maximality of $\Ff_m$. We conclude that $H$ must have already be contained in $\Ff_m$ and thus that $\Ff_m$ contains all subgroups of $G$. This finishes the proof of the implication (3) $\implies$ (1), thus finishing the proof of the theorem.
\end{proof}

\bibliographystyle{alpha}
\bibliography{Bibliography.bib}
\addcontentsline{toc}{section}{References}

\end{document}